\def\C{\mathbb{C}}
\def\R{\mathbb{R}}
\def\Z{\mathbb{Z}}
\def\AA{\mathcal{A}}
\def\TT{\mathcal{T}}
\def\UU{\mathcal{U}}
\def\FF{\mathcal{F}}
\def\NN{\mathcal{N}}
\def\SS{\mathcal{S}}
\def\CC{\mathcal{C}}
\def\n{\mathfrak{n}}
\def\e{\mathfrak{e}}
\def\p{\mathfrak{p}}
\def\q{\mathfrak{q}}
\def\g{\mathfrak{g}}
\def\h{\mathfrak{h}}
\def\r{\mathfrak{r}}
\def\l{\mathfrak{l}}
\def\s{\mathfrak{s}}
\def\o{\mathfrak{o}}
\def\ol{\overline}
\def\rk{\operatorname{rk}}
\def\d{\partial}
\newtheorem{theorem}{Theorem} 
	\newtheorem{lemma}{Lemma}
	\newtheorem{proposition}{Proposition}
	\newtheorem{corollary}{Corollary}
	\theoremstyle{definition}
	\newtheorem{definition}{Definition}
	\theoremstyle{definition}
	\newtheorem{remark}{Remark}
	\theoremstyle{definition}
	\newtheorem{example}{Example}}
\numberwithin{theorem}{section}
\numberwithin{lemma}{section}
\numberwithin{remark}{section}
\numberwithin{definition}{section}
\numberwithin{corollary}{section}
\numberwithin{proposition}{section}
\numberwithin{example}{section}
\crefname{conjecture}{conjecture}{conjectures}
\begin{document}

	\title{A type Q Kac--Moody Construction}
	
	\author[A.~Sherman, L.~Silberberg]{Alexander Sherman, Lior Silberberg}
	
	\begin{abstract} We introduce a new, Kac--Moody-flavoured construction for Lie superalgebras, which incorporates phenomena of the type Q (queer) Lie superalgebra.  This is done by replacing a maximal even torus by the most general possible Cartan subalgebra for Lie superalgebras, which is a maximal quasitoral subalgebra.  The theory is remarkably rigid but nevertheless unveils a new natural class of Lie superalgebras, which we call Type Q Kac--Moody (QKM) algebras.  We classify finite-growth type Q Kac--Moody algebras, and obtain in a novel way the $d=2$, $\NN=1,2,3,4$ twisted superconformal algebras, along with three other new, finite growth Lie superalgebras.  Our work also gives a new perspective on the distinctiveness of the Lie superalgebra $\q(n)$.  
	\end{abstract}
	
	\maketitle
	\pagestyle{plain}
	
	\section{Introduction}
	
	In 1968, Kac \cite{K3} and Moody \cite{Moody} introduced a class of Lie algebras that are now known as Kac--Moody algebras.  On the surface level, the theory of such algebras is a simple yet beautiful extension of the theory of semisimple Lie algebras as developed by Lie, Killing, and Cartan.   Namely, from the data of an $n\times n$ complex matrix $A$ (often assumed to satisfy certain conditions), one produces a Lie algebra $\g(A)$, and if $A$ is nice enough (i.e.~symmetrizable, generalized Cartan) then $\g(A)$ admits a tidy presentation in terms of generators (the Chevalley generators) and relations (the Chevalley-Serre relations).   
	
	Although the extent of their applications were not immediately apparent, in the 70s and 80s it dawned that Kac--Moody algebras, and in particular the class of affine Lie algebras, have rich connections to physics, number theory, modular forms, theta functions, differential equations, the Virasoro algebra, and more.  We refer to \cite{K} for a comprehensive study of such algebras and their applications to other fields, as well as a wealth of references to other works.
	
	Somewhat in parallel, the theory of Lie superalgebras had a start, with a major milestone being Kac's seminal 1977 paper \cite{K2} in which, among other things, the simple finite-dimensional Lie superalgebras were classified.  It was realized already in \cite{K2} that the theory of Kac--Moody Lie algebras admits an obvious extension to the super setting, where one starts with an $n\times n$ matrix $A$ along with a parity function $p:\{1,\dots,n\}\to\Z/2\Z$.  One obtains (up to central quotients and derived subalgebras) all simple, basic (admitting an even invariant form) classical Lie superalgebras from this approach.  Finite growth Kac--Moody superalgebras were classified later on in the works of Kac (\cite{K4}) (the case with no isotropic simple roots), Van-de Leur (\cite{L}) (the symmetrizable case), and finished by Hoyt and Serganova (see \cite{C} and \cite{CS}).  The theory of affine Lie superalgebras has connections once again to physics and number theory, amongst other fields, see for instance \cite{KW1}, \cite{KW2}, and \cite{KW3}.   See \cite{GHS} for a more recent approach to Kac--Moody superalgebras, which nicely clarifies the role of the Weyl groupoid.
	
	\subsection{The type Q case} Amidst the progress in understanding Kac--Moody Lie superalgebras, a certain Lie superalgebra avoided the Kac-Moody fold, despite being contragredient and simple (in an appropriate sense).  This is the type Q Lie superalgebra $\q(n)$.  In the theory of finite-dimensional associative superalgebras over the complex numbers, there are two families of central simple superalgebras: $\operatorname{End}(\C^{m|n})$ and $Q(n)$, where the latter is the type Q associative superalgebra, and it can be viewed as the subalgebra of $\operatorname{End}(\C^{n|n})$ consisting of endomorphisms commuting with a particular odd automorphism.  One may present $Q(n)$ as matrices of the form
	\[
	\begin{bmatrix}
		A & B\\ B & A
	\end{bmatrix}
	\]
	where $A,B$ are arbitrary $n\times n$ matrices.  Thus one arrives at the Lie superalgebra $\q(n)$, which is the natural Lie superalgebra obtained from $Q(n)$ via the supercommutator.  
	
	The Lie superalgebra $\q(n)$ has even part $\g\l(n)$ and odd part the adjoint representation of $\g\l(n)$.  It is distinctive in that its Cartan subalgebra, 
	\[
	\h=\begin{bmatrix}
		D & D'\\ D' & D
	\end{bmatrix},
	\]
	where $D,D'$ are diagonal, is \emph{not} purely even or commutative.  It is instead what we call \emph{quasitoral}, i.e.~it satisfies $[\h_{\ol{0}},\h]=0$.  This property prevents it from ever being described in classical Kac--Moody terms, because in that setup one always begins with a self-normalizing, purely even torus.  In fact the root system of $\q(n)$ is nothing but the classical $A_{n-1}$ root system.   Thus it is clear that to have a Kac--Moody type construction that gives rise to $\q(n)$, one needs to begin with a self-normalizing quasitoral subalgebra.
	
	On the flip side, quasireductive Lie superalgebras $\g$ (i.e.~ones for which $\g_{\ol{0}}$ is reductive and acts $\operatorname{ad}$-semisimply on $\g$) are a main focus of super representation theory, as their representation categories are Frobenius (see \cite{S2}).  The Cartan subalgebra of a quasireductive superalgebra $\g$ must be quasitoral, meaning that quasitoral superalgebras are the correct generalization of tori to the super setting.  This motivates a Kac--Moody construction which begins with a self-normalizing quasitoral subalgebra, which we now explain.
	
	\subsection{The classical construction} Without going into too many details, we now describe the general construction that is developed in the paper.  Let us start with the classical construction, which is a special case.  Starting with an $n\times n$ matrix $A=(a_{ij})$ and parity function $p:\{1,\dots,n\}\to\Z/2\Z$, one begins by finding a finite-dimensional abelian Lie algebra $\h$ along with linearly independent sets 
	\[
	\Pi=\{\alpha_1,\dots,\alpha_n\}\subseteq\h^*, \ \text{ and } \ \Pi^\vee=\{h_1,\dots,h_n\}\subseteq\h, \text{ satisfying } \alpha_j(h_i)=a_{ij}.
	\]
	One then takes the Lie algebra generated by $\h$ and Chevalley generators $e_1,\dots,e_n,f_1,\dots,f_n$, where the parity of $e_i$ and $f_i$ is $p(i)$, with imposed relations
	\[
	[h,e_{i}]=\alpha_i(h)e_i, \ \ \ [h,f_i]=-\alpha_i(h)f_i, \ \ \ [e_i,f_j]=\delta_{ij}h_i.
	\]
	Then one defines $\g(A)$ to be the quotient of this algebra by the maximal set of relations that don't kill any elements in $\h$.  If $A$ is generalized Cartan and symmetrizable, then these relations are generated by the Serre relations (\cite{GK}) (see \cite{LS} for the super case).
	
	To understand our generalization, one should view the choices of $\alpha_i\in\h^*$ and $p(i)\in\Z/2\Z$ as choices of irreducible representations of $\h$, which are necessarily one-dimensional: indeed, $\alpha_i$ tells you the action, and $p(i)$ tells you the parity of the representation.  Thus the Chevalley generators are just nonzero elements (i.e.~bases) of corresponding irreducible representations of $\h$.
	
	\subsection{Type Q construction} Let $\h$ be a quasi-toral Lie superalgebra, and set $\mathfrak{t}:=\h_{\ol{0}}$ (which is necessarily abelian).  The irreducible representations of $\h$ are in bijection (up to parity) with $\mathfrak{t}^*$; however even when $\mathfrak{t}$ acts semisimply, which we assume, the category of $\h$-modules is not semisimple, which adds both a complication and richness to the theory. 
	
	Let $\Pi=\{\alpha_1,\dots,\alpha_n\}\subseteq\mathfrak{t}^*$ be linearly independent, and choose irreducible representations 
	\[
	\g_{\alpha_1},\dots,\g_{\alpha_n},\g_{-\alpha_1},\dots,\g_{-\alpha_n}
	\] 
	of $\h$ with specified weights $\pm\alpha_1,\dots,\pm\alpha_n$.  These irreducible representations correspond to a set of Chevalley generators.  Finally, choose (nontrivial) $\h$-equivariant maps $[-,-]_{i}:\g_{\alpha_{i}}\otimes\g_{-\alpha_i}\to\h$ (these correspond to coroots).
	
	We call the package of data specified above a Cartan datum, $\AA$, and from it one may construct a Lie superalgebra $\g(\AA)$ in a completely analogous fashion as in the classical Kac--Moody setup.  In particular $\g(\AA)$ will contain $\h$ as a self-normalizing subalgebra, will have a root decomposition, will contain $\g_{\pm\alpha_i}$ for all $i$, and it will admit a natural triangular decomposition $\g(\AA)=\n^{-}\oplus\h\oplus\n$.
	
	We note that in the recent preprint \cite{APS}, Kac--Moody approaches similar to ours were explored in the setting of an arbitrary symmetric tensor category.
	
	\subsection{Clifford Kac--Moody algebras} The above setup is very general (in fact one doesn't need the $\h$-modules $\g_{\alpha_i}$ to be irreducible), and to get something with more structure it makes sense to impose the condition of integrability (see Definition \ref{definition-integrable}).  This gives us the class of what we call Clifford Kac--Moody algebras.  Here the name Clifford signifies that the simple root spaces are representations of Clifford algebras.  
	
	It is natural to then classify Clifford Kac--Moody algebras with one simple root, and determine how the roots can `interact' with one another.  We have done this in Sections 5 and 6, and the remarkable answer we obtain is depicted in the diagram below.  To explain, the `nodes' of the diagram represent possible Clifford Kac--Moody algebras with one simple root $\alpha$, and we have drawn an arrow $\alpha\to\beta$ if it is possible (under our integrability assumption) that $[\h_{\alpha},\g_{\beta}]\neq0$, where $\h_{\alpha}=[\g_{\alpha},\g_{-\alpha}]$ is the space of $\alpha$-coroots.  This is the analogue to the condition that $\beta(h_\alpha)\neq0$ in a Kac--Moody algebra.  In particular, a loop at a node means that it is possible for two simple roots of that type to interact.
	
	Here the node `Super KM' contains the three possible root subalgebras $\s\l(2),\o\s\p(1|2)$, and $\s\l(1|1)$.  These subalgebras can interact with each other in every possible way, so to speak, which is what the loop there represents.  They give rise to classical Kac--Moody superalgebras.
	
	Similarly, the node `Type Q KM' represents the three possible root subalgebras which appear in our definition of a \emph{type Q Kac--Moody} algebra: they are (up to central quotient) odd loop superalgebras $\mathfrak{t}\s:=\s\otimes\C[\xi]\oplus\C\langle c\rangle$, where $\C[\xi]=\C\langle1,\xi\rangle$ is a supersymmetric polynomial algebra on one odd variable, $c$ is central, and $\s=\s\l(2),\o\s\p(1|2),$ or $\s\l(1|1)$.  The formula for the bracket is given in Example \ref{example-takiff}, and is entirely analogous to the loop algebra construction.  Such an odd affinization is what we refer to as a `Takiff' construction, following an established convention, and in recognition of Takiff's work \cite{T}.
	
	Thus we see that type Q Kac--Moody (QKM) nodes are obtained from super Kac--Moody nodes by applying the Takiff construction.  Further, the diagram shows that QKM nodes can interact with each other in interesting ways.

	\begin{figure}[h] 
	\[
	\xymatrix{
		&&&\text{Super KM}  \ar@(ul,ur) \ar@/_3pc/[ddddlll] \ar@/^3pc/[ddddrrr]\ar[ddr]\ar[ddl] \ar@/^2pc/[ddddl]\ar@/_2pc/[ddddr]&&&\\
		&&&&&&\\
		&&\text{Type Q KM}\ar[dd] \ar[ddrr] \ar@(ul,ur)\ar@/^2pc/@{-->}[rr] &&\ar[ll] \ar[dd]\ar[lldd]H_1 &&\\
		&&&&&&\\
		\h\e(0) & & \h\e(2)^{\Pi} && H_n,\ n\geq 3 & & \h\e(0)^{\Pi}
	}
	\]
	\caption{Clifford Kac--Moody algebras}\label{Fig1a}
\end{figure}
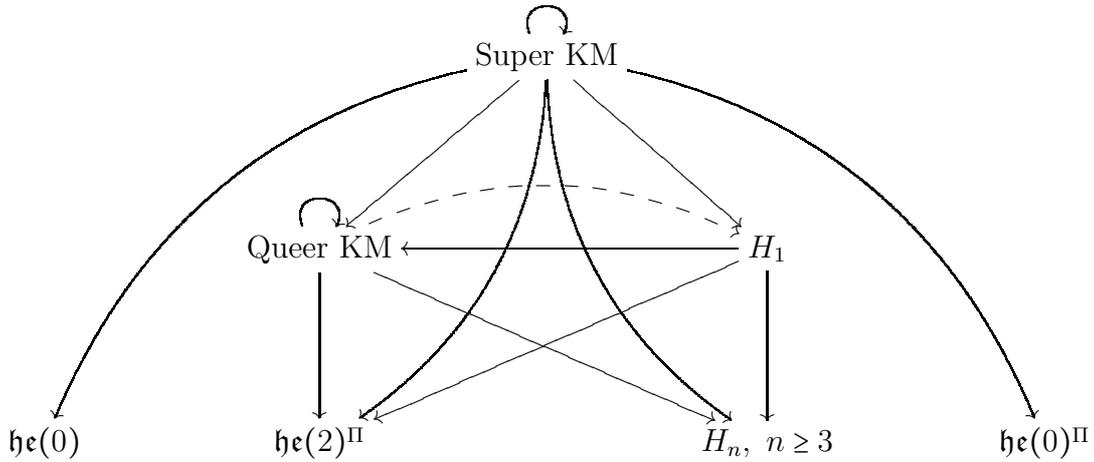

	All other nodes are of `Heisenberg' type, which means that $[\h_{\alpha},\g_{\alpha}]=0$.  The diagram illustrates that they cannot interact in interesting ways with themselves or other nodes under our integrability assumption; in particular their simple root spaces will generate nontrivial ideals inside of Clifford Kac--Moody superalgebras.  (See Section \ref{section-connectivity} for further explanation and for the meaning of the dashed arrow).

	\subsection{Type Q Kac--Moody algebras} As stated above, type Q Kac--Moody (QKM) algebras are those constructed from simple roots of type $\mathfrak{t}\s$, where $\s=\s\l(2),\o\s\p(1|2)$, or $\s\l(1|1)$.  Note also that $\mathfrak{t}\s\l(2)\cong\mathfrak{sq}(2)=[\q(2),\q(2)]$.  
	
	Thus arises the problem of classifying such superalgebras, and in particular the classification of the finite growth QKM algebras.  One simple way to produce a QKM algebra is to start with a Kac--Moody superalgebra $\s$ and apply the Takiff construction to obtain $\s\otimes\C[\xi]\oplus\langle c\rangle$.  Although this is a beautiful and important superalgebra (and in the finite-dimensional case a slight variant of it was studied in \cite{CC}), we view the Takiff construction as somewhat degenerate: in particular every simple root shares the central element $c$ as a coroot.  We thus call such algebras `completely coupled' (or $Y$-coupled to be more precise).
	
	More generally we define notions of being completely $X$-coupled, completely $Y$-coupled, and completely uncoupled, the latter being the least degenerate case (see Definition \ref{definition_coupling}).  In Section \ref{section_coupled_v_uncoupled} we prove the following:
	
	\begin{theorem}
		An indecomposable, finite growth QKM algebra is either completely $X$-coupled, completely $Y$-coupled, or completely uncoupled.  
	\end{theorem}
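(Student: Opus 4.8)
The plan is to reduce the statement to a computation in rank three and then propagate it across the diagram. Fix an indecomposable finite-growth qKM algebra $\g(\AA)$ with simple roots $\alpha_1,\dots,\alpha_n$, and for each $i$ let $\mathfrak{t}\s_i$ be the associated root subalgebra — one of $\mathfrak{t}\s\l(2)$, $\mathfrak{t}\o\s\p(1|2)$, $\mathfrak{t}\s\l(1|1)$ — with distinguished central coroot $c_i\in\h_{\alpha_i}=[\g_{\alpha_i},\g_{-\alpha_i}]$. Call $\{i,j\}$ an \emph{edge} if $[\h_{\alpha_i},\g_{\alpha_j}]\neq 0$ or $[\h_{\alpha_j},\g_{\alpha_i}]\neq 0$. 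By the classification of qKM data of rank at most two carried out in Sections 5 and 6 (summarized in Figure \ref{Fig1a}), every edge of a qKM algebra is of exactly one of three types — $X$-coupled, $Y$-coupled, or uncoupled (Definition \ref{definition_coupling}) — where $Y$-coupled is the Takiff situation in which $c_i$ and $c_j$ are identified (Example \ref{example-takiff}), and $X$-coupled and uncoupled are the other two possibilities. In these terms the theorem asserts that, for an indecomposable finite-growth qKM algebra, every edge has the same type; the three outcomes are then precisely completely $X$-coupled, completely $Y$-coupled, and completely uncoupled.

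The reduction to rank three is the standard connectivity argument. First, sub-data of finite-growth qKM data are again of finite growth: for $J\subseteq\{1,\dots,n\}$ the subalgebra of $\g(\AA)$ generated by the Chevalley generators and coroots indexed by $J$ is the qKM algebra of the sub-datum $\AA_J$ (the usual Levi-type argument), and, being a graded subalgebra of $\g(\AA)$ for the root-lattice grading, it cannot have larger growth. Hence it suffices to show that for every length-two path $\alpha_i$--$\alpha_j$--$\alpha_k$ the edges $\{i,j\}$ and $\{j,k\}$ carry the same type: since $\g(\AA)$ is indecomposable the edge graph is connected, and chaining any two edges through a sequence of successively adjacent edges then forces the edge-type to be globally constant.

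The rank-three assertion is the heart of the matter, and I would establish it by showing that a mixed pattern on $\alpha_i$--$\alpha_j$--$\alpha_k$ forces infinite growth, organizing the argument by the unordered pair of edge-types and by the shape (path or triangle) of the rank-three diagram. The $Y$-coupled edges give the most convenient handle: a chain of $Y$-coupled nodes is, up to central extensions and derivations, an odd affinization (Takiff construction) of an honest Kac-Moody superalgebra assembled from the $\s_i$, so its growth is governed by that underlying diagram; replacing one $Y$-edge by an $X$-coupled or an uncoupled edge destroys the single shared central coroot, and using the bracket formula of Example \ref{example-takiff} together with the integrability constraints of Definition \ref{definition-integrable} one checks that the resulting rank-three datum contains a two- or three-element sub-datum of indefinite type — equivalently, an imaginary root whose root multiplicities grow exponentially. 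The remaining mixtures (an $X$-coupled edge adjacent to an uncoupled one, and the triangle configurations) are treated the same way: the action of $c_j$ propagated along the $X$-coupled edge is incompatible with $\alpha_j$ being uncoupled from $\alpha_k$ unless new root directions appear, once again pushing a small sub-datum outside the finite-growth list.

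The main obstacle is exactly this case analysis. There are several shapes of rank-three diagram and several choices of the $\s_i$ at the three nodes, so the real work is to organize matters uniformly, so that every mixed edge-pattern produces an explicit witness to infinite growth. I expect the cleanest witness to be a two- or three-element sub-Cartan datum whose associated generalized Cartan matrix is of indefinite type, which reduces everything to the finite-growth classification in lower rank; once that reduction is in place, the remaining verifications are finite and mechanical.
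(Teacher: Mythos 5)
Your global skeleton --- reduce to configurations of at most three simple roots and then propagate the edge type across the connected diagram --- is the same as the paper's, and the reduction itself (full sub-data of finite-growth data are of finite growth; the edge graph of an indecomposable diagram is connected) is fine. The gap is in the mechanism you propose for the rank-three step. You want every mixed configuration to be killed by exhibiting a two- or three-vertex sub-Cartan matrix of indefinite type, i.e.\ by a growth obstruction. That cannot work for the path configuration $\alpha_1$--$\alpha_2$--$\alpha_3$: a path with one coupled and one uncoupled edge could perfectly well carry the finite-type Cartan matrix $A_3$ (both $\q(4)$, all edges uncoupled, and $T\s\l(4)$, all edges $Y$-coupled, realize exactly this matrix), so no sub-Cartan matrix witnesses anything. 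What actually rules out a mixed path is the coroot identity $[H_i,H_j]=x_{ij}c_j+y_{ij}h_j=x_{ji}c_i+y_{ji}h_i$ of Lemma \ref{lemma_qKM_formulas}: if $\alpha_1,\alpha_2$ are $Y$-coupled then $c_1$ and $c_2$ are proportional, so $\alpha_1(c_2)=0$, and evaluating the identity for the pair $(2,3)$ at $\alpha_1$ (which annihilates $c_3$ and $h_3$ because $\alpha_1$ is not connected to $\alpha_3$) forces $y_{32}=0$, i.e.\ the second edge is also $Y$-coupled. No growth hypothesis is used there; finite growth enters only in the triangle configuration, where a genuinely quantitative computation produces $a_{12}a_{21}>2$ and only then a non-finite-type matrix.

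Second, you treat an $X$-coupled edge as a type that could sit inside a larger diagram and propose to exclude mixtures involving it by the same growth mechanism. In fact Lemma \ref{lemma_X_coupling_isolated} shows much more, again with no growth hypothesis: an $X$-coupled pair forces $h_i=\lambda h_j$ with $\lambda<0$, whence every other simple root $\gamma$ satisfies $\gamma(h_i)=\gamma(h_j)=0$ and is disconnected from both; by indecomposability there are no other simple roots at all. So complete $X$-coupling is intrinsically a two-root phenomenon, and every mixed case involving an $X$-edge is vacuous rather than of infinite growth. Without this lemma and without the coroot-identity computation, your case analysis has no engine: as written, the proposal records the correct combinatorial reduction but does not prove any of the rank-three exclusions, and the one tool it does name (an indefinite-type sub-Cartan matrix) fails on the most important case.
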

	
	This allows us to separate our study into the three separate classes.  For the following, we note that QKM algebras have Dynkin diagrams with vertices  \begin{tikzpicture}	\node[diamond,aspect=1,scale=0.6,draw] (A) {};	\end{tikzpicture} for $\mathfrak{t}\s\l(2)$ simple roots, \begin{tikzpicture}	\node[diamond,aspect=1,scale=0.6,draw,fill=black] (A) {};	\end{tikzpicture} for $\mathfrak{t}\o\s\p(1|2)$ simple roots, and $\diamondtimes$ for $\mathfrak{tsl}(1|1)$ simple roots.  We put labels on edges to indicate values of pairings of roots with coroots (see Section \ref{section_dynkin}).
	
	\begin{theorem}\label{thm intro type X}
		Let $\g$ be an indecomposable, completely $X$-coupled QKM algebra with more than one simple root, without any assumptions on growth conditions.  Then $\g$ is of finite growth with GK dimension 1, and is one of three possible superalgebras with the following Dynkin diagrams:
		\begin{center}
			\begin{tikzpicture}
				\node (C) at (-5,0) {$\q^X(A_1^{(1)})$};
				\node[diamond,aspect=1,scale=0.6,draw] (A) at (-0.5,0) {};
				\node[diamond,aspect=1,scale=0.6,draw] (B) at (0.5,0) {};
				\draw (A) edge node[above,font=\tiny] {\(-2,-2\)} (B);
			\end{tikzpicture}
		\end{center}
		\begin{center}
			\begin{tikzpicture}
				\node (C) at (-5,0) {$\q^X(A_{2}^{(2)})$};
				\node[diamond,aspect=1,scale=0.6,draw] (A) at (-0.5,0) {};
				\node[diamond,aspect=1,scale=0.6,draw] (B) at (0.5,0) {};
				\draw (A) edge node[above,font=\tiny] {\(-4,-1\)} (B);
			\end{tikzpicture}
		\end{center}
		\begin{center}
			\begin{tikzpicture}
				\node (C) at (-5,0) {$\q_{\bullet}^X(A_2^{(2)})$};
				\node[diamond,aspect=1,scale=0.6,draw] (A) at (-0.5,0) {};
				\node[diamond,aspect=1,scale=0.6,draw,fill=black] (B) at (0.5,0) {};
				\draw (A) edge node[above,font=\tiny] {\(-2,-1\)} (B);
			\end{tikzpicture}
		\end{center}
	\end{theorem}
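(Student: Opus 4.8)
The plan is to reduce to the case of exactly two simple roots by a rigidity argument on the coroots, then classify the rank-two data by hand using the one-simple-root analysis, and finally realise each surviving datum by a (twisted) loop construction in order to read off the growth.

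By Definition~\ref{definition_coupling} and indecomposability, every pair of distinct simple roots $\alpha_i,\alpha_j$ is $X$-coupled. Writing $h_i\in\mathfrak{t}$ and $\tilde h_i\in\h_{\ol 1}$ for the even and odd coroots of the root subalgebra attached to $\alpha_i$, I would first verify that $X$-coupling of the pair $\{\alpha_i,\alpha_j\}$ means that the $2\times 2$ matrix $\bigl(\langle\alpha_a,h_b\rangle\bigr)_{a,b\in\{i,j\}}$ is singular (``affine type'') with all four entries nonzero. The key lemma is then that \emph{three simple roots cannot be pairwise $X$-coupled}, and I would prove it by exploiting the quasitorality of $\h$: since $[\mathfrak{t},\h]=0$, the odd coroots $\tilde h_i$ carry a degenerate Clifford pairing $\h_{\ol 1}\times\h_{\ol 1}\to\mathfrak{t}$, and $X$-coupling (as opposed to $Y$-coupling) forces $\tilde h_i$ and $\tilde h_j$ to interact nontrivially under it; combined with the $\h$-equivariance of the bracket maps $[-,-]_i\colon\g_{\alpha_i}\otimes\g_{-\alpha_i}\to\h$, this pins $h_j$ to a \emph{negative} scalar multiple of $h_i$. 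A third root $\alpha_k$ that is $X$-coupled to both $\alpha_1$ and $\alpha_2$ then gives $h_1=\lambda_{12}\lambda_{23}h_3$ with $\lambda_{12},\lambda_{23}<0$, i.e.\ a \emph{positive} ratio, contradicting that $\{\alpha_1,\alpha_3\}$ is $X$-coupled. Hence $\g$ has exactly two simple roots.

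So $\g$ is built from two simple roots $\alpha_1,\alpha_2$, each of one of the three queer types, and by Definition~\ref{definition_coupling} the two central coroots pair trivially with all roots; thus, in the conventions of Section~\ref{section_dynkin}, the Dynkin data reduces to $a_{12}=\langle\alpha_1,h_2\rangle$ and $a_{21}=\langle\alpha_2,h_1\rangle$. Integrability (Definition~\ref{definition-integrable}) of the two rank-one subalgebras forces $a_{12},a_{21}\in\Z_{\le 0}$ and fixes $\langle\alpha_i,h_i\rangle$ to the value dictated by the type of $\alpha_i$; this value is nonzero precisely when $\alpha_i$ is not of type $\mathfrak{t}\s\l(1|1)$ (whose simple root is isotropic). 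A $\mathfrak{t}\s\l(1|1)$-node would make the singularity condition read $a_{12}a_{21}=0$, contradicting that both entries are nonzero, so both nodes are $\mathfrak{t}\s\l(2)$- or $\mathfrak{t}\o\s\p(1|2)$-nodes. Solving the singularity condition over negative integers leaves $(a_{12},a_{21})$, up to order, equal to $(-2,-2)$ or $(-4,-1)$, and the one-simple-root information recorded in Figure~\ref{Fig1a} together with a parity computation for the maps $[-,-]_i$ then rules out the configuration of two $\mathfrak{t}\o\s\p(1|2)$-nodes and fixes the placement of a $\mathfrak{t}\o\s\p(1|2)$-node relative to a $(-4,-1)$ edge, leaving precisely the three diagrams of the statement.

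To finish I would exhibit each of the three data by an explicit model --- the first as the loop superalgebra of $\mathfrak{t}\s\l(2)\cong\mathfrak{sq}(2)$, suitably extended by a central element and a derivation, and the other two as suitable twisted loop superalgebras built from $\mathfrak{t}\s\l(2)$- and $\mathfrak{t}\o\s\p(1|2)$-type data --- and check in each case that the result is a qKM algebra with the prescribed Dynkin diagram, indecomposable and completely $X$-coupled. Being (twisted) loop superalgebras, their root systems consist of finitely many real root strings together with positive imaginary roots $\N\delta$ of bounded multiplicity, so the number of roots of height $\le N$, counted with multiplicity, is $O(N)$; hence $\g$ has linear growth, and as $\g$ is infinite-dimensional this is exactly $\operatorname{GKdim}\g=1$. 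I expect the crux to be the rigidity lemma of the second step --- that no three simple roots are pairwise $X$-coupled --- together with the two exclusions of the third step: the impossibility of three pairwise $X$-coupled roots, of a $\mathfrak{t}\s\l(1|1)$-node, and of two $\mathfrak{t}\o\s\p(1|2)$-nodes must each be extracted by carefully propagating the Clifford-module structure of $\g_{\pm\alpha_i}$ and the $\h$-equivariant bracket maps through the integrability constraints, which is where the distinctive rigidity of the queer construction really shows up.
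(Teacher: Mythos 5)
Your skeleton is close to the paper's: the crux really is that $X$-coupling forces $h_i=\lambda h_j$ with $\lambda<0$ (this is just $y_{ij}h_j=y_{ji}h_i$, coming from the two expressions for $[H_i,H_j]$ once the $x$'s vanish, combined with $\alpha_i(h_i)>0>\alpha_i(h_j)$), and the two-root classification then follows from degeneracy of the $2\times2$ Cartan matrix as in Proposition \ref{propistion-two-sq2}. But there are two genuine gaps. First, your reduction to two simple roots does not work as stated: the lemma ``three simple roots cannot be pairwise $X$-coupled'' only excludes triangles, whereas a completely $X$-coupled datum could a priori have Dynkin diagram a path $\alpha_1-\alpha_2-\alpha_3$ in which $\alpha_1$ and $\alpha_3$ are not connected, hence not coupled at all, and your lemma is silent. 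The statement you actually need (Lemma \ref{lemma_X_coupling_isolated}) is stronger: once a single pair is $X$-coupled, the negative proportionality $h_i=\lambda h_j$ forces $\gamma(h_i)=\lambda\gamma(h_j)$ to be simultaneously $\leq 0$ and $\geq 0$ for \emph{every} other simple root $\gamma$, so $\gamma$ is disconnected from both and indecomposability finishes the job. Your mechanism proves this, but the conclusion you extract from it does not suffice to rule out chains. Relatedly, your exclusion of $\mathfrak{t}\s\l(1|1)$ nodes via ``the singularity condition reads $a_{12}a_{21}=0$'' is unjustified: the singularity of the Cartan matrix is derived (Lemma \ref{lemma_nontrivial_relation_on_ys}) from $H_i^2=c_i$, which fails for $Tak(\s\l(1|1))$ where $H_i^2=0$; the correct route is the separate lemma that an isotropic node is automatically $Y$-coupled to any neighbour, hence cannot occur in a completely $X$-coupled datum.

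Second, and more seriously, the growth claim. You propose to establish finite growth and GK dimension $1$ by exhibiting each of the three data as a (twisted) loop superalgebra and reading off linear root growth. The paper explicitly states that realizations of the three $X$-coupled algebras of Theorem \ref{theorem_X_coupled} are not yet known, so this is not a routine verification but a substantial open construction; as written, the growth part of your argument has no proof. An argument that stays inside the qKM formalism (e.g., embedding the root system into the affine root system determined by the degenerate Cartan matrix and bounding root multiplicities by integrability, as is done for the uncoupled case in Theorem \ref{theorem-root-system}) would be needed to close this step.
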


For the next theorem, let $\Theta$ denote the map which takes a Dynkin diagram of a QKM algebra and turns the diamonds into circles, thus giving the Dynkin diagram of Kac--Moody superalgebra.
	
	\begin{theorem}\label{thm_intro_Y_coupled}
		Suppose that $\g$ is a $Y$-coupled, QKM algebra with Dynkin diagram $D$.   Let $\s$ be the Kac--Moody superalgebra obtained from the Dynkin diagram $\Theta(D)$. Then $\g$ is constructed from $\s$ via the Takiff construction (see Theorem \ref{theorem-Takiffs} for a precise statement).
	\end{theorem}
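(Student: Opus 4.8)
The plan is to reduce the theorem to a statement about Cartan data: I will show that the completely $Y$-coupled hypothesis, together with finite growth, forces the Cartan datum $\AA$ of $\g$ to be isomorphic to the \emph{Takiff Cartan datum} $\AA_{\mathfrak{t}\s}$ attached to the pair $(\s,\kappa)$, where $\s=\g(\Theta(D))$ and $\kappa$ is an invariant form on $\s$ normalized by the edge labels of $D$. Once this is done, the isomorphism $\g\cong\mathfrak{t}\s$ is formal. Indeed, $\g=\g(\AA)$ is by construction the quotient of the Lie superalgebra $\widetilde{\g}(\AA)$ freely generated by $\h$ and the simple root spaces (subject to the relations of the datum) by the \emph{maximal} graded ideal trivially intersecting $\h$, so it suffices to check that $\mathfrak{t}\s$, equipped with the torus of Theorem \ref{theorem-Takiffs}, contains $\h$ faithfully, is generated by $\h$ together with the simple root spaces $\C(e_i\otimes1)\oplus\C(e_i\otimes\xi)$ and their negatives, and satisfies the defining relations of $\widetilde{\g}(\AA)$ --- all of which is a direct verification from the bracket formula of Example \ref{example-takiff}. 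Granting this, $\mathfrak{t}\s$ is a quotient of $\widetilde{\g}(\AA)$ by a graded ideal meeting $\h$ trivially, hence a quotient of $\g(\AA)$; the kernel of $\g\twoheadrightarrow\mathfrak{t}\s$ is then a graded ideal of $\g$ meeting $\h$ trivially, hence $0$.

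To identify the datum I would first extract the underlying Kac--Moody superalgebra. For each simple root $\alpha_i$, the node analysis of Sections~5 and~6 identifies the root subalgebra $\g^{(i)}=\langle\g_{\pm\alpha_i},\h_{\alpha_i}\rangle$ with a central extension of $\mathfrak{t}\s_i$, $\s_i\in\{\s\l(2),\o\s\p(1|2),\s\l(1|1)\}$, recorded by the diamond type of the $i$-th vertex; in particular $\h_{\alpha_i}$ contains a distinguished ``classical'' coroot $h_i$ whose pairings $\alpha_j(h_i)$ recover the edge labels of $D$, an odd coroot, and the $Y$-coupled central element $c$. Choosing $e_i\in\g_{\alpha_i}$ and $f_i\in\g_{-\alpha_i}$ with $[e_i,f_i]=h_i$, the family $\{e_i,f_i,h_i\}$ satisfies the Chevalley relations for the generalized Cartan matrix and parity underlying $\Theta(D)$, so there is a homomorphism $\s\to\g$. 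Since $\g$ has finite growth, $\s$ is symmetrizable, so the Kac--Serre presentation applies (cf.~\cite{LS}); combined with the fact that $\g(\AA)$ has no nonzero graded ideal meeting $\h$ trivially, this shows that $\s$ embeds as the subalgebra $\langle e_i,f_i\rangle\subseteq\g$.

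It remains to match the rest of the datum with the Takiff one. Being completely $Y$-coupled means (Definition \ref{definition_coupling}) that the central coroots of the various $\g^{(i)}$ all coincide with a single $c$, while their odd coroots remain linearly independent; a dimension count then pins down $\h$ to be $\h_\s\otimes\C[\xi]\oplus\C c$, up to the derivations isolated in Theorem \ref{theorem-Takiffs}, with $\mathfrak{t}=\h_{\ol{0}}$. One checks that $\h_{\ol{1}}$ acts on each $\g_{\alpha_i}$ through a one-dimensional degenerate Clifford algebra intertwining $e_i\leftrightarrow\bar e_i$, that $[h,\bar e_i]=\alpha_i(h)\bar e_i$, and that $[\bar e_i,\bar f_i]=\kappa(e_i,f_i)\,c$ where $\kappa$ is the invariant form on $\s$ with $\kappa(e_i,f_i)$ equal to the symmetrizing coefficient of the $i$-th node. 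These are exactly the relations of $\AA_{\mathfrak{t}\s}$, so $\AA\cong\AA_{\mathfrak{t}\s}$ and the formal argument above concludes.

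I expect the main obstacle to be this last matching step: showing that the common central element $c$ of all the $\g^{(i)}$ really plays the role of the Takiff cocycle target \emph{uniformly} --- i.e.\ that the brackets $[\bar e_i,\bar f_i]$ are all governed by a single invariant form $\kappa$ on $\s$, with normalizations forced by the edge labels --- and ruling out spurious additional coupling among the odd coroots. This is precisely where finite growth enters: it guarantees that $\s$ carries a nondegenerate invariant form (so that the Takiff construction is defined and $\kappa(e_i,f_i)\neq0$ for every $i$), and a comparison of root multiplicities of $\g$ against those of $\mathfrak{t}\s$ is what forces the torus dimension count --- and hence the isomorphism $\AA\cong\AA_{\mathfrak{t}\s}$ --- to be exactly right, rather than merely up to a further central extension.
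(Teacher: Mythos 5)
Your overall strategy --- identify the Cartan datum with that of a Takiff algebra and then invoke the maximal-ideal characterization of $\g(\AA)$ --- is broadly the same as the paper's: Theorem \ref{theorem-Takiffs} constructs an explicit homomorphism $\tilde{\g}(\AA)\to T\s(A)/J_{\s}$ and uses the absence of ideals meeting the Cartan trivially on both sides. But the proposal leaves the actual content of the theorem unproven. The step you yourself flag as ``the main obstacle'' --- that the action of $\h_{\ol{1}}$ on the simple root spaces is governed by a single invariant form, equivalently that the matrix $X$ equals the Cartan matrix $A$ after rescaling the $E_i$ --- is precisely the computation the paper supplies and you do not: one uses irreducibility of $\g_{\alpha_i}$ to produce $I\in\h_{\ol{1}}$ with $[I,E_i]=e_i$, computes $x_{ij}y_j=\alpha_j([I,H_i])=\alpha_j(h_i)=a_{ij}$, and propagates $y_j\neq0$ along the connected Dynkin diagram. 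Without this the map to $T\s$ cannot even be written down. Note also that finite growth plays no essential role: symmetrizability of $A$ (hence the existence of the form $\kappa$) already follows from complete $Y$-coupling via the relation $x_{ij}c_j=x_{ji}c_i$ of Remark \ref{remark_coroot_coupling}, so your appeal to finite growth for the nondegenerate form, and the detour through the Kac--Serre presentation to embed $\s$ in $\g$, are both unnecessary --- and the latter is itself unjustified, since an ideal of the subalgebra generated by the $e_i,f_i$ need not be an ideal of $\g$, so the ``no ideals meeting $\h$ trivially'' property of $\g(\AA)$ does not transfer to that subalgebra.

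Second, the target statement is stronger than what is true. There is no isomorphism $\AA\cong\AA_{\mathfrak{t}\s}$ and no ``dimension count'' pinning down $\h$: the Cartan subalgebra of a qKM datum may contain additional central even elements and odd elements annihilating every simple root space, none of which are detected by the root spaces. The precise conclusion of Theorem \ref{theorem-Takiffs} is only an embedding $\g(\AA)/J_{\AA}\hookrightarrow T\s(A)/J_{\s}$ which is an isomorphism on root spaces, with both sides quotiented by the common annihilator of the simple root spaces. Relatedly, your formal argument reverses a quotient: if $\mathfrak{t}\s=\tilde{\g}(\AA)/I$ with $I\cap\h=0$, then $I\subseteq\r$, so it is $\g(\AA)$ that is a quotient of $\mathfrak{t}\s$, not the reverse; to obtain a map $\g(\AA)\to\mathfrak{t}\s$ one must separately verify that the Takiff side admits no nonzero ideals meeting its Cartan trivially, which is the property the paper actually invokes. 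These points are repairable, but as written the proposal defers the essential computation and asserts a conclusion that fails without the $J$-quotients.
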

	
	\subsection{Completely uncoupled QKM algebras} Theorem \ref{thm_intro_Y_coupled} tells us that any possible finite-growth Dynkin diagram can appear in our construction.  However if we pass to the completely uncoupled situation, things become more rigid.  Namely we prove the following (Corollary \ref{corollary-fg-cc-QKM}):
	
	\begin{theorem}\label{theorem_intro_completely_uncoupled}
		The possible Dynkin diagrams of an indecomposable, completely uncoupled QKM algebra of finite growth with more than one simple root are the following:		
		\begin{enumerate}
			\item Type \(A(n)\) for \(n \in \mathbb{Z}_{\geq 2}\), giving rise to $\q(n+1)$:
			\begin{center}
				\begin{tikzpicture}
					\node[diamond,aspect=1,scale=0.6,draw] (A) at (-3,0) {};
					\node[diamond,aspect=1,scale=0.6,draw] (B) at (-1.5,0) {};
					\node (C) at (0,0) {...};
					\node[diamond,aspect=1,scale=0.6,draw] (D) at (1.5,0) {};
					\node[diamond,aspect=1,scale=0.6,draw] (E) at (3,0) {};
					\draw (A) edge node[above,font=\tiny] {\(-1,-1\)} (B);
					\draw (B) edge node[above,font=\tiny] {\(-1,-1\)} (C) ;
					\draw (C) edge node[above,font=\tiny] {\(-1,-1\)} (D) ;
					\draw (D) edge node[above,font=\tiny] {\(-1,-1\)} (E) ;
				\end{tikzpicture}
			\end{center}
			with \(n\) vertices.
			\item Type \(A(n)^{(1)}\) for \(n \in \mathbb{Z}_{\geq 2}\), giving rise to $\q(n+1)^{(1)}$, an affinization of $\q(n)$:
			\begin{center}
				\begin{tikzpicture}
					\node[diamond,aspect=1,scale=0.6,draw] (A) at (-3,0) {};
					\node[diamond,aspect=1,scale=0.6,draw] (B) at (-1.5,0) {};
					\node (C) at (0,0) {...};
					\node[diamond,aspect=1,scale=0.6,draw] (D) at (1.5,0) {};
					\node[diamond,aspect=1,scale=0.6,draw] (E) at (3,0) {};
					\draw (A) edge node[below,font=\tiny] {\(-1,-1\)} (B);
					\draw (B) edge node[below,font=\tiny] {\(-1,-1\)} (C) ;
					\draw (C) edge node[below,font=\tiny] {\(-1,-1\)} (D) ;
					\draw (D) edge node[below,font=\tiny] {\(-1,-1\)} (E) ;
					\draw (A) edge [bend left] node[above,font=\tiny] {\(-1,-1\)} (E);
				\end{tikzpicture}
			\end{center}
			with \(n+1\) vertices.
			\item Type \(A(1)^{(1)}\), giving rise to two superalgebras $\q_{(2,2)}^{\pm}$:
			\begin{center}
				\begin{tikzpicture}
					\node[diamond,aspect=1,scale=0.6,draw] (A) at (-0.75,0) {};
					\node[diamond,aspect=1,scale=0.6,draw] (B) at (0.75,0) {};
					\draw (A) edge node[above,font=\tiny] {\(-2,-2\)} (B) ;
				\end{tikzpicture}
			\end{center}
		\end{enumerate}
	\end{theorem}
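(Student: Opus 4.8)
I would classify by rank, bootstrapping from the rank-two case. The contrast with Theorem \ref{thm_intro_Y_coupled}, in which every finite-growth Kac--Moody super Dynkin diagram --- branchings and all --- is realised by a Takiff construction, already signals where the work is: all of the rigidity here must come from the \emph{completely uncoupled} coroot data interacting with integrability.

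\textbf{Step 1: the rank-two pieces.} For $i\neq j$ the pair $\{\alpha_i,\alpha_j\}$ spans a rank-two sub-datum $\AA_{ij}$, and since the $(\alpha_i,\alpha_j)$-graded part of $\g$ carries the whole root system of $\g(\AA_{ij})$, finite growth of $\g$ forces finite growth of the rank-two algebra, and indecomposability is inherited edge by edge. So I would first enumerate the indecomposable, completely uncoupled, finite-growth qKM algebras on two simple roots, combining the one-simple-root classification and the connectivity analysis of Sections 5 and 6 (summarised in Figure \ref{Fig1a}) with a direct growth estimate. I expect the outcome to be: (a) no edge may touch a $\mathfrak{t}\o\s\p(1|2)$ (black) or $\mathfrak{tsl}(1|1)$ ($\diamondtimes$) vertex --- such vertices interact only through $X$- or $Y$-coupling, so in the completely uncoupled, more-than-one-root case every vertex is a white diamond $\mathfrak{t}\s\l(2)$; (b) an edge between two white diamonds carries the label $(-1,-1)$ or $(-2,-2)$; and (c) a $(-2,-2)$ edge is terminal --- its rank-two algebra is one of the two algebras $\q^{\pm}_{(2,2)}$ of type $A(1)^{(1)}$ (the sign recording the two inequivalent $\h$-equivariant brackets $\g_{\alpha}\otimes\g_{-\alpha}\to\h$), and adjoining any third simple root adjacent to either of its vertices produces infinite growth. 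Parts (b)--(c) are where the first genuine computation enters, weighing the Takiff coroot datum against the classical $A_1^{(1)}$-versus-$A_1^{(2)}$ alternatives.

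\textbf{Step 2: no branch vertices.} The crux --- and the step I expect to be hardest --- is to show that when every edge is $(-1,-1)$-labelled, no simple root has three or more neighbours; equivalently, the rank-three sub-datum in which $\alpha_i$ is adjacent to both $\alpha_j$ and $\alpha_k$ cannot occur. Note that its even Cartan matrix is just the $A_3$ one --- of \emph{finite} type classically --- and that this configuration \emph{does} arise in the $Y$-coupled setting, so the exclusion has no classical shadow and must use the super structure. I would argue with the Clifford generators: take a homogeneous basis $e_i,\bar e_i$ of the $(1|1)$-dimensional space $\g_{\alpha_i}$, record the even coroot $h_i\in\mathfrak{t}$ and the odd coroot $\bar h_i\in\h_{\ol{1}}$, impose the integrability and Serre relations among the three rank-one subalgebras, and then track the adjoint action of $\bar h_i$ on the nilpotent subalgebra generated by $e_j$ and $e_k$. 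Complete uncoupling makes $\bar h_i$ genuinely independent of the coroots of $\alpha_j$ and $\alpha_k$, and I expect this to force its eigenspaces inside $\n^+$ to proliferate --- i.e.\ exponential root multiplicities for the $D_4$-shaped datum --- contradicting finite growth. Where possible I would route the argument through the pairwise connectivity statements already proved in Section \ref{section-connectivity}.

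\textbf{Step 3: assembling the diagrams and checking existence.} By Step 1(c), if any edge of $D$ is $(-2,-2)$-labelled then $D$ is that single edge, of type $A(1)^{(1)}$, realised by $\q^{\pm}_{(2,2)}$. Otherwise every edge is $(-1,-1)$-labelled, and by Step 2 the diagram has maximum degree $\le 2$, hence is a path or a cycle (a length-two ``cycle'' reduces to the $(-2,-2)$ edge already on the list). A path on $n$ vertices is type $A(n)$; a presentation-matching argument identifies $\g(\AA)$ with $\q(n+1)$ (up to the standard centre/derived-subalgebra adjustments), in particular finite-dimensional, of GK dimension $0$. A cycle on $n+1\ge 3$ vertices is type $A(n)^{(1)}$; here I would exhibit a loop-algebra realisation of $\q(n+1)$ extended by a centre and a derivation, verify it has the stated Dynkin diagram, is completely uncoupled, and has GK dimension $1$, identifying $\g(\AA)$ with the affinisation $\q(n+1)^{(1)}$. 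Together with the $\q^{\pm}_{(2,2)}$ from Step 1 this exhausts the list and closes the classification.
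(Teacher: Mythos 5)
Your Steps 1 and 3 track the paper closely: the reduction to full rank-two subdata, the exclusion of $Tak(\o\s\p(1|2))$ and $Tak(\s\l(1|1))$ vertices via Theorem \ref{theorem_coupling}, the restriction of edge labels to $(-1,-1)$ or $(-2,-2)$ via Proposition \ref{propistion-two-sq2} plus the classical growth criterion of Remark \ref{remark_dynkin_diagram}, and the terminal nature of the $(-2,-2)$ edge are all as in Section 8, and the realizations in your Step 3 are those of Section 9.

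Step 2, however, rests on a mechanism that cannot work, and you have correctly identified it as the crux. A branch vertex with all edges labelled $(-1,-1)$ has classical Cartan matrix of type $D_4$ (or, for larger branched diagrams, of finite or affine type), so no growth contradiction is available: by Theorem \ref{theorem-root-system}, the root system of a completely uncoupled qKM algebra coincides with that of the classical Kac--Moody algebra $\s(A)$, every root space is finite-dimensional, and a $D$-shaped completely uncoupled qKM algebra would therefore be \emph{finite-dimensional} if it existed. There is no proliferation of eigenspaces of the odd coroot to exploit; the obstruction is not growth but the inconsistency of the quasitoral coroot data itself. The paper's Lemma \ref{lemma-Dynkin-D} exhibits this by a short linear-algebra computation: with $\alpha_1$ the branch vertex and $\alpha_2,\alpha_3,\alpha_4$ its neighbours, both $[H_1,H_2]$ and $[H_1,H_3]$ lie in the one-dimensional space $\C\langle c_1,h_1\rangle\cap\ker\alpha_4$ (using $[H_1,H_j]=x_{j1}c_1+y_{j1}h_1\in\h_{\alpha_j}$ and the disconnection of $\alpha_j$ from $\alpha_4$), hence span the same line inside $(\h_{\alpha_3})_{\ol{0}}\subseteq\ker\alpha_2$; evaluating $\alpha_2$ via Lemma \ref{lemma_qKM_formulas}(5) gives $0=\alpha_2([H_1,H_2])=y_{12}\alpha_2(h_2)$, forcing $y_{12}=0$ and contradicting complete uncoupledness. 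Your proposal would need to be repaired by an argument of this kind (a relation among the odd coroots $H_i$ that is incompatible with $y_{ij}y_{ji}\neq0$), not by a root-multiplicity estimate.
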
	
	One outcome of Theorem \ref{theorem_intro_completely_uncoupled} is that the only finite-type, completely uncoupled QKM root system we can obtain is $A(n)$, and ultimately the only algebra we can get (up to central extensions/quotients and derivations/commutator subalgebras) is $\q(n)$.  Thus it shows that there cannot be a type Q version of other simple root systems.
	
	Another outcome is the existence of two nontrivial completely uncoupled finite-growth Lie superalgebras $\q^{\pm}_{(2,2)}$.  These Lie superalgebras both have root system of type $A^{(1)}_{1}$, and contain $\widehat{\s\l}_{2}$ as a subalgebra. The following will be shown in future work:
	\begin{theorem}\label{thm superconformal}
			The Lie superalgebra $\q_{(2,2)}^+$ is isomorphic to the $d=2$, $\NN=3$ twisted superconformal algebra, and $\q_{(2,2)}^-$ is isomorphic to the $d=2$, $\NN=4$ twisted superconformal algebra.
	\end{theorem} 

  In the language of \cite{KL}, these superalgebras are $\s\o(3)$ and $\s\o(4)$ superconformal algebras.  We can also naturally produce the twisted, $d=2$, $\NN=1,2$ superconformal algebras via our construction; see Section \ref{section ramond}.
	
	\subsection{Serre relations} Establishing Serre relations is important in the study of Kac--Moody superalgebras, in particular for quantizations of superalgebras.  It is natural to ask when QKM algebras admit nice presentations.  We have the following, which is proven in \cite{Si}.
	
	\begin{theorem}
		Let $\g$ be a finite growth, completely uncoupled QKM algebra with simple roots $\{\alpha_1,\dots,\alpha_n\}$, where $n>1$. Let $A=(a_{ij})$ be its Cartan matrix (see Section \ref{section_dynkin}).  Then $\g$ is generated by $\h$ and simple root spaces $\g_{\pm\alpha_1},\dots,\g_{\pm\alpha_i}$, subject to the following relations:
		\begin{enumerate}
			\item Chevalley-type relations (those defining $\g(\AA)$, as listed in Section \ref{section_cartan_datum}); and
			\item Serre relations: for $i\neq j$ we have
			\[
			\operatorname{ad}(\g_{\alpha_i})^{1-a_{ij}}(\g_{\alpha_j})=0,  \ \ \ \ \operatorname{ad}(\g_{-\alpha_i})^{1-a_{ij}}(\g_{-\alpha_j})=0.
			\]
		\end{enumerate}
	\end{theorem}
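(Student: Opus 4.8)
The plan is to run the classical Gabber--Kac strategy in the quasitoral setting, leaning on the classification already established. Write $\tilde{\mathfrak g}=\tilde{\mathfrak g}(\AA)$ for the Lie superalgebra presented by $\mathfrak h$ together with the simple root spaces $\mathfrak g_{\pm\alpha_i}$ subject only to the Chevalley-type relations of Section~\ref{section_cartan_datum}. It is $\Z^{n}$-graded with triangular decomposition $\tilde{\mathfrak g}=\tilde{\mathfrak n}^{-}\oplus\mathfrak h\oplus\tilde{\mathfrak n}^{+}$, and by the construction of $\mathfrak g(\AA)$ we have $\mathfrak g=\mathfrak g(\AA)=\tilde{\mathfrak g}/\mathfrak r$, where $\mathfrak r=\mathfrak r^{-}\oplus\mathfrak r^{+}$ is the unique maximal graded ideal with $\mathfrak r\cap\mathfrak h=0$ (equivalently $\mathfrak r^{\pm}\subseteq\tilde{\mathfrak n}^{\pm}$). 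Let $\mathfrak S\subseteq\tilde{\mathfrak g}$ be the ideal generated by the Serre subspaces $S^{\pm}_{ij}:=\operatorname{ad}(\mathfrak g_{\pm\alpha_i})^{1-a_{ij}}(\mathfrak g_{\pm\alpha_j})$; the theorem amounts to $\mathfrak r=\mathfrak S$. By Theorem~\ref{theorem_intro_completely_uncoupled} the Dynkin diagram $D$ of $\mathfrak g$ is one of $A(n)$, $A(n)^{(1)}$, $A(1)^{(1)}$, so every simple root is of type $\mathfrak t\mathfrak{sl}(2)$, all edge labels are symmetric, and $\Theta(D)\in\{A_{n},A_{n}^{(1)},A_{1}^{(1)}\}$ is simply laced; this will be used freely.

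For $\mathfrak S\subseteq\mathfrak r$ it suffices to see $\mathfrak S\cap\mathfrak h=0$, and since $\mathfrak S$ is graded and symmetric under the interchange of positive and negative roots it is enough to show each $S^{+}_{ij}\subseteq\tilde{\mathfrak n}^{+}$ is \emph{singular}, i.e.\ annihilated by every $\mathfrak g_{-\alpha_k}$. For $k\neq i,j$ this is automatic by $\Z^{n}$-degree. For $k=i$, restrict to the rank-one subalgebra $\langle\mathfrak h,\mathfrak g_{\pm\alpha_i}\rangle\cong\mathfrak t\mathfrak{sl}(2)$: integrability (Definition~\ref{definition-integrable}) makes $\bigoplus_{k}\mathfrak g_{\alpha_j+k\alpha_i}$ a finite-dimensional $\mathfrak t\mathfrak{sl}(2)$-module, and $1-a_{ij}$ is precisely the exponent landing $\operatorname{ad}(\mathfrak g_{\alpha_i})^{1-a_{ij}}(\mathfrak g_{\alpha_j})$ at the top of every $\mathfrak t\mathfrak{sl}(2)$-string, hence in $\ker\operatorname{ad}(\mathfrak g_{-\alpha_i})$. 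For $k=j$, commute $\mathfrak g_{-\alpha_j}$ past the $1-a_{ij}$ copies of $\operatorname{ad}(\mathfrak g_{\alpha_i})$ using Jacobi; the error terms involve $[\mathfrak h_{\alpha_j},\mathfrak g_{\alpha_i}]$ and the space $\mathfrak g_{(1-a_{ij})\alpha_i}$, and complete uncoupling kills both: if $a_{ij}=0$ then symmetry of the labels gives $a_{ji}=0$, so $[\mathfrak h_{\alpha_j},\mathfrak g_{\alpha_i}]=0$, while the only other potential term $[[\mathfrak g_{-\alpha_j},\mathfrak g_{\alpha_i}],\mathfrak g_{\alpha_j}]$ vanishes since $\alpha_i-\alpha_j$ is not a root; and if $a_{ij}\leq-1$ then $(1-a_{ij})\alpha_i$ is not a root of $\mathfrak t\mathfrak{sl}(2)$, so $\mathfrak g_{(1-a_{ij})\alpha_i}=0$.

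The content is in $\mathfrak r\subseteq\mathfrak S$, the Gabber--Kac assertion that $\mathfrak g':=\tilde{\mathfrak g}/\mathfrak S$ carries no nonzero graded ideal meeting $\mathfrak h$ trivially, with induced decomposition $\mathfrak g'=\mathfrak n_{-}'\oplus\mathfrak h\oplus\mathfrak n_{+}'$. One first checks that, with the Serre relations imposed, $\operatorname{ad}(x)$ is locally nilpotent on $\mathfrak g'$ for each $x\in\mathfrak g_{\pm\alpha_i}$ --- the usual $\mathfrak{sl}(2)$ computation, carried along with the odd Takiff generator of $\mathfrak t\mathfrak{sl}(2)$ --- so $\mathfrak g'$ is integrable and its set of roots is $\Theta(D)$-Weyl-group stable. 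Now suppose $\bar{\mathfrak r}:=\mathfrak r/\mathfrak S\neq0$; it is graded and (being an ideal meeting $\mathfrak h$ trivially) has nonzero intersection with $\mathfrak n_{-}'$, say, so pick $0\neq x\in\bar{\mathfrak r}\cap\mathfrak n_{-}'$ of minimal height, $x\in\mathfrak g'_{-\beta}$ with $\beta>0$. Minimality forces $[\mathfrak g_{\alpha_i},x]=0$ for all $i$: if $\beta\neq\alpha_i$ the bracket lies in $\bar{\mathfrak r}\cap\mathfrak n_{-}'$ at strictly smaller height, hence is $0$; if $\beta=\alpha_i$ it lies in $\bar{\mathfrak r}\cap\mathfrak h=0$. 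Thus $U(\mathfrak n_{-}')x$ is an integrable highest weight $\mathfrak g'$-submodule of $\mathfrak g'$ with highest weight $-\beta$; integrability makes $-\beta$ dominant with respect to all coroots, so $-\beta$ is a dominant weight lying in the negative of the cone spanned by the simple roots. For finite type $\Theta(D)=A_{n}$ this forces $\beta=0$, a contradiction; for affine $\Theta(D)$ one runs the null-root version of the argument exactly as in the classical affine Gabber--Kac theorem. Hence $\bar{\mathfrak r}=0$, i.e.\ $\mathfrak r=\mathfrak S$.

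The main obstacle is this last step, and the subtlety specific to the queer setting is the interaction between the non-commutativity of $\mathfrak h$ --- each weight space $\mathfrak g'_{\gamma}$ is a module over a Clifford algebra rather than a line --- and the extremal-weight argument: one must make the choice of the minimal-height vector $x$ compatible with that Clifford action and verify the submodule step is not disturbed by the odd Takiff directions of the rank-one subalgebras. A more economical route, presumably that of~\cite{Si}, bypasses the general machinery: by Theorem~\ref{theorem_intro_completely_uncoupled} there are only the families $\mathfrak q(n+1)$, $\mathfrak q(n+1)^{(1)}$, and $\mathfrak q^{\pm}_{(2,2)}$, and for each one compares the displayed presentation directly with a known realization (a subquotient of $\mathfrak q(n+1)$, its loop algebra, and the $\mathfrak{so}(3)$-superconformal algebra respectively), turning the theorem into a finite, if intricate, verification. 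In either approach the rank-$\leq2$ computation of Step~1 is routine and essentially all the weight of the proof rests on Step~2.
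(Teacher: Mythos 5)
First, a point of comparison: the paper does not prove this theorem at all --- it explicitly defers the proof to the thesis \cite{Si} --- so there is no in-paper argument to measure your proposal against, and it has to stand on its own.

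Your easy direction ($\mathfrak{S}\subseteq\mathfrak{r}$, i.e.\ the Serre elements vanish in $\g(\AA)$) is essentially fine: since every simple root is of type $Tak(\s\l(2))$, the triple $e_i,h_i,f_i$ and local finiteness of $\operatorname{ad}(e_i)$ force $\g_{\alpha_j+(1-a_{ij})\alpha_i}=0$ by ordinary $\s\l(2)$-theory, and your weight/Jacobi checks for the other $\g_{-\alpha_k}$ go through (for $a_{ij}=0$ one uses that connectivity is symmetric, Lemma \ref{lemma-qKM-notation}). The genuine gap is in the hard direction $\mathfrak{r}\subseteq\mathfrak{S}$, which you yourself flag as carrying ``essentially all the weight'' but then do not close. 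The extremal-vector argument you run shows only that a minimal-height element of $\bar{\mathfrak{r}}\cap\mathfrak{n}'_-$ sits in $\g'_{-\beta}$ with $-\beta$ dominant and $\beta\in Q_+\setminus\{0\}$. That yields a contradiction in type $A(n)$, but for $A(n)^{(1)}$ and $A(1)^{(1)}$ the dominant elements of $-Q_+$ include all $-k\delta$, $k>0$, so the argument says nothing about imaginary root spaces. In the classical setting this residual case is exactly where Gabber--Kac invoke symmetrizability and the Casimir operator (or, equivalently, a contravariant form whose radical is $\mathfrak{r}$); here $\g(\AA)$ is not a contragredient algebra in the usual sense --- $\h$ is noncommutative, the weight spaces are Clifford modules, and no invariant bilinear form or Casimir adapted to the quasitoral Cartan has been constructed. ``One runs the null-root version of the argument exactly as in the classical affine Gabber--Kac theorem'' is therefore not a step you are entitled to: what is actually needed is a direct computation that imposing the Serre relations already cuts $\tilde{\g}_{k\delta}$ down to the dimension $(2|2)$ (resp.\ the $\widehat{\mathfrak{psq}}_n$ value) realized in $\g(\AA)$, or a substitute for the Casimir. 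Neither is supplied.

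A secondary, smaller issue: even the $k=i$ case of the singularity check is not literally ``the usual $\s\l(2)$ computation,'' because $[\g_{-\alpha_i},\g_{\alpha_i}]=\h_{\alpha_i}$ contains the odd coroot $H_i$, whose action mixes $e_j$ and $E_j$ with the off-diagonal coefficients $x_{ij},y_{ij}$; the string $\bigoplus_k\g_{\alpha_j+k\alpha_i}$ is a $\mathfrak{t}\s\l(2)$-module, not an $\s\l(2)$-module, and the claim that $1-a_{ij}$ lands every vector ``at the top of every $\mathfrak{t}\s\l(2)$-string'' needs the (short but nontrivial) verification that the odd Takiff direction does not lengthen the string. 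Your suggested fallback --- using Theorem \ref{theorem_intro_completely_uncoupled} to reduce to the three explicit families and verifying the presentation against the known realizations $\q(n{+}1)$, $\q(n{+}1)^{(1)}$, $\q^{\pm}_{(2,2)}$ --- is a legitimate strategy and very plausibly what \cite{Si} does, but as written it is a proposal for a proof rather than a proof.
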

	
	\subsection{Connections to physics}  The $d=2$, $\NN=1,2,3,$ and $4$ superconformal algebras play important roles in conformal field theory and string theory.  Their representation theory, in particular the classification of unitary representations, is of importance (see the recent work \cite{KFP}).  It is possible that our novel presentation of these superalgebras may lead to new insights on their representation theory, using techniques from Kac--Moody algebras.  Further, it would be interesting to know if the three finite-growth algebras $\q^X(A_{1}^{(1)})$, $\q^X(A_{2}^{(2)})$, and $\q_{\bullet}^X(A_{2}^{(2)})$ from Theorem \ref{thm intro type X} have known realizations or connections to physics.  
	
	\subsection{Future work} In addition to finding realizations for the algebras in Theorems \ref{thm intro type X}, we plan to study of representation theory of QKM algebras.  One can ask for descriptions of integrable representations, character formulas, denominator identities, Shapovalov determinants (to extend the work of \cite{G}), and more.  In particular we hope that nice identities can be found in the Grothendieck ring of $\h$, such as was begun for $\q(n)$ in \cite{GSS}.

	\subsection{Acknowledgements}  The authors would like to thank Maria Gorelik for many helpful discussions and suggestions.  We thank Vera Serganova for numerous suggestions and comments, including those which led to the realization of the $\NN=3$ superconformal algebra.  We thank David Ridout for comments that led to the realization of the $\NN=4$ superconformal algebra.  The first author was partially supported by ARC grant DP210100251, ISF grant 711/18 and NSF-BSF grant 2019694.  The second author was partially supported by the funding received from the MINERVA Stiftung with the funds from the BMBF of the Federal Republic of Germany.
	
	\tableofcontents

	\subsection{List of notation}
	
	\begin{itemize}
		\item $\CC\ell(V,B)$ the Clifford superalgebra on a vector space $V$ with symmetric bilinear form $B$;
		\item $\SS(V)$ the supersymmetric algebra on $V$;
		\item $\h$ a finite-dimensional quasi-toral Lie superalgebra (Sec.~\ref{section-irreducible-h-modules});
		\item $\mathfrak{t}=\h_{\ol{0}}$ the even part of a quasitoral Lie superalgebra;
		\item $\FF(\h)$ the category of finite-dimensional $\h$-modules with semisimple action of $\mathfrak{t}$ (Sec.~\ref{section-irreducible-h-modules});
		\item $B_{\lambda}$ the symmetric bilinear form on $\h_{\ol{1}}$ given by $B_{\lambda}(x,y)=\lambda([x,y])$ (Sec.~\ref{section-irreducible-h-modules});
		\item $C_{\lambda}$ an irreducible representation of $\h$ of weight $\lambda$ (Sec.~\ref{section-irreducible-h-modules});
		\item $V^\vee:=V^{\omega_{\h}^{-1}}$ the twist of an $\h$-module $V$ by $\omega_{\h}^{-1}$ (Sec.~\ref{section_dualities});
		\item $\AA$ a Cartan datum (Def.~\ref{definition-datum});
		\item $\Pi=\{\alpha_1,\dots,\alpha_n\}\subseteq\mathfrak{t}^*$ simple roots (Sec.~\ref{section_cartan_datum});
		\item $\h_{\alpha}:=[\g_{\alpha},\g_{-\alpha}]$ the coroot space of $\alpha$ (Def.~\ref{definition_coroot});
		\item $\tilde{\g}(\AA)$ the Lie superalgebra constructed in Sec.~\ref{section_cartan_datum};
		\item $\g(\AA):=\tilde{\g}(\AA)/\r$, see Sec.~\ref{section-g(A)};
		\item $\omega$ the Chevalley automorphism on $\g(\AA)$, see Cor.~\ref{corollary-Chevalley};
		\item $\g\langle\alpha\rangle$ the subalgebra generated by $\g_{\alpha}$ and $\g_{-\alpha}$ (Sec.~\ref{section-one-root});
		\item $T\s$, $\mathfrak{t}\s$, $\mathfrak{pt}\s$ are Takiff constructions, see Ex.~\ref{example-takiff};
		\item $\h\e(n)^{(\Pi)}$ a Heisenberg superalgebra determined by a rank $n$ simple root, see Sec.~\ref{section_heisenberg};
		\item $Tak(\s)$ a type for a simple root, see the table at the start of Sec.~\ref{section-connectivity};
		\item $e_i,E_i,f_i,F_i$ are `Chevalley generators' in a QKM algebra (Sec.~\ref{section-notation-QKM});
		\item $H_i,h_i,c_i$ are the pure coroots of $\alpha_i$ in QKM algebra (Sec.~\ref{section-notation-QKM});
		\item $x_{ij},y_{ij}$ satisfy $[H_i,e_j+E_j]=x_{ij}E_j+y_{ij}e_j$ (Sec.~\ref{section-notation-QKM}).
	\end{itemize}
	
	\section{Quasi-toral Lie superalgebras}\label{section_quasi_toral}
	
	In this section we review some known results on the representation theory of quasi-toral Lie superalgebras (see Definition \ref{definition-quasi-toral}), which will be used frequently in our work. For more details, we refer the reader to \cite{G} and \cite{GSS}.
	
	\subsection{Clifford superalgebras}\label{section_cliff_algs}
	Let \(V\) be a finite-dimensional vector space and let \(B\) be a symmetric (not necessarily non-degenerate) bilinear form on \(V\). Let \(\mathcal{C}\ell(V,B)\) denote the corresponding Clifford superalgebra, with the elements of \(V\) being odd.  Explicitly, $\CC\ell(V,B)$ is the quotient of the tensor superalgebra $\TT V:=\bigoplus\limits_{n\geq0}V^{\otimes n}$ by the ideal generated by expressions of the form $v\otimes w+w\otimes v-B(v,w)$, where $v,w\in V$.  Because this ideal is homogeneous (in particular it is generated by even elements), the quotient inherits the structure of an associative superalgebra.
	
	The form \(B\) induces a non-degenerate symmetric bilinear form on \(V/\ker B\), which we shall also denote by \(B\). We have a well-known, non-canonical isomorphism of superalgebras
	\begin{align}\label{eq_iso_cliff}
		\mathcal{C}\ell(V,B) \simeq \mathcal{C}\ell(V/\ker B,B) \otimes \mathcal{S} (\ker B),
	\end{align}
	where \(\mathcal{S}(\ker B)\) is the superalgebra of supercommutative polynomials in elements of \(\ker B\), a purely odd vector space. In particular $\mathcal{S}(\ker B)$ is isomorphic as an algebra to the Grassmann algebra on $\ker B$ viewed as an even vector space, and thus is finite-dimensional.
	
	Let $m$ denote the dimension of \(V\), and suppose that \(B\) is non-degenerate.  For the following facts we refer to \cite{CW}, Exercise 3.11.
	
	\begin{itemize}
		\item \(\mathcal{C}\ell(V,B)\) is always a simple superalgebra;
		\item all modules over \(\mathcal{C}\ell(V,B)\) are completely reducible;
		\item if \(m\) is odd, \(\mathcal{C}\ell(V,B)\) admits a unique, parity invariant, irreducible module;
		\item if \(m\) is even, \(\mathcal{C}\ell(V,B)\) admits two irreducible modules which differ by parity. 
		\item if \(m \neq 0\) and \(E\) is an irreducible \(\mathcal{C}\ell(V,B)\)-module, then \(\dim E_{\ol{0}} = \dim E_{\ol{1}} = 2^{\lfloor \frac{m-1}{2} \rfloor}\), where $\lfloor-\rfloor$ denotes the floor function.
	\end{itemize} 
	
	\subsection{Irreducible \(\mathfrak{h}\)-modules}\label{section-irreducible-h-modules}
	
	For general background on Lie superalgebras, we refer to \cite{CW} or \cite{M}.
	\begin{definition}\label{definition-quasi-toral}
		A Lie superalgebra \(\mathfrak{h}\) is said to be quasi-toral if \([\mathfrak{h}_{\ol{0}},\mathfrak{h}] = 0\), i.e.~$\h_{\ol{0}}$ is central in $\h$.
	\end{definition}
	
	\begin{example}\label{example of h_n}
		For $n\geq0$, let $\h(n)$ be the Lie superalgebra with $\h(n)_{\ol{0}}=\C\langle c\rangle$, and $\h(n)_{\ol{1}}=\C^n$.  We impose that $c$ is a nonzero, central element of $\h(n)$, and for $x,y\in\h(n)_{\ol{1}}$ we set $[x,y]=(x,y)c$, where $(-,-)$ is a fixed, nondegenerate symmetric bilinear form on $\C^n$.  Then $\h(n)$ is a quasitoral Lie superalgebra.
	\end{example}
	
	For \(\mathfrak{h}\) a finite-dimensional quasi-toral Lie superalgebra, we write \(\mathfrak{t} \coloneq \mathfrak{h}_{\ol{0}}\). Throughout the paper, we will consider only \(\mathfrak{h}\)-modules with semisimple \(\mathfrak{t}\)-action.  Write $\FF(\h)$ for the category of finite-dimensional $\h$-modules with semisimple action of $\mathfrak{t}$.
	
	To a weight \(\lambda \in \mathfrak{t}^*\), we associate a symmetric bilinear form \(B_{\lambda}\) on \(\mathfrak{h}_{\ol{1}}\), given by \(B_{\lambda}(H_1,H_2) = \lambda([H_1,H_2])\).
	\begin{definition}\label{definition-rank}
		For a weight \(\lambda \in \mathfrak{t}^*\), we define the rank of $\lambda$ to be \(\operatorname{rk}\lambda:=\rk B_{\lambda} \in \mathbb{Z}_{\geq 0 }\), where $\mathrm{rk}B_{\lambda}$ is the rank of the bilinear form $B_{\lambda}$ on $\h_{\ol{1}}$.  Observe that $\rk c\lambda=\rk\lambda$ for $c\in\C^\times$.
	\end{definition}
	We consider the universal enveloping algebra \(\mathcal{U}(\mathfrak{h})\) as an algebra over the central polynomial subalgebra \(\UU(\mathfrak{t})=\mathcal{S}(\mathfrak{t})\). Let \(\lambda \in \mathfrak{t}^*\) and let \(I(\lambda)\) be the kernel of the algebra homomorphism \(\mathrm{ev}_{\lambda} : \mathcal{S}(\mathfrak{t})\rightarrow \mathbb{C}\), induced by evaluation at \(\lambda\). We consider the Clifford superalgebra
	\begin{align*}
		\mathcal{C}\ell(\lambda) \coloneq \mathcal{C}\ell(\mathfrak{h}_{\ol{1}},B_{\lambda}) = \mathcal{U}(\mathfrak{h})/I(\lambda)\mathcal{U}(\mathfrak{h}).
	\end{align*}
	Then we have a non-canonical isomorphism of superalgebras (see (\ref{eq_iso_cliff}))
	\begin{align*}
		\mathcal{C}\ell(\lambda) \simeq \mathcal{C}\ell(\mathfrak{h}_{\ol{1}}/\ker B_\lambda,B_{\lambda}) \otimes \mathcal{S} (\ker B_\lambda),
	\end{align*}
	where \(B_{\lambda}\), by abuse of notation, also denotes the induced form on \(\mathfrak{h}_{\ol{1}}/\ker B_\lambda\). \par
	We have a decomposition of $\FF(\h)$ according to central character given by
	\[
	\FF(\h)=\bigoplus\limits_{\lambda\in\mathfrak{t}^*}\FF_{\lambda},
	\]
	where $\FF_{\lambda}$ consists of those modules on which $\mathfrak{t}$ acts by $\lambda$.  Further, $\FF_{\lambda}$ is equivalent to the category of $\CC\ell(\lambda)$-modules.  Thus all irreducible \(\mathfrak{h}\)-modules arise as  \(\mathcal{C}\ell(\lambda)\)-modules, for some \(\lambda \in \mathfrak{t}^*\). We denote by \(C_\lambda\) a choice of unique (up to parity) irreducible \(\mathfrak{h}\)-module on which \(\mathfrak{t}\) acts via \(\lambda\), where we assume that $C_0$ is the trivial module $\C$.   Recall from Section \ref{section_cliff_algs} that if $m=\rk\lambda>0$, then $\dim (C_{\lambda})_{\ol{0}}= \dim(C_{\lambda})_{\ol{1}}=2^{\lfloor \frac{m-1}{2} \rfloor}$.
	
	Note that the blocks of \(\mathcal{F}(\mathfrak{h})\) are more finely parameterized in the following way. If \(B_\lambda\) is degenerate, then $\FF_{\lambda}$ is a block, and as stated above, this block is equivalent to the category of finite-dimensional modules over \(\CC\ell(\lambda)\). If \(B_{\lambda}\) is non-degenerate, then $\FF_{\lambda}$ is semisimple; if \(\rk \lambda\) is odd there is exactly one simple module in $\FF_{\lambda}$, and if \(\rk \lambda\) is even there are two. 
	
	\subsection{Dualities on $\FF(\h)$}\label{section_dualities}
	The category \(\mathcal{F}(\mathfrak{h})\) admits the usual duality \((-)^*\) which is induced by the anti-automorphism of \(\mathcal{U}(\mathfrak{h})\) defined by \(-\mathrm{id}_{\mathfrak{h}}\). We have another duality on \(\mathcal{F}(\mathfrak{h})\) which we denote by \((-)^\#\). It is induced by the anti-automorphism of \(\mathcal{U}(\mathfrak{h})\) defined by \(\mathrm{id}_{\mathfrak{h}_{\ol{0}}}\oplus(\sqrt{-1}\cdot\mathrm{id}_{\mathfrak{h}_{\ol{1}}})\). We have the formula
	\[
	C_{\lambda}^\#\cong \begin{cases}
		C_{\lambda} & \text{ if }\rk\lambda \text{ is odd or }\rk\lambda\equiv0\ \pmod4,\\
		\Pi C_{\lambda} & \text{ if }\rk\lambda\equiv 2\ \pmod4.
	\end{cases}
	\] 
	
	The quasi-toral Lie superalgebra \(\mathfrak{h}\) admits an automorphism \(\omega_{\mathfrak{h}} = (-\mathrm{id}_{\mathfrak{h}_{\ol{0}}})\oplus(\sqrt{-1}\cdot\mathrm{id}_{\mathfrak{h}_{\ol{1}}})\). Given an irreducible \(\mathfrak{h}\)-module \(C_{\lambda}\), we define the twisted module 
	\[
	C_{\lambda}^{\vee} \coloneq C_{\lambda}^{\omega_{\mathfrak{h}}^{-1}}
	\]
	(note the inverse - it will simplify notation later).  On $\FF(\h)$, we have a natural isomorphism of functors \((-)^\vee \cong ((-)^*)^\#\).
	
	From Theorem 3.6.4 of \cite{GSS} we obtain the following isomorphisms of $\h$-modules which we use later: if \(\rk\lambda \) is odd, then 
	\begin{align}\label{equation-2Z+1}
		C_{\lambda}\otimes C_{\lambda}^\vee \simeq \mathcal{S}(\mathfrak{h}_{\ol{1}}/\ker B_{\lambda})\oplus \Pi \mathcal{S}(\mathfrak{h}_{\ol{1}}/\ker B_{\lambda}),
	\end{align}
	where $\SS(\mathfrak{h}_{\ol{1}}/\ker B_{\lambda})$ is the Grassmann algebra on $\mathfrak{h}_{\ol{1}}/\ker B_{\lambda}$, and has the natural action by $\CC\ell(0)\cong\SS(\h_{\ol{1}})$ from left multiplication in the quotient.  In particular $\mathfrak{t}=\h_{\ol{0}}$ will act trivially.  Note that $\SS(\mathfrak{h}_{\ol{1}}/\ker B_{\lambda})$ has a simple socle and top, and thus in particular is indecomposable.
	
	If \(\rk\lambda \in 4\mathbb{Z}\), then we have
	\begin{align}\label{equation-4Z}
		C_{\lambda}\otimes C_{\lambda}^\vee \simeq \mathcal{S}(\mathfrak{h}_{\ol{1}}/\ker B_{\lambda}),
	\end{align}
	while if \(\rk\lambda \in 4\mathbb{Z}+2\), then
	\begin{align}\label{equation-4Z+2}
		C_{\lambda}\otimes C_{\lambda}^\vee \simeq \Pi\mathcal{S}(\mathfrak{h}_{\ol{1}}/\ker B_{\lambda}).
	\end{align}
	We now obtain the following corollary:
	
	\begin{corollary}\label{corollary-tensor-of-irreducibles}
		Let \(\lambda \in \mathfrak{t}^*\) and let \(\phi : C_{\lambda}\otimes C_{\lambda}^\vee \rightarrow \mathfrak{h}\) be an \(\mathfrak{h}\)-module homomorphism.
		\begin{enumerate}
			\item If \(\rk\lambda \in 4\mathbb{Z}\), the $\operatorname{Im}\phi$ is purely even and at most one-dimensional.
			\item If \(\rk\lambda \in 4\mathbb{Z}+2\), then the odd part of the image of \(\phi\) is at most one-dimensional, and it generates $\operatorname{Im}\phi$ as an \(\mathfrak{h}\)-module.
		\end{enumerate}
	\end{corollary}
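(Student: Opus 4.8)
The plan is to combine the explicit descriptions (\ref{equation-4Z}) and (\ref{equation-4Z+2}) of $C_{\lambda}\otimes C_{\lambda}^{\vee}$ with one structural observation about the adjoint module $\mathfrak{h}$. Since $\mathfrak{t}=\mathfrak{h}_{\ol 0}$ is central in $\mathfrak{h}$, we have $[\mathfrak{h}_{\ol 1},\mathfrak{h}]\subseteq\mathfrak{t}$ and $[\mathfrak{t},\mathfrak{h}]=0$, hence $H_{1}\cdot(H_{2}\cdot v)=[H_{1},[H_{2},v]]=0$ for all $H_{1},H_{2}\in\mathfrak{h}_{\ol 1}$ and all $v\in\mathfrak{h}$. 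Consequently every product $H_{1}\cdots H_{k}$ with $k\geq 2$ and $H_{i}\in\mathfrak{h}_{\ol 1}$ annihilates the adjoint module, and $\mathfrak{t}$ acts on $\mathfrak{h}$ by zero.

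Next I recall that, by (\ref{equation-4Z}) and (\ref{equation-4Z+2}) together with the description of the action given after (\ref{equation-2Z+1}), the source is isomorphic as an $\mathfrak{h}$-module to $\Pi^{\varepsilon}\mathcal{S}(W)$, where $W=\mathfrak{h}_{\ol 1}/\ker B_{\lambda}$, the Grassmann algebra $\mathcal{S}(W)$ carries the action of $\mathcal{C}\ell(0)\cong\mathcal{S}(\mathfrak{h}_{\ol 1})$ by left multiplication through the surjection $\mathcal{S}(\mathfrak{h}_{\ol 1})\twoheadrightarrow\mathcal{S}(W)$, $\mathfrak{t}$ acts by zero, and $\varepsilon=0$ when $\rk\lambda\in 4\mathbb{Z}$ while $\varepsilon=1$ when $\rk\lambda\in 4\mathbb{Z}+2$. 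In particular $\mathcal{S}(W)$ is generated over $\mathcal{U}(\mathfrak{h})$ by its unit $1\in\Lambda^{0}W$, and the subspace $\mathcal{S}^{\geq 2}(W)=\bigoplus_{k\geq 2}\Lambda^{k}W$ is spanned by elements of the form $H_{1}\cdots H_{k}\cdot 1$ with $k\geq 2$ and $H_{i}\in\mathfrak{h}_{\ol 1}$. Therefore, for any $\mathfrak{h}$-module map $\phi\colon C_{\lambda}\otimes C_{\lambda}^{\vee}\to\mathfrak{h}$, the first paragraph gives $\phi(\mathcal{S}^{\geq 2}(W))=0$, so $\phi$ factors through $\mathcal{S}(W)/\mathcal{S}^{\geq 2}(W)\cong\mathbb{C}\langle 1\rangle\oplus W$, and hence
\[
\operatorname{Im}\phi=\mathbb{C}\langle\phi(1)\rangle+\mathfrak{h}_{\ol 1}\cdot\phi(1),\qquad \mathfrak{h}_{\ol 1}\cdot\phi(1)=[\mathfrak{h}_{\ol 1},\phi(1)]\subseteq\mathfrak{t}.
\]

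Finally I split into the two cases using that $\phi$ is even (a morphism in $\mathcal{F}(\mathfrak{h})$), so $\phi(1)$ has the same parity as the generator $1$ of $\Pi^{\varepsilon}\mathcal{S}(W)$. If $\rk\lambda\in 4\mathbb{Z}$ then $\varepsilon=0$ and $\phi(1)\in\mathfrak{h}_{\ol 0}=\mathfrak{t}$, so $[\mathfrak{h}_{\ol 1},\phi(1)]=0$ and $\operatorname{Im}\phi=\mathbb{C}\langle\phi(1)\rangle$ is purely even of dimension at most one, proving (1). If $\rk\lambda\in 4\mathbb{Z}+2$ then $\varepsilon=1$ and $x:=\phi(1)\in\mathfrak{h}_{\ol 1}$, so $\operatorname{Im}\phi=\mathbb{C}\langle x\rangle\oplus[\mathfrak{h}_{\ol 1},x]$ where $\mathbb{C}\langle x\rangle$ is the odd part (dimension at most one) and $[\mathfrak{h}_{\ol 1},x]\subseteq\mathfrak{t}$ is the even part; since $[\mathfrak{h}_{\ol 1},x]=\mathfrak{h}_{\ol 1}\cdot x\subseteq\mathcal{U}(\mathfrak{h})\cdot x$, the odd line $\mathbb{C}\langle x\rangle$ generates $\operatorname{Im}\phi$ as an $\mathfrak{h}$-module, proving (2).

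The only substantive input is the structural observation of the first paragraph; the rest is bookkeeping. The step requiring the most care is lining up the conventions from \cite{GSS}: one must confirm that the $\mathfrak{h}$-action on $\mathcal{S}(W)$ in (\ref{equation-4Z}) and (\ref{equation-4Z+2}) is the left-multiplication action recorded after (\ref{equation-2Z+1}) (so that $1$ is a cyclic vector and $\mathfrak{t}$ acts trivially), and one must track the parity shift $\Pi^{\varepsilon}$ carefully, since the whole case division rests on the parity of $\phi(1)$.
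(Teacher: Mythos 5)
Your proof is correct and follows essentially the same route as the paper, which simply cites $[\mathfrak{h}_{\ol{0}},\mathfrak{h}]=0$ together with the structure of $C_{\lambda}\otimes C_{\lambda}^{\vee}$ from \eqref{equation-4Z} and \eqref{equation-4Z+2}; you have merely written out the details (cyclicity of $1$, vanishing of the action of $\mathcal{S}^{\geq 2}$ on the adjoint module, and the parity bookkeeping) that the paper leaves implicit. No gaps.
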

	\begin{proof}
		This follows immediately from $[\h_{\ol{0}},\h]=0$ and the structure of $C_{\lambda}\otimes C_{\lambda}^\vee$ given in \eqref{equation-4Z} and \eqref{equation-4Z+2}.
	\end{proof}

	\subsection{Nondegenerate morphisms}\label{section_nondegenerate}
	
	\begin{definition}
		Let $\h$ be a quasitoral Lie superalgebra, and let $U,V,$ and $W$ be $\h$-modules.  We say a morphism of $\h$-modules $\phi:U\otimes V\to W$ is nondegenerate if for any submodules $U'\subseteq U$ and $V'\subseteq V$ we have both $\phi(U'\otimes V)\neq0$ and $\phi(U\otimes V')\neq0$.
	\end{definition}

	Observe that if $U$ and $V$ are simple $\h$-modules, then a map $\phi:U\otimes V\to W$ is nondegenerate if and only if $\phi\neq0$.
	
	\subsection{Realization of \(C_{\lambda}\)}\label{subsection-realization-irreducible}
	
	We consider the polynomial superalgebra \(\mathbb{C}[\xi_1,...,\xi_m]\), where $\xi_1,\dots,\xi_m$ are odd, supercommuting variables, i.e.~ \(\xi_i\xi_j = -\xi_j\xi_i\) for all \(i,j\). In particular \(\xi_i^2 = 0\). For a set \(I = \{i_1,...,i_k\} \subseteq \{1,...,m\}\), we denote \(\xi_I \coloneq \xi_{i_1}\cdots \xi_{i_k}\) for \(i_1<...<i_k\) (in particular \(\xi_\emptyset = 1\)). We now explain how to concretely realize the irreducible \(\mathfrak{h}\)-modules \(C_{\lambda}\) as polynomial superalgebras, depending on the rank of \(\lambda\).

	If \(\rk\lambda = 2m\) for some \(m \in \mathbb{Z}_{\geq0}\), then there exist linearly independent vectors \linebreak \(H_1,...,H_m,\bar{H}_1,...,\bar{H}_m\) of \(\mathfrak{h}_{\ol{1}}\) which satisfy 
	\begin{align*}
		\begin{array}{ccc}
			B_\lambda(H_i,\bar{H_j}) = \delta_{ij}, & B_\lambda(H_i,H_j) = 0, & B_\lambda(\bar{H}_i,\bar{H}_j)=0,
		\end{array}
	\end{align*}
	for \(1\leq i,j \leq m\). We can identify \(C_\lambda\) (up to parity) with \(\mathbb{C}[\xi_1,...,\xi_m]\), subject to the following action of \(\mathfrak{h}_{\ol{1}}\): \(H_i\) acts by multiplication by \(\xi_i\) and \(\bar{H}_i\) acts by the odd derivation \(\partial_{\xi_i}\). \par
	If, on the other hand, \(\rk\lambda = 2m+1\) for some \(m \in \mathbb{Z}_{\geq 0}\), then there exist linearly independent vectors \(H_1,...,H_m,\bar{H}_1,...,\bar{H}_m,\tilde{H}\) of \(\mathfrak{h}_{\ol{1}}\) which satisfy 
	\begin{align*}
		\begin{array}{ccc}
			B_\lambda(H_i,\bar{H_j}) = \delta_{ij}, & B_\lambda(H_i,H_j) = 0, & B_\lambda(\bar{H}_i,\bar{H}_j)=0, \\
			B_\lambda(\tilde{H},\tilde{H}) = 2, & B_\lambda(H_i,\tilde{H}) = 0, & B_\lambda(\bar{H}_i,\tilde{H}) = 0,
		\end{array}
	\end{align*}
	for \(1\leq i,j \leq m\). The \(\mathfrak{h}\)-module \(C_\lambda\) can be realized as \(\mathbb{C}[\xi_1,...,\xi_{m+1}]\) with the following action of \(\mathfrak{h}_{\ol{1}}\): \(H_i\) acts by multiplication by \(\xi_i\), \(\bar{H}_i\) acts by the odd derivation \(\partial_{\xi_i}\), and \(\tilde{H}\) acts by \(\xi_{m+1}+\partial_{\xi_{m+1}}\).

	\section{General Construction of $\g(\AA)$}

	In this section we give a construction, in the spirit of the Kac--Moody approach, for Lie superalgebras admitting a quasi-toral Cartan subalgebra. It is a generalization of the constructions given in \cite{K} and \cite{K3} for Lie algebras, and \cite{K2} for Lie superalgebras.
	
	\subsection{Cartan datum and $\tilde{\g}(\AA)$}\label{section_cartan_datum}
	\begin{definition}\label{definition-datum} 
		A Cartan datum \(\mathcal{A}\) consists of the following information:
		\begin{enumerate}
			\item A finite-dimensional quasi-toral Lie superalgebra \(\mathfrak{h}\). We write \(\mathfrak{t} \coloneq \mathfrak{h}_{\ol{0}}\) and \([-,-]_{\mathfrak{h}}\) for the Lie bracket on \(\mathfrak{h}\);
			\item A linearly independent subset \(\Pi = \{\alpha_1,...,\alpha_n\}\subseteq\mathfrak{t}^*\);
			\item For each $\alpha \in \pm \Pi$, an \(\mathfrak{h}\)-module \(\mathfrak{g}_{\alpha}\) in $\FF_{\alpha}$; i.e., as a \(\mathfrak{t}\)-module, $\g_{\alpha}$ is a weight space of weight \(\alpha\). We write \(m_{\alpha} : \mathfrak{h} \times \mathfrak{g}_{\alpha} \rightarrow \mathfrak{g}_{\alpha}\) for the action of \(\mathfrak{h}\) on \(\mathfrak{g}_{\alpha}\);
			\item\label{definition-datum-step-4} For each \(\alpha \in \Pi\), a nondegenerate morphism (see Section \ref{section_nondegenerate}) of \(\mathfrak{h}\)-modules \newline \([-,-]_{\alpha} : \mathfrak{g}_{\alpha}\otimes \mathfrak{g}_{-\alpha} \rightarrow \mathfrak{h}\).
		\end{enumerate}
	\end{definition}
	
	\begin{remark}\label{remark on vanishing}
		Observe that for any $\alpha\in\Pi$, the morphism $[-,-]_{\alpha}$ must vanish on the submodule generated by odd elements in the radical (as an $\h$-module) of $\g_{\alpha}\otimes\g_{-\alpha}$.
	\end{remark}
	
	To a Cartan datum \(\mathcal{A}\) we associate a Lie superalgebra \(\tilde{\mathfrak{g}}(\mathcal{A})\), which is generated by the super vector space
	\begin{align}\label{generating-set}	\mathcal{C} = \bigoplus_{i=1}^n \mathfrak{g}_{-\alpha_i} \oplus \mathfrak{h} \oplus \bigoplus_{i=1}^n \mathfrak{g}_{\alpha_i},
	\end{align}
	subject to the following relations:
	\begin{enumerate}
		\item For any \(h_1,h_2 \in \mathfrak{h}\) we have \([h_1,h_2] = [h_1,h_2]_{\mathfrak{h}}\).
		\item For any \(h \in \mathfrak{h}\), \(\alpha \in \pm\Pi\) and \(x \in \mathfrak{g}_{\alpha}\), we have \([h,x] = m_{\alpha}(h,x)\).
		\item For any \(\alpha,\beta \in \Pi\) with \(x \in \mathfrak{g}_{\alpha}\) and \(y \in \mathfrak{g}_{-\beta}\), we have \([x,y] = [x,y]_{\alpha}\) if \(\alpha = \beta\) and \([x,y] = 0\) otherwise.
	\end{enumerate}
	We denote by \(\tilde{\mathfrak{n}}^+\) and \(\tilde{\mathfrak{n}}^-\) the subalgebras of \(\tilde{\mathfrak{g}}(\mathcal{A})\) generated by \(\bigoplus_{i=1}^n \mathfrak{g}_{\alpha_i}\) and \(\bigoplus_{i=1}^n \mathfrak{g}_{-\alpha_i}\), respectively. We set \(Q_+ \coloneq \mathbb{Z}_{\geq 0} \Pi\). We obtain the following theorem, whose statement and proof are direct generalizations of Theorem 1.2 in \cite{K}.
	\begin{theorem}\label{theorem-half-baked-structure}
		Let \(\tilde{\mathfrak{g}} \coloneq \tilde{\mathfrak{g}}(\mathcal{A})\) and \(\tilde{\mathfrak{n}}^\pm\) be as above. Then
		\begin{enumerate}[label=(\roman*)]
			\item\label{bullet1-half-baked} The subalgebras \(\tilde{\mathfrak{n}}^+\) and \(\tilde{\mathfrak{n}}^-\) are freely generated as Lie superalgebras by \(\bigoplus_{i=1}^n \mathfrak{g}_{\alpha_i}\) and \(\bigoplus_{i=1}^n \mathfrak{g}_{-\alpha_i}\), respectively;
			\item\label{bullet2-half-baked} \(\tilde{\mathfrak{g}} = \tilde{\mathfrak{n}}^- \oplus \mathfrak{h} \oplus \tilde{\mathfrak{n}}^+\) as super vector spaces;
			\item\label{bullet3-half-baked} As a \(\mathfrak{t}\)-module, \(\tilde{\mathfrak{g}}\) admits a root space decomposition:
			\begin{align*}
				\tilde{\mathfrak{g}} = \bigoplus_{\substack{\alpha \in Q_+ \\ \alpha \neq 0}} \tilde{\mathfrak{g}}_{-\alpha} \oplus \mathfrak{h} \oplus \bigoplus_{\substack{\alpha \in Q_+ \\ \alpha \neq 0}} \tilde{\mathfrak{g}}_{\alpha}.
			\end{align*}
			In particular, \(\mathfrak{h}\) is the centralizer of \(\mathfrak{t}\) in \(\tilde{\mathfrak{g}}\), and is self-normalizing;
			\item\label{bullet4-half-baked} \(\tilde{\mathfrak{g}}\) contains a unique maximal ideal \(\mathfrak{r}\) that intersect \(\mathfrak{h}\) trivially; it satisfies \(\mathfrak{r} = (\mathfrak{r}\cap \tilde{\mathfrak{n}}^+)\oplus (\mathfrak{r}\cap \tilde{\mathfrak{n}}^-)\).
		\end{enumerate}
	\end{theorem}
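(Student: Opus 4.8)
The plan is to adapt, essentially verbatim, the proof of Theorem~1.2 in \cite{K}, the substantive changes being that one-dimensional weight spaces are replaced by the $\h$-modules $\g_{\pm\alpha_i}$ and the abelian torus by the quasi-toral $\h$. Throughout set $W^{\pm}:=\bigoplus_{i=1}^{n}\g_{\pm\alpha_i}$ and $Q_-:=-Q_+$.

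The core construction is an auxiliary module. Fix a finite-dimensional $\h$-module $C$ on which $\h$ acts faithfully --- for instance $\CC\ell(0)$ together with the one-dimensional modules $\C_\mu$ for $\mu$ running over a basis of $\mathfrak{t}^*$ --- and put $M:=T(W^-)\otimes C$, where $T(W^-)$ is the tensor superalgebra on $W^-$. Let $W^-$ act on $M$ by left multiplication on the first tensor factor, let $\h$ act by the tensor-product action, using on $T(W^-)$ the unique extension of its action on $W^-$ by (super)derivations, and define the action of $W^+$ recursively in the tensor degree of $T(W^-)$: each element of $W^+$ kills $1\otimes C$, and for homogeneous $x\in\g_{\alpha_i}$, $y\in W^-$ and $v\in M$,
\[
x\cdot(y\otimes v)=[x,y]\cdot v+(-1)^{|x||y|}\,y\cdot(x\cdot v),
\]
where $[x,y]:=[x,y]_{\alpha_i}\in\h$ when $y\in\g_{-\alpha_i}$ and $[x,y]:=0$ when $y\in\g_{-\alpha_j}$ with $j\neq i$. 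Since $T(W^-)$ is free, this recursion is unambiguous. The main step --- and the one I expect to be the chief obstacle --- is to check that these operators satisfy relations (1)--(3) defining $\tilde{\g}(\AA)$, so that $M$ becomes a $\tilde{\g}(\AA)$-module. Relations (1) and (2) restricted to $\h$ and to $W^-$ hold because $M$ is assembled from genuine $\h$-modules, with $W^-$ acting by left multiplication and $\h$ by derivations; relation (3) is precisely the displayed identity rearranged; the real work is relation (2) for $W^+$, i.e.\ $[h,x]=m_{\alpha_i}(h,x)$ as operators on $M$, which I would establish by induction on the tensor degree with careful bookkeeping of Koszul signs.

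Granting this, part \ref{bullet1-half-baked} follows as in \cite{K}. Let $\mathfrak{L}^-$ be the free Lie superalgebra on the super vector space $W^-$; there is a canonical surjection $\mathfrak{L}^-\twoheadrightarrow\tilde{\n}^-$. Since $\mathcal{U}(\mathfrak{L}^-)\cong T(W^-)$ and $W^-$ acts on $M$ as $L_{(-)}\otimes\mathrm{id}$, every $z\in\mathfrak{L}^-$ acts on $M$ as $L_z\otimes\mathrm{id}$; if $z$ lies in the kernel of $\mathfrak{L}^-\to\tilde{\n}^-$ then $L_z=0$ on $T(W^-)$ (as $C\neq0$), so $z=z\cdot1=0$ by PBW, and the surjection is an isomorphism. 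The symmetric construction with $T(W^+)\otimes C$ handles $\tilde{\n}^+$. Moreover $\h$ acts faithfully on $1\otimes C\subseteq M$, so $\h\hookrightarrow\tilde{\g}$.

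For \ref{bullet2-half-baked}--\ref{bullet4-half-baked} I would argue by weights. First, $\tilde{\n}^-+\h+\tilde{\n}^+$ is a subalgebra: it is obviously closed under bracketing with $\h$ and with $W^\pm$ once one knows $[W^+,\tilde{\n}^-]\subseteq\h+\tilde{\n}^-$ and $[W^-,\tilde{\n}^+]\subseteq\h+\tilde{\n}^+$, which follow by induction on length from relation (3) and the Jacobi identity; containing the generators, it is all of $\tilde{\g}$. Next, $\operatorname{ad}\mathfrak{t}$ acts semisimply on $\tilde{\g}$ with weights in $\Z\Pi$, giving $\tilde{\g}=\bigoplus_{\mu\in\Z\Pi}\tilde{\g}_\mu$; linear independence of $\Pi$ forces $Q_+\cap Q_-=\{0\}$ and forces the weight-$0$ components of $\tilde{\n}^\pm$ to vanish, so the sum $\tilde{\n}^-\oplus\h\oplus\tilde{\n}^+$ is direct, is the root-space decomposition, satisfies $\tilde{\g}_0=\h$, and exhibits $\h$ as the centralizer of $\mathfrak{t}$ and as self-normalizing. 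Finally, any ideal $I$ is $\operatorname{ad}\mathfrak{t}$-stable, hence $\Z\Pi$-graded, so $I\cap\h=I\cap\tilde{\g}_0$; thus $I\cap\h=0$ implies $I\subseteq\tilde{\n}^-\oplus\tilde{\n}^+$ and $I=(I\cap\tilde{\n}^+)\oplus(I\cap\tilde{\n}^-)$. Applying this to $\mathfrak{r}:=\sum\{I: I\text{ is an ideal and }I\cap\h=0\}$, whose degree-$0$ component is again $0$, shows $\mathfrak{r}$ is the unique maximal ideal meeting $\h$ trivially and inherits the same decomposition.
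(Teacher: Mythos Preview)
Your proof is correct and follows essentially the same strategy as the paper's: both construct the auxiliary representation on a tensor algebra (you on $T(W^-)\otimes C$, the paper on $\TT(\oplus_i\g_{\alpha_i})\otimes W$) to establish freeness of $\tilde{\n}^\pm$ and the embedding of $\h$, then deduce the triangular and root-space decompositions. The only cosmetic differences are that you fix one faithful $\h$-module $C$ where the paper varies $W$, and you extract the directness in (ii) from the $\Z\Pi$-grading rather than by evaluating the representation; both variants work without issue.
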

	
	\begin{proof}
		Let \(W\) be any representation of \(\mathfrak{h}\), and consider the super vector space \linebreak \(V = \TT(\oplus_{i=1}^n \mathfrak{g}_{\alpha_i})\otimes W\). The space \(V\) is naturally graded, with \(V_s = (\oplus_{i=1}^n \mathfrak{g}_{\alpha_i})^{\otimes s}\otimes W\). We define a representation \(\pi_W : \tilde{\mathfrak{g}} \rightarrow \operatorname{End}(V)\) by the following action of the generators of \(\tilde{\mathfrak{g}}\). Let \(e \in \oplus_{i=1}^n \mathfrak{g}_{\alpha_i}\), \(f \in \oplus_{i=1}^n \mathfrak{g}_{-\alpha_i}\), and \(h\in \mathfrak{h}\) be homogeneous elements. We let \(f\) act on \(V_0\) trivially and \(h\) act on \(V_0=W\) by the existing action of \(\mathfrak{h}\) on $W$. For \(a\in V\), we define inductively:
		\begin{align*}
			& e(a) = e\otimes a, \\
			& h(e\otimes a) = [h,e]\otimes a + (-1)^{\bar{h}\bar{e}} e\otimes h(a) \\ 
			& f(e\otimes a) = [f,e]\otimes a + (-1)^{\bar{f}\bar{e}} e\otimes f(a). 
		\end{align*}
		To show this defines an action of \(\tilde{\mathfrak{g}}\) on \(V\), we need to check it respects the defining relations imposed on \(\tilde{\mathfrak{g}}\), which is an immediate yet technical generalization of the proof of Theorem 1.2 in \cite{K}.  We demonstrate the general argument of the proof by showing, inductively, that \([h,f](a) = (hf-(-1)^{\bar{h}\bar{f}}fh)(a)\) for all \(a \in V\). Assuming this equality holds for \(a \in V_s\), then
		\begin{align*}
			& (hf-(-1)^{\bar{h}\bar{f}}fh)(e\otimes a) \\ 
			& = h[f,e](a) + (-1)^{\bar{f}\bar{e}}h(e\otimes f(a)) -(-1)^{\bar{h}\bar{f}}f([h,e]\otimes a) - (-1)^{\bar{h}(\bar{f}+\bar{e})}f(e\otimes h(a)) \\ & = h[f,e](a) + (-1)^{\bar{f}\bar{e}}[h,e]\otimes f(a) + (-1)^{(\bar{f}+\bar{h})\bar{e}}e\otimes hf(a) - (-1)^{\bar{h}\bar{f}} [f,[h,e]](a) \\ 
			& - (-1)^{\bar{e}\bar{f}}[h,e]\otimes f(a) - (-1)^{\bar{h}(\bar{f}+\bar{e})}[f,e]h(a) - (-1)^{\bar{e}(\bar{f}+\bar{h})+\bar{h}\bar{f}}e\otimes fh(a) \\
			& = [h,[f,e]](a) - (-1)^{\bar{h}\bar{f}}[f,[h,e]](a) + (-1)^{\bar{e}(\bar{f}+\bar{h})}e\otimes(hf-(-1)^{\bar{h}\bar{f}}fh)(a) \\ 
			& = [[h,f],e](a) + (-1)^{\bar{e}(\bar{f}+\bar{h})}e\otimes[h,f](a) = [h,f](e\otimes a).
		\end{align*}
		The equality \([h,f](a) = (hf-(-1)^{\bar{h}\bar{f}}fh)(a)\) for \(a \in V\) follows by induction. \par
		As the representation \(\pi_W\) is defined for every \(\mathfrak{h}\)-module \(W\), we obtain that \(\mathfrak{h}\) injects into \(\tilde{\mathfrak{g}}\). Moreover, the action of \(\bigoplus_{i=1}^n \mathfrak{g}_{\alpha_i}\) via \(\pi_W\) shows that  \(\bigoplus_{i=1}^n \mathfrak{g}_{\alpha_i}\) injects into \(\tilde{\mathfrak{g}}\). \par
		Let \(W_0\) denote the trivial \(\mathfrak{h}\)-module. The map \(\phi:\tilde{\mathfrak{n}}^+ \rightarrow \TT(\bigoplus_{i=1}^n \mathfrak{g}_{\alpha_i})\) given by \(n \mapsto \pi_{W_0}(n)(1)\) establishes \(\TT(\bigoplus_{i=1}^n \mathfrak{g}_{\alpha_i})\) as an enveloping algebra of \(\tilde{\mathfrak{n}}^+\), and is easily verified to be its universal enveloping algebra. In particular, \(\tilde{\mathfrak{n}}^+\) is freely generated by \(\bigoplus_{i=1}^n \mathfrak{g}_{\alpha_i}\). The result for \(\tilde{\mathfrak{n}}^-\) is analogous. This proves \ref{bullet1-half-baked}. \par
		We certainly have \(\tilde{\mathfrak{g}} = \tilde{\mathfrak{n}}^- + \mathfrak{h} + \tilde{\mathfrak{n}}^+\). Suppose \(n^- + h + n^+ = 0\) for some \(n^\pm \in \tilde{\mathfrak{n}}^{\pm}\) and \(h\in \mathfrak{h}\). Then \(0 = \pi_W(n^-+h+n^+)(a)\) for \(a \in V_0 = W\). For \(W = W_0\) we obtain \(\phi(n^+) = \pi_{W_0}(n^-+h+n^+)(1) = 0\), so \(n^+ = 0\). Now \(0 = \pi_W(n^-+h)(a) = \pi_W(h)(a)\) for all \(a \in V_0 = W\), which implies \(h = 0\). We conclude that \(n^- = 0\), which establishes \ref{bullet2-half-baked}. Part \ref{bullet3-half-baked} is an immediate consequence of \ref{bullet2-half-baked}. \par
		Finally, a standard linear algebraic argument gives that any ideal \(I\) of \(\tilde{\mathfrak{g}}\) decomposes as a \(\mathfrak{t}\)-module in the following way:
		\begin{align}\label{equation-decomposition-of-ideals}
			I = \bigoplus_{\substack{\alpha \in Q_+ \\ \alpha \neq 0}}(I\cap\tilde{\mathfrak{g}}_{-\alpha}) \oplus (I\cap \mathfrak{h}) \oplus \bigoplus_{\substack{\alpha \in Q_+ \\ \alpha \neq 0}}(I\cap\tilde{\mathfrak{g}}_{\alpha}) = (I\cap \tilde{\mathfrak{n}}^-) \oplus (I\cap \mathfrak{h}) \oplus (I\cap \tilde{\mathfrak{n}}^+).
		\end{align}
		Therefore, if \(I\) intersects \(\mathfrak{h}\) trivially then \(I = (I\cap \tilde{\mathfrak{n}}^-) \oplus (I\cap \tilde{\mathfrak{n}}^+) \). Summing over all such ideals of \(\tilde{\mathfrak{g}}\), we get a unique maximal \(\mathfrak{r}\) intersecting \(\mathfrak{h}\) trivially, which satisfies \(\mathfrak{r} = (\mathfrak{r}\cap\tilde{\mathfrak{n}}^-)\oplus(\mathfrak{r}\cap\tilde{\mathfrak{n}}^+)\), according to \eqref{equation-decomposition-of-ideals}. This proves part \ref{bullet4-half-baked}.
	\end{proof}
	
	\begin{remark}
		By the same argument, Theorem \ref{theorem-half-baked-structure} also holds in the case that \(\mathfrak{g}_{\pm \alpha_i}\) are generalized weight spaces of weight \(\pm \alpha_i\) as \(\mathfrak{t}\)-modules.
	\end{remark}

	The following lemma is a direct generalization of Lemma 1.5 in \cite{K}, with the same proof.
	
	\begin{lemma}\label{lemma-maximal-ideal}
		If \(x \in \tilde{\mathfrak{n}}^+\) is such that \([\mathfrak{g}_{-\alpha},x] \in \mathfrak{r}\) for all \(\alpha \in \Pi\), then \(x \in \mathfrak{r}\). Analogously, if \(y \in \tilde{\mathfrak{n}}^-\) satisfies \([\mathfrak{g}_{\alpha},y] \in \mathfrak{r}\) for all \(\alpha \in \Pi\), then \(y \in \mathfrak{r}\).
	\end{lemma}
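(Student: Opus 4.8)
The plan is to follow the classical strategy behind Lemma 1.5 of \cite{K}: starting from $x$ as in the statement, I would produce an ideal of $\tilde{\g}:=\tilde{\g}(\AA)$ which contains both $x$ and $\r$ yet still intersects $\h$ trivially, and then conclude by the maximality of $\r$ from Theorem~\ref{theorem-half-baked-structure}.

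Here is the construction. Put $\tilde{\b}^+:=\h\oplus\tilde{\n}^+$, which is a subalgebra by the triangular decomposition in Theorem~\ref{theorem-half-baked-structure}, and let $\mathfrak{m}\subseteq\tilde{\g}$ be the $\operatorname{ad}(\tilde{\b}^+)$-submodule generated by $x$. Since $\tilde{\n}^+$ is an ideal of $\tilde{\b}^+$ and $x\in\tilde{\n}^+$, we have $\mathfrak{m}\subseteq\tilde{\n}^+$. The heart of the matter is the claim that
\[
[\g_{-\alpha},\mathfrak{m}]\subseteq\mathfrak{m}+\r\qquad\text{for every }\alpha\in\Pi,
\]
which I would establish by induction along the filtration $\mathfrak{m}_k:=\sum_{j\leq k}(\operatorname{ad}\tilde{\b}^+)^j(x)$. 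The case $k=0$ is precisely the hypothesis $[\g_{-\alpha},x]\subseteq\r$. For the inductive step, $\mathfrak{m}_{k+1}=\mathfrak{m}_k+[\tilde{\b}^+,\mathfrak{m}_k]$, so it suffices to treat elements $[b,m]$ with $b\in\tilde{\b}^+$, $m\in\mathfrak{m}_k$; for $w\in\g_{-\alpha}$ the super Jacobi identity gives
\[
[w,[b,m]]=[[w,b],m]\pm[b,[w,m]].
\]
The second term lies in $[\tilde{\b}^+,\mathfrak{m}_k+\r]\subseteq\mathfrak{m}_{k+1}+\r$ by the inductive hypothesis together with the fact that $\r$ is an ideal. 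In the first term, decompose $b=b_0+b_+$ with $b_0\in\h$ and $b_+\in\tilde{\n}^+$: then $[w,b_0]\in\g_{-\alpha}$, so $[[w,b_0],m]\in[\g_{-\alpha},\mathfrak{m}_k]\subseteq\mathfrak{m}_k+\r$ by induction, and one is reduced to checking that $[w,b_+]\in\tilde{\b}^+$.

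This inclusion $[\g_{-\alpha},\tilde{\n}^+]\subseteq\h\oplus\tilde{\n}^+$ is the one genuinely structural point, and it uses the root-space decomposition of Theorem~\ref{theorem-half-baked-structure}: for a homogeneous component $v\in\tilde{\g}_\gamma$ of $b_+$, with $\gamma\in Q_+\setminus\{0\}$, we have $[w,v]\in\tilde{\g}_{\gamma-\alpha}$, which is nonzero only if $\gamma-\alpha\in Q_+\cup(-Q_+)$; but $\gamma-\alpha=-\delta$ with $\delta\in Q_+\setminus\{0\}$ would force $\gamma+\delta=\alpha$, impossible on comparing heights, so $\gamma-\alpha\in Q_+\cup\{0\}$ and $[w,v]\in\h\oplus\tilde{\n}^+$. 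Granting the claim, $\mathfrak{m}+\r$ is stable under $\operatorname{ad}(\tilde{\b}^+)$ (immediate) and under $\operatorname{ad}(\g_{-\alpha})$ for every $\alpha\in\Pi$; since $\tilde{\b}^+$ and the $\g_{-\alpha}$, $\alpha\in\Pi$, generate $\tilde{\g}$, the space $\mathfrak{m}+\r$ is an ideal of $\tilde{\g}$. As $\mathfrak{m}\subseteq\tilde{\n}^+$ and $\r=(\r\cap\tilde{\n}^+)\oplus(\r\cap\tilde{\n}^-)$ by Theorem~\ref{theorem-half-baked-structure}, we get $\mathfrak{m}+\r\subseteq\tilde{\n}^+\oplus\tilde{\n}^-$, hence $(\mathfrak{m}+\r)\cap\h=0$; maximality of $\r$ then forces $\mathfrak{m}+\r\subseteq\r$, i.e.\ $x\in\r$. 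The assertion for $y\in\tilde{\n}^-$ follows by the mirror-image argument, with $\Pi$ and $-\Pi$ interchanged. I expect the inductive step — and within it the verification that $[\g_{-\alpha},\tilde{\n}^+]\subseteq\tilde{\b}^+$ — to be the only non-formal part; the remainder is bookkeeping with the $Q_+$-grading.
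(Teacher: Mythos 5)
Your argument is correct and is essentially the standard proof of Lemma 1.5 in Kac's book, which is exactly what the paper invokes (it gives no proof beyond ``the same proof'' as \cite{K}): one checks that the $\operatorname{ad}(\h\oplus\tilde{\n}^+)$-module generated by $x$, together with $\r$, is an ideal meeting $\h$ trivially, and concludes by maximality of $\r$. The key structural inclusion $[\g_{-\alpha},\tilde{\n}^+]\subseteq\h\oplus\tilde{\n}^+$ via the $Q_+$-grading and the height comparison is handled correctly, so nothing further is needed.
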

		
	\subsection{Chevalley automorphism of \(\tilde{\mathfrak{g}}(\mathcal{A})\)}\label{subsection-Chevalley}
	
	Recall the automorphism $\omega_{\h}$ of $\h$ defined in Section \ref{section-irreducible-h-modules}.
	\begin{definition}
		Let $\AA$ be a Cartan datum; a Chevalley automorphism of $\tilde{\g}(\AA)$ is an extension $\tilde{\omega}$ of $\omega_{\h}$ to an automorphism of all of $\tilde{\g}(\AA)$.  
	\end{definition}

	In contrast with the usual Kac--Moody construction, the Lie superalgebra \(\tilde{\mathfrak{g}}(\mathcal{A})\) is not guaranteed to admit a Chevalley automorphism; in particular, a necessary condition is that $\g_{-\alpha}\cong\g_{\alpha}^\vee$ for all roots $\alpha$, where \(\mathfrak{g}_{\alpha}^\vee\) is the twist by the automorphism \(\omega_{\mathfrak{h}}^{-1}\) (see Section \ref{section-irreducible-h-modules}).  We now describe a condition which ensures the existence of such an automorphism.
	
	Suppose \(\mathcal{A}\) is a Cartan datum for which we have identifications \(\mathfrak{g}_{-\alpha}=\mathfrak{g}_{\alpha}^\vee\) for all \(\alpha \in \Pi\). For each $\alpha\in\Pi$, let \(\omega_{\alpha}:\mathfrak{g}_{\alpha}\rightarrow \mathfrak{g}_{\alpha}^\vee\) be the identity map of the underlying super vector spaces. We will also write \(x^\vee \coloneq \omega_{\alpha}(x)\) for \(x \in \mathfrak{g}_{\alpha}\). We set \(\omega_{-\alpha} \coloneq \delta_{\mathfrak{g}_{\alpha}}\circ\omega_{\alpha}^{-1}:\mathfrak{g}_{\alpha}^\vee \rightarrow \mathfrak{g}_{\alpha}\), where \(\delta_{\mathfrak{g}_{\alpha}}(v)=(-1)^{\ol{v}}v\) is the grading operator on \(\mathfrak{g}_{\alpha}\). It follows immediately that for \(\alpha\in \Pi\), \(x \in \mathfrak{g}_{\alpha}\), \(y \in \mathfrak{g}_{-\alpha}\) and \(h \in \mathfrak{h}\) we have
	\begin{align*}
		\begin{array}{cc}
			\omega_{\alpha}([h,x]) = [\omega_{\mathfrak{h}}(h),\omega_{\alpha}(x)], & \omega_{-\alpha}([h,y]) = [\omega_{\mathfrak{h}}(h),\omega_{-\alpha}(y)].
		\end{array}
	\end{align*}
	\begin{theorem}\label{theorem-half-baked-Chevalley}
		Let \(\mathcal{A}\) be a Cartan datum satisfying \(\mathfrak{g}_{-\alpha}=\mathfrak{g}_{\alpha}^\vee\) for all \(\alpha \in \Pi\). Assume \(\omega_{\mathfrak{h}}([x,y]) = [\omega_{\alpha}(x),\omega_{-\alpha}(y)]\) for all \(x\in \mathfrak{g}_{\alpha}\) and \(y \in \mathfrak{g}_{-\alpha}\), for all \(\alpha \in \Pi\). Then there exists a Chevalley automorphism \(\tilde{\omega}\) of \(\tilde{\mathfrak{g}}(\mathcal{A})\) of order \(4\), which preserves \(\mathfrak{r}\) and satisfies \(\tilde{\omega}|_{\mathfrak{h}} = \omega_{\mathfrak{h}}\) and \(\tilde{\omega}|_{\mathfrak{g}_{\alpha}} = \omega_{\alpha}\) for \(\alpha \in \pm \Pi\).
	\end{theorem}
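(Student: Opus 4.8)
The plan is to build $\tilde\omega$ out of its prescribed action on the generating space $\mathcal{C}$ of \eqref{generating-set}, exploiting the presentation of $\tilde\g(\AA)$ by generators and relations from Section~\ref{section_cartan_datum}. Concretely, define a parity-preserving linear map $\psi\colon\mathcal{C}\to\tilde\g(\AA)$ by $\psi|_{\h}=\omega_{\h}$, $\psi|_{\g_{\alpha}}=\omega_{\alpha}$, and $\psi|_{\g_{-\alpha}}=\omega_{-\alpha}$ for $\alpha\in\Pi$; this makes sense because $\g_{\alpha}$, $\g_{-\alpha}=\g_{\alpha}^\vee$, and $\h$ all embed in $\tilde\g(\AA)$ by Theorem~\ref{theorem-half-baked-structure}, and $\omega_{\alpha},\omega_{-\alpha}$ exchange $\g_\alpha$ and $\g_{-\alpha}$. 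Extend $\psi$ to a homomorphism $\hat\psi$ from the free Lie superalgebra on $\mathcal{C}$ into $\tilde\g(\AA)$. To get the desired endomorphism $\tilde\omega$ of $\tilde\g(\AA)$ it then suffices to verify that $\hat\psi$ kills each of the three families of defining relations (1)--(3).

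Relation (1) amounts to $\omega_{\h}$ being an algebra automorphism of $\h$, which is true by its very definition. Relation (2), for $\alpha\in\pm\Pi$, $h\in\h$, $x\in\g_\alpha$, is exactly the pair of displayed identities $\omega_{\alpha}([h,x])=[\omega_{\h}(h),\omega_{\alpha}(x)]$ and $\omega_{-\alpha}([h,y])=[\omega_{\h}(h),\omega_{-\alpha}(y)]$ stated just before the theorem. For relation (3) with $\alpha=\beta$, we have $[x,y]=[x,y]_\alpha\in\h$, so $\hat\psi([x,y])=\omega_{\h}([x,y]_\alpha)$, while $[\psi(x),\psi(y)]=[\omega_{\alpha}(x),\omega_{-\alpha}(y)]$ with $\omega_\alpha(x)\in\g_{-\alpha}$ and $\omega_{-\alpha}(y)\in\g_\alpha$; these agree precisely by the hypothesis $\omega_{\h}([x,y])=[\omega_{\alpha}(x),\omega_{-\alpha}(y)]$, the right-hand side being the bracket computed inside $\tilde\g(\AA)$ (equivalently $-(-1)^{\bar x\bar y}$ times $[-,-]_\alpha$ evaluated with the arguments swapped). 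For relation (3) with $\alpha\neq\beta$, $x\in\g_\alpha$, $y\in\g_{-\beta}$, we have $\psi(x)\in\g_{-\alpha}$ and $\psi(y)\in\g_{\beta}$, and $[\g_\beta,\g_{-\alpha}]=0$ in $\tilde\g(\AA)$ since $\beta\neq\alpha$, so $[\psi(x),\psi(y)]=0=\hat\psi([x,y])$. Thus $\hat\psi$ descends to an endomorphism $\tilde\omega$ with $\tilde\omega|_{\h}=\omega_{\h}$ and $\tilde\omega|_{\g_\alpha}=\omega_\alpha$ for $\alpha\in\pm\Pi$.

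It remains to see that $\tilde\omega$ is an automorphism of order $4$ preserving $\r$. Since $\tilde\g(\AA)$ is generated by $\mathcal{C}$, it is enough to identify $\tilde\omega^2$ on $\mathcal{C}$: on $\h$ it is $\omega_{\h}^2=\mathrm{id}_{\h_{\ol 0}}\oplus(-\mathrm{id}_{\h_{\ol 1}})$, the parity operator, and on $\g_{\pm\alpha}$ it is the grading operator, since $\omega_{-\alpha}\circ\omega_{\alpha}=\delta_{\g_\alpha}\circ\omega_\alpha^{-1}\circ\omega_\alpha=\delta_{\g_\alpha}$ and likewise $\omega_{\alpha}\circ\omega_{-\alpha}=\delta_{\g_{-\alpha}}$ (using that $\omega_\alpha$ is parity-preserving). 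Hence $\tilde\omega^2$ agrees on generators with the parity automorphism of $\tilde\g(\AA)$, so it equals it, $\tilde\omega^4=\mathrm{id}$, and $\tilde\omega$ is an automorphism (of order $4$, unless $\tilde\g(\AA)$ is purely even) with inverse $\tilde\omega^3$. Finally $\tilde\omega(\h)=\h$, so $\tilde\omega(\r)$ is an ideal with $\tilde\omega(\r)\cap\h=\tilde\omega(\r\cap\h)=0$; by the uniqueness of $\r$ as the maximal ideal meeting $\h$ trivially (Theorem~\ref{theorem-half-baked-structure}\ref{bullet4-half-baked}) we get $\tilde\omega(\r)\subseteq\r$, and applying $\tilde\omega^{-1}$ gives $\tilde\omega(\r)=\r$.

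I expect the only genuine friction to be the sign bookkeeping in relation (3)---keeping straight that $[\omega_\alpha(x),\omega_{-\alpha}(y)]$ is a bracket from $\g_{-\alpha}$ against $\g_\alpha$---together with the check that $\tilde\omega^2$ is the parity operator rather than the identity, which is exactly what forces order $4$ and which rests on the precise normalization $\omega_{-\alpha}=\delta_{\g_\alpha}\circ\omega_\alpha^{-1}$. Everything else is formal, flowing from the generators-and-relations description of $\tilde\g(\AA)$ and the structural statements of Theorem~\ref{theorem-half-baked-structure}.
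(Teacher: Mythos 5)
Your proof is correct and follows essentially the same route as the paper: define the map on the generating space $\mathcal{C}$ via $\omega_{\h}$, $\omega_{\alpha}$, $\omega_{-\alpha}$, check that the three families of defining relations are respected (relation (3) with $\alpha=\beta$ being exactly the hypothesis of the theorem), extend from the free Lie superalgebra, observe that $\tilde{\omega}^2$ is the grading operator to get order $4$, and use maximality of $\r$ among ideals meeting $\h$ trivially to get $\tilde{\omega}(\r)=\r$. Your explicit handling of relation (3) for $\alpha\neq\beta$ and the remark that order $4$ degenerates in the purely even case are slightly more detailed than the paper's write-up, but the argument is the same.
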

	
	\begin{proof}
		Let \(\omega_{\mathcal{C}} \coloneq \bigoplus_{i=1}^n \omega_{-\alpha_i} \oplus \omega_{\mathfrak{h}} \oplus \bigoplus_{i=1}^n \omega_{\alpha_i}\) be the endomorphism of the super vector space \(\mathcal{C}\) defined in \eqref{generating-set}. It is immediate that \(\omega_{\mathcal{C}}\) is an automorphism and that \(\omega_{\mathcal{C}}^2 = \delta_{\mathcal{C}}\), where \(\delta_{\mathcal{C}}\) is the grading operator of \(\mathcal{C}\). We wish to extend \(\omega_{\mathcal{C}}\) to a Lie superalgebra endomorphism \(\tilde{\omega}\) of \(\tilde{\mathfrak{g}}(\mathcal{A})\). By assumption we know that \(\omega_{\mathcal{C}}([x,y]) = [\omega_{\mathcal{C}}(x),\omega_{\mathcal{C}}(y)]\) whenever \(x,y,[x,y] \in \mathcal{C}\). Moreover, because \(\omega_\mathcal{C}\) is an even linear map, we have
		\begin{align*}
			[\omega_{\mathcal{C}}(x),\omega_{\mathcal{C}}(y)] + (-1)^{\bar{x}\bar{y}}[\omega_{\mathcal{C}}(y),\omega_{\mathcal{C}}(x)] = 0
		\end{align*} 
		and
		\begin{align*}
			[[\omega_{\mathcal{C}}(x),\omega_{\mathcal{C}}(y)],\omega_{\mathcal{C}}(z)] = [\omega_{\mathcal{C}}(x),[\omega_{\mathcal{C}}(y),\omega_{\mathcal{C}}(z)]] + (-1)^{\bar{x}\bar{y}} [\omega_{\mathcal{C}}(y),[\omega_{\mathcal{C}}(x),\omega_{\mathcal{C}}(z)]]
		\end{align*}
		for homogeneous \(x,y,z \in \mathcal{C}\). It follows that we may extend \(\tilde{\omega}\) to $\tilde{\g}(\AA)$ by requiring that \(\tilde{\omega}([x,y]) = [\tilde{\omega}(x),\tilde{\omega}(y)]\) for \(x,y \in \tilde{\mathfrak{g}}(\mathcal{A})\) and \(\tilde{\omega}|_{\mathcal{C}} \coloneq \omega_{\mathcal{C}}\). From the definition of \(\omega_{\mathcal{C}}\) follows that \(\tilde{\omega}^2 = \delta_{\tilde{\mathfrak{g}}(\mathcal{A})}\), where \(\delta_{\tilde{\mathfrak{g}}(\mathcal{A})}\) is the grading operator of \(\tilde{\mathfrak{g}}(\mathcal{A})\). In particular, we obtain that \(\tilde{\omega}\) is an automorphism of order \(4\). It is immediate that \(\tilde{\omega}(\tilde{\mathfrak{g}}_{\alpha}) = \tilde{\mathfrak{g}}_{-\alpha}\) for any \(\alpha \in \pm Q_+\). This implies \(\tilde{\omega}(\mathfrak{r})\cap \mathfrak{h} = 0\), so \(\tilde{\omega}(\mathfrak{r}) \subseteq \mathfrak{r}\). A similar argument gives \(\tilde{\omega}^{-1}(\mathfrak{r}) \subseteq \mathfrak{r}\). Altogether, we obtain \(\tilde{\omega}(\mathfrak{r}) = \mathfrak{r}\).
	\end{proof}
	
	\begin{proposition}\label{proposition-rank-2-chevalley}
		Let \(\mathcal{A}\) be a Cartan datum such that \(\mathfrak{g}_{-\alpha} = \mathfrak{g}_{\alpha}^\vee\) for all \(\alpha \in \Pi\). Assume, moreover, that \(\mathfrak{g}_{\alpha}\) is irreducible and that \(\rk\alpha \leq 2\) for all \(\alpha \in \Pi\). Then the conditions of Theorem \ref{theorem-half-baked-Chevalley} hold, and thus $\tilde{\g}(\AA)$ admits a Chevalley automorphism.
	\end{proposition}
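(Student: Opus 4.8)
The two conditions of Theorem~\ref{theorem-half-baked-Chevalley} are the identification $\g_{-\alpha}=\g_\alpha^\vee$, which is part of the present hypothesis, and the bracket‑compatibility $\omega_{\h}([x,y])=[\omega_\alpha(x),\omega_{-\alpha}(y)]$ for $x\in\g_\alpha$, $y\in\g_{-\alpha}$; the equivariance identities for $\omega_{\pm\alpha}$ recorded just before that theorem already hold under the assumption $\g_{-\alpha}=\g_\alpha^\vee$. So the plan is to fix $\alpha\in\Pi$ and prove only the bracket‑compatibility. Write $[-,-]_\alpha\colon\g_\alpha\otimes\g_{-\alpha}\to\h$ for the bracket and set $\Psi_\alpha(x\otimes y):=\omega_\h^{-1}\bigl([\omega_\alpha(x),\omega_{-\alpha}(y)]\bigr)$; the identity to be proven is exactly $[-,-]_\alpha=\Psi_\alpha$. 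Using the equivariance of $\omega_{\pm\alpha}$, the Jacobi identity in $\tilde\g(\AA)$, and the fact that $\omega_\h$ is a Lie algebra automorphism of $\h$, one checks that $\Psi_\alpha$ is again an $\h$-module homomorphism $\g_\alpha\otimes\g_{-\alpha}\to\h$. Moreover, using the super‑antisymmetry of the bracket to move $\omega_\alpha(x)\in\g_{-\alpha}$ past $\omega_{-\alpha}(y)\in\g_\alpha$, one has $\Psi_\alpha=T([-,-]_\alpha)$ where
\[
T(\phi)(x\otimes y):=-(-1)^{\ol{x}\,\ol{y}}\,\omega_\h^{-1}\bigl(\phi(\omega_{-\alpha}(y)\otimes\omega_\alpha(x))\bigr).
\]
A short sign computation using $\omega_{-\alpha}\circ\omega_\alpha=\delta_{\g_\alpha}$, $\omega_\alpha\circ\omega_{-\alpha}=\delta_{\g_{-\alpha}}$, $\omega_\h^{2}=\delta_\h$ and $\delta_\h^{2}=\mathrm{id}$ shows that $T$ is a well‑defined involution of $\operatorname{Hom}_\h(\g_\alpha\otimes\g_{-\alpha},\h)$, so it suffices to prove that $[-,-]_\alpha$ lies in the $(+1)$-eigenspace of $T$.

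Here is where the hypotheses $\g_\alpha=C_\alpha$ irreducible and $\rk\alpha\le 2$ enter. By \eqref{equation-2Z+1}, \eqref{equation-4Z}, \eqref{equation-4Z+2}, the source $\g_\alpha\otimes\g_{-\alpha}\cong C_\alpha\otimes C_\alpha^\vee$ is $\C$ when $\rk\alpha=0$, is $\SS(\C\eta)\oplus\Pi\SS(\C\eta)$ when $\rk\alpha=1$, and is $\Pi\SS(\C^2)$ when $\rk\alpha=2$; in all cases it has dimension at most $4$, and its socle, radical and submodule lattice are explicit from the realization of $C_\lambda$ in Section~\ref{subsection-realization-irreducible}. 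Combining this with Corollary~\ref{corollary-tensor-of-irreducibles} and Remark~\ref{remark on vanishing} (the latter forcing $[-,-]_\alpha$ to annihilate the submodule generated by the odd part of the radical) cuts $[-,-]_\alpha$ down to a short list of possibilities: for $\rk\alpha=0$ its image is purely even and one‑dimensional, hence contained in $\mathfrak{t}$; for $\rk\alpha=2$ the odd part of its image is at most one‑dimensional and generates the image, so the image is $\C H_1$, or $\C H_1\oplus[\h_{\ol{1}},H_1]$ for an odd $H_1$, or purely even and one‑dimensional; and for $\rk\alpha=1$ it is the sum of a rank‑$0$-type piece and a two‑dimensional uniserial summand.

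The proof then concludes by a direct verification of $[-,-]_\alpha=\Psi_\alpha$ in each of these cases. Realize $\g_\alpha$ as $\C[\xi_1,\dots]$ and $\g_{-\alpha}=\g_\alpha^\vee$ as the same underlying super vector space with $\h_{\ol{1}}$ acting through $\omega_\h^{-1}$ (so the standard generators act with an extra factor $-\sqrt{-1}$); then $\omega_\alpha$ is the identity map of underlying spaces and $\omega_{-\alpha}=\delta_{\g_\alpha}$, so both $[-,-]_\alpha$ and $\Psi_\alpha$ become explicit bilinear expressions in these bases and the check reduces to evaluating them on one generator of each indecomposable summand of the source. For $\rk\alpha=0$, writing $v^{\pm}\in\g_{\pm\alpha}$ for the two incarnations of a basis vector $v\in\g_\alpha$, one has $[v^+,v^-]_\alpha=H_0\in\mathfrak{t}$ by parity, $\omega_\h^{-1}$ acts by $-1$ on $\mathfrak{t}$, and the Koszul signs cancel to give $\Psi_\alpha(v^+\otimes v^-)=H_0$; in the rank‑$1$ and rank‑$2$ cases the factors of $\sqrt{-1}$ produced by $\omega_\h^{-1}|_{\h_{\ol{1}}}=-\sqrt{-1}\cdot\mathrm{id}$ precisely cancel those introduced in passing from $C_\alpha$ to $C_\alpha^\vee$, again yielding $\Psi_\alpha=[-,-]_\alpha$.

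The main obstacle is bookkeeping: one must correctly track the Koszul signs and the powers of $\sqrt{-1}$ in each rank and dispose of the several admissible shapes of $\ker[-,-]_\alpha$ (equivalently of $\operatorname{Im}[-,-]_\alpha$) allowed by Corollary~\ref{corollary-tensor-of-irreducibles} and Remark~\ref{remark on vanishing}. The involution $T$ is a useful consistency check throughout, since it reduces every case to excluding the single eigenvalue $-1$, and it makes transparent why the rank restriction $\rk\alpha\le 2$ is what is needed: beyond it the module $C_\alpha\otimes C_\alpha^\vee$ and the relevant Hom‑space become large enough that the required symmetry can genuinely fail.
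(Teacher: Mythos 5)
Your reduction is set up correctly: the only thing to prove is the bracket compatibility $\omega_{\h}([x,y])=[\omega_\alpha(x),\omega_{-\alpha}(y)]$, and the involution $T$ on $\operatorname{Hom}_{\h}(\g_\alpha\otimes\g_{-\alpha},\h)$ is a legitimate (if heavier) repackaging of that identity. But the proof as written stops exactly where the content begins: the assertions that ``the Koszul signs cancel'' and that ``the factors of $\sqrt{-1}$ precisely cancel'' are not bookkeeping — a priori, in the rank $1$ and $2$ cases the two odd pure coroots $[w,v^\vee]$ and $[v,w^\vee]$ (for $\{v,w\}$ a homogeneous basis of the $(1|1)$-dimensional $\g_\alpha$) are unrelated elements of $\h_{\ol 1}$, and nothing in your classification of possible images relates them with the correct constant. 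The mechanism that does relate them, and which is the entire point of the paper's proof, is quasitorality: $[v,v^\vee]\in\mathfrak{t}$ is central in $\h$, so for $K\in\h_{\ol 1}$ with $K\cdot v=w$ the equivariance of $[-,-]_\alpha$ gives $0=[K,[v,v^\vee]]=[w,v^\vee]-\sqrt{-1}\,(-1)^{\ol v}[v,w^\vee]$, and this single relation is exactly what makes the Chevalley identity close up on the cross terms (the diagonal terms $[v,v^\vee]$, $[w,w^\vee]$ being handled by the rank-$0$ sign computation). Until you isolate and use this relation, the ``direct verification'' cannot be completed, so there is a genuine gap.

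Two smaller points. First, the paper's argument is much shorter than your scheme suggests: it never needs the decomposition of $C_\alpha\otimes C_\alpha^\vee$, Corollary \ref{corollary-tensor-of-irreducibles}, or the polynomial realizations — only that $\rk\alpha\le 2$ forces $\dim\g_\alpha\le(1|1)$ and that some odd $K$ moves $v$ to $w$; your classification of the possible shapes of $\operatorname{Im}[-,-]_\alpha$ is dispensable. Second, in that classification the rank-$2$ option ``purely even and one-dimensional'' is impossible: by Corollary \ref{corollary-tensor-of-irreducibles}(2) the image is generated by its odd part, so a nonzero (hence, by nondegeneracy, the actual) bracket map must have nonzero odd image.
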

	
	\begin{proof}
		Fix a simple root \(\alpha \in \Pi\). If \(\rk\alpha = 0\), then \(\mathfrak{g}_{\alpha}\) is \(1\)-dimensional with basis \(\{v\}\). Then \(\{v^\vee\}\) is a basis for \(\mathfrak{g}_{-\alpha}\) (where \(v^\vee = \omega_{\alpha}(v)\)) and \([v,v^\vee] \in \mathfrak{t}\). Therefore
		\begin{align*}
			\omega_{\mathfrak{h}}([v,v^\vee]) = -[v,v^\vee] = - [(-1)^{\bar{v}}\omega_{-\alpha}(v^\vee),\omega_{\alpha}(v)] = [\omega_{\alpha}(v),\omega_{-\alpha}(v^\vee)].
		\end{align*}
		If \(\rk\alpha=1\) or $2$, then \(\mathfrak{g}_{\alpha}\) is \((1|1)\)-dimensional. Let \(\{v,w\}\) be a homogeneous basis for \(\mathfrak{g}_{\alpha}\). From the irreducibility of \(\mathfrak{g}_{\alpha}\), there exists \(H \in \mathfrak{h}_{\ol{1}}\) such that \(H\cdot v = w\). As \([v,v^\vee] \in \mathfrak{t}\), we have \([\mathfrak{h},[v,v^\vee]] = 0\). So
		\begin{align*}
			0 = [H,[v,v^\vee]] = [H\cdot v, v^\vee] + (-1)^{\bar{v}} [v,H\cdot v^\vee] = [w,v^\vee] - \sqrt{-1}\cdot (-1)^{\bar{v}}[v,w^\vee].
		\end{align*}
		As \([w,v^\vee] \in \mathfrak{h}_{\ol{1}}\), we must have
		\begin{align*}
			\omega_{\mathfrak{h}}([w,v^\vee]) = \sqrt{-1}[w,v^\vee] = -(-1)^{\bar{v}}[v,w^\vee] = - [\omega_{-\alpha}(v^\vee),\omega_{\alpha}(w)] = [\omega_{\alpha}(w),\omega_{-\alpha}(v^\vee)].
		\end{align*}
		A similar computation gives \(\omega_{\mathfrak{h}}([v,w^\vee]) = [\omega_{\alpha}(v),\omega_{-\alpha}(w^\vee)]\). Finally, the computation in the \(\rk\alpha = 0\) case gives \(\omega_{\mathfrak{h}}([v,v^\vee]) = [\omega_{\alpha}(v),\omega_{-\alpha}(v^\vee)] \)  
		and \(\omega_{\mathfrak{h}}([w,w^\vee]) = [\omega_{\alpha}(w),\omega_{-\alpha}(w^\vee)]\). The claim follows by linearity. 
	\end{proof}
	
	\subsection{The Lie superalgebra \(\mathfrak{g}(\mathcal{A})\)}\label{section-g(A)}
	
	We define \(\mathfrak{g}(\mathcal{A}) \coloneq \tilde{\mathfrak{g}}(\mathcal{A})/\mathfrak{r}\), the quotient of \(\tilde{\mathfrak{g}}(\mathcal{A})\) by the maximal ideal intersecting \(\mathfrak{h}\) trivially, as in Theorem \ref{theorem-half-baked-structure}. We say \(\mathfrak{g}(\mathcal{A})\) is the Lie superalgebra associated to the Cartan datum \(\mathcal{A}\). We also write \(\mathfrak{g}\) instead of \(\mathfrak{g}(\mathcal{A})\) when clear from context. 
	
	\begin{lemma}
		The natural map 
		\[
		\mathcal{C} = \bigoplus_{i=1}^n \mathfrak{g}_{-\alpha_i} \oplus \mathfrak{h} \oplus \bigoplus_{i=1}^n \mathfrak{g}_{\alpha_i}\to\g(\AA)
		\]
		is an embedding of super vector spaces.  Thus we may identify $\h$ and $\g_{\alpha}$ for $\alpha\in\pm\Pi$ with their images in $\g(\AA)$.
	\end{lemma}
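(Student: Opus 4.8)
The plan is to realise the natural map as the composite $\mathcal{C}\hookrightarrow\tilde{\mathfrak{g}}(\mathcal{A})\twoheadrightarrow\mathfrak{g}(\mathcal{A})=\tilde{\mathfrak{g}}(\mathcal{A})/\mathfrak{r}$, where the first arrow is the embedding already supplied by Theorem \ref{theorem-half-baked-structure}, and then to prove $\mathcal{C}\cap\mathfrak{r}=0$. First I would use that $\mathfrak{r}$ is a $\mathfrak{t}$-weight-graded subspace of $\tilde{\mathfrak{g}}(\mathcal{A})$ --- this is exactly the content of \eqref{equation-decomposition-of-ideals}. An element of $\mathcal{C}$ decomposes as $\sum_i y_i + h + \sum_i x_i$ with $y_i\in\mathfrak{g}_{-\alpha_i}$, $h\in\mathfrak{h}$, $x_i\in\mathfrak{g}_{\alpha_i}$, and since $\Pi$ is linearly independent the weights $0,\pm\alpha_1,\dots,\pm\alpha_n$ are pairwise distinct, so this is precisely the $\mathfrak{t}$-weight decomposition of that element. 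Hence if it lies in $\mathfrak{r}$ then so does each summand, and the lemma reduces to proving $\mathfrak{r}\cap\mathfrak{g}_{\alpha_i}=0$ and $\mathfrak{r}\cap\mathfrak{g}_{-\alpha_i}=0$ for every $i$, the case of $\mathfrak{h}$ being the defining property $\mathfrak{r}\cap\mathfrak{h}=0$.

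The heart of the argument is where the nondegeneracy built into the Cartan datum enters. Suppose for contradiction that $0\neq x\in\mathfrak{g}_{\alpha_i}\cap\mathfrak{r}$, and let $U':=\mathcal{U}(\mathfrak{h})x$ be the $\mathfrak{h}$-submodule of $\mathfrak{g}_{\alpha_i}$ that $x$ generates; it is nonzero, and $U'\subseteq\mathfrak{r}$ because $\mathfrak{r}$ is an ideal. By the defining relation $[x',y]=[x',y]_{\alpha_i}$ for $x'\in\mathfrak{g}_{\alpha_i}$, $y\in\mathfrak{g}_{-\alpha_i}$, the subspace $[U',\mathfrak{g}_{-\alpha_i}]$ equals the image $[-,-]_{\alpha_i}(U'\otimes\mathfrak{g}_{-\alpha_i})$; this image lies in $\mathfrak{h}$ (it has $\mathfrak{t}$-weight $0$) and in $\mathfrak{r}$ (since $[\mathfrak{r},\tilde{\mathfrak{g}}(\mathcal{A})]\subseteq\mathfrak{r}$), hence is $0$. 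But that contradicts the nondegeneracy of $[-,-]_{\alpha_i}$ required in Definition \ref{definition-datum} (recalled in Section \ref{section_nondegenerate}), which forces $[-,-]_{\alpha_i}(U'\otimes\mathfrak{g}_{-\alpha_i})\neq0$ for every nonzero submodule $U'\subseteq\mathfrak{g}_{\alpha_i}$. Thus $\mathfrak{r}\cap\mathfrak{g}_{\alpha_i}=0$, and the mirror argument --- taking a nonzero submodule $V'\subseteq\mathfrak{g}_{-\alpha_i}$ and invoking the other clause $[-,-]_{\alpha_i}(\mathfrak{g}_{\alpha_i}\otimes V')\neq0$ --- gives $\mathfrak{r}\cap\mathfrak{g}_{-\alpha_i}=0$.

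Putting the pieces together yields $\mathcal{C}\cap\mathfrak{r}=0$, so the natural map is injective, and the closing sentence identifying $\mathfrak{h}$ and the $\mathfrak{g}_{\alpha}$ with their images is then immediate. I do not anticipate a genuine obstacle here: the real content already sits in Theorem \ref{theorem-half-baked-structure} and in the nondegeneracy axiom, and the only points needing care are the weight bookkeeping (that linear independence of $\Pi$ makes the decomposition of an element of $\mathcal{C}$ genuinely its $\mathfrak{t}$-weight decomposition, so no "higher" part of $\mathfrak{r}$ can contribute) and the translation of the hypothesis "$x\in\mathfrak{g}_{\alpha_i}\cap\mathfrak{r}$" into the statement "$[-,-]_{\alpha_i}$ annihilates a nonzero submodule", which is exactly the situation nondegeneracy was designed to rule out.
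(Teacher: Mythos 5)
Your proposal is correct and is exactly the argument the paper intends: the paper's own proof consists of the single sentence that the claim "immediately follows from the definition of $\r$ and the assumption of nondegeneracy," and your write-up simply makes explicit the weight-space decomposition of $\r$ and the step where a nonzero $x\in\g_{\pm\alpha_i}\cap\r$ would force $[-,-]_{\alpha_i}$ to kill the nonzero submodule $\UU(\h)x$.
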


	\begin{proof}
		This immediately follows from the definition of $\r$ and the assumption of nondegeneracy on our maps $\g_{\alpha}\otimes\g_{-\alpha}\to\h$.
\end{proof}
	
	Since $\mathfrak{t}$ acts on $\g$ semisimply, we have a weight space decomposition $\g=\bigoplus\limits_{\alpha\in\mathfrak{t}^*}\g_{\alpha}$.  We call \(\alpha \in \mathfrak{t}^*\setminus\{0\}\) a root of \(\mathfrak{g}\) if \(\mathfrak{g}_{\alpha} \neq 0\). We denote by \(\Delta\) the set of roots of \(\mathfrak{g}\), and set \(\Delta_+ \coloneq \Delta \cap Q_+\), where $Q_+=\sum\limits_{i=1}^n\mathbb{N}\alpha_i\subseteq\mathfrak{t}^*$. If \(\alpha = \sum_{i=1}^{s} k_i\alpha_i\) is a root for some \(k_i \in \mathbb{Z}_{\geq 0}\), we say \(\alpha\) is of height \(\sum_{i=1}^sk_i\). 
	
	We denote by \(\mathfrak{n}^+\) and \(\mathfrak{n}^-\) the images of \(\tilde{\mathfrak{n}}^+\) and \(\tilde{\mathfrak{n}}^-\) in \(\mathfrak{g}\), respectively. Theorem \ref{theorem-half-baked-structure} implies the triangular decomposition \(\mathfrak{g} = \mathfrak{n}^-\oplus \mathfrak{h} \oplus \mathfrak{n}^+\) as a super vector space and the decomposition
	\begin{align*}
		\mathfrak{g} = \bigoplus_{\alpha\in\Delta_+} \mathfrak{g}_{-\alpha} \oplus \mathfrak{h} \oplus \bigoplus_{\alpha \in \Delta_+} \mathfrak{g}_{\alpha}
	\end{align*}
	as a \(\mathfrak{t}\)-module. We notice that \(\mathfrak{g}_{\alpha}\) is an \(\mathfrak{h}\)-module for any \(\alpha \in Q_+\), so the decomposition also holds as an \(\mathfrak{h}\)-module.  Further, $\h$ is the centralizer of $\mathfrak{t}$ in $\g(\AA)$, and is self-normalizing.
	
	\begin{definition}\label{definition_coroot}
		For a root \(\alpha \in \Delta\), we call \(\h_{\alpha}:=[\mathfrak{g}_{\alpha},\mathfrak{g}_{-\alpha}]\) the \emph{coroot-space} corresponding to \(\alpha\); it is an ideal of \(\mathfrak{h}\). An element of $\h_{\alpha}$ shall be called an \emph{\(\alpha\)-coroot}, or a coroot corresponding to \(\alpha\). We say an \(\alpha\)-coroot \(h\) is \emph{pure} if \(h = [x,y]\) for homogeneous non-zero \(x \in \mathfrak{g}_{\alpha}\) and \(y \in \mathfrak{g}_{-\alpha}\).
	\end{definition}

	As a generalization of Lemma 1.6 in \cite{K} we have:

	\begin{proposition}\label{proposition-roots-connectivity}
	Let \(\Pi_1,\Pi_2 \subseteq \Pi\) be disjoint subsets such that
	\begin{align*}
		[\h_{\alpha},\mathfrak{g}_{\beta}] = [\h_{\beta},\mathfrak{g}_{\alpha}] = 0
	\end{align*} 
	for any \(\alpha \in \Pi_1\), \(\beta \in \Pi_2\). Let \(Q^+_s = \mathbb{Z}_{\geq 0}\Pi_s\) for \(s=1,2\). If \(\theta \in \Delta\) is a root of \(\mathfrak{g}\) that satisfies \(\theta \in Q^+_1 + Q^+_2\), then necessarily \(\theta \in Q^+_1\) or \(\theta \in Q^+_2\).
\end{proposition}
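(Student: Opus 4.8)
The plan is to argue by induction on the height of $\theta$. Write $\theta = \mu + \nu$ with $\mu \in Q^+_1$ and $\nu \in Q^+_2$, and suppose for contradiction that both $\mu \neq 0$ and $\nu \neq 0$; we want to derive a contradiction, or rather show $\mathfrak{g}_\theta = 0$ in that case. The base case (height $1$) is immediate since $\Pi_1$ and $\Pi_2$ are disjoint. For the inductive step, I would first reduce to showing that the commutator subalgebra generated by the root spaces $\mathfrak{g}_{\pm\alpha_i}$, $\alpha_i \in \Pi_1$, and the one generated by the $\mathfrak{g}_{\pm\beta_j}$, $\beta_j \in \Pi_2$, \emph{commute} inside $\mathfrak{g}(\mathcal{A})$ — call these subalgebras $\mathfrak{g}^{(1)}$ and $\mathfrak{g}^{(2)}$. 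If $[\mathfrak{g}^{(1)}, \mathfrak{g}^{(2)}] = 0$ as subalgebras of $\mathfrak{g}(\mathcal{A})$, then since $\mathfrak{n}^+$ is generated by $\mathfrak{n}^{+,(1)}$ and $\mathfrak{n}^{+,(2)}$ and these commute, a PBW-type argument shows $\mathfrak{n}^+ = \sum_{\mu',\nu'} [\mathfrak{n}^{+}_{\mu'}, \mathfrak{n}^+_{\nu'}]$ collapses: any iterated bracket of the generators reorganizes, via the Jacobi identity and the vanishing of cross-brackets, into a sum of a bracket purely in the $\Pi_1$-generators and one purely in the $\Pi_2$-generators, forcing $\mathfrak{g}_\theta = \mathfrak{g}^{(1)}_\theta + \mathfrak{g}^{(2)}_\theta$, which vanishes unless $\theta \in Q^+_1$ or $\theta \in Q^+_2$. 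The same works for $\mathfrak{n}^-$, and $\mathfrak{h}$-weights are handled by $\mathfrak{h} = \mathfrak{g}_0$.

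So the crux is: \emph{$[\mathfrak{g}^{(1)}, \mathfrak{g}^{(2)}] = 0$ inside $\mathfrak{g}(\mathcal{A})$.} Here I would use the structure results available: it suffices to check that the generators of $\mathfrak{g}^{(1)}$ commute with those of $\mathfrak{g}^{(2)}$, i.e.\ $[\mathfrak{g}_{\pm\alpha}, \mathfrak{g}_{\pm\beta}] = 0$ for all $\alpha \in \Pi_1$, $\beta \in \Pi_2$ (with all four sign combinations), because then an easy Jacobi-identity induction on height propagates commuting to all of $\mathfrak{g}^{(1)}$ and $\mathfrak{g}^{(2)}$ (if $a$ commutes with $b_1$ and $b_2$ then it commutes with $[b_1,b_2]$). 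For the mixed-sign pairs $[\mathfrak{g}_\alpha, \mathfrak{g}_{-\beta}]$ with $\alpha \neq \beta$: by the defining relations of $\tilde{\mathfrak{g}}(\mathcal{A})$ this bracket is already $0$ in $\tilde{\mathfrak{g}}(\mathcal{A})$, hence in $\mathfrak{g}(\mathcal{A})$. For $[\mathfrak{g}_\alpha, \mathfrak{g}_\beta]$ with $\alpha \in \Pi_1$, $\beta \in \Pi_2$ distinct simple roots, the element $[\mathfrak{g}_\alpha,\mathfrak{g}_\beta]$ lies in $\mathfrak{g}_{\alpha+\beta}$, and I would show it lies in $\mathfrak{r}$ (hence dies in $\mathfrak{g}(\mathcal{A})$) by applying Lemma \ref{lemma-maximal-ideal}: bracket with $\mathfrak{g}_{-\gamma}$ for each $\gamma \in \Pi$ and use the Jacobi identity together with the hypothesis $[\mathfrak{h}_\alpha, \mathfrak{g}_\beta] = [\mathfrak{h}_\beta, \mathfrak{g}_\alpha] = 0$ — for $\gamma = \alpha$ one gets a term in $[\mathfrak{h}_\alpha, \mathfrak{g}_\beta] = 0$ (after moving the bracket past, noting $[\mathfrak{g}_{-\alpha},\mathfrak{g}_\beta] = 0$ already), for $\gamma = \beta$ symmetrically, and for $\gamma \notin \{\alpha,\beta\}$ one gets $0$ since $[\mathfrak{g}_{-\gamma}, \mathfrak{g}_\alpha] = [\mathfrak{g}_{-\gamma},\mathfrak{g}_\beta] = 0$. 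Thus $[[\mathfrak{g}_\alpha,\mathfrak{g}_\beta], \mathfrak{g}_{-\gamma}] \subseteq \mathfrak{r}$ for all $\gamma$, and the Lemma gives $[\mathfrak{g}_\alpha,\mathfrak{g}_\beta] \subseteq \mathfrak{r}$.

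I expect the main obstacle to be \textbf{book-keeping in the Jacobi computation} for $[[\mathfrak{g}_\alpha,\mathfrak{g}_\beta],\mathfrak{g}_{-\alpha}]$: one must be careful that $[\mathfrak{g}_{-\alpha}, [\mathfrak{g}_\alpha,\mathfrak{g}_\beta]]$ really reduces, via Jacobi, to $[[\mathfrak{g}_{-\alpha},\mathfrak{g}_\alpha],\mathfrak{g}_\beta] = [\mathfrak{h}_\alpha,\mathfrak{g}_\beta]$ plus $\pm[\mathfrak{g}_\alpha,[\mathfrak{g}_{-\alpha},\mathfrak{g}_\beta]]$, and that the second term vanishes because $[\mathfrak{g}_{-\alpha},\mathfrak{g}_\beta] = 0$ by the defining relations (as $\alpha \neq \beta$). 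Once that is set up cleanly, the rest is the formal height induction described above. One should also remark that the decomposition $\mathfrak{g} = \mathfrak{n}^- \oplus \mathfrak{h} \oplus \mathfrak{n}^+$ with its $Q_+$-grading (Theorem \ref{theorem-half-baked-structure}(iii), passed to the quotient) is what lets us treat positive and negative roots, and the $\mathfrak{h}$ part, independently, so no interaction between $\mathfrak{n}^+$ and $\mathfrak{n}^-$ needs to be analyzed for a root $\theta \in Q^+_1 + Q^+_2 \subseteq Q_+$.
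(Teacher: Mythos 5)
Your proposal is correct and follows essentially the same route as the paper: the key step in both is to show $[\mathfrak{g}_{\alpha},\mathfrak{g}_{\beta}]=0$ for $\alpha\in\Pi_1$, $\beta\in\Pi_2$ via the Jacobi identity (using $[\h_{\alpha},\mathfrak{g}_{\beta}]=0$ and the defining relation $[\mathfrak{g}_{-\alpha},\mathfrak{g}_{\beta}]=0$) together with Corollary \ref{corollary-maximal-ideal} (equivalently, Lemma \ref{lemma-maximal-ideal} applied upstairs in $\tilde{\mathfrak{g}}(\mathcal{A})$), and then to conclude that the subalgebras generated by the $\Pi_1$- and $\Pi_2$-root spaces commute, so that $\mathfrak{g}_{\theta}$ must lie in one of them.
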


\begin{proof}
	Let us show that \([\mathfrak{g}_{\alpha},\mathfrak{g}_{\beta}] = 0\) for \(\alpha \in \Pi_1\), \(\beta \in \Pi_2\). The Jacobi identity implies
	\begin{align*}
		[\mathfrak{g}_{-\alpha},[\mathfrak{g}_{\alpha},\mathfrak{g}_{\beta}]] \subseteq [[\mathfrak{g}_{-\alpha},\mathfrak{g}_{\alpha}],\mathfrak{g}_{\beta}] + [\mathfrak{g}_{\alpha},[\mathfrak{g}_{-\alpha},\mathfrak{g}_{\beta}]].
	\end{align*}
	But \([[\mathfrak{g}_{-\alpha},\mathfrak{g}_{\alpha}],\mathfrak{g}_{\beta}] = 0\) by assumption and \([\mathfrak{g}_{-\alpha},\mathfrak{g}_{\beta}]=0\) by the defining relations of \(\mathfrak{g}\). So \([\mathfrak{g}_{-\alpha},[\mathfrak{g}_{\alpha},\mathfrak{g}_{\beta}]] = 0\). A similar argument shows \([\mathfrak{g}_{-\beta},[\mathfrak{g}_{\alpha},\mathfrak{g}_{\beta}]] = 0\). We certainly have \([\mathfrak{g}_{-\gamma},[\mathfrak{g}_{\alpha},\mathfrak{g}_{\beta}]] = 0\) for any \(\gamma \in \Pi \setminus \{\alpha,\beta\}\). As \([\mathfrak{g}_{\alpha},\mathfrak{g}_{\beta}] \subseteq \mathfrak{n}^+\) and \([\mathfrak{g}_{-\gamma},[\mathfrak{g}_{\alpha},\mathfrak{g}_{\beta}]]=0\) for any \(\gamma \in \Pi\), Corollary \ref{corollary-maximal-ideal} implies \([\mathfrak{g}_{\alpha},\mathfrak{g}_{\beta}] = 0\). \par
	Let \(\mathfrak{g}^{(s)}\) be the subalgebra of \(\mathfrak{g}\) generated by \(\mathfrak{g}_{\alpha}\) and \(\mathfrak{g}_{-\alpha}\) for \(\alpha \in \Pi_{s}\). Then what we have shown so far implies \([\mathfrak{g}^{(1)},\mathfrak{g}^{(2)}] = 0\). Because \(\theta \in Q^+_1 + Q^+_2\) we must have that \(\mathfrak{g}_{\theta}\) is contained in the algebra generated by \(\mathfrak{g}^{(1)}\) and \(\mathfrak{g}^{(2)}\). But then \(\mathfrak{g}_{\theta}\) is contained in either \(\mathfrak{g}^{(1)}\) or \(\mathfrak{g}^{(2)}\), which means \(\theta \in Q^+_1\) or \(\theta \in Q^+_2\).
\end{proof}

	We now state several obvious corollaries:
	
	\begin{corollary}\label{corollary-center}
		Let \(\mathfrak{c}\) be the center of \(\mathfrak{g}\). Then \(\mathfrak{c} \subseteq \bigcap_{i=1}^n (\mathrm{Ann}_{\mathfrak{h}} \mathfrak{g}_{\alpha_i}\cap \mathrm{Ann}_{\mathfrak{h}} \mathfrak{g}_{-\alpha_i})\) and \newline \(\mathfrak{c}_{\ol{0}} = \bigcap_{i=1}^n \ker \alpha_i \subseteq \mathfrak{t}\).
	\end{corollary}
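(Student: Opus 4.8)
The plan is to first show that the center $\c$ is contained in $\h$, and then read off both assertions from the fact that $\g=\g(\AA)$ is generated by $\h$ together with the simple root spaces $\g_{\pm\alpha_i}$.

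First I would use the $\mathfrak{t}$-weight decomposition $\g=\bigoplus_{\alpha}\g_{\alpha}$. If $z\in\c$ and $z=\sum_{\alpha}z_{\alpha}$ is its decomposition into $\mathfrak{t}$-weight components, then for every $h\in\mathfrak{t}$ we have $0=[h,z]=\sum_{\alpha}\alpha(h)z_{\alpha}$; since distinct weight spaces are linearly independent, $\alpha(h)z_{\alpha}=0$ for all $h\in\mathfrak{t}$, which forces $z_{\alpha}=0$ whenever $\alpha\neq 0$. Hence $z\in\g_0=\h$, so $\c\subseteq\h$.

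Next, recall from the construction in Section \ref{section_cartan_datum} that $\tilde{\g}(\AA)$, and therefore its quotient $\g(\AA)$, is generated by $\h$ together with $\bigoplus_{i=1}^n\g_{\alpha_i}$ and $\bigoplus_{i=1}^n\g_{-\alpha_i}$. Thus an element $z\in\h$ is central if and only if it annihilates all of these generators. In particular every $z\in\c$ annihilates $\g_{\alpha_i}$ and $\g_{-\alpha_i}$ for each $i$, which gives the inclusion $\c\subseteq\bigcap_{i=1}^n(\mathrm{Ann}_{\h}\g_{\alpha_i}\cap\mathrm{Ann}_{\h}\g_{-\alpha_i})$.

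For the statement about $\c_{\ol{0}}$: since $\c\subseteq\h$ and $\h_{\ol{0}}=\mathfrak{t}$, we have $\c_{\ol{0}}=\c\cap\mathfrak{t}$. Now any $h\in\mathfrak{t}$ automatically commutes with all of $\h$ because $\h$ is quasi-toral, so by the previous paragraph $h$ is central precisely when $[h,\g_{\pm\alpha_i}]=0$ for all $i$. But $\g_{\pm\alpha_i}$ is a $\mathfrak{t}$-weight space of weight $\pm\alpha_i$, so $h$ acts on it by the scalar $\pm\alpha_i(h)$; hence $[h,\g_{\pm\alpha_i}]=0$ iff $\alpha_i(h)=0$. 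Therefore $h\in\c$ iff $\alpha_i(h)=0$ for all $i$, i.e.\ $\c_{\ol{0}}=\bigcap_{i=1}^n\ker\alpha_i$. There is no serious obstacle here; the only points to keep straight are that the condition $[z,\h]=0$ is vacuous for even $z$ thanks to the quasi-torality of $\h$, and that $\mathfrak{t}$ acts on $\g_{\pm\alpha_i}$ by the scalar $\pm\alpha_i$, which is built into the definition of a Cartan datum.
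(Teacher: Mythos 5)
Your proof is correct and is exactly the argument the paper has in mind (the paper labels this an ``obvious corollary'' and omits the proof): reduce to $\c\subseteq\g_0=\h$ via the $\mathfrak{t}$-weight decomposition, then test centrality against the generating set $\h\cup\bigoplus_i\g_{\pm\alpha_i}$, using quasi-torality to make the condition $[z,\h]=0$ vacuous for even $z$ and the weight-space property to convert $[z,\g_{\pm\alpha_i}]=0$ into $\alpha_i(z)=0$. No gaps.
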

	
	Lemma \ref{lemma-maximal-ideal} immediately implies the following corollary:
	\begin{corollary}\label{corollary-maximal-ideal}
		If \(x \in \mathfrak{n}^+\) satisfies \([\mathfrak{g}_{-\alpha},x] = 0\) for all \(\alpha \in \Pi\), then \(x = 0\). Similarly, if \(y \in \mathfrak{n}^-\) is such that \([\mathfrak{g}_{\alpha},y] = 0\) for all \(\alpha \in \Pi\), then \(y=0\). 
	\end{corollary}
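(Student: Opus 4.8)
The plan is to reduce everything to Lemma~\ref{lemma-maximal-ideal} by lifting from $\g(\AA)$ to $\tilde{\g}(\AA)$. Recall that $\n^+$ is by definition the image of $\tilde{\n}^+$ under the quotient map $\pi\colon\tilde{\g}(\AA)\to\g(\AA)=\tilde{\g}(\AA)/\r$, and that, since $\r\cap\h=0$ together with the nondegeneracy of the maps $[-,-]_\alpha$, the generating space $\mathcal{C}$ of \eqref{generating-set} embeds into both $\tilde{\g}(\AA)$ and $\g(\AA)$. In particular each simple root space $\g_{-\alpha}$ with $\alpha\in\Pi$ may be regarded simultaneously as a subspace of $\tilde{\g}(\AA)$ and of $\g(\AA)$, in a way compatible with $\pi$.

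First I would take $x\in\n^+$ with $[\g_{-\alpha},x]=0$ in $\g(\AA)$ for all $\alpha\in\Pi$, and pick any preimage $\tilde{x}\in\tilde{\n}^+$ with $\pi(\tilde{x})=x$. Since $\pi$ is a homomorphism of Lie superalgebras restricting to the identity on each $\g_{-\alpha}$, the hypothesis becomes $\pi([\g_{-\alpha},\tilde{x}])=[\g_{-\alpha},x]=0$, i.e.\ $[\g_{-\alpha},\tilde{x}]\subseteq\ker\pi=\r$ for every $\alpha\in\Pi$. Now Lemma~\ref{lemma-maximal-ideal} applies verbatim and yields $\tilde{x}\in\r$, hence $x=\pi(\tilde{x})=0$. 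The second assertion follows by the identical argument with the roles of $+$ and $-$ interchanged, invoking the second half of Lemma~\ref{lemma-maximal-ideal}.

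I do not expect any real obstacle here: the only point requiring (minor) care is the bookkeeping of the identifications of $\g_{\pm\alpha}$ as subspaces of $\tilde{\g}(\AA)$ and of $\g(\AA)$, and the observation that the conclusion $\tilde{x}\in\r$ does not depend on the chosen preimage (two preimages differ by an element of $\r\cap\tilde{\n}^+\subseteq\r$). Thus the corollary is genuinely an immediate consequence of Lemma~\ref{lemma-maximal-ideal}.
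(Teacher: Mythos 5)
Your argument is correct and is exactly the intended deduction: the paper gives no separate proof, stating only that the corollary is immediate from Lemma~\ref{lemma-maximal-ideal}, and your lifting of $x$ to a preimage $\tilde{x}\in\tilde{\n}^+$ with $[\g_{-\alpha},\tilde{x}]\subseteq\r=\ker\pi$ is precisely how that implication works. The remarks about the embedding of $\mathcal{C}$ and the independence of the choice of preimage are correct but, as you note, routine.
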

	
	Theorem \ref{theorem-half-baked-Chevalley} and Proposition \ref{proposition-rank-2-chevalley} give the following corollary:
	
	\begin{corollary}\label{corollary-Chevalley}
		Let \(\mathcal{A}\) be a Cartan datum satisfying \(\mathfrak{g}_{-\alpha} = \mathfrak{g}_{\alpha}^\vee\) for all \(\alpha \in \Pi\). Assume
		\begin{align}\label{chevalley-equation}
			\omega_{\mathfrak{h}}([x,y]) = [\omega_{\alpha}(x),\omega_{-\alpha}(y)]
		\end{align} 
		for all \(x \in \mathfrak{g}_{\alpha}\) and \(y \in \mathfrak{g}_{-\alpha}\), for all \(\alpha \in \Pi\) (see Section \ref{subsection-Chevalley} for notation). Then \(\mathfrak{g}(\mathcal{A})\) admits a Chevalley automorphism $\omega$ with $\omega^2=\delta$, where $\delta(v)=(-1)^{\ol{v}}v$.
		
		In particular, if the \(\mathfrak{h}\)-modules \(\mathfrak{g}_{\alpha}\) are irreducible and \(\rk\alpha \leq 2\) for all \(\alpha \in \Pi\), then \eqref{chevalley-equation} holds.
	\end{corollary}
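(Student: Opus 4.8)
The plan is to deduce the corollary directly from Theorem \ref{theorem-half-baked-Chevalley} by passing to the quotient $\g(\AA)=\tilde{\g}(\AA)/\r$. First I would invoke Theorem \ref{theorem-half-baked-Chevalley}: its hypotheses are exactly the identity \eqref{chevalley-equation} together with $\g_{-\alpha}=\g_{\alpha}^\vee$ for all $\alpha\in\Pi$, so it produces the Chevalley automorphism $\tilde{\omega}$ of $\tilde{\g}(\AA)$ satisfying $\tilde{\omega}(\r)=\r$, $\tilde{\omega}|_{\h}=\omega_{\h}$, $\tilde{\omega}|_{\g_{\alpha}}=\omega_{\alpha}$ for $\alpha\in\pm\Pi$, and $\tilde{\omega}^2=\delta_{\tilde{\g}(\AA)}$.

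Since $\tilde{\omega}$ is an automorphism preserving the ideal $\r$, it descends to a well-defined automorphism $\omega$ of $\g(\AA)=\tilde{\g}(\AA)/\r$, and the same holds for $\tilde{\omega}^{-1}$, so $\omega$ is genuinely an automorphism. Because $\h$ and each $\g_{\alpha}$ embed into $\g(\AA)$ and $\r$ meets them trivially, the restrictions of $\omega$ agree with $\omega_{\h}$ and $\omega_{\alpha}$; in particular $\omega$ extends $\omega_{\h}$, hence is a Chevalley automorphism of $\g(\AA)$. The relation $\tilde{\omega}^2=\delta_{\tilde{\g}(\AA)}$ is compatible with the quotient map, and $\delta_{\tilde{\g}(\AA)}$ descends to the grading operator $\delta$ on $\g(\AA)$, so $\omega^2=\delta$; since $\delta^2=\mathrm{id}$, $\omega$ has order $4$ (unless $\g(\AA)$ is purely even).

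For the final assertion, the irreducibility of the $\g_{\alpha}$ together with $\rk\alpha\leq 2$ for all $\alpha\in\Pi$ is precisely the hypothesis of Proposition \ref{proposition-rank-2-chevalley}, which guarantees that \eqref{chevalley-equation} holds, and one then applies the first part. I do not expect any real obstacle: the corollary is a formal consequence of Theorem \ref{theorem-half-baked-Chevalley} and Proposition \ref{proposition-rank-2-chevalley}, the only point deserving a sentence being that the descent of $\tilde{\omega}$ to the quotient is legitimate, which is immediate from $\tilde{\omega}(\r)=\r$.
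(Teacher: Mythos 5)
Your proposal is correct and matches the paper exactly: the paper states this corollary as an immediate consequence of Theorem \ref{theorem-half-baked-Chevalley} (which gives $\tilde{\omega}$ with $\tilde{\omega}(\r)=\r$ and $\tilde{\omega}^2=\delta$, hence descent to $\g(\AA)=\tilde{\g}(\AA)/\r$) together with Proposition \ref{proposition-rank-2-chevalley} for the final assertion. Your extra sentence justifying the descent is the only content the paper leaves implicit, and it is handled correctly.
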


	\subsection{The Ramond superalgebra and the twisted $d=2$, $\NN=2$ superconformal algebra}\label{section ramond} We give here two examples with connections to physics. Let $\AA_{R}$ be the following Cartan datum: let $\h$ be the $(2|1)$ dimensional super vector space with odd generator $G_0$ and even generators $L_0,c$, with $[G_0,G_0]=2L_0-c/12$.  Let $\alpha$ be the weight with $\alpha(L_0)=-1$, $\alpha(c)=0$.  Let $\g_{\alpha}$ have basis $L_1,G_1$, and $\g_{-\alpha}$ basis $L_{-1},G_{-1}$, where $L_{\pm1}$ are even, $G_{\pm1}$ are odd, and we have actions
	\[
	[G_0,L_{\pm1}]=\mp G_{\pm1}/2, \ \ \ [G_0,G_{\pm1}]=2L_{\pm1}.
	\]
	Finally, set $[L_1,L_{-1}]=2L_0$, $[G_{1},G_{-1}]=2L_0+c/4$, and $[L_1,G_{-1}]=3G_0/2=[G_1,L_{-1}]$
	
	Next, define $\AA_{\NN=2}$ to be the Cartan datum with the same $\h$,  but now we take for simple root $\alpha/2$, so that $(\alpha/2)(L_0)=-1/2$, and $(\alpha/2)(c)=0$.  Then let $\g_{\alpha/2}$ have basis $J_{1/2}$, $H_{1/2}$, and $\g_{-\alpha/2}$ basis $J_{-1/2}$, $H_{-1/2}$, where $J_{\pm1/2}$ are even and $H_{\pm1/2}$ are odd.  We set the actions to be:
	\[
	[G_0,J_{\pm1/2}]=H_{\pm 1/2}, \ \ \ [G_0,H_{\pm1/2}]=\mp J_{\pm1/2}/2.
	\]
	Finally, set 
	\[
	[J_{1/2},J_{-1/2}]=c/6, \ \ \ [J_{1/2},H_{-1/2}]=-2G_0, \ \ \ [H_{1/2},H_{-1/2}]=2L_0.
	\]
	
	\begin{proposition}
		\begin{enumerate}
			\item $\g(\AA_R)$ is isomorphic to the Ramond algebra, i.e.~the $d=2$, $\NN=1$ superconformal algebra in the Ramond sector.  
			\item $\g(\AA_{\NN=2})$ is isomorphic to the twisted $d=2$, $\NN=2$ superconformal algebra.
		\end{enumerate}

	\end{proposition}
	\begin{proof}
		This follows from a straightforward comparison of relations, see for instance \cite{SS}.
	\end{proof}

	\subsection{Cartan subdata and morphisms}\label{section_subdatum}
	
	Let $\h'\subseteq\h$ be any Lie subalgebra of $\h$, and for each $\alpha\in\pm\Pi$ choose $\h'$-submodules $\g_{\alpha}'\subseteq\g_{\alpha}$ such that $[\g_{\alpha}',\g_{-\alpha}']\subseteq\h'$.  
	
	Then we obtain what we call a \emph{Cartan subdatum} $\AA'$ of $\AA$, which by definition consists of the above information, and constitutes a Cartan datum of its own apart from possibly failing the condition that the map $\g_{\alpha}'\otimes\g_{-\alpha}'\to\h'$ is nondegenerate.  Nevertheless, we may define $\tilde{\g}(\AA')$ in the natural way, and we will have an obvious map $\tilde{\g}(\AA')\to\g(\AA)$.
	
	There are two distinguished types of Cartan subdata.  Let $\Pi'\subseteq\Pi$ be a subset of simple roots.  Then we obtain a natural Cartan subdatum $\AA':=\AA_{\Pi'}$ of $\AA$ with $\h'=\h$, and $\g_{\alpha}'=\g_{\alpha}$ if $\pm\alpha\in\Pi'$, and $\g_{\alpha}'=0$ otherwise.  In this case $\AA'$ will be a Cartan datum in its own right.  We call this a \emph{full} Cartan subdatum.
	
	The other important case is when $\h'=\mathfrak{t}$, and we choose vectors $e_1,\dots,e_n,f_1,\dots,f_n$ with $e_i\in\g_{\alpha_i}$, $f_i\in\g_{-\alpha_i}$, $e_i,f_i$ of the same parity.  We call a Cartan subdatum of this form \emph{classical}. Setting $h_i:=[e_i,f_i]$, we obtain in this way a Cartan matrix $A'=(\alpha_j(h_i))$, and with it a natural map $\tilde{\g}'(A')\to\g(\AA)$, where $\tilde{\g}'(A')$ is the derived subalgebra of $\tilde{\g}(A')$.
	
	The following result will be of use later on.  
	\begin{corollary}
		If there exists a classical Cartan subdatum of $\AA$ with Cartan matrix $A'$ for which $\g(A')$ is not of finite growth, then $\g(\AA)$ is also not of finite growth.
	\end{corollary}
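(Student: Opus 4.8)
The plan is to prove the contrapositive: if $\g(\AA)$ is of finite growth, then so is $\g(A')$. Recall that a $\Z$-graded Lie superalgebra with finite-dimensional homogeneous components is of finite growth precisely when those dimensions grow polynomially, that passing to a graded subalgebra can only decrease the graded dimensions, and that for a Kac--Moody superalgebra finite growth is insensitive to the finite-dimensional degree-$0$ discrepancy between $\g(A')$ and its derived subalgebra $\g'(A')$. Hence it suffices to show that $\g'(A')=\tilde{\g}'(A')/\r'$ is of finite growth, where $\r'$ is the maximal ideal of $\tilde{\g}(A')$ meeting its Cartan trivially; by part~\ref{bullet4-half-baked} of Theorem~\ref{theorem-half-baked-structure} we have $\r'=(\r'\cap\tilde{\n}^{+})\oplus(\r'\cap\tilde{\n}^{-})\subseteq\tilde{\g}'(A')$. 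Let $e_1,\dots,e_n,f_1,\dots,f_n$ be the (nonzero) vectors of the given classical Cartan subdatum, $h_i=[e_i,f_i]\in\mathfrak{t}$, and let $\g'\subseteq\g(\AA)$ be the subalgebra they generate, i.e.~the image of the natural map $\tilde{\g}'(A')\to\g(\AA)$. It is graded by $Q=\bigoplus_i\Z\alpha_i$ with $\g'_\beta\subseteq\g(\AA)_\beta$; grading both algebras by height $\operatorname{ht}\colon Q\to\Z$ (with $\h$ in degree $0$), $\g'$ is a graded subspace of $\g(\AA)$, so it has finite-dimensional components (those of $\g(\AA)$ being finite-dimensional by Theorem~\ref{theorem-half-baked-structure}) and is of finite growth. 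Write $\pi\colon\tilde{\g}'(A')\twoheadrightarrow\g'$ for the induced surjection and $\mathfrak{k}=\ker\pi$, a graded ideal.

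The \emph{key step} is the claim that $\mathfrak{k}\cap\tilde{\g}'(A')_\beta\subseteq\r'$ for every $\beta\in Q$ with $\operatorname{ht}\beta\neq0$; I would prove it by induction on $|\operatorname{ht}\beta|$, treating positive heights (negatives being symmetric via the $\tilde{\n}^{-}$ half of Lemma~\ref{lemma-maximal-ideal}). For height $1$, the component $\tilde{\g}'(A')_1=\bigoplus_i\C e_i$ is sent by $\pi$ to the span of the chosen vectors $e_i\in\g_{\alpha_i}\subseteq\g(\AA)$, which lie in distinct $\mathfrak{t}$-weight spaces and are therefore linearly independent; hence $\pi$ is injective there and $\mathfrak{k}$ vanishes in height $1$. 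For $x\in\mathfrak{k}$ homogeneous of height $h\geq2$, assuming the claim for all weights of height between $1$ and $h-1$, and for each $i$: the element $[f_i,x]$ lies in $\mathfrak{k}$ (an ideal) and has height $h-1$, hence lies in $\r'$ by the inductive hypothesis; as $\g_{-\alpha_i}=\C f_i$ in a classical datum, Lemma~\ref{lemma-maximal-ideal} applied inside $\tilde{\g}(A')$ yields $x\in\r'$. This proves the claim.

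Finally, by the claim $\mathfrak{k}_n\subseteq\r'_n$ for all $n\neq0$, and since all components in play are finite-dimensional,
\[
\dim\g'_n \;=\; \dim\tilde{\g}'(A')_n-\dim\mathfrak{k}_n \;\geq\; \dim\tilde{\g}'(A')_n-\dim\r'_n \;=\; \dim\g'(A')_n \qquad (n\neq0),
\]
while the degree-$0$ components of $\g'$ and of $\g'(A')$ are of bounded dimension. Thus the finite growth of $\g'$, inherited from $\g(\AA)$, forces finite growth of $\g'(A')$, hence of $\g(A')$, contradicting the hypothesis and proving the corollary. The only substantial point is the inductive claim of the second paragraph; the remaining ingredients---finiteness of the graded components, compatibility of the height gradings under $\pi$ (automatic since $\Pi$ is linearly independent, so $Q$ is free of rank $n$), and the standard passage between $\g(A')$ and its derived subalgebra---are routine bookkeeping.
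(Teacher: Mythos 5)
Your proposal is correct and follows essentially the same route as the paper: the paper simply asserts that the image of $\tilde{\g}(A')\to\g(\AA)$ admits $\g'(A')$ modulo its center as a subquotient, and your ``key step'' (the height induction via Lemma~\ref{lemma-maximal-ideal} showing the kernel lies in $\r'$ away from degree zero) is exactly the justification of that assertion. The dimension comparison at the end is just a counting reformulation of the same subquotient statement, so nothing here diverges from the paper's argument.
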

	
	\begin{proof}
		By the setup described above, we have a map $\tilde{\g}(A')\to\g(\AA)$.  The image of this map admits as a subquotient the Lie superalgebra $\g'(A')$ quotient by its center, where $\g'(A')$ is the derived subalgebra of $\g(A')$.  Since this is of finite growth if and only if $\g(A')$ is, we are done.
	\end{proof}
	
	\section{Clifford Kac--Moody algebras}
	
	\begin{definition}\label{definition-integrable}
		We say that a Cartan datum $\AA$ (and by extension, the Lie superalgebra \(\mathfrak{g}(\AA)\)) is integrable if for any \(\alpha,\beta \in \pm \Pi\) there exists \(n \in \mathbb{N}\) such that \((\mathrm{ad}\ \mathfrak{g}_{\alpha})^n \mathfrak{g}_{\beta} = 0\).
	\end{definition}
	
	\begin{definition}
		We call \(\mathfrak{g}(\mathcal{A})\) Clifford Kac--Moody if it satisfies the following properties:
		\begin{enumerate}
			\item The \(\mathfrak{h}\)-module \(\mathfrak{g}_{\alpha}\) is irreducible for any \(\alpha \in \pm\Pi\); and
			\item $\AA$ is integrable.
		\end{enumerate}
		In this situation, we will also say that the Cartan datum $\AA$ is Clifford Kac--Moody.
	\end{definition}
	
	\begin{remark}\label{remark-almostKM-subdatum}
		For any full Cartan subdatum $\AA'$ of $\AA$ (see Section \ref{section_subdatum}), $\AA'$ is Clifford Kac--Moody whenever $\AA$ is.
	\end{remark}
	
		

	\begin{remark}
		For any \(\alpha \in \Pi\), choose a non-zero pure even coroot \(h_{\alpha}\in\h_{\alpha}\) and set \(A \coloneq (\beta(h_{\alpha}))_{\alpha,\beta\in\Pi}\). Then the integrability of \(\mathfrak{g}(\mathcal{A})\) implies that if \(A\) is elemental as in Sec.~2 of \cite{CS}, then it satisfies the condition of Lem.~3.1 in \cite{CS}. 
	\end{remark}
	
	\subsection{Examples of Clifford Kac--Moody algebras}
	\begin{example}
		Let $\g=\q(n)$ denote the Lie subalgebra of $\g\l(n|n)$ consisting of matrices of the form
		\[
		\begin{bmatrix}
			A & B\\ B &A
		\end{bmatrix},
		\]
		where $A$ and $B$ are arbitrary.  Set $\h\subseteq\q(n)$ to be the subalgebra of matrices of the form
		\[
		\begin{bmatrix} D & D'\\ D' & D\end{bmatrix},
		\]
		where $D,D'$ are diagonal.  Then $\h$ is quasi-toral and self-normalizing in $\g$.  Let $\mathfrak{t}=\h_{\ol{0}}$, the subalgebra of block diagonal matrices, and let $\epsilon_1,\dots,\epsilon_n\in\mathfrak{t}^*$ be the coordinate projections.  Then if we set $\alpha_i:=\epsilon_i-\epsilon_{i+1}$, we obtain a Cartan datum from $\h$ and $\g_{\pm\alpha_1},\dots,\g_{\pm\alpha_{n-1}}$, and all roots are of rank 2.  The root spaces $\g_{\alpha_i}$ are irreducible, and since integrability is easily checked, we see that $\g$ is Clifford Kac--Moody.  
		
		Write $e_i:=e_{ii}+e_{n+i,n+i}$ and $E_{i}:=e_{i,n+i}+e_{n+i,i}$.  Then the pure coroots in $\h_{\alpha_i}$ are given by $h_i:=e_i-e_{i+1}$, $c_i:=e_i+e_{i+1},$ and $H_i:=E_{i}-E_{i+1}$.  It is interesting to note that the pure coroots are distinct for different simple roots.
		
		One should view $\q(n)$ as the primary, motivating example of a Clifford Kac--Moody algebra, and of our construction at large.  
		
	\end{example}
	
	\begin{example}\label{example_sqn}
		Let $\g=\s\q(n)=[\q(n),\q(n)]$.  Then $\g$ has Cartan subalgebra given by $\h$ the set of matrices of the form
		\[
		\begin{bmatrix}
			D & D'\\ D' & D
		\end{bmatrix},
		\] 
		where $\operatorname{tr}(D')=0$.  The rest of the Cartan datum remains unchanged from that of $\q(n)$, and $\s\q(n)$ is Clifford Kac--Moody whenever $n\geq 3$ (notice that $\g_{\alpha_1}$ is not irreducible in $\s\q(2)$).  
	\end{example}
	
	\begin{example}\label{example_psq(n)}
		Let $\g=\p\s\q(n):=\s\q(n)/\C I_{n|n}$.  Similarly to the previous example, one can verify that $\g$ is Clifford Kac--Moody if $n\geq3$.
	\end{example}

	\begin{example}[Takiff Superalgebras]\label{example-takiff}
		An interesting class of examples of Clifford Kac--Moody algebras is obtained via extensions of Takiff superalgebras, which can be described as follows (see also Example 5.1 in \cite{S2}).  Let \(\mathfrak{s}\) be a symmetrizable integrable (as in Sec.~3 of \cite{CS}) Kac--Moody superalgebra with invariant form \((-,-)\).  We define 
		\[
		T\mathfrak{s} \coloneq \mathfrak{s}\otimes\mathbb{C}[\xi]/(\xi^2)\oplus \mathbb{C}\langle\partial_\xi,c,z\rangle,
		\]
		where \(\xi\) and \(\partial_\xi\) are odd, \(c\) and \(z\) even and central, and we have the following bracket (note that there are signs missing in the formula for the bracket in \cite{S2}):
		\begin{align*}
			&[x\otimes p+ a\partial_\xi  , y\otimes q+ b\partial_\xi] = \\ & (-1)^{\ol{p}\ol{y}}[x,y]\otimes pq+(-1)^{\ol{y}}ay\otimes q'+(-1)^{\ol{p}}bx\otimes p'+(-1)^{\ol{y}}(x,y)Res(p'q)c + abz 
		\end{align*}		
		for \(x,y \in \mathfrak{s}\), \(a,b \in \mathbb{C}\), and $p,q\in\C[\xi]$.	Here we write e.g.~$p':=\d_{\xi}p$, and $Res(a+b\xi)=b$.
		
		To check that \(\g:=T\mathfrak{s}\) is Clifford Kac--Moody, let $\ol{\mathfrak{t}}\subseteq\s$ be a maximal torus of $\s$, and let $\h=\ol{\mathfrak{t}}\otimes\C[\xi]\oplus\C\langle\d_{\xi},c,z\rangle$.  Then $\h$ is quasi-toral and self-normalizing in $T\s$.  Write $\mathfrak{t}:=\h_{\ol{0}}$.  If $\ol{\Pi}=\{\ol{\alpha_1},\dots,\ol{\alpha_n}\}\subseteq(\ol{\mathfrak{t}})^{*}$ denotes the simple roots of $\s$, let $\alpha_i\in\mathfrak{t}^*$ denote the weight satisfying $\alpha_i(t)=\ol{\alpha_i}(t)$ for $t\in\ol{\mathfrak{t}}$, and $\alpha_i(c)=\alpha_i(z)=0$.  Then $\Pi=\{\alpha_1,\dots,\alpha_n\}\subseteq\mathfrak{t}^*$ is linearly independent, and $\h$ along with the root spaces $\g_{\pm\alpha_i}$ give rise to a Cartan datum.  
		
		The root spaces $\g_{\alpha_i}$ are spanned by $e_{i}\otimes1,e_i\otimes\xi$ where $e_i$ is the Chevalley generator of $\s$ for the root $\ol{\alpha_i}$, and it is easy to see this forms an irreducible $\h$-module (with the help of $\d_{\xi}$).  Finally, $\g$ will be integrable because the same is true of $\s$.  
		
		Observe that every simple root of $T\s$ is of rank 2.  The $\alpha$ coroot-space $\h_{\alpha}$ for a simple root $\alpha$ is given by $h_{\ol{\alpha}}\otimes \C[\xi]\oplus\C\langle c\rangle$, where $h_{\ol{\alpha}}$ denotes the coroot in $\s$ of the simple root $\ol{\alpha}$.  Thus the pure coroots are $h_{\ol{\alpha}}\otimes 1$, $h_{\ol{\alpha}}\otimes\xi$, and $c$; in particular every coroot space shares the pure coroot $c$, and this coroot is central.
		
		As a special case of the above construction, we have that 
		\[
		T\s\l(2)/(c-z)\cong\q(2).
		\]
		
		We will see that in some ways the property of two coroot spaces sharing a pure, central coroot is characteristic of the superalgebras $T\s$.  This construction represents the only known `general' construction of nontrivial Clifford Kac--Moody algebras, i.e.~ones with simple roots of rank bigger than 0.      
	\end{example}
	
	\begin{example}\label{example_takiff_non_symm}
		In the above example, we can drop the condition that $\mathfrak{s}$ be symmetrizable at the cost of removing the central extension $c$.  We will still obtain a Clifford Kac--Moody algebra in this way, and we will still call this superalgebra $T\s$.
	\end{example}
	
	\begin{example}\label{example_ts_pts}
		For examples of superalgebras that are not Clifford Kac--Moody, let $\s$ be a symmetrizable Kac--Moody Lie superalgebra, and write $\mathfrak{t}\s$ for the subalgebra of $T\s$ spanned by $\s\otimes\C[\xi]$ and $c$.  It $\ol{\mathfrak{t}}$ is a maximal torus of $\s$, then we set $\h=\ol{\mathfrak{t}}\otimes\C[\xi]\oplus\C\langle c\rangle$ to obtain a self-normalizing quasi-toral subalgebra.  However in this case the root spaces of simple roots, spanned by $e_i\otimes 1$ and $e_i\otimes\xi$ for a Chevalley generator $e_i$ of $\s$, will not be irreducible $\h$-modules because we no longer have the derivation $\d_{\xi}$.  Thus $\mathfrak{t}\s$ is not Clifford Kac--Moody (however it does arise naturally from our more general construction, as $\mathfrak{t}\s$ contains no nontrivial ideals that intersect $\h$ trivially).  
		
		Another example is obtained by considering $\p\mathfrak{t}\s:=\mathfrak{t}\s/(c)\cong\s\otimes\C[\xi]$; and indeed if $\mathfrak{s}$ is not symmetrizable then this is the natural superalgebra to consider.  This case is again clearly not Clifford Kac--Moody, but once again arises naturally from our construction.
		
		Finally, we note that 
		\[
		\mathfrak{t}\s\l(2)\cong\s\q(2) \ \text{ and } \ \mathfrak{pt}\s\l(2)\cong\p\s\q(2).
		\]
	\end{example}

	\section{Cartan datum with one simple root}\label{section-one-root}
	
	In light of Section \ref{section_subdatum} and Remark \ref{remark-almostKM-subdatum}, it is wise to begin our study with Clifford Kac--Moody algebras \(\mathfrak{g}(\mathcal{A})\) having one simple root. Thus let \(\mathcal{A}\) be a Cartan datum with \(\Pi = \{\alpha\}\) and irreducible \(\mathfrak{h}\)-modules \(\mathfrak{g}_{\alpha}\) and \(\mathfrak{g}_{-\alpha}\). The integrability condition on \(\mathfrak{g}(\mathcal{A})\) means that \((\mathrm{ad}\ \mathfrak{g}_{\alpha})^n \mathfrak{g}_{\alpha} = 0\) and \((\mathrm{ad}\ \mathfrak{g}_{-\alpha})^n \mathfrak{g}_{-\alpha} = 0\) for some \(n \in \mathbb{N}\).
	
	For a simple root $\alpha$ of any Cartan datum, write 
	\[
	\g\langle\alpha\rangle:=\h_{\alpha}\oplus\bigoplus\limits_{n\in\Z_{\neq0}}\g_{n\alpha}.
	\]
	Said otherwise, $\g\langle\alpha\rangle$ is the subalgebra generated by $\g_{\alpha}$ and $\g_{-\alpha}$.
	
	\subsection{Roots of Heisenberg type}\label{section_heisenberg}
	
	\begin{definition}
		We say that a simple root $\alpha$ is of Heisenberg type if any of the following equivalent conditions hold:
		\begin{enumerate}
			\item $[\h_{\alpha},\mathfrak{g}_{\alpha}]=0$;
			\item $\h_{\alpha}\subseteq\operatorname{Ann}_{\h}\g_{\alpha}$;
			\item $\h_{\alpha}$ is central in $\g\langle\alpha\rangle$.
		\end{enumerate}
	\end{definition}
	The one part of the above definition that may not be clear is why $\h_{\alpha}$ need be abelian; however if there exists $H_1,H_2\in(\h_{\alpha})_{\ol{1}}$, then $[H_1,H_2]\in[H_1,[\g_{\alpha},\g_{-\alpha}]]=0$.  
	
	\begin{lemma}
		If $\alpha$ is of Heisenberg type, then \(\Delta = \{\pm \alpha\}\).
	\end{lemma}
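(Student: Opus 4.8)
The claim is that if $\alpha$ is of Heisenberg type, then $\Delta = \{\pm\alpha\}$, i.e.\ no roots $n\alpha$ with $|n|\geq 2$ can occur. The plan is to show that $\g\langle\alpha\rangle = \g_{-\alpha}\oplus\h_\alpha\oplus\g_\alpha$, which immediately gives the result since $\Delta$ for a one-root datum lives inside $\g\langle\alpha\rangle$ plus $\h$. First I would recall that by construction $\g\langle\alpha\rangle$ is generated by $\g_\alpha$ and $\g_{-\alpha}$, so it suffices to prove that $[\g_\alpha,\g_\alpha] = 0$ and $[\g_{-\alpha},\g_{-\alpha}]=0$; once these hold, the bracket of any two generators lands in $\g_{\pm 2\alpha}\cap(\text{stuff})$ — more carefully, $[\g_\alpha,\g_\alpha]\subseteq\g_{2\alpha}$ and $[\g_\alpha,\h_\alpha]\subseteq\g_\alpha$ (zero by the Heisenberg hypothesis, condition (1)), so the only new elements one could build are in $\g_{\pm 2\alpha}$, and killing those kills everything of height $\geq 2$ inductively.

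The heart of the matter is therefore to show $[\g_\alpha,\g_\alpha]=0$ (the $-\alpha$ case being symmetric, e.g.\ via the Chevalley automorphism when it exists, or by the identical argument). Here is where I would use the structure of $\g_\alpha$ as an irreducible $\h$-module together with Corollary~\ref{corollary-maximal-ideal}: it suffices to show $[\g_{-\alpha},[\g_\alpha,\g_\alpha]]=0$, since then $[\g_\alpha,\g_\alpha]\subseteq\n^+$ is killed by all $\g_{-\beta}$, $\beta\in\Pi=\{\alpha\}$, hence is zero. To compute $[\g_{-\alpha},[\g_\alpha,\g_\alpha]]$, apply the Jacobi identity: for $x,x'\in\g_\alpha$ and $y\in\g_{-\alpha}$,
\[
[y,[x,x']] = \pm[[y,x],x'] \pm [x,[y,x']],
\]
and each $[y,x],[y,x']\in\h_\alpha$. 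Now the Heisenberg hypothesis (condition (1), $[\h_\alpha,\g_\alpha]=0$) says precisely that $[\h_\alpha,\g_\alpha]=0$, so both terms $[[y,x],x']$ and $[x,[y,x']]$ vanish. Hence $[\g_{-\alpha},[\g_\alpha,\g_\alpha]]=0$, and Corollary~\ref{corollary-maximal-ideal} finishes it.

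With $[\g_\alpha,\g_\alpha]=0$ and $[\g_{-\alpha},\g_{-\alpha}]=0$ in hand, I would run a short induction on height to conclude $\g_{n\alpha}=0$ for $|n|\geq 2$: the space $\n^+\cap\g\langle\alpha\rangle$ is spanned by iterated brackets of elements of $\g_\alpha$; any bracket of the form $[x,z]$ with $x\in\g_\alpha$ and $z$ of height $k\geq 1$ is, again by Jacobi, a combination of brackets $[[x,\cdot],\cdot]$ where the inner $[x,\cdot]$ is either in $\g_{2\alpha}$ (which we must show is zero — but $\g_{2\alpha}=[\g_\alpha,\g_\alpha]=0$ directly) or lands in lower-height pieces inductively; since the only height-$1$ piece is $\g_\alpha$ and $[\g_\alpha,\g_\alpha]=0$, nothing of height $2$ survives, and then nothing of height $\geq 2$ survives. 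Thus $\g\langle\alpha\rangle = \g_{-\alpha}\oplus\h_\alpha\oplus\g_\alpha$ and $\Delta=\{\pm\alpha\}$.

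**Expected main obstacle.** The only genuinely delicate point is bookkeeping the signs and the fact that $\g_\alpha$ may be odd (so $[\g_\alpha,\g_\alpha]$ is computed with the super-bracket, which for odd $x$ gives a potentially nonzero $[x,x]$); the Jacobi-identity computation must be carried out for the super-bracket and one should check that the conclusion $[\g_{-\alpha},[\g_\alpha,\g_\alpha]]=0$ is unaffected by these signs — it is, because every term on the right-hand side already contains a factor $[\h_\alpha,\g_\alpha]=0$ regardless of sign. A secondary (but routine) point is making the height induction airtight, for which Corollary~\ref{corollary-maximal-ideal} and part~\ref{bullet1-half-baked} of Theorem~\ref{theorem-half-baked-structure} together do all the work. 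I do not anticipate needing the Chevalley automorphism, since the $-\alpha$ statement is proved by the mirror-image argument.
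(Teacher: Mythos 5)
Your proposal is correct and follows essentially the same route as the paper: the Jacobi identity combined with the Heisenberg hypothesis $[\h_{\alpha},\g_{\alpha}]=0$ gives $[\g_{-\alpha},[\g_{\alpha},\g_{\alpha}]]=0$, then Corollary~\ref{corollary-maximal-ideal} yields $[\g_{\alpha},\g_{\alpha}]=0$ (and symmetrically for $-\alpha$), after which the vanishing of all higher root spaces follows since $\n^{+}$ is generated by $\g_{\alpha}$. The paper leaves the final height induction implicit, but your spelled-out version is the intended argument.
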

	\begin{proof}
		The Jacobi identity implies
		\begin{align*}
			[\mathfrak{g}_{-\alpha},[\mathfrak{g}_{\alpha},\mathfrak{g}_{\alpha}]] \subseteq [\mathfrak{g}_{\alpha},[\mathfrak{g}_{\alpha},\mathfrak{g}_{-\alpha}]].
		\end{align*}
		Hence our assumption \([\mathfrak{g}_{\alpha},[\mathfrak{g}_{\alpha},\mathfrak{g}_{-\alpha}]] = 0\) yields \([\mathfrak{g}_{-\alpha},[\mathfrak{g}_{\alpha},\mathfrak{g}_{\alpha}]] = 0\). From Corollary \ref{corollary-maximal-ideal} we obtain \([\mathfrak{g}_{\alpha},\mathfrak{g}_{\alpha}] = 0\). A similar argument shows \([\mathfrak{g}_{-\alpha},\mathfrak{g}_{-\alpha}] = 0\).
	\end{proof}
	
	\begin{definition}\label{definition he_n}
		For each  $n\geq0$, let $\h\e(n)$ denote the following Lie superalgebra: let $\h(n)$ be the Lie superalgebra constructed in Example \ref{example of h_n}, and let $C_1$ denote an irreducible representation of $\h(n)$ in which $c$ acts as the identity; assume that $C_1$ is purely even if $n=0$.  
		
		Now set $C_{-1}:=C_1^\vee$, and let $\h$ denote the quotient of the $\h(n)$-module $C_1\otimes C_{-1}$ by the submodule generated by odd elements lying in its radical (see Remark \ref{remark on vanishing}).  Then $C_{-1}\oplus \h\oplus\h(n)\oplus C_1$ is Clifford Kac--Moody, where $\h$ is abelian and acts trivially on $C_{\pm1}$, and the bracket map $[-,-]:C_{1}\otimes C_{-1}\to\h\oplus \h(n)$ is the quotient map onto $\h$.  We set $\h\e(n):=C_{-1}\oplus\h\oplus C_1$.  
		
		Similarly let $\h^{\Pi}$ denote the quotient of $C_{1}\otimes\Pi C_{-1}$ by the submodule generated by odd elements lying in the radical.  Then $\Pi C_{-1}\oplus\h^{\Pi}\oplus\h(n)\oplus C_1$ will be Clifford Kac--Moody in a similar way, and we set $\h\e(n)^{\Pi}:=\Pi C_{-1}\oplus \h^{\Pi}\oplus C_1$.
	\end{definition}
	\begin{example}\label{example_he(n)}
		If $n=0$ in the above, we obtain the purely even algebra $\h\e(0)=\C\langle e,h,f\rangle$ with $h$ central and $[e,f]=h$.  On the other hand, $\h\e(0)^{\Pi}=\C\langle e,h,f\rangle$ where $e,h$ are odd, $f$ is even, $h$ is central, and $[e,f]=h$.
		
		Finally we note that $\h\e(2)\cong\mathfrak{t}\s\l(1|1)\cong\mathfrak{t}\h\e(0)$.
	\end{example}
	
	The following lemma is straightforward, and mostly given for purposes of clarity.
	\begin{lemma}
		If $\alpha$ is a simple root of rank $n$ and of Heisenberg type, then $\g\langle\alpha\rangle$ is a quotient by a central ideal of $\h$ of one of the following:
		\begin{enumerate}
			\item if $n$ is odd, then is it a quotient of $\h\e(n)$;
			\item if $n\equiv 2$(mod 4), then it is a quotient of either $\h\e(n)$ or $\h\e(n)^{\Pi}$;
			\item if $n\equiv 0$(mod 4), then it is a quotient of either $\h\e(n),$ $\h\e(n)^{\Pi}$, or $\Pi C_{-1}\oplus \h\oplus\Pi C_1$ in the notation of Definition \ref{definition he_n}.
		\end{enumerate} 
	\end{lemma}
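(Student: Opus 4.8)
The plan is to exploit the rigidity of the Heisenberg case to show that the entire Lie-superalgebra structure of $\g\langle\alpha\rangle$ is forced by the $\h$-module data sitting at $\alpha$. First I would extract that structure: since $\alpha$ is of Heisenberg type we have $\Delta=\{\pm\alpha\}$ and $[\g_{\pm\alpha},\g_{\pm\alpha}]=0$, so $\g\langle\alpha\rangle=\g_{-\alpha}\oplus\h_\alpha\oplus\g_\alpha$, with $\h_\alpha$ central in $\g\langle\alpha\rangle$ and abelian. Consequently, as a Lie superalgebra $\g\langle\alpha\rangle$ is completely determined by the two $\h$-modules $\g_{\pm\alpha}$ together with the $\h$-equivariant bracket $[-,-]_\alpha\colon\g_\alpha\otimes\g_{-\alpha}\to\h_\alpha$, every remaining bracket being zero; so it suffices to identify these data.

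Next I would recognize $\g_{\pm\alpha}$ as the modules $C_{\pm1}$ of Definition~\ref{definition he_n}. Since $\g_\alpha$ is an irreducible $\h$-module of weight $\alpha$ with $\rk\alpha=n$, its action factors through the surjection $\rho\colon\h\twoheadrightarrow\h(n)$ given by $t\mapsto\alpha(t)c$ on $\mathfrak t$ and by the projection $\h_{\ol{1}}\twoheadrightarrow\h_{\ol{1}}/\ker B_\alpha\cong\C^n$ (carrying the form $B_\alpha$) on $\h_{\ol{1}}$: indeed $\ker\rho=\ker\alpha\oplus\ker B_\alpha$, the first summand annihilating $\g_\alpha$ because $\mathfrak t$ acts by the scalar $\alpha$, and the second because by \eqref{eq_iso_cliff} the space $\ker B_\alpha$ lies in a nilpotent ideal of $\CC\ell(\alpha)$, which kills every irreducible. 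Choosing $t_0$ with $\alpha(t_0)=1$ one sees that $c\in\h(n)$ acts as $+\mathrm{id}$ on $\g_\alpha$ and as $-\mathrm{id}$ on $\g_{-\alpha}$, so $\g_\alpha$ is $C_1$ and $\g_{-\alpha}$ is $C_{-1}=C_1^\vee$, each up to a parity shift (the $c$-eigenvalues matching since $\omega_{\h(n)}$ negates $c$). Thus $(\g_\alpha,\g_{-\alpha})$ is one of the four parity-shifts of $(C_1,C_{-1})$.

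Now I would build the surjection from the relevant model. For each of the four pairs there is a model algebra constructed exactly as in Definition~\ref{definition he_n}, with coroot space $(\g_\alpha\otimes\g_{-\alpha})/R$, where $R$ is the submodule generated by the odd elements of the radical. By Remark~\ref{remark on vanishing}, the Cartan-datum bracket $[-,-]_\alpha$ vanishes on $R$, hence factors through $(\g_\alpha\otimes\g_{-\alpha})/R$; taking the identity on $\g_{\pm\alpha}$ and this factorization on the coroot space defines a surjective homomorphism from that model onto $\g\langle\alpha\rangle$ (every bracket is accounted for by the first paragraph), whose kernel, lying inside the central coroot space of the model, is a central ideal. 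Since the two "mixed" pairs give the same model after the harmless relabeling $\alpha\leftrightarrow-\alpha$, this already shows $\g\langle\alpha\rangle$ is a quotient by a central ideal of one of $\h\e(n)$, $\h\e(n)^{\Pi}$, or $\Pi C_{-1}\oplus\h\oplus\Pi C_1$.

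The remaining task — collapsing this list according to $n\bmod 4$ — is where I expect the main difficulty. When $n$ is odd, $\CC\ell(\alpha)$ has a unique parity-invariant irreducible, so $\Pi C_{\pm1}\cong C_{\pm1}$ and all three models coincide with $\h\e(n)$. For $n$ even, $\h\e(n)$ and $\h\e(n)^{\Pi}$ are distinct, and the delicate point is whether $\Pi C_{-1}\oplus\h\oplus\Pi C_1$ is genuinely new: computing the three coroot spaces from the structure of $C_\lambda\otimes C_\lambda^\vee$ in \eqref{equation-2Z+1}--\eqref{equation-4Z+2}, together with the formula for $C_\lambda^\#$, should show that when $\rk\alpha\equiv2\pmod4$ the pair $(\Pi C_1,\Pi C_{-1})$ yields an algebra isomorphic to one already listed, whereas for $\rk\alpha\equiv0\pmod4$ it does not. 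This comparison of coroot spaces and brackets is the subtle step; everything preceding it is bookkeeping with the representation theory of $\h$ recalled in Section~\ref{section_quasi_toral}.
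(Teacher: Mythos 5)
The paper itself offers no proof of this lemma (it is declared ``straightforward, and mostly given for purposes of clarity''), so there is nothing to compare your route against; I can only assess it on its own terms. Most of it is sound. The reduction of $\g\langle\alpha\rangle$ to the triple $(\g_{\alpha},\g_{-\alpha},[-,-]_{\alpha})$ is correct, since Heisenberg type forces $\Delta=\{\pm\alpha\}$, $[\g_{\pm\alpha},\g_{\pm\alpha}]=0$ and $\h_{\alpha}$ central. Your identification of $\g_{\pm\alpha}$ with $C_{\pm1}$ up to parity, via the surjection $\h\twoheadrightarrow\h(n)$ killing $\ker\alpha\oplus\ker B_{\alpha}$ (the latter acting nilpotently by \eqref{eq_iso_cliff}), is a clean and correct argument, as is the use of Remark \ref{remark on vanishing} to factor $[-,-]_{\alpha}$ through the model's coroot space and produce the surjection with central kernel. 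The disposal of the pair $(\Pi C_1,C_{-1})$ by exchanging $\alpha$ and $-\alpha$ is also fine (using $(C^{\vee})^{\vee}\cong C$ and $\CC\ell(V,B)\cong\CC\ell(V,-B)$).

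The genuine gap is the one step carrying all the arithmetic content of the statement: showing that for $n\equiv 2\pmod 4$ the fourth pair $(\Pi C_1,\Pi C_{-1})$ yields an algebra isomorphic to $\h\e(n)$ or $\h\e(n)^{\Pi}$. You defer this with ``should show,'' and the method you propose --- comparing the three coroot spaces via \eqref{equation-2Z+1}--\eqref{equation-4Z+2} --- cannot close it: since $\Pi C_1\otimes\Pi C_{-1}\cong C_1\otimes C_{-1}$ canonically as $\h(n)$-modules, the $(\Pi C_1,\Pi C_{-1})$-model and $\h\e(n)$ have isomorphic coroot spaces for \emph{every} $n$, so no invariant of the coroot space alone can either establish the isomorphism for $n\equiv2\pmod 4$ or detect a difference for $n\equiv0\pmod4$. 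What is actually required is an even linear isomorphism of the two Heisenberg-type algebras intertwining the bracket maps $\g_{\alpha}\otimes\g_{-\alpha}\to\h_{\alpha}$; this amounts to comparing the induced pairings on $(C_1\oplus C_{-1})_{\ol 0}$ and $(C_1\oplus C_{-1})_{\ol 1}$ before and after the shift (note the naive transport of the bracket along the double parity shift has the wrong supersymmetry, so the comparison is not formal), and it is exactly here that the residue of $n$ modulo $4$ enters. Separately, your parenthetical goal of proving that for $n\equiv0\pmod4$ the third model is \emph{not} isomorphic to the others is unnecessary: the lemma only asserts that $\g\langle\alpha\rangle$ is a quotient of \emph{one of} the listed algebras, so for $n\equiv0\pmod4$ all four parity combinations are already covered and nothing further needs proving there.
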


	\subsection{Simple roots of non-Heisenberg type}   The goal for the rest of this section is to prove the following theorem:
	
	\begin{theorem}\label{theorem-one-simple-root}
		Suppose \(\h_{\alpha} \nsubseteq \mathrm{Ann}_{\mathfrak{h}}\mathfrak{g}_{\alpha}\). Then \(\mathfrak{g}_{-\alpha} \simeq \mathfrak{g}_{\alpha}^\vee\), and one of the following happens:
		\begin{enumerate}[label=(\roman*)]
			\item \(\rk\alpha = 0\), \(\Delta = \{\pm \alpha\}\), and \(\mathfrak{g}\langle \alpha \rangle\) is isomorphic to \(\s\l(2)\).
			\item \(\rk\alpha = 0\), \(\Delta = \{\pm \alpha,\pm 2\alpha\}\), and \(\mathfrak{g}\langle \alpha \rangle\) is isomorphic to \(\mathfrak{osp}(1|2)\).
			\item \(\rk\alpha = 2\), \(\Delta = \{\pm \alpha\}\), and \(\mathfrak{g}\langle \alpha \rangle\) is isomorphic to either $\mathfrak{tsl}(2)\cong\s\q(2)$ or $\mathfrak{ptsl}(2)\cong\p\s\q(2)$ (see Examples \ref{example_sqn}, \ref{example_psq(n)}, and \ref{example_ts_pts}).
			\item \(\rk\alpha = 2\), \(\Delta = \{\pm \alpha,\pm 2\alpha\}\), and \(\mathfrak{g}\langle \alpha \rangle\) is isomorphic to either $\mathfrak{t}\o\s\p(1|2)$ or $\p\mathfrak{t}\o\s\p(1|2)$.
		\end{enumerate}
	\end{theorem}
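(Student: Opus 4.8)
The plan is to first pin down the two simple root spaces and extract a classical $\s\l(2)$- or $\mathfrak{osp}(1|2)$-triple, then use integrability to force $\rk\alpha\leq 2$, and finally run through the short resulting list.

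\emph{Duality and a distinguished coroot.} First I would observe that $[-,-]_{\alpha}$ is a nonzero $\h$-module map $\g_{\alpha}\otimes\g_{-\alpha}\to\h$. Since $\g_{-\alpha}$ is, up to parity, the unique irreducible of weight $-\alpha$, and $\h$ regarded as an $\h$-module (with $\mathfrak{t}$ acting by $0$) has only trivial simple subquotients, comparing with the explicit description of $\g_{\alpha}\otimes\g_{\alpha}^{\vee}$ in \eqref{equation-2Z+1}, \eqref{equation-4Z}, \eqref{equation-4Z+2} forces $\g_{-\alpha}\cong\g_{\alpha}^{\vee}$. Next I would produce a pure even coroot $h=[e,f]$, with $e\in\g_{\alpha}$, $f\in\g_{-\alpha}$ homogeneous of the same parity, such that $\alpha(h)\neq 0$: the inputs are that $\ker B_{\alpha}$ acts trivially on $C_{\alpha}$ (so an odd coroot outside $\ker B_{\alpha}$ is needed to act nontrivially on $\g_{\alpha}$), that $\h_{\alpha}$ is an ideal of $\h$, and Corollary \ref{corollary-tensor-of-irreducibles} (for $\rk\alpha$ even) or the decomposition \eqref{equation-2Z+1} (for $\rk\alpha$ odd); together these show that $\h_{\alpha}\nsubseteq\operatorname{Ann}_{\h}\g_{\alpha}$ forces some even coroot to pair nontrivially with $\alpha$, and passing to a pure summand gives $h=[e,f]$. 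If $e$ is even then $[e,e]=0$ and $\langle e,h,f\rangle\cong\s\l(2)$; if $e$ is odd then super-Jacobi gives $[[e,e],f]=2[e,[e,f]]=2[e,h]=-2\alpha(h)e\neq 0$, so $[e,e]\neq 0$ and $e,f,h$ generate $\mathfrak{osp}(1|2)$.

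\emph{Finite-dimensionality and the rank bound.} Integrability gives $(\operatorname{ad}\g_{\alpha})^{N}\g_{\alpha}=0$ for some $N$, and since $\g\langle\alpha\rangle$ is graded and generated in degrees $\pm1$ one has $\g_{(k+1)\alpha}=(\operatorname{ad}\g_{\alpha})^{k}\g_{\alpha}$; hence $\g_{k\alpha}=0$ for $|k|>N$ and $\g\langle\alpha\rangle$ is finite-dimensional. Let $Z$ be the center of $\g\langle\alpha\rangle$. By Corollary \ref{corollary-maximal-ideal}, $Z\subseteq\h_{\alpha}$, and $Z$ consists of the even coroots annihilated by $\alpha$ together with the odd coroots lying in $\ker B_{\alpha}$; in particular $Z\cap\h_{\alpha}$ has codimension one in $(\h_{\alpha})_{\ol{0}}$ (as $\alpha(h)\neq 0$). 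The finite-dimensional Lie superalgebra $\g\langle\alpha\rangle/Z$ is a module over the $\s\l(2)$ (resp.\ $\mathfrak{osp}(1|2)$) of the previous step, and, after normalizing $\alpha(h)=2$, $\g_{n\alpha}$ sits in a single weight space; since weight multiplicities of finite-dimensional modules are non-increasing away from $0$, $\dim(\g\langle\alpha\rangle/Z)_{0}\geq\dim\g_{\alpha}$. But $(\g\langle\alpha\rangle/Z)_{0}=\h_{\alpha}/(Z\cap\h_{\alpha})$ has dimension at most $1+\dim(\h_{\alpha})_{\ol{1}}$, which Corollary \ref{corollary-tensor-of-irreducibles} (together with \eqref{equation-2Z+1} in the odd case) bounds by $2$. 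Therefore $\dim\g_{\alpha}\leq 2$, i.e.\ $\rk\alpha\leq 2$; moreover $\rk\alpha\in 4\Z$ forces $(\h_{\alpha})_{\ol{1}}=0$, hence $\rk\alpha=0$.

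\emph{Excluding $\rk\alpha=1$ and identifying $\g\langle\alpha\rangle$.} To rule out $\rk\alpha=1$, I would show the $\s\l(2)$-string through the odd basis vector $e'$ of the $(1|1)$-dimensional $\g_{\alpha}$ never terminates: $e'$ has $h$-weight $2$ while $[e,e']\in\g_{2\alpha}$ has $h$-weight $4$, and computing $[f,-]$ of the iterated brackets $(\operatorname{ad}e)^{k}e'$ by super-Jacobi shows each one reproduces its predecessor up to a nonzero scalar built from $\alpha(h)$ and $(2\alpha)(h)$, so Lemma \ref{lemma-maximal-ideal} shows nothing is killed in $\g(\AA)=\tilde{\g}(\AA)/\r$ and $\g_{k\alpha}\neq 0$ for all $k$ --- contradicting integrability. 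Thus $\rk\alpha\in\{0,2\}$. When $\rk\alpha=0$, $\g_{\alpha}=\C e$ and the Step-1 computation gives $\g\langle\alpha\rangle\cong\s\l(2)$ with $\Delta=\{\pm\alpha\}$ for $e$ even, and $\g\langle\alpha\rangle\cong\mathfrak{osp}(1|2)$ with $\Delta=\{\pm\alpha,\pm2\alpha\}$ for $e$ odd (the bracket closes because $[e,[e,e]]=0$, again by super-Jacobi). When $\rk\alpha=2$, I would realize $\g_{\alpha}$ as in Section \ref{subsection-realization-irreducible} with $e$ even and $e'=H_{1}\cdot e$ odd, use Proposition \ref{proposition-rank-2-chevalley} for the Chevalley automorphism and Corollary \ref{corollary-tensor-of-irreducibles}(2) to see $\h_{\alpha}$ is generated by one odd coroot, and split according to whether $[e',e']=0$: if so, $\Delta=\{\pm\alpha\}$ and $\g\langle\alpha\rangle$ is spanned by an $\s\l(2)$, its image under the odd coroot, and --- if nonzero --- the central pure coroot, giving $\mathfrak{tsl}(2)\cong\s\q(2)$ or $\mathfrak{ptsl}(2)\cong\p\s\q(2)$; if $[e',e']\neq 0$, then $\Delta=\{\pm\alpha,\pm2\alpha\}$ and the same analysis with $\mathfrak{osp}(1|2)$ replacing $\s\l(2)$ yields $\mathfrak{t}\o\s\p(1|2)$ or $\p\mathfrak{t}\o\s\p(1|2)$. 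In each rank-$2$ case the remaining work is to check that the structure constants are pinned down by $\h$-equivariance and integrability to be those of the Takiff construction of Example \ref{example-takiff}.

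The main obstacle is the rank bound and the elimination of $\rk\alpha=1$: the weight-multiplicity estimate gives $\rk\alpha\leq 2$ cheaply, but the borderline value $\rk\alpha=1$ survives that estimate and must be excluded by the explicit non-termination of the $\s\l(2)$-string, which is where integrability is used most delicately.
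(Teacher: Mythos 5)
Your overall architecture (extract a classical $\s\l(2)$ or $\o\s\p(1|2)$ from a pure coroot with $\alpha(h)\neq 0$, use integrability to bound $\rk\alpha\leq 2$, exclude $\rk\alpha=1$, then identify $\g\langle\alpha\rangle$) matches the paper's, but two of your key steps have genuine gaps.

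First, the duality $\g_{-\alpha}\cong\g_{\alpha}^\vee$ does not follow from "comparing with the explicit description of $\g_{\alpha}\otimes\g_{\alpha}^\vee$": the parity-shifted module $\Pi\g_{\alpha}^\vee$ is also an irreducible of weight $-\alpha$, and $\g_{\alpha}\otimes\Pi\g_{\alpha}^\vee$ likewise has only trivial simple subquotients and admits nonzero $\h$-maps to $\h$ (the paper's $\h\e(2)^{\Pi}$ is built from exactly this non-dual pairing). What actually rules out $\Pi\g_{\alpha}^\vee$ when $\rk\alpha=2$ is that the image of $\g_{\alpha}\otimes\Pi\g_{\alpha}^\vee$ in $\h$ is necessarily purely even (its top is even and even elements of $\h$ are central), whereas the non-Heisenberg hypothesis forces $\h_{\alpha}$ to contain a nonzero \emph{odd} pure coroot. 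That odd coroot is produced in the paper by the computation $H=[X,y]$, $[H,x]=\beta([x,y])X\neq 0$ at the top root $\beta=k\alpha$; your proposal never establishes its existence before asserting the duality, and your later appeals to Corollary \ref{corollary-tensor-of-irreducibles} presuppose the duality, so the argument is circular as written.

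Second, the exclusion of $\rk\alpha=1$ by a "non-terminating $\s\l(2)$-string" is not carried out and the claimed nonvanishing of the recursion scalars is unjustified. Setting $w_k=(\operatorname{ad}e)^k e'$, super-Jacobi gives $[f,w_k]=(d_{k-1}-2k)w_{k-1}$ with $d_1$ determined by the action of the unknown odd coroot $[f,e']$ on $e$; without pinning down $d_1$ you cannot conclude $d_k\neq 0$ for all $k$ (there are values of $d_1$ for which the string would terminate, and then Corollary \ref{corollary-maximal-ideal} gives you nothing since you would also have to control $[F,w_k]$ for the odd generator $F$ of $\g_{-\alpha}$). The paper's argument is both shorter and airtight: at the maximal root $\beta$ the odd pure coroot $H=[X,y]$ satisfies $[H,x]\neq 0$ but $[H,X]=0$, hence $\beta([H,H])=0$ while $H\notin\ker B_{\beta}$, which is impossible when $\rk\beta=1$; the same computation supplies the odd coroot needed for the duality above. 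I would recommend replacing both of your gapped steps with this single computation.
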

	Stated more simply, the above theorem says that if $\alpha$ is not of Heisenberg type, then either it gives rise to $\s\l(2)$, $\o\s\p(1|2)$, or a Takiff construction on one of these.
	
	We deduce Theorem \ref{theorem-one-simple-root} from the following results. 
	\begin{lemma}\label{lemma-root-subalgebra}
		Suppose \(\h_{\alpha} \nsubseteq \mathrm{Ann}_{\mathfrak{h}}\mathfrak{g}_{\alpha}\). There exist \(e \in \mathfrak{g}_{\alpha}\) and \(f \in \mathfrak{g}_{-\alpha}\) such that the subalgebra they generate is isomorphic to either \(\s\l(2)\) or \(\mathfrak{osp}(1|2)\).
	\end{lemma}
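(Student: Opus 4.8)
The plan is to read off the required $\s\l(2)$ or $\mathfrak{osp}(1|2)$ from a single well-chosen pure even coroot, and the only delicate point is to produce one. The crucial first step is to show that $\h_{\alpha}\not\subseteq\mathrm{Ann}_{\h}\g_{\alpha}$ forces the existence of homogeneous nonzero $e\in\g_{\alpha}$ and $f\in\g_{-\alpha}$ of equal parity with $h:=[e,f]$ even and $\alpha(h)\neq 0$. To get this I would rule out the a priori possibility that $\h_{\alpha}$ acts on $\g_{\alpha}$ only through odd coroots. Suppose every even coroot lies in $\ker\alpha$, i.e.\ $(\h_{\alpha})_{\ol{0}}\subseteq\ker\alpha$. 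Since $\h_{\alpha}$ is an ideal of $\h$ (Definition~\ref{definition_coroot}), for any $H\in(\h_{\alpha})_{\ol{1}}$ and any $H'\in\h_{\ol{1}}$ we get $[H,H']\in(\h_{\alpha})_{\ol{0}}\subseteq\ker\alpha$, hence $B_{\alpha}(H,H')=\alpha([H,H'])=0$, so $(\h_{\alpha})_{\ol{1}}\subseteq\ker B_{\alpha}$. But $\ker\alpha$ annihilates $\g_{\alpha}$ (as $\mathfrak{t}$ acts by the scalar $\alpha$), and $\ker B_{\alpha}$ annihilates the irreducible $\CC\ell(\alpha)$-module $\g_{\alpha}$ (immediate from the realization in Section~\ref{subsection-realization-irreducible}), so $[\h_{\alpha},\g_{\alpha}]=0$, contradicting the hypothesis. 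Therefore $(\h_{\alpha})_{\ol{0}}$, which is spanned by the even pure coroots $[x,y]$ with $x\in\g_{\alpha}$, $y\in\g_{-\alpha}$ homogeneous of equal parity, is not contained in the hyperplane $\ker\alpha$; so some such coroot $h=[e,f]$ satisfies $\alpha(h)\neq0$, and since $h\neq0$ both $e$ and $f$ are nonzero.

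Next I would split on the parity of $e$ and $f$. If they are even, then $[h,e]=\alpha(h)e$, $[h,f]=-\alpha(h)f$, and $[e,e]=[f,f]=0$, so $\C e\oplus\C h\oplus\C f$ is a subalgebra; the three vectors lie in the distinct $\mathfrak{t}$-weight spaces $\g_{\alpha},\g_{0},\g_{-\alpha}$, hence are linearly independent, and rescaling $h$ and $f$ (possible since $\alpha(h)\neq0$) puts the bracket in standard $\s\l(2)$ form. So the subalgebra generated by $e$ and $f$ is $\s\l(2)$.

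If instead $e$ and $f$ are odd, the super Jacobi identity gives $2[e,[e,f]]=[[e,e],f]$, and since $[e,[e,f]]=[e,h]=-\alpha(h)e$ we get $[[e,e],f]=-2\alpha(h)e\neq0$, hence $E:=[e,e]\neq0$; symmetrically $F:=[f,f]\neq0$. The five elements $e,f,h,E,F$ lie in the pairwise distinct weight spaces $\g_{\alpha},\g_{-\alpha},\g_{0},\g_{2\alpha},\g_{-2\alpha}$ and are therefore linearly independent; a further round of super Jacobi computations yields $[e,E]=[f,F]=0$, $[f,E]\in\C e$, $[e,F]\in\C f$, and $[E,F]\in\C h$, so their span is a $5$-dimensional subalgebra whose structure constants are all nonzero multiples of $\alpha(h)$. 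Rescaling then identifies it with $\mathfrak{osp}(1|2)$, so the subalgebra generated by $e$ and $f$ is $\mathfrak{osp}(1|2)$.

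I expect the first step to be the main obstacle: everything after it is routine $\s\l(2)$/$\mathfrak{osp}(1|2)$ bookkeeping with Koszul signs, whereas excluding the ``odd coroots only'' scenario genuinely requires the ideal structure of $\h_{\alpha}$ together with the triviality of the radical $\ker B_{\alpha}$ on a simple Clifford module. Note that this argument uses irreducibility of $\g_{\alpha}$, but not the integrability hypothesis.
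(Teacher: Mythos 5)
Your proof is correct and follows essentially the same route as the paper: both arguments reduce to producing a pure even coroot $h=[e,f]$ with $\alpha(h)\neq 0$ by combining the ideal property of $\h_{\alpha}$ with the fact that $\ker B_{\alpha}$ annihilates the irreducible Clifford module $\g_{\alpha}$ (your step 1 is just the contrapositive of the paper's), after which one checks directly that $e,f$ generate $\s\l(2)$ or $\o\s\p(1|2)$. Your closing remark is also accurate: the super-Jacobi computation shows the subalgebra generated by $e,f$ closes up on its own, so the integrability hypothesis that the paper invokes at that final step is not actually needed.
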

	\begin{proof}
		First let us show that we can find an even \(\alpha\)-coroot \(h \in \h_{\alpha}\) such that \(\alpha(h) \neq 0\). By assumption, there exists a homogeneous \(\alpha\)-coroot that does not annihilate \(\mathfrak{g}_{\alpha}\). If it is even, take \(h\) to be this coroot. If it is odd denote it by \(H\). It follows from \([H,\mathfrak{g}_{\alpha}]\neq 0\) and from the irreducibility of \(\mathfrak{g}_{\alpha}\) that \(H \notin \ker B_\alpha\). So there exists \(K \in \mathfrak{h}_{\ol{1}}\) such that \(\alpha([K,H]) \neq 0\). Then \(h \coloneq [K,H] \in \mathfrak{h}_{\alpha}\) is the even coroot we are looking for. \par
		Now, by definition, there exists a finite index set \(I\) and homogeneous elements \(e_i \in \mathfrak{g}_{\alpha}\) and \(f_i \in \mathfrak{g}_{-\alpha}\) of the same parity for each \(i\in I\) such that \(h = \sum_{i\in I} [e_i,f_i]\). Hence \(\alpha(h) = \alpha(\sum_{i\in I} [e_i,f_i]) \neq 0\), which implies that \(\alpha([e_j,f_j]) \neq 0\) for some \(j \in I\). 
		
		Now by our integrability assumption, it is straightforward to check that $e:=e_j$, $f:=f_j$ generate a subalgebra isomorphic to either $\s\l(2)$ or $\o\s\p(1|2)$, depending on whether $e,f$ are even or odd.  
	\end{proof}
	We denote by \(\mathfrak{s}\) the subalgebra isomorphic to either $\s\l(2)$ or $\o\s\p(1|2)$ obtained in Lemma \ref{lemma-root-subalgebra}.
	\begin{lemma}\label{lemma-sl2-osp2-subalgebras}
		Suppose \(\h_{\alpha}\nsubseteq \mathrm{Ann}_{\mathfrak{h}}\mathfrak{g}_{\alpha}\), and let \(\mathfrak{s}\) be as above. 
		\begin{enumerate}[label=(\roman*)]
			\item If \(\mathfrak{s} \simeq \s\l(2)\), then \(\Delta = \{\pm \alpha\}\).
			\item If \(\mathfrak{s}\simeq \mathfrak{osp}(1|2)\), then \(\Delta = \{\pm \alpha,\pm 2\alpha\}\).
		\end{enumerate}
		Moreover, \(\rk\alpha \leq 2\) and \(\dim \mathfrak{g}_\beta = \dim \mathfrak{g}_{\alpha}\) for any \(\beta \in \Delta\). 
	\end{lemma}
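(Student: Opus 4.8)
The plan is to study $\g\langle\alpha\rangle$ as a module over a distinguished $\s\l(2)$ sitting inside $\s$, and to play $\s\l(2)$-representation theory against the Clifford-module structure of the root spaces from Section~\ref{section-irreducible-h-modules}.

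\textbf{Setup.} Inside $\s$ I fix an $\s\l(2)$-triple $(e,h,f)$: if $\s\cong\s\l(2)$ take $\s$ itself, and if $\s\cong\o\s\p(1|2)$ take its even part, so that $e=\tfrac12[e_0,e_0]$ for $e_0\in\g_\alpha$ an odd generator and $e\in\g_{2\alpha}$. By Lemma~\ref{lemma-root-subalgebra}, $\alpha(h)\neq0$, and after rescaling $h$ I may assume $\operatorname{ad}h$ acts on $\g_{k\alpha}$ by $k\alpha(h)$ with $\alpha(h)=2$ in the first case and $\alpha(h)=1$ in the second; since $\mathfrak t$ is central in $\h$, $\operatorname{ad}h$ kills $\h$, so for $\mathfrak a:=\langle e,h,f\rangle$ the $\operatorname{ad}h$-weight-$0$ subspace of $\g\langle\alpha\rangle$ is exactly $\h_\alpha$, while $\g_{k\alpha}$ has $\mathfrak a$-weight $2k$ (resp.\ $k$). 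Integrability makes $\operatorname{ad}e$ and $\operatorname{ad}f$ locally nilpotent on $\g\langle\alpha\rangle$ (in the $\o\s\p(1|2)$ case $\operatorname{ad}e=(\operatorname{ad}e_0)^2$), and together with semisimplicity of $\operatorname{ad}h$ this makes $\g\langle\alpha\rangle$ a locally finite, hence completely reducible, $\mathfrak a$-module. I record the resulting $\s\l(2)$-facts: $\dim\g_{k\alpha}=\dim\g_{-k\alpha}$, and, writing $M_\ell$ for the $\operatorname{ad}h$-eigenspace of eigenvalue $\ell$, one has $\dim\big(\ker(\operatorname{ad}e)\cap M_\ell\big)=\dim M_\ell-\dim M_{\ell+2}$ for $\ell\ge0$.

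\textbf{The root system.} The key numerical input is $B_{k\alpha}=kB_\alpha$, so $\rk(k\alpha)=\rk\alpha$ and $\dim C_{k\alpha}=\dim C_\alpha=\dim\g_\alpha$; hence any nonzero $\h$-submodule of $\g_{k\alpha}$ has dimension $\ge\dim\g_\alpha$. In the $\s\l(2)$ case, $e\in\g_\alpha$ lies in $\ker(\operatorname{ad}e)$ (as $[e,e]=0$), so $\dim\g_{2\alpha}=\dim\g_\alpha-\dim(\ker(\operatorname{ad}e)\cap\g_\alpha)\le\dim\g_\alpha-1$, which is incompatible with $\dim\g_{2\alpha}\ge\dim\g_\alpha$ unless $\g_{2\alpha}=0$; then $\g\langle\alpha\rangle=\g_{-\alpha}\oplus\h_\alpha\oplus\g_\alpha$ and $\Delta=\{\pm\alpha\}$. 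In the $\o\s\p(1|2)$ case, $[e_0,[e_0,e_0]]=0$ puts the nonzero vector $e_0\in\g_\alpha=M_1$ in $\ker(\operatorname{ad}e)$, and the same counting for $\operatorname{ad}e\colon M_1\to M_3$ forces $\g_{3\alpha}=0$; since $2\alpha$ is a root of $\s\subseteq\g\langle\alpha\rangle$, this gives $\Delta=\{\pm\alpha,\pm2\alpha\}$.

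\textbf{The rank bound, the dimension equalities, and the main obstacle.} I would bound $\rk\alpha$ by sandwiching $\dim(\h_\alpha)_{\ol 1}$. For the lower bound: the even-$\mathfrak a$-weight part of the odd subspace $\big(\g\langle\alpha\rangle\big)_{\ol 1}$ is an $\mathfrak a$-submodule with weights in $\{-2,0,2\}$, so its weight-$0$ multiplicity dominates its weight-$2$ multiplicity, giving $\dim(\h_\alpha)_{\ol 1}\ge\dim(\g_\alpha)_{\ol 1}$ in the $\s\l(2)$ case and $\dim(\h_\alpha)_{\ol 1}\ge\dim(\g_{2\alpha})_{\ol 1}\ge\dim(\g_\alpha)_{\ol 1}$ in the $\o\s\p(1|2)$ case when $\rk\alpha\ge1$ (using $\g_{2\alpha}\neq0$ and $\dim(C_{2\alpha})_{\ol 1}=\dim(C_\alpha)_{\ol 1}$). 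For the upper bound: by Remark~\ref{remark on vanishing} the bracket $\g_\alpha\otimes\g_{-\alpha}\to\h$ annihilates the $\h$-submodule generated by the odd part of the radical of $\g_\alpha\otimes\g_{-\alpha}$; feeding in the description of $C_\alpha\otimes C_\alpha^\vee$ from \eqref{equation-2Z+1}--\eqref{equation-4Z+2} (and its parity-shift when $\g_{-\alpha}\cong\Pi\g_\alpha^\vee$), and using that the relevant Grassmann-type modules $\SS(\h_{\ol 1}/\ker B_\alpha)$ have totally ordered submodule lattices (simple socle and top), one checks that $\dim(\h_\alpha)_{\ol 1}\le1$; in the even-rank cases this is precisely Corollary~\ref{corollary-tensor-of-irreducibles}. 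Combining the two bounds yields $\dim(\g_\alpha)_{\ol 1}\le1$, and since $\dim(C_\lambda)_{\ol 1}=2^{\lfloor(\rk\lambda-1)/2\rfloor}$ this forces $\rk\alpha\le2$ (the case $\rk\alpha=0$ being trivial). Finally, $\dim\g_{-\alpha}=\dim\g_\alpha$ is the weight symmetry, and in the $\o\s\p(1|2)$ case, knowing $\rk\alpha\le2$ makes $\g_\alpha$ at most $(1|1)$-dimensional, so $\g_{2\alpha}=[\g_\alpha,\g_\alpha]$ has dimension at most $\dim\g_\alpha$, which with $\dim\g_{2\alpha}\ge\dim C_{2\alpha}=\dim\g_\alpha$ gives equality. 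I expect the genuinely delicate point to be the uniform bound $\dim(\h_\alpha)_{\ol 1}\le1$, which requires running the $\h$-module structure of $\g_\alpha\otimes\g_{-\alpha}$ through all residue classes of $\rk\alpha$ modulo $4$ and through both possibilities $\g_{-\alpha}\cong\g_\alpha^\vee$ and $\g_{-\alpha}\cong\Pi\g_\alpha^\vee$; everything else is a fairly direct application of $\s\l(2)$-theory.
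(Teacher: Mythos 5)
Your treatment of (i), (ii) and of $\dim\g_\beta=\dim\g_\alpha$ is essentially the paper's argument: both rest on playing the surjectivity/kernel count for $\operatorname{ad}e$ on a finite-dimensional $\s\l(2)$- (or $\o\s\p(1|2)$-) module against the lower bound $\dim\g_{k\alpha}\geq\dim C_{k\alpha}=\dim C_\alpha$ coming from $\rk(k\alpha)=\rk\alpha$. Where you genuinely diverge is the bound $\rk\alpha\leq 2$. The paper gets it in one line from the $k=0$ instance of the same surjectivity: $\g_\alpha=[e,\h]=\C e+[e,\h_{\ol1}]$, and since $[e,\h_{\ol1}]$ has parity opposite to $e$, the parity component of $\g_\alpha$ containing $e$ is one-dimensional, which forces $2^{\lfloor(\rk\alpha-1)/2\rfloor}=1$. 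Your route instead sandwiches $\dim(\h_\alpha)_{\ol1}$ between $\dim(\g_\alpha)_{\ol1}$ and $1$; this works, but it is longer and it is exactly here that your justification has a flaw: the modules $\SS(\h_{\ol1}/\ker B_\alpha)$ do \emph{not} have totally ordered submodule lattices once $\rk\alpha\geq2$ (for two odd generators $v_1,v_2$, the submodules $v_1\SS$ and $v_2\SS$ are incomparable); the paper only asserts simple socle and top. Fortunately the conclusion $\dim(\h_\alpha)_{\ol1}\leq1$ does not need that claim: by \eqref{equation-2Z+1}--\eqref{equation-4Z+2}, $\g_\alpha\otimes\g_{-\alpha}$ has top of dimension at most $(1|1)$, so its image in $\h$ is generated by at most one even and one odd homogeneous element $u_0,u_1$; quasitorality gives $[\h,u_0]=0$ and $[\h,u_1]\subseteq\h_{\ol0}$ with $[\h,[\h,u_1]]=0$, so the odd part of the image is $\C u_1$. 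With that repair your sandwich closes, your lower bounds (weight-$0$ multiplicity dominating weight-$2$ multiplicity in the odd part, as an $\s\l(2)$-module with weights in $\{-2,0,2\}$) are sound, and your patch for $\dim\g_{2\alpha}=\dim\g_\alpha$ via $[E,E]=0$ in the $(1|1)$-dimensional case is fine. So: correct modulo one false auxiliary claim that is easily replaced, at the cost of a noticeably heavier argument than the paper's for the rank bound.
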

	\begin{proof}
		The integrability of \(\mathfrak{g}(\mathcal{A})\) implies that it is finite-dimensional, and in particular a finite dimensional \(\mathfrak{s}\)-module. If \(e \in \mathfrak{s}\cap \mathfrak{g}_{\alpha}\) is non-zero, then by the finite-dimensional representation theory of \(\s\l(2)\) and \(\mathfrak{osp}(1|2)\), the map
		\begin{align}\label{equation-sl2-action}
			\mathrm{ad}\ e : \mathfrak{g}_{k\alpha} \rightarrow \mathfrak{g}_{(k+1)\alpha}
		\end{align}
		is surjective for all \(k \geq 0\). In particular \(\dim \mathfrak{g}_{k\alpha} \leq \dim \mathfrak{g}_{\alpha}\) for all \(k > 0\). Since \(\mathfrak{g}_{\alpha}\) is irreducible and \(\rk\alpha = \rk k\alpha\) for all \(k\neq 0\), we have \(\dim \mathfrak{g}_{k\alpha} \geq \dim \mathfrak{g}_{\alpha}\) whenever \(\mathfrak{g}_{k\alpha} \neq 0\). As a consequence, \(\mathrm{ad}\ e : \mathfrak{g}_{k\alpha} \rightarrow \mathfrak{g}_{(k+1)\alpha}\) is a linear isomorphism for \(k > 0\) whenever \(\mathfrak{g}_{(k+1)\alpha}\neq 0\). \par
		If \(\mathfrak{s}\simeq \s\l(2)\), then \([e,e] = 0\), hence \(\mathfrak{g}_{2\alpha} = 0\). If \(\mathfrak{s}\simeq \mathfrak{osp}(1|2)\), then \([e,e]\neq 0\) and \([e,[e,e]] = 0\), so \(\mathfrak{g}_{3\alpha} = 0\). The description of \(\Delta\) follows immediately. \par
		Finally, from \eqref{equation-sl2-action}, we have \([e,\mathfrak{h}] = \mathfrak{g}_{\alpha}\). As \([e,\mathfrak{t}] = \mathbb{C}\langle e\rangle\), we obtain that either \(\dim (\mathfrak{g}_\alpha)_{\ol{0}} = 1\) or \(\dim (\mathfrak{g}_\alpha)_{\ol{1}} = 1\). Therefore \(\rk\alpha \leq 2\) by our dimension formulas for irreducibles given in Section \ref{section_quasi_toral}.
	\end{proof}
	\begin{lemma}\label{lemma-rk0-rk2}
		If \(\mathfrak{h}_{\alpha} \nsubseteq \mathrm{Ann}_{\mathfrak{h}}\mathfrak{g}_{\alpha}\), then \(\rk\alpha = 0\) or \(\rk\alpha = 2\).
	\end{lemma}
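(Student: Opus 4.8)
The plan is to reduce everything to showing $\rk\alpha\neq 1$. By Lemma~\ref{lemma-sl2-osp2-subalgebras} we already know $\rk\alpha\le 2$ (there the dimension formula $\dim(C_\lambda)_{\ol 0}=\dim(C_\lambda)_{\ol 1}=2^{\lfloor(m-1)/2\rfloor}$, combined with the fact that one of $\dim(\g_\alpha)_{\ol 0},\dim(\g_\alpha)_{\ol 1}$ equals $1$, forces $\rk\alpha\in\{0,1,2\}$), so it remains to exclude $\rk\alpha=1$, which I will do by contradiction.

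The crux is the following self-contained observation, which I extract first. \emph{Suppose $\beta\in\mathfrak{t}^*$ has $\rk\beta=1$, that $\g_\beta$ is irreducible, that $\g_{2\beta}=0$, and that there exist nonzero even $e\in\g_\beta$ and even $f\in\g_{-\beta}$ with $\beta([e,f])\neq 0$; then we reach a contradiction.} Indeed, $\g_\beta$ is then $(1|1)$-dimensional; choose $\tilde H\in\h_{\ol 1}$ with $B_\beta(\tilde H,\tilde H)=2$ (possible since $\rk\beta=1$), so $\tilde H$ acts invertibly on $\g_\beta$ with $\tilde H^2=\operatorname{id}$ while $\ker B_\beta$ is exactly the annihilator of $\g_\beta$ in $\h_{\ol 1}$. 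Put $h:=[e,f]$, $e_1:=\tilde H e$, $f_1:=\tilde H f$ (both odd). Applying $\operatorname{ad}\tilde H$ to the identity $h=[e,f]$ and using $[\tilde H,h]=0$ (since $\h_{\ol 0}$ is central in $\h$) gives $[e_1,f]=-[e,f_1]=:K\in\h_{\ol 1}$; applying $\operatorname{ad}\tilde H$ again to $[e_1,f]$ gives $[e_1,f_1]=h-[\tilde H,K]$. A short Jacobi computation, which uses $[e_1,e]\in\g_{2\beta}=0$, shows $K\cdot e=\beta(h)\,e_1$; since the $\h_{\ol 1}$-action on the irreducible rank-one module $\g_\beta$ factors through the one-dimensional quotient $\h_{\ol 1}/\ker B_\beta=\C\ol{\tilde H}$, this forces $K-\beta(h)\tilde H\in\ker B_\beta$, whence $B_\beta(\tilde H,K)=2\beta(h)$ and therefore $\beta([e_1,f_1])=\beta(h)-2\beta(h)=-\beta(h)\neq 0$. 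On the other hand $[e_1,e_1]\in\g_{2\beta}=0$, so the super-Jacobi identity gives $2[e_1,[e_1,f_1]]=[[e_1,e_1],f_1]=0$, i.e.\ $[e_1,[e_1,f_1]]=0$; but $[e_1,[e_1,f_1]]=-[[e_1,f_1],e_1]=-\beta([e_1,f_1])\,e_1=\beta(h)\,e_1\neq 0$, a contradiction.

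Granting this, assume $\rk\alpha=1$ and let $\s=\langle e,f\rangle$ be the subalgebra produced by Lemma~\ref{lemma-root-subalgebra}, with $\s\cong\s\l(2)$ or $\s\cong\o\s\p(1|2)$ according to the parity of the generators, and with $\alpha$ not vanishing on the relevant coroot. If $\s\cong\s\l(2)$, then $e,f$ are even, Lemma~\ref{lemma-sl2-osp2-subalgebras}(i) gives $\Delta=\{\pm\alpha\}$ (so $\g_{2\alpha}=0$), and the observation above applied with $\beta=\alpha$ yields a contradiction. If $\s\cong\o\s\p(1|2)$, then $e,f$ are odd and Lemma~\ref{lemma-sl2-osp2-subalgebras}(ii) gives $\Delta=\{\pm\alpha,\pm 2\alpha\}$ with $\g_{\pm 2\alpha}$ of dimension $(1|1)$; put $X:=\tfrac12[e,e]\in(\g_{2\alpha})_{\ol 0}$ and $X':=\tfrac12[f,f]\in(\g_{-2\alpha})_{\ol 0}$, which are nonzero and generate the even subalgebra $\s_{\ol 0}\cong\s\l(2)$, so $[X,X']$ is a nonzero multiple of $h=[e,f]$ and $(2\alpha)([X,X'])\neq 0$. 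Since $\rk(2\alpha)=\rk\alpha=1$, since $\g_{2\alpha}$ is irreducible (it is cyclic, generated by $X$, and $\ker B_\alpha=\ker B_{2\alpha}$ annihilates $X$ because it already annihilates $\g_\alpha$), and since $\g_{4\alpha}=0$ as $4\alpha\notin\Delta$, the observation above applies once more with $\beta=2\alpha$ (and $X,X'$ in the roles of $e,f$), giving a contradiction. In either case $\rk\alpha\neq 1$, so $\rk\alpha\in\{0,2\}$.

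The step I expect to be the main obstacle is the extracted observation itself: one must track the four pure coroots $[e,f],[e_1,f],[e,f_1],[e_1,f_1]$ carefully under $\operatorname{ad}\tilde H$, and it is the quasi-toral identity $[\h_{\ol 0},\h]=0$ that forces $[e_1,f_1]$ to be tied to $[e,f]$, producing an equality incompatible with the super-Jacobi identity once $\g_{2\beta}=0$. The secondary subtlety is recognizing that in the $\o\s\p(1|2)$ case one should not run the argument on $\alpha$ (where the analogous coroot computation remains inconclusive) but on the doubled root $2\alpha$, whose root space is again an irreducible rank-one module carrying a genuine $\s\l(2)$-triple on its even part.
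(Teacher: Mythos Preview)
Your proof is correct and follows essentially the same approach as the paper: pass to the highest root $\beta$ (so that $\g_{2\beta}=0$), construct the odd coroot $K=[e_1,f]$ (which is exactly the paper's $H=[X,y]$ with $X=e_1$, $y=f$), and derive a contradiction from the fact that $K$ acts nontrivially yet nilpotently on the irreducible rank-one module $\g_\beta$. The paper's execution is shorter---it directly checks $[H,X]=0$ and $[H,x]\neq 0$, whence $B_\beta(H,H)=0$ while $H$ acts nontrivially, impossible in rank one---whereas you arrive at the same inconsistency via the value of $\beta([e_1,f_1])$; note also that your irreducibility argument for $\g_{2\alpha}$ can be shortened, since any $(1|1)$-dimensional module in $\FF_\beta$ with $\rk\beta=1$ is automatically irreducible (as $\tilde H$ acts invertibly and anticommutes with each $\xi\in\ker B_\beta$, forcing $\xi$ to act by zero).
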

	\begin{proof}
		Let us assume \(\rk\alpha \neq 0\). Then according to Lemma \ref{lemma-sl2-osp2-subalgebras}, we have \(1 \leq \rk\alpha \leq 2\). The irreducibility of \(\mathfrak{g}_{\alpha}\) as an \(\mathfrak{h}\)-module implies \(\dim \mathfrak{g}_{\alpha} = (1|1)\). Let \(k \in \mathbb{Z}\) be the maximal integer such that \(k\alpha \in \Delta\) and fix \(\beta = k\alpha\). Then \(\dim \mathfrak{g}_\beta = (1|1)\) from Lemma \ref{lemma-sl2-osp2-subalgebras}. Choose non-zero elements \(x \in (\mathfrak{g}_{\beta})_{\ol{0}}\), \(X \in (\mathfrak{g}_{\beta})_{\ol{1}}\) and \(y \in (\mathfrak{g}_{-\beta})_{\ol{0}}\), and set \(H \coloneq [X,y]\). We notice that \(x\) and \(y\) belong to an \(\s\l(2)\)-subalgebra, so \(\beta([x,y]) \neq 0\). We have
		\begin{align*}
			[H,x] = [[X,y],x] = [X,[y,x]] = \beta([x,y])X \neq 0,
		\end{align*}
		and
		\begin{align*}
			[H,X] = [[X,y],X] = [X,[y,X]] = [[y,X],X] = -[[X,y],X] = -[H,X].
		\end{align*}
		Thus \([H,X] = 0\), and we conclude that \([H,\mathfrak{g}_{\beta}] \neq 0\) and \(\beta([H,H]) = 0\). This is impossible if \(\rk\beta = 1\),and thus we must have \(\rk\beta=\rk\alpha = 2\).
	\end{proof}
	\begin{corollary}\label{corollary-twist}
		If \(\mathfrak{h}_{\alpha} \nsubseteq \mathrm{Ann}_{\mathfrak{h}}\mathfrak{g}_{\alpha}\), then \(\mathfrak{g}_{-\alpha} \simeq \mathfrak{g}_{\alpha}^\vee\). 
	\end{corollary}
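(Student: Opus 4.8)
The plan is to identify $\g_{-\alpha}$ inside the block $\FF_{-\alpha}$ and then exclude a parity twist. Both $\g_{-\alpha}$ and $\g_\alpha^\vee$ are irreducible objects of $\FF_{-\alpha}$: for $\g_\alpha^\vee$ this is because $\omega_\h$ is an \emph{even} automorphism of $\h$, so twisting the irreducible $\g_\alpha$ by $\omega_\h^{-1}$ yields an irreducible again, now of weight $-\alpha$. Since $\CC\ell(-\alpha)$ has a unique irreducible module up to parity — the Clifford algebra on the non-degenerate part of $B_{-\alpha}$ contributes one isomorphism class up to parity, and tensoring with the local algebra $\SS(\ker B_{-\alpha})$ does not change this — we get $\g_{-\alpha}\cong\g_\alpha^\vee$ or $\g_{-\alpha}\cong\Pi\g_\alpha^\vee$, and only the second possibility needs to be ruled out. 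By Lemma~\ref{lemma-rk0-rk2} it suffices to treat $\rk\alpha=0$ and $\rk\alpha=2$ separately.

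If $\rk\alpha=0$, then both $\g_{\pm\alpha}$ are one-dimensional, and by Lemma~\ref{lemma-root-subalgebra} there exist homogeneous $e\in\g_\alpha$, $f\in\g_{-\alpha}$ of the \emph{same} parity; hence $\g_\alpha=\C e$ and $\g_{-\alpha}=\C f$ have equal parity. Twisting by the even automorphism $\omega_\h^{-1}$ does not change the parity of vectors, so $\g_\alpha^\vee$ has the same parity as $\g_\alpha$, hence as $\g_{-\alpha}$. A one-dimensional $\h$-module of weight $-\alpha$ is determined by its parity (the odd part of $\h$ acts by zero on it), so $\g_{-\alpha}\cong\g_\alpha^\vee$.

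If $\rk\alpha=2$, I would argue using the known structure of $\g_\alpha\otimes\g_\alpha^\vee$ together with the coroot space $\h_\alpha$. By Lemma~\ref{lemma-sl2-osp2-subalgebras} each of $\g_\alpha,\g_{-\alpha},\g_\alpha^\vee$ is $(1|1)$-dimensional. Since $\rk\alpha\in 4\Z+2$, formula~\eqref{equation-4Z+2} gives $\g_\alpha\otimes\g_\alpha^\vee\cong\Pi\SS(\h_{\ol 1}/\ker B_\alpha)$, and therefore $\g_\alpha\otimes\Pi\g_\alpha^\vee\cong\SS(\h_{\ol 1}/\ker B_\alpha)$; these two modules have one-dimensional heads of opposite parity — that of $\SS(\h_{\ol 1}/\ker B_\alpha)$ is purely even (spanned by $1$), that of $\Pi\SS(\h_{\ol 1}/\ker B_\alpha)$ purely odd. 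The bracket $[-,-]_\alpha\colon\g_\alpha\otimes\g_{-\alpha}\to\h$ is non-degenerate, so $\h_\alpha=\operatorname{Im}([-,-]_\alpha)$ is a non-zero quotient of $\g_\alpha\otimes\g_{-\alpha}$, and its head is a non-zero quotient of the head of $\g_\alpha\otimes\g_{-\alpha}$, hence isomorphic to it. Thus it is enough to check that the head of $\h_\alpha$ is purely odd, for that forces $\g_\alpha\otimes\g_{-\alpha}\cong\Pi\SS(\h_{\ol 1}/\ker B_\alpha)$, i.e.\ $\g_{-\alpha}\cong\g_\alpha^\vee$. Now $\mathfrak{t}$ is central in $\h$, so $\h_\alpha$ lies in $\FF_0$, equivalently is a module over $\CC\ell(0)\cong\SS(\h_{\ol 1})$, and its radical is $[\h_{\ol 1},\h_\alpha]$; since $(\h_\alpha)_{\ol 0}\subseteq\mathfrak{t}$ one has $[\h_{\ol 1},(\h_\alpha)_{\ol 0}]=0$ and $[\h_{\ol 1},(\h_\alpha)_{\ol 1}]\subseteq\mathfrak{t}$, so $[\h_{\ol 1},\h_\alpha]$ is purely even and the head $\h_\alpha/[\h_{\ol 1},\h_\alpha]$ keeps all of $(\h_\alpha)_{\ol 1}$ in its odd part. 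So it remains to show $(\h_\alpha)_{\ol 1}\neq 0$: the construction in the proof of Lemma~\ref{lemma-rk0-rk2} produces a non-zero odd element $H\in\h_\beta$ where $\beta=k\alpha$ is the highest root of $\g\langle\alpha\rangle$; if $\g\langle\alpha\rangle$ is of $\s\l(2)$-type then $\beta=\alpha$, and if it is of $\o\s\p(1|2)$-type then $\beta=2\alpha$ and a short Jacobi-identity computation shows $\h_{2\alpha}=[[\g_\alpha,\g_\alpha],[\g_{-\alpha},\g_{-\alpha}]]\subseteq\h_\alpha$, so in either case $0\neq H\in(\h_\alpha)_{\ol 1}$.

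The crux is this last point in the case $\rk\alpha=2$, namely ruling out that $\h_\alpha$ is purely even — a priori the bracket $[-,-]_\alpha$ could take values only in $\mathfrak{t}$. It is exactly here that one must use the Jacobi identity (as packaged in Lemma~\ref{lemma-rk0-rk2}, together with the inclusion $\h_{2\alpha}\subseteq\h_\alpha$ in the $\o\s\p(1|2)$-case). The remaining steps are routine bookkeeping with the structure of $C_\lambda\otimes C_\lambda^\vee$ and of irreducible modules over Clifford superalgebras.
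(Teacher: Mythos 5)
Your proof is correct and follows essentially the same route as the paper: the rank-$0$ case by matching parities of the one-dimensional spaces, and the rank-$2$ case by observing that if $\g_{-\alpha}\not\cong\g_\alpha^\vee$ then \eqref{equation-4Z} forces $\h_\alpha$ to be purely even (being generated by the even, hence central, image of the top), contradicting the nonzero odd coroot produced in the proof of Lemma \ref{lemma-rk0-rk2}. You in fact supply a detail the paper leaves implicit, namely that in the $\o\s\p(1|2)$ case the odd element lives a priori in $\h_{2\alpha}$ and one needs the Jacobi-identity inclusion $\h_{2\alpha}\subseteq\h_\alpha$ to conclude.
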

	\begin{proof}
		If \(\rk\alpha = 0\), then \(\mathfrak{g}_{-\alpha} \simeq \mathfrak{g}_{\alpha}^\vee\) because \(\mathfrak{g}_{\alpha}\) and \(\mathfrak{g}_{-\alpha}\) are of the same parity. If \(\rk\alpha = 2\), then according to \eqref{equation-4Z+2} in Section \ref{section-irreducible-h-modules}, we have \(\mathfrak{g}_{\alpha}\otimes \mathfrak{g}_{-\alpha} \simeq \Pi \mathcal{S}(\mathfrak{h}_{\ol{1}}/\ker B_\alpha)\) if \(\mathfrak{g}_{-\alpha} \simeq \mathfrak{g}_{\alpha}^\vee\) and \(\mathfrak{g}_{\alpha}\otimes \mathfrak{g}_{-\alpha} \simeq \mathcal{S}(\mathfrak{h}_{\ol{1}}/\ker B_\alpha)\) otherwise. 
		
		However in the latter situation, the top of $\mathcal{S}(\mathfrak{h}_{\ol{1}}/\operatorname{ker} B_\alpha)$ as an $\h$-module is even, and thus $\h_{\alpha}=\operatorname{Im}([-,-]:\g_{\alpha}\otimes\g_{-\alpha}\to \h)$ would be purely even.  However the proof of Lemma \ref{lemma-rk0-rk2} has shown that $\h$ contains a non-trivial odd element. Our result follows.
	\end{proof}
	
	\subsection{Realization of  \(\mathfrak{g}\langle \alpha \rangle\) as in Theorem \ref{theorem-one-simple-root}}\label{section-realization}
	Again, suppose \(\mathfrak{h}_{\alpha} \nsubseteq \mathrm{Ann}_{\mathfrak{h}}\mathfrak{g}_{\alpha}\). We continue building on the results of Lemmas \ref{lemma-sl2-osp2-subalgebras} and \ref{lemma-rk0-rk2} along with Corollary \ref{corollary-twist}. If \(\rk\alpha =0\), then it is clear that \(\mathfrak{g}\langle \alpha \rangle\) is isomorphic to either \(\s\l(2)\) or \(\mathfrak{osp}(1|2)\). So assume \(\rk\alpha = 2\). In this case $\g_{\alpha}$ is $(1|1)$-dimensional.
	
	If $\Delta=\{\pm\alpha\}$, let \(e\) and \(E\) be even and odd basis vectors of $\g_{\alpha}$, respectively.  As \(\mathfrak{g}_{-\alpha} \simeq \mathfrak{g}_{\alpha}^\vee\), under this identification we may write \(f \coloneq e^\vee\) and \(F \coloneq \sqrt{-1}E^\vee\) as in Proposition \ref{proposition-rank-2-chevalley}, to form a (unique up to scalar) homogeneous basis for \(\mathfrak{g}_{-\alpha}\).
	
	If $\Delta=\{\pm\alpha,\pm2\alpha\}$, we instead let \(E\) and \(e\) be even and odd basis vectors of $\g_{\alpha}$ (note the change of order), and set $f=\sqrt{-1}e^{\vee}$ and $F=E^\vee$.
	
	Now in either case, we define the following pure coroots:
	\begin{align*}
		\begin{array}{ccc}
			h \coloneq [e,f], & c \coloneq [E,F], & H \coloneq [E,f].
		\end{array}
	\end{align*}
	
	If $\Delta=\{\pm\alpha\}$, then we normalize \(e\) in the  such that \(\alpha(h) = 2\), and if $\Delta=\{\pm\alpha,\pm2\alpha\}$ we normalize $e$ so that $\alpha(h)=1$.  Then the proof of Proposition \ref{proposition-rank-2-chevalley} and a direct computation give the following relations:
	\begin{align*}
		\begin{array}{ccccc}
			H = [e,F], & [H,e] =\alpha(h)E, & [H,E] = 0, & [H,H] = 2c,  & \alpha(c)=0.
		\end{array}
	\end{align*}
	It is now straightforward to check that $\g\langle\alpha\rangle$ is isomorphic to either $\mathfrak{t}\s$ or $\mathfrak{pt}\s$ where $\s=\s\l(2)$ if $\Delta=\{\pm\alpha\}$ and $\s=\o\s\p(1|2)$ if $\Delta=\{\pm\alpha,\pm2\alpha\}$.
	
	\begin{proof}[Proof of Theorem \ref{theorem-one-simple-root}]
		The theorem is an immediate consequence of Lemmas \ref{lemma-root-subalgebra} and \ref{lemma-sl2-osp2-subalgebras}, and \ref{lemma-rk0-rk2}, along with Corollary \ref{corollary-twist} and Section \ref{section-realization}.
	\end{proof}
	
	\section{On Connectivity of simple roots}\label{section-connectivity}
	
	Let \(\mathfrak{g}(\mathcal{A})\) be a Clifford Kac--Moody algebra. For a simple root \(\alpha \in \Pi\), let \(\mathfrak{g}\langle\alpha\rangle\) denote the subalgebra of \(\mathfrak{g}(\mathcal{A})\) generated by \(\mathfrak{g}_{\alpha}\) and \(\mathfrak{g}_{-\alpha}\). Using the results of Section \ref{section-one-root}, we define the root type of a simple root \(\alpha \in \Pi\) according to the following table: 
	
	\renewcommand{\arraystretch}{1.25}
	\begin{center}
		\begin{tabular}{ | c | c | }
			\hline
			root type  & the root \(\alpha\) satisfies \\ 
			\hline
			\hline
			$\s\l(2)$ & $\g\langle\alpha\rangle\cong\s\l(2)$ \\  
			\hline
			$\o\s\p(1|2)$ & $\g\langle\alpha\rangle\cong\o\s\p(1|2)$ \\  
			\hline
			$\s\l(1|1)$ & $\g\langle\alpha\rangle\cong\s\l(1|1)$ \\
			\hline 
			$\h\e(0)$ & $\g\langle\alpha\rangle$ a purely even Heisenberg of rank 0\\
			& (see Example \ref{example_he(n)})\\
			\hline
			$\h\e(0)^{\Pi}$ & a `mixed' parity Heisenberg of rank 0\\
			& (see Example \ref{example_he(n)})\\
			\hline
			$Tak(\s\l(2))$ & $\rk\alpha=2$ and $\g\langle\alpha\rangle$ a central quotient of $\mathfrak{t}\s\l(2)$ \\
			& (case (iii) of Theorem \ref{theorem-one-simple-root})\\
			\hline 
			$Tak(\o\s\p(1|2))$ & $\rk\alpha=2$ and $\g\langle\alpha\rangle$ a central quotient of $\mathfrak{t}\o\s\p(1|2)$ \\
			& (case (iv) of Theorem \ref{theorem-one-simple-root})\\
			\hline 
			$Tak(\s\l(1|1))$ & $\rk\alpha=2$ and $\g\langle\alpha\rangle$ a central quotient of $\mathfrak{t}\s\l(1|1)\cong\mathfrak{t}\h\e(0)$\\
			& (see Example \ref{example_ts_pts}) \\
			\hline
			$H_n$ & $\alpha$ is of Heisenberg type and $\rk\alpha=n$ \\
			& (see Section \ref{section_heisenberg})\\
			\hline
		\end{tabular}
	\end{center}
	
	\
	
	We emphasize that each simple root in a Clifford Kac--Moody algebra has a well-defined type. The above table hints that simple roots of rank \(0\) and rank \(2\) are distinguished among all simple roots. In this section, we indeed show that if we want `interesting' interactions between simple roots, i.e.~without any obvious ideals, we should ask that they are all of rank zero or rank 2.
	
	\subsection{Connectivity}\label{section_connectivity} The next natural question to address is when two simple roots can interact, according to their type in the above table.  To be precise, we are interested in the following question:
	
	\textbf{Question:} for which ordered pairs of simple roots $(\alpha,\beta)$ of given type in the above table do the following equivalent condition hold:
	\begin{enumerate}
		\item $[\h_{\alpha},\g_{\beta}]\neq0$;
		\item $[(\h_{\alpha})_{\ol{0}},\g_{\beta}]\neq0$;
		\item there exists a pure, even $\alpha$-coroot $h$ (see Section \ref{section-g(A)}) for which $\beta(h)\neq0$.
	\end{enumerate}    
	
	Notice that when the above question has a positive answer, we will have that $\g_{\alpha+\beta}\neq0$, i.e.~$\alpha+\beta\in\Delta$.
	
	Equivalently, the above question asks when it is a necessary condition that $\h_{\alpha}\subseteq\operatorname{Ann}_{\h}\g_{\beta}$, according to the root type, as listed in the above table.  As an example, if $\alpha$ is of $\h\e(0)$-type and $\beta$ is of any other type, then we always have $[\h_{\alpha},\g_{\beta}]=0$.  This is because even Heisenbergs have no finite-dimensional representations with nontrivial central character.
	
	This section seeks to answer the above question, and the answer is depicted in the picture below (this is an expanded version of Figure \ref{Fig1a} from the introduction).  Namely, in the below diagram we positioned all root types in separate places, and then we draw an arrow from root type $\alpha$ to root type $\beta$ if it is possible that $[\h_{\alpha},\g_{\beta}]\neq0$ in a Clifford Kac--Moody algebra.
	
	\
	
	\
	
	\[
	\xymatrix{
		&\s\l(2) \ar[rr] \ar@(ul,ur) \ar@{=>}[d] \ar@/^3pc/[rrrr] & & \ar[ll]  \o\s\p(1|2) \ar@(ul,ur) \ar@{=>}[d]\ar[rr] & & \ar@/_3pc/[llll]  \ar[ll] \s\l(1|1) \ar@(ul,ur) \ar@{=>}[d] & \\
		&&&&&&\\ 
		&Tak(\s\l(2)) \ar@/^3pc/@{-->}[rrrrr] \ar@(ul,ur) \ar[rr] \ar[rrdd] \ar[rrrrdd] \ar@/^2pc/[rrrr] & & \ar[ll] Tak(\o\s\p(1|2)) \ar@(ul,ur) \ar[dd] \ar@/^1pc/[ddrr]\ar[rr] & & \ar@/_2pc/[llll] \ar[ll] \ar[dd] \ar[ddll] Tak(\s\l(1|1)) \ar@(ul,ur) & \ar[l]\ar[ddl]\ar[ddlll]H_1 \\
		&&&&&&\\
		& \h\e(0) & \h\e(0)^{\Pi} & \h\e(2)^{\Pi} & & H_n, \ n\geq 3 &
	}
	\]
	
	In the above diagram, the bold-faced downward arrows $\Rightarrow$ from $\s\l(2),\o\s\p(1|2)$, and $\s\l(1|1)$ signify that if $\alpha$ is one of any of these three root types and $\beta$ is another root type, then it is possible that $[\h_{\alpha},\g_{\beta}]\neq0$.  We have used this notation to avoid an overwhelming thicket of arrows from these three root types.
	
	Further, the dashed arrow pointing to $H_1$ is meant to signify that if $\alpha$ is of type $Tak(\s\l(2))$ and $\beta$ is of type $H_1$, and if we have $[\h_{\alpha},\g_{\beta}]\neq0$, then $[\h_{\beta},\g_{\gamma}]=0$ for any simple root $\gamma$, i.e.~$\beta$ is forced to become a `sink'.  
	
	\subsection{Sinks} In light of condition (3) in our above question, an important question is when all pure, even coroots of $\g\langle\alpha\rangle$ lie in an even Heisenberg triple, i.e.~lie in a subalgebra isomorphic to $\h\e(0)$.  In this case $[\h_{\alpha},\g_{\beta}]=0$ for all simple roots $\beta$, i.e.~$(\h_{\alpha})_{\ol{0}}$ will be central in $\g(\AA)$. One of the main results of this section is the following:
	
	\begin{proposition}\label{prop_sinks}
		All pure, even coroots of $\g\langle\alpha\rangle$ lie in an even Heisenberg triple whenever $\alpha$ is one of the following types:
		\begin{enumerate}
			\item $\h\e(0)$;
			\item $\h\e(0)^{\Pi}$;
			\item $\h\e(2)^{\Pi}$;
			\item $H_n$ for $n\geq 3$.
		\end{enumerate}
		In particular, in these cases, $[\h_{\alpha},\g_{\beta}]=0$ for all simple roots $\beta$.  
	\end{proposition}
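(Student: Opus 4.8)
The plan is to handle all four types through one structural mechanism, using that each of them is of Heisenberg type. First I would record what this gives: if $\alpha$ is of any of the four listed types, then by the material of Section~\ref{section_heisenberg} we have $\Delta=\{\pm\alpha\}$ and $\g\langle\alpha\rangle=\g_{-\alpha}\oplus\h_\alpha\oplus\g_\alpha$ with $\h_\alpha$ a central abelian ideal and $[\g_{\pm\alpha},\g_{\pm\alpha}]=0$. Consequently any even subalgebra $\langle e',h',f'\rangle\cong\h\e(0)$ of $\g\langle\alpha\rangle$ meets $\h_\alpha$ exactly in the line $\C h'=\C[e',f']$ with $e'\in(\g_\alpha)_{\ol0}$, $f'\in(\g_{-\alpha})_{\ol0}$; so a nonzero pure even coroot $h$ lies in an even Heisenberg triple precisely when $h\in\C^\times[e',f']$ for some such even pair $(e',f')$, and the zero coroot causes no trouble (either $(\h_\alpha)_{\ol0}=0$, in which case there are no nonzero even pure coroots at all and the asserted consequence $[\h_\alpha,\g_\beta]=0$ holds trivially, or some nonzero even pure coroot is present and produces a triple as below, which then contains $0$). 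The two rank‑$0$ types are now immediate: for $\h\e(0)$ the spaces $\g_{\pm\alpha}$ are one‑dimensional and even, so $h$ is itself an even$\times$even bracket; for $\h\e(0)^{\Pi}$ the spaces $\g_\alpha$ and $\g_{-\alpha}$ have opposite parity, so every pure coroot is odd and there is nothing to prove.

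For the remaining cases I would first reduce $H_n$ (for $n\geq 3$) to the model algebras it covers — $\h\e(n)$, together with $\h\e(n)^{\Pi}$ when $n$ is even and $\Pi C_{-1}\oplus\h\oplus\Pi C_1$ when $n\equiv 0\ (4)$ — since passing to a central quotient only shrinks $\h_\alpha$: a pure even coroot of the quotient lifts to one of the model, and the image of an even Heisenberg triple is again such a triple (or its intersection with $\h_\alpha$ degenerates to $0$). In each of these models, and in $\h\e(2)^{\Pi}$, one has $\h_\alpha=\operatorname{Im}\big(\phi\colon\g_\alpha\otimes\g_{-\alpha}\to\h\big)$ with $\g_{-\alpha}\cong C_\alpha^\vee$ or $\Pi C_\alpha^\vee$, and the description of $C_\alpha\otimes C_\alpha^\vee$ from Section~\ref{section_dualities} (equations \eqref{equation-2Z+1}--\eqref{equation-4Z+2} and Corollary~\ref{corollary-tensor-of-irreducibles}), combined with Remark~\ref{remark on vanishing}, supplies the one numerical input needed: $\dim(\h_\alpha)_{\ol1}\leq 1$ in every case.

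The core of the argument is then a Jacobi‑identity manoeuvre applied to a pure even coroot $h=[x,y]$ with $x\in(\g_\alpha)_{\ol1}$ and $y\in(\g_{-\alpha})_{\ol1}$ (the even$\times$even case being trivial). For $v\in\h_{\ol1}$, expanding $[v,[x,vy]]$ by the super Jacobi identity and using that $v^2$ acts on the weight space $\g_\mu$ by $\tfrac12\mu([v,v])$ — so by $\tfrac12 B_\alpha(v,v)$ on $\g_\alpha$ and $-\tfrac12 B_\alpha(v,v)$ on $\g_{-\alpha}$ — one gets
\[
[v,[x,vy]]=[vx,vy]+\tfrac12 B_\alpha(v,v)\,[x,y].
\]
I would choose $v$ anisotropic, normalized so that $B_\alpha(v,v)=2$: then $v$ acts invertibly on $\g_{\pm\alpha}$ (since $v^2$ acts by the nonzero scalars $\pm 1$), so $vx\in(\g_\alpha)_{\ol0}$ and $vy\in(\g_{-\alpha})_{\ol0}$ are nonzero; and if $v$ also satisfies $[x,vy]=0$, the displayed identity collapses to $h=-[vx,vy]$, exhibiting $h$ inside the even Heisenberg triple $\langle -vx,\,h,\,vy\rangle$. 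Existence of such a $v$ is the crux, and the step I expect to be most delicate: the condition $[x,vy]=0$ is the vanishing of the composite
\[
W=\h_{\ol1}/\ker B_\alpha \xrightarrow{\,v\mapsto vy\,}(\g_{-\alpha})_{\ol0}\xrightarrow{\,[x,-]\,}(\h_\alpha)_{\ol1},
\]
whose target has dimension $\leq 1$, so it cuts out a subspace $H\subseteq W$ of codimension $\leq 1$, and $H$ contains an anisotropic vector iff it is not totally isotropic. This is exactly where the hypotheses bite: $\dim H\geq n-1$ whereas the Witt index of $(W,B_\alpha)$ is $\lfloor n/2\rfloor$, and $n-1>\lfloor n/2\rfloor$ precisely for $n=\rk\alpha\geq 3$; while for $\h\e(2)^{\Pi}$ one has $(\h_\alpha)_{\ol1}=0$, so $H=W$ is a nondegenerate plane and anisotropic vectors are plentiful. (By contrast, in $\h\e(2)\cong\mathfrak{t}\s\l(1|1)$ one has $\dim H=1=$ Witt index and $H$ may be an isotropic line, so the construction genuinely fails — consistent with $\h\e(2)$ not appearing among the sinks.) Thus, beyond this geometric observation, the remaining work is the bookkeeping of the second paragraph: verifying $\dim(\h_\alpha)_{\ol1}\leq 1$ model by model and checking that the subspace $H$ is never totally isotropic under the stated hypotheses.
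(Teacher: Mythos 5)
Your argument is correct, and for the main case ($H_n$ with $n\geq3$) it follows a genuinely different route from the paper's. The paper proves that case inside the explicit polynomial realization of $C_{\pm\alpha}$ from Section \ref{subsection-realization-irreducible}, running a combinatorial case analysis on monomials $\xi_I,\phi_J$ with the quasi-torality relation $[\h_{\ol{1}},[\h_{\ol{1}},\h_{\alpha}]]=0$ doing the work; you instead prove the single coordinate-free identity $[v,[x,vy]]=[vx,vy]+\tfrac12 B_{\alpha}(v,v)[x,y]$ and then locate an anisotropic $v$ in the kernel of $v\mapsto[x,vy]$ by comparing the codimension of that kernel (at most $1$, since $\dim(\h_{\alpha})_{\ol{1}}\leq1$) with the maximal dimension $\lfloor n/2\rfloor$ of a totally isotropic subspace. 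This buys uniformity --- the same mechanism disposes of $\h\e(2)^{\Pi}$ (where $(\h_{\alpha})_{\ol{1}}=0$) and of all $H_n$ with $n\geq3$, and it isolates exactly why $n=2$ with $(\h_{\alpha})_{\ol{1}}\neq0$ is the boundary case --- whereas the paper's computation is more elementary but case-by-case; your treatment of $\h\e(2)^{\Pi}$ is in substance the paper's own argument for that case (an anisotropic $K$ applied to the vanishing bracket $[e,F]$). Two points you should make explicit in a write-up: that $\ker B_{\alpha}$ annihilates the irreducible modules $\g_{\pm\alpha}$, so that $v\mapsto vy$ genuinely descends to $W=\h_{\ol{1}}/\ker B_{\alpha}$ (alternatively, run the codimension count in $\h_{\ol{1}}$ itself and note that an anisotropic class lifts to an anisotropic vector); and the verification of $\dim(\h_{\alpha})_{\ol{1}}\leq1$ in the odd-rank case, which is not covered by Corollary \ref{corollary-tensor-of-irreducibles} as stated and instead requires applying Remark \ref{remark on vanishing} to each summand of \eqref{equation-2Z+1}.
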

	
	The above proposition tells us that if $\alpha$ is a simple root of the above type, then $\g\langle\alpha\rangle$ generates an ideal containing no other simple roots. Thus they are `sinks' in the theory, which we see from the diagram above. 
	
	\begin{proof}[Proof of Proposition \ref{prop_sinks}]
		The statement for $\h\e(0)$ is clear, and for $\h\e(0)^{\Pi}$ there are no even coroots, so that statement is vacuously true.
		
		For $\h\e(2)^{\Pi}$, let \(e\) and \(E\) be even and odd basis vectors of \(\mathfrak{g}_{\alpha}\), and \(f\) and \(F\) even and odd basis vectors of \(\mathfrak{g}_{-\alpha}\). Then our claim reduces to showing \([E,F] \in \mathbb{C}\langle[e,f]\rangle\). 
		
		Choose $K\in\h$ such that $\alpha(K^2)\neq0$, so that $K$ defines an odd automorphism of $\g_{\pm\alpha}$.  Then \([K,e] = aE\) and \([K,F] = bf\) for non-zero \(a,b \in \mathbb{C}\). we know from Section \ref{section_dualities} that $\h_{\alpha}$ is one-dimensional and purely even, so that $[e,F]=0$.  Thus
		\begin{align*}
			0 = [K,[e,F]] = a[E,F] + b[e,f],
		\end{align*} 
		which gives our result.
		
		Finally, we deal with case of $H_n$ for $n\geq 3$.   In the proof, we will use the realization of a simple \(\mathfrak{h}\)-module given in Section \ref{subsection-realization-irreducible}. We remind here that \(\mathbb{C}[\xi_1,...,\xi_m]\) is the superalgebra of polynomial in odd variables \(\xi_1,...,\xi_m\) satisfying the relation \(\xi_i\xi_j = -\xi_j\xi_i\) for all \(i,j\). For a set \(I=\{i_1,...,i_k\} \subseteq \{1,...,m\}\), we write \(\xi_I \coloneq \xi_{i_1}\cdot...\cdot \xi_{i_k}\) for \(i_1<...<i_k\). 
		
		We first consider the case where \(\rk\alpha\) is even, let \(m \in \mathbb{Z}\) be such that \(\rk\alpha = 2m\). Then our assumption on \(\rk\alpha\) implies \(m \geq 2\). According to Section \ref{subsection-realization-irreducible}, we can identify \(\mathfrak{g}_{\alpha}\) as \(\mathbb{C}[\xi_1,...,\xi_m]\) up to parity, and choose \(H_1,...,H_m,\bar{H}_1,...,\bar{H}_m \in \mathfrak{h}_{\ol{1}}\) such that \(H_i\) acts on \(\mathfrak{g}_{\alpha}\) via multiplication by \(\xi_i\), and \(\bar{H}_i\) by the derivation \(\partial_{\xi_i}\). Similarly, we can identify \(\mathfrak{g}_{-\alpha}\) as \(\mathbb{C}[\phi_1,...,\phi_m]\) up to parity, such that \(H_i\) acts on \(\mathfrak{g}_{-\alpha}\) via multiplication by \(-\phi_i\), and \(\bar{H}_i\) be the derivation \(\partial_{\phi_i}\). 
		
		Let \(\xi_I\) and \(\phi_J\) be odd elements; we will show that \([\xi_I,\phi_J]\) is either zero or a bracket of two even elements  from \(\mathfrak{g}_{\alpha}\) and \(\mathfrak{g}_{-\alpha}\). We will use the fact that \(\h_{\alpha} = [\mathfrak{g}_{\alpha},\mathfrak{g}_{-\alpha}] \subseteq \mathfrak{h}\), and so \([\mathfrak{h}_{\ol{1}},[\mathfrak{h}_{\ol{1}},\mathfrak{h}_{\alpha}]] = 0\) because \(\mathfrak{h}\) is quasi-toral. 
		
		If \(\lvert I\cap J\rvert\geq 2\), then we can find \(i,j \in I\cap J\) such that \(i\neq j\), so
		\begin{align*}
			0 = [H_i,[H_j,[\xi_{I\setminus\{i,j\}},\phi_J]]] = \pm [\xi_I,\phi_J]. 
		\end{align*}
		If \(I=J=\{i\}\), then because \(m\geq2\), we can find \(j\notin I\) so that
		\begin{align*}
			0=[H_i,[\bar{H}_j,[\xi_j,\phi_i]]] = [H_i,[\xi_\emptyset,\phi_i]] = [\xi_i,\phi_i]. 
		\end{align*}
		If \(I=J=\emptyset\), then because \(m\geq 2\), we can find \(i,j \in \{1,...,m\}\) such that \(i\neq j\), so
		\begin{align*}
			0 = [\bar{H}_i,[\bar{H}_j,[\xi_i,\phi_j]]] = \pm [\bar{H}_i,[\xi_i,\phi_{\emptyset}]] = \pm [\xi_\emptyset,\phi_\emptyset].
		\end{align*}
		Finally, if none of the above happens, then it must be that \(I\neq J\). If \(i \in I\setminus J\), then
		\begin{align*}
			0 = [\bar{H}_i,[H_i,[\xi_I,\phi_J]]] = [\bar{H}_i,[\xi_I,-\phi_i\phi_J]] = \pm[\xi_{I\setminus\{i\}},\phi_{J\cup\{i\}}] \pm [\xi_I,\phi_J].
		\end{align*}
		If \(i \in J\setminus I\), then
		\begin{align*}
			0 = [\bar{H}_i,[H_i,[\xi_I,\phi_J]]] = [\bar{H}_i,[\xi_i\xi_I,\phi_J]] = [\xi_I,\phi_J] \pm [\xi_{I\cup\{i\}},\phi_{J\setminus\{i\}}].
		\end{align*}
		In the above two equations, we see that our term of interest, \([\xi_I,\phi_J]\), is equal to a bracket of two even elements, as desired.
		
		We now consider the case where \(\rk\alpha\) is odd, let \(m \in \mathbb{Z}\) be such that \(\rk\alpha = 2m+1\). Again, from Section \ref{subsection-realization-irreducible}, we can find \(H_1,...,H_m,\bar{H}_1,...,\bar{H}_m,\tilde{H} \in \mathfrak{h}_{\ol{1}}\) such that \(\mathfrak{g}_{\alpha}\) can be realized as \(\mathbb{C}[\xi_1,...,\xi_{m+1}]\), such that \(H_i\) acts by \(\xi_i\), \(\bar{H}_i\) acts by \(\partial_{\xi_i}\), and \(\tilde{H}\) acts by \(\xi_{m+1}+\partial_{\xi_{m+1}}\). Similarly, \(\mathfrak{g}_{-\alpha}\) can be realized as \(\mathbb{C}[\phi_1,...,\phi_{m+1}]\), such that \(H_i\) acts by \(-\phi_i\), \(\bar{H}_i\) acts by \(\partial_{\phi_i}\), and \(\tilde{H}\) acts by \(-\phi_{m+1}+\partial_{\phi_{m+1}}\). \par
		As in the previous case, let \(\xi_I\) and \(\phi_J\) be odd elements. We notice that in the current setting (\(\rk\alpha = 2m+1\)), it must be that \(\lvert I \rvert\) and \(\lvert J \rvert\) are odd integers. We will show that \([\xi_I,\phi_J]\) is either zero or a bracket of two even elements from \(\mathfrak{g}_{\alpha}\) and \(\mathfrak{g}_{-\alpha}\). The only computations left (which are not analogous to the case when \(\rk\alpha\) is even) are for \(I\setminus \{m+1\} = J\setminus \{m+1\}\) and \(\lvert I\setminus \{m+1\} \rvert \leq 1\). If \(I=J\) and \(I\setminus\{m+1\} = \{i\}\), then
		\begin{align*}
			0 = [\tilde{H},[H_i,[\xi_{m+1},\phi_i]]] = [\tilde{H},[\xi_i\xi_{m+1},\phi_i]] = \pm [\xi_i,\phi_i] \pm [\xi_i\xi_{m+1},\phi_i\phi_{m+1}].
		\end{align*}
		If \(I = J = \{m+1\}\), then
		\begin{align*}
			0 = [\tilde{H},[\bar{H}_1,[\xi_1,\phi_{m+1}]]] = [\tilde{H},[\xi_\emptyset,\phi_{m+1}]] = [\xi_{m+1},\phi_{m+1}] \pm [\xi_{\emptyset},\phi_{\emptyset}].
		\end{align*}
		This concludes our proof.
	\end{proof}

	\subsection{Connectivity properties of $Tak(\s\l(2))$, $Tak(\o\s\p(1|2))$, and $Tak(\s\l(1|1))$}
	
	In this section we assume that $\g\langle\alpha\rangle$ is of type $Tak(\s\l(2))$, $Tak(\o\s\p(1|2))$, or $Tak(\s\l(1|1))$; equivalently we assume that $\rk\alpha=2$ and $\g_{\alpha}^\vee\cong\g_{-\alpha}$.  In this case, by Section \ref{section_dualities}, $\h_{\alpha}$ is generated by an odd element which we call $H_{\alpha}$.
	\begin{lemma}
		If $\beta$ is a simple root with $\rk\beta=0$, we have $[\h_{\alpha},\g_{\beta}]=0$.  
	\end{lemma}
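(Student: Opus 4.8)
The plan is to split $\h_{\alpha}$ into its odd and even homogeneous components and check that each annihilates $\g_{\beta}$ under the adjoint action. Since we are in the case $\rk\alpha = 2$ with $\g_{-\alpha}\cong\g_{\alpha}^{\vee}$, Corollary \ref{corollary-tensor-of-irreducibles} gives that $(\h_{\alpha})_{\ol{1}} = \C H_{\alpha}$ is one-dimensional and that $H_{\alpha}$ generates $\h_{\alpha}$ as an $\h$-module. Because $\h$ is quasi-toral, $\mathfrak{t} = \h_{\ol{0}}$ is central in $\h$, so $[\mathfrak{t}, H_{\alpha}] = 0$; combined with the fact that $[\h_{\ol{1}},\h_{\ol{1}}]\subseteq\mathfrak{t}$ is again central (so $[\h_{\ol{1}},[\h_{\ol{1}},H_{\alpha}]]=0$), this forces
\[
\h_{\alpha} = \C H_{\alpha} + [\h_{\ol{1}}, H_{\alpha}], \qquad (\h_{\alpha})_{\ol{0}} = [\h_{\ol{1}}, H_{\alpha}] \subseteq [\h_{\ol{1}}, \h_{\ol{1}}].
\]

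Next I would record the elementary consequences of the hypothesis $\rk\beta = 0$. The form $B_{\beta}$ vanishes identically on $\h_{\ol{1}}$, so $\beta\big([\h_{\ol{1}}, \h_{\ol{1}}]\big) = B_{\beta}(\h_{\ol{1}},\h_{\ol{1}}) = 0$; and the Clifford superalgebra $\CC\ell(\beta) \cong \SS(\h_{\ol{1}})$ is local, so its unique irreducible module $\g_{\beta} = C_{\beta}$ is one-dimensional with $\h_{\ol{1}}$ acting by zero and $\mathfrak{t}$ acting through the character $\beta$. (Here we use that $\AA$ is Clifford Kac-Moody, so $\g_\beta$ is irreducible.)

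Finally I would assemble these: the odd part $(\h_{\alpha})_{\ol{1}} = \C H_{\alpha} \subseteq \h_{\ol{1}}$ acts by zero on $\g_{\beta}$, so $[(\h_{\alpha})_{\ol{1}}, \g_{\beta}] = 0$; and the even part satisfies $(\h_{\alpha})_{\ol{0}} \subseteq [\h_{\ol{1}}, \h_{\ol{1}}] \subseteq \ker\beta$, so since $\mathfrak{t}$ acts on $\g_{\beta}$ through $\beta$ we also get $[(\h_{\alpha})_{\ol{0}}, \g_{\beta}] = 0$. Hence $[\h_{\alpha}, \g_{\beta}] = 0$.

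I do not anticipate a genuine obstacle here: the statement is a direct bookkeeping consequence of the rank-$2$ structure of $\h_{\alpha}$ (the odd coroot $H_\alpha$ generating it as an $\h$-module) together with the triviality of both the $\h_{\ol{1}}$-action and the form $B_{\beta}$ in the rank-$0$ situation. The only point deserving a line of care is the inclusion $(\h_{\alpha})_{\ol{0}} \subseteq [\h_{\ol{1}},\h_{\ol{1}}]$, which is precisely where quasi-torality (killing $[\mathfrak{t}, H_{\alpha}]$ and $[\h_{\ol{1}},[\h_{\ol{1}}, H_{\alpha}]]$) enters.
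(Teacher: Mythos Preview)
Your proof is correct and follows essentially the same approach as the paper. The paper's proof is the one-liner ``Since $[H_{\alpha},\g_{\beta}]=0$ we are done,'' relying on the setup that $H_{\alpha}$ generates $\h_{\alpha}$ as an $\h$-module and that $\rk\beta=0$ forces $\h_{\ol{1}}$ to act trivially on $\g_{\beta}$; you simply unpack these implicit steps explicitly, in particular the inclusion $(\h_{\alpha})_{\ol{0}}\subseteq[\h_{\ol{1}},\h_{\ol{1}}]\subseteq\ker\beta$.
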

	\begin{proof}
		Since $[H_{\alpha},\g_{\beta}]=0$ we are done.
	\end{proof}
	
	\begin{lemma}\label{lemma_conn_tak}
		Suppose that $\beta$ is of rank 1.  Then 
		\begin{enumerate}
			\item if $\alpha$ is of type $Tak(\s\l(1|1))$ or $Tak(\o\s\p(1|2))$, then $[\h_{\alpha},\g_{\beta}]=0$;
			\item if $\alpha$ is of type $Tak(\s\l(2))$ and $[\h_{\alpha},\g_{\beta}]\neq0$, then $[\h_{\beta},\g_{\gamma}]=0$ for all simple roots $\gamma$ (i.e.~$H_1$ `becomes' a sink, justifying the dashed arrow in our diagram).
		\end{enumerate}
	\end{lemma}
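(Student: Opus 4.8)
In both parts $\alpha$ is of $Tak$-type, so $\rk\alpha=2$, $\g_{\alpha}$ is $(1|1)$-dimensional, $\g_{-\alpha}\cong\g_{\alpha}^{\vee}$, and by Corollary~\ref{corollary-tensor-of-irreducibles} the coroot space $\h_{\alpha}$ has one-dimensional odd part, spanned by some $H_{\alpha}$, with $\h_{\alpha}=\C H_{\alpha}+[\h_{\ol{1}},H_{\alpha}]$ (higher brackets vanish since $\h$ is quasi-toral). The root $\beta$ has $\rk\beta=1$, so by Theorem~\ref{theorem-one-simple-root} it is automatically of Heisenberg type $H_1$; in particular $[\h_{\beta},\g_{\beta}]=0$, and $\g\langle\beta\rangle$ is a central quotient of $\h\e(1)$. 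The key rigidity from $\rk\beta=1$ is: on the irreducible $\h$-module $\g_{\beta}$ the Grassmann factor $\SS(\ker B_{\beta})$ acts by its augmentation (cf.\ Section~\ref{subsection-realization-irreducible}), so $\ker B_{\beta}$ annihilates $\g_{\beta}$; and since $B_{\beta}$ has rank $1$, its null cone coincides with $\ker B_{\beta}$. Hence for $K\in\h_{\ol{1}}$ we have $[K,\g_{\beta}]\ne0\iff K\notin\ker B_{\beta}\iff\beta([K,K])\ne0$. Applying this to $K=H_{\alpha}$, and noting that $[[K',H_{\alpha}],v]=[K',[H_{\alpha},v]]\pm[H_{\alpha},[K',v]]$ with $[K',v]\in\g_{\beta}$, so that $[\h_{\alpha},\g_{\beta}]\ne0\iff[H_{\alpha},\g_{\beta}]\ne0$, I obtain the clean criterion $[\h_{\alpha},\g_{\beta}]\ne0\iff\beta([H_{\alpha},H_{\alpha}])\ne0$. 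From the presentation of $\g\langle\alpha\rangle$ in Section~\ref{section-realization} (and the $\h\e(2)$-model for $Tak(\s\l(1|1))$): $[H_{\alpha},H_{\alpha}]=0$ in the $Tak(\s\l(1|1))$ case, and $[H_{\alpha},H_{\alpha}]=2c$ with $\alpha(c)=0$ and $c=[E,F]$ in the $Tak(\s\l(2))$ and $Tak(\o\s\p(1|2))$ cases, where $E\in\g_{\alpha}$, $F\in\g_{-\alpha}$ are the distinguished basis vectors --- crucially, $E,F$ are \emph{odd} in the $Tak(\s\l(2))$ case and \emph{even} in the $Tak(\o\s\p(1|2))$ case.

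\noindent\textbf{Part (1).} For $Tak(\s\l(1|1))$ the criterion gives $[\h_{\alpha},\g_{\beta}]=0$ at once. For $Tak(\o\s\p(1|2))$, suppose $\beta(c)\ne0$. By integrability $\g_{\beta+n\alpha}=0$ for $n\gg0$, and $\beta+n\alpha$ is not a root for $n<0$, so $M:=\bigoplus_{n}\g_{\beta+n\alpha}$ is a \emph{finite-dimensional} $\g\langle\alpha\rangle$-module containing $\g_{\beta}$. Since $\alpha(c)=0$, the central element $c\in\h_{\ol{0}}$ acts on all of $M$ by the scalar $\beta(c)$; but $c=[E,F]$ with $E,F$ even, so $\operatorname{tr}_{M}(\operatorname{ad}c)=\operatorname{tr}_{M}([\operatorname{ad}E,\operatorname{ad}F])=0$, whence $\beta(c)\dim M=0$ and $\beta(c)=0$, a contradiction. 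This proves part (1). (The same computation is vacuous for $Tak(\s\l(2))$, as it must be: there $E,F$ are odd, so $\operatorname{ad}c$ is an \emph{anticommutator} of odd operators and its ordinary trace need not vanish.)

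\noindent\textbf{Part (2), and the main obstacle.} Now $\g\langle\alpha\rangle$ is of $Tak(\s\l(2))$-type and $[\h_{\alpha},\g_{\beta}]\ne0$, i.e.\ $\beta(c)\ne0$; then $(\operatorname{ad}H_{\alpha})^{2}=\beta(c)\cdot\mathrm{id}$ on $\g_{\pm\beta}$, so $H_{\alpha}$ acts invertibly there, and since $\rk\beta=1$ it is, up to a scalar and an element of $\ker B_{\beta}$, the distinguished odd operator, realising $\g_{\pm\beta}\cong\C[\xi]$ with $H_{\alpha}$ acting as a multiple of $\xi+\partial_{\xi}$. Choose homogeneous bases $v,V$ of $\g_{\beta}$ and $w,W$ of $\g_{-\beta}$ with $v,w$ even, $V,W$ odd; the pure even $\beta$-coroots are $p:=[v,w]$ and $q:=[V,W]$, and $(\h_{\beta})_{\ol{0}}=\C p+\C q$, so it suffices to show $\gamma(p)=\gamma(q)=0$ for every simple $\gamma$ (this makes $(\h_{\beta})_{\ol{0}}$ central, hence $[\h_{\beta},\g_{\gamma}]=0$). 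For each $\gamma$ the module $N:=\bigoplus_{n\ge0}\g_{\gamma+n\beta}$ is finite-dimensional and nonzero (as for $M$), and since $\beta(p)=\beta(q)=0$ both $p,q$ act on $N$ by scalars; as $\operatorname{ad}p=[\operatorname{ad}v,\operatorname{ad}w]$ is a commutator of even operators, $\gamma(p)\dim N=\operatorname{tr}_{N}(\operatorname{ad}p)=0$, so $\gamma(p)=0$. For $q$, a short computation using $[H_{\alpha},H_{\alpha}]=2c$ gives $q\in\C\,[H_{\alpha},[v,[H_{\alpha},w]]]+\C p$, where $[v,[H_{\alpha},w]]$ is an odd $\beta$-coroot; since $(\h_{\beta})_{\ol{1}}$ is at most one-dimensional (a central quotient of the one-dimensional odd part of $\h$ for $\h\e(1)$) --- and if it vanishes then $q\in\C p$ and we are done --- we may assume $(\h_{\beta})_{\ol{1}}=\C H_{\beta}$, so $\gamma(q)$ is a scalar multiple of $\gamma([H_{\alpha},H_{\beta}])$. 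Finally $[H_{\alpha},H_{\beta}]\in\h_{\alpha}$, hence $\gamma([H_{\alpha},H_{\beta}])$ is a combination of $\gamma(h)$ and $\gamma(c)$, the pure even $\alpha$-coroots: if $\alpha$ does not point to $\gamma$ these both vanish and we are done. \emph{The main obstacle} is the remaining situation, where $\alpha$ would point to both $\beta$ and $\gamma$, so that one must rule out the coexistence of $[\h_{\alpha},\g_{\beta}]\ne0$, $[\h_{\alpha},\g_{\gamma}]\ne0$ and $[\h_{\beta},\g_{\gamma}]\ne0$. The plan there is to push the same (super)trace idea on the finite-dimensional $\g\langle\alpha\rangle$-module $\bigoplus_{n}\g_{\gamma+n\alpha}$, on which --- in the $Tak(\s\l(2))$-presentation --- $E$ and $F$ act as anticommuting square-zero odd operators generating a Clifford-type quotient, and to feed the resulting numerical constraints, together with the description of $H_{\alpha}$ as the distinguished odd vector for \emph{both} $\g_{\pm\beta}$ and $\g_{\pm\gamma}$, back into the coroot identity $q\equiv\beta(c)\,p\pmod{[H_{\alpha},\h_{\beta}]}$. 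This is the delicate step, and it is where the hypothesis $\rk\beta=1$ is used most essentially.
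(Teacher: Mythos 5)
Your part (1) is correct and is essentially the paper's argument: since $\rk\beta=1$, the null cone of $B_\beta$ equals $\ker B_\beta$, so $[\h_\alpha,\g_\beta]\neq 0$ if and only if $\beta(H_\alpha^2)\neq 0$; this vanishes because $H_\alpha^2=0$ for $Tak(\s\l(1|1))$ and $H_\alpha^2$ is (up to scalar) $c_\alpha=[E,F]$ with $E,F$ even for $Tak(\o\s\p(1|2))$. Your trace computation on $\bigoplus_n\g_{\beta+n\alpha}$ is a clean, self-contained proof of the fact the paper only cites (even Heisenberg triples cannot act with nontrivial central character on a finite-dimensional module).

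Part (2), however, contains a genuine gap, which you acknowledge: after reducing $\gamma(q)=0$ (where $q=[V,W]$) to $\gamma([H_\alpha,H_\beta])=0$, you can only conclude when $\gamma$ is not connected to $\alpha$, and for the remaining case you offer only a ``plan''. The missing idea is that you should evaluate $\beta$ itself, rather than $\gamma$, on the element $[H_\alpha,H_\beta]$, exploiting its double membership in $\h_\alpha\cap\h_\beta$. On the one hand, your own computation shows $[H_\alpha,H_\beta]=\mu p\pm\nu q$ with $\mu\nu\neq 0$ (invertibility of $H_\alpha$ on $\g_{\pm\beta}$), and $\beta$ kills all of $\h_\beta$ because $\beta$ is of Heisenberg type. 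On the other hand, $[H_\alpha,H_\beta]\in(\h_\alpha)_{\ol{0}}=\C h_\alpha\oplus\C c_\alpha$, say $rh_\alpha+sc_\alpha$, where $H_\beta^2=0$ forces $r=0$ or $s=0$; and the hypothesis $[\h_\alpha,\g_\beta]\neq0$ gives both $\beta(c_\alpha)\neq0$ (your criterion) and $\beta(h_\alpha)\neq0$ (by $\s\l(2)$-representation theory, since $\g_{\alpha+\beta}\neq0$ while $\g_{\beta-\alpha}=0$). Hence $0=\beta([H_\alpha,H_\beta])=r\beta(h_\alpha)+s\beta(c_\alpha)$ forces $r=s=0$, i.e.\ $[H_\alpha,H_\beta]=0$, and therefore $q\in\C p$ unconditionally. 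Then $(\h_\beta)_{\ol{0}}=\C p$ with $p$ lying in an even Heisenberg triple, and your trace argument gives $\gamma(p)=0$ for every simple $\gamma$, so $[\h_\beta,\g_\gamma]=0$ with no case analysis on $\gamma$ whatsoever. The route you sketch --- extracting numerical constraints from the $\g\langle\alpha\rangle$-module $\bigoplus_n\g_{\gamma+n\alpha}$ --- is not needed and would be considerably more delicate; as written, your proof of (2) is incomplete.
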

	
	\begin{proof}
		For $(1)$, we have $H_{\alpha}^2=0$ for $Tak(\s\l(1|1))$, while $H_{\alpha}^2$ is the center of an even Heisenberg triple for $Tak(\o\s\p(1|2))$.  Thus we necessarily have $\beta(H_{\alpha}^2)=0$ for these two cases. Since $\beta$ is of rank one this implies that $[H_{\alpha},\g_{\beta}]=0$, and we are done.
		
		For $(2)$, if $[\h_{\alpha},\g_{\beta}]\neq0$, then clearly $H_{\alpha}$ acts by an automorphism on $\g_{\beta}$. Write $e,E$ and $f,F$ for homogeneous bases of $\g_{\beta}$ and $\g_{-\beta}$ respectively, and set $H_{\beta}=[E,f]$.  Then we see that
		\[
		[H_{\alpha},H_{\beta}]=a[e,f]+b[E,F],
		\] 
		for some nonzero $a,b\in\C$.  On the other hand we may write $\h_{\alpha}\ni[H_{\alpha},H_{\beta}]=rh_{\alpha}+sc_{\alpha}$, where $h_{\alpha}$ is the pure $\alpha$ coroot lying in an $\s\l(2)$-triple, and $c_{\alpha}$ is central in $\g\langle\alpha\rangle$. Further, because $H_{\beta}^2=0$, we see that either $r=0$ or $s=0$, and perhaps both are 0.  Now since
		\[
		a[e,f]+b[E,F]=rh_{\alpha}+sc_{\alpha},
		\]  
		and it is clear that $\beta(a[e,f]+b[E,F])=0$, we also must have $r\beta(h_{\alpha})+s\beta(c_{\alpha})=0$.  By $\s\l(2)$-representation theory we must have $\beta(h_{\alpha})\neq0$, so this forces $r=0$, which tells us that
		\[
		a[e,f]+b[E,F]=sc_{\alpha}.
		\] 
		However since $\alpha(c_{\alpha})=0$, this forces $a\alpha([e,f])+b\alpha([E,F])=0$.  Since $[e,f]$ lies in a Heisenberg triple, we must have $\alpha([e,f])=0$, so necessarily $\alpha([E,F])=0$.  Thus $\alpha((\h_{\beta})_{\ol{0}})=0$, which implies $[H_{\alpha},H_{\beta}]=0$.  It follows that
		\[
		a[e,f]+b[E,F]=0,
		\] 
		meaning that $[E,F]\in\C\langle[e,f]\rangle$, so that all pure, even coroots of  $\g\langle\beta\rangle$ lie in a Heisenberg triple.  This completes the argument.
	\end{proof}
	\subsection{Connectivity properties of $H_1$} 
	\begin{lemma}\label{lemma-rank-1}
		Let \(\alpha \in \Pi\) be such that \(\rk\alpha= 1\), and let \(\beta \in \Pi\) be any simple root with either $\rk\beta<2$ or $\rk\beta=2$ of type $Tak(\s\l(2))$ or $Tak(\o\s\p(1|2))$. Then $[\h_{\alpha},\g_{\beta}]=0$. 
	\end{lemma}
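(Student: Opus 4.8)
The plan is to first reduce $\alpha$ to Heisenberg type, then make two reductions that turn the statement into a claim about a single odd coroot, and finally split into an easy low-rank case and a harder rank-$2$ case.

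Since $\rk\alpha=1$, Lemma \ref{lemma-rk0-rk2} forces $\h_\alpha\subseteq\mathrm{Ann}_\h\g_\alpha$, so $\alpha$ is of Heisenberg type; hence $[\g_\alpha,\g_\alpha]=0$ (the proof in Section \ref{section_heisenberg} that a Heisenberg-type simple root satisfies this uses only the Jacobi identity and Corollary \ref{corollary-maximal-ideal}, so it is valid with several simple roots present), whence $\g_{2\alpha}=0$, and $\h_\alpha$ is central in $\g\langle\alpha\rangle$. I would fix a homogeneous basis $e,E$ of $\g_\alpha$ ($e$ even, $E$ odd) and $f,F$ of $\g_{-\alpha}$, and set $h:=[e,f]$, $c:=[E,F]$, $H:=[E,f]$; then $h,c$ are the pure even $\alpha$-coroots, $H$ an odd one, and $[H,H]=0$ since $\h_\alpha$ is central in $\g\langle\alpha\rangle$. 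Using the realization of the $(1|1)$-dimensional irreducible $\h$-modules in Section \ref{subsection-realization-irreducible}, I would pick $K\in\h_{\ol 1}$ interchanging the homogeneous basis vectors of $\g_{\pm\alpha}$; as $h,c\in\mathfrak{t}$ are central in $\h$ and $\h_\alpha$ is an ideal of $\h$, a super-Leibniz computation gives $[e,F]=-H$, $[K,h]=[K,c]=0$, and $[K,H]=h-c\in(\h_\alpha)_{\ol 0}$. In particular $(\h_\alpha)_{\ol 0}=\C h+\C c$ and $(\h_\alpha)_{\ol 1}=\C H$.

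Next I would make two reductions. \emph{Reduction 1:} $\beta(h)=0$ for every root $\beta$ of $\g$ --- if $h\neq0$ then $\langle e,h,f\rangle\cong\h\e(0)$, and since $\g_{2\alpha}=0$ one has $(\mathrm{ad}\,e)^2\h=0$, so integrability (Definition \ref{definition-integrable}) makes $\mathrm{ad}\,e$ and $\mathrm{ad}\,f$ locally nilpotent on $\g$; on the $\mathrm{ad}\,e$- and $\mathrm{ad}\,f$-stable space $M:=\bigoplus_{n\in\Z}\g_{\beta+n\alpha}$, $h$ acts by the scalar $\beta(h)$ (because $\alpha(h)=0$), and if $\beta(h)\neq0$ then choosing $0\neq v\in\g_\beta$ and $w:=(\mathrm{ad}\,e)^k v$ with $(\mathrm{ad}\,e)w=0$, the identity $(\mathrm{ad}\,e)^m(\mathrm{ad}\,f)^m w=m!\,\beta(h)^m w$ contradicts local nilpotency of $\mathrm{ad}\,f$. \emph{Reduction 2:} it suffices to show $H$ annihilates $\g_\beta$ --- for then $[K,H]=h-c$ acts on $\g_\beta$ as the supercommutator of the actions of $K$ and $H$, hence by $0$, so $\beta(c)=\beta(h)=0$, and then $(\h_\alpha)_{\ol 0}=\C h+\C c$ and $(\h_\alpha)_{\ol 1}=\C H$ all annihilate $\g_\beta$, i.e.\ $[\h_\alpha,\g_\beta]=0$.

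Now $[H,H]=0$ means $H$ is isotropic for $B_\beta$, so its image in $\h_{\ol 1}/\ker B_\beta$ is isotropic for the induced nondegenerate form. If $\rk\beta\leq1$ that space has dimension $\leq1$ and no nonzero isotropic vector, so $H\in\ker B_\beta$ and $H$ acts by $0$ on the irreducible module $\g_\beta$; this settles $\rk\beta<2$. The hard case is $\beta$ of type $Tak(\s\l(2))$ or $Tak(\o\s\p(1|2))$, where I must rule out $H\g_\beta\neq0$. Assuming $H\g_\beta\neq0$ one gets $\alpha+\beta\in\Delta$ and the image of $H$ in the hyperbolic plane $\h_{\ol 1}/\ker B_\beta$ is a nonzero isotropic vector; picking $K'\in\h_{\ol 1}$ lying on the opposite isotropic line, $\beta([H,K'])=B_\beta(H,K')\neq0$, and as $[H,K']\in\mathfrak{t}\cap\h_\alpha=\C h+\C c$ with $\beta(h)=0$ this forces $\beta(c)\neq0$. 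Then the square-zero odd operators $\mathrm{ad}\,E,\mathrm{ad}\,F$ (with $\{\mathrm{ad}\,E,\mathrm{ad}\,F\}=\mathrm{ad}\,c$, using $\g_{2\alpha}=0$) make $M=\bigoplus_{n\geq0}\g_{\beta+n\alpha}$ (here $\g_{\beta+n\alpha}=0$ for $n<0$ since $\beta\neq\alpha$) an $\s\l(1|1)$-module with $c$ acting by the nonzero scalar $\beta(c)$, producing an idempotent $P=\beta(c)^{-1}\,\mathrm{ad}\,E\,\mathrm{ad}\,F$ on $M$. I would then combine this $\s\l(1|1)$-string picture with the $\s\l(2)$- (resp.\ $\o\s\p(1|2)$-)triple $\langle e_\beta,h_\beta,f_\beta\rangle\subseteq\g\langle\beta\rangle$ --- under which $\g_\alpha$ is a lowest-weight string of weight $\alpha(h_\beta)$, since $(\mathrm{ad}\,f_\beta)\g_\alpha\subseteq\g_{\alpha-\beta}=0$ --- and with the explicit realization of $\g\langle\beta\rangle$ from Section \ref{section-realization} and Theorem \ref{theorem-one-simple-root}, to derive a contradiction; a promising route is to show that $H\g_\beta\neq0$ forces $[\h_\beta,\g_\alpha]\neq0$ and then invoke the already-proven Lemma \ref{lemma_conn_tak} (part (1) contradicts the $Tak(\o\s\p(1|2))$ case, and part (2) makes $\alpha$ a sink, contradicting $[\h_\alpha,\g_\beta]\neq0$). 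The main obstacle is precisely this last case: the reduction in the earlier paragraphs is essentially mechanical, but for the two rank-$2$ Takiff types one genuinely needs the rank-$2$ structure and a careful analysis of how the odd coroot $H$ can act on $\g_\beta$, and I expect pinning down the implication $[\h_\alpha,\g_\beta]\neq0\Rightarrow[\h_\beta,\g_\alpha]\neq0$ --- or an equivalent contradiction extracted directly from the $\s\l(1|1)$-string decomposition --- to be the crux.
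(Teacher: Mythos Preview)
Your setup, the two reductions, and the case $\rk\beta\leq 1$ are correct and match the paper's argument (the paper is terser: it notes that $e,h,f$ form an even Heisenberg so integrability gives $\beta(h)=0$, then uses $[K,H]=ah+bc$ with $a,b\neq 0$ to reduce to $[H,\g_\beta]=0$).

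The gap is the rank-$2$ case. You correctly single out Lemma~\ref{lemma_conn_tak} and the $\s\l(2)_\beta$-string on $\g_\alpha$ as the ingredients, but you leave the implication $[\h_\alpha,\g_\beta]\neq0\Rightarrow[\h_\beta,\g_\alpha]\neq0$ unproven and are reading Lemma~\ref{lemma_conn_tak} from the harder side. In that lemma the rank-$2$ root is the one called $\alpha$ and the rank-$1$ root is $\beta$; translated to the present notation, part~(2) in contrapositive form already hands you $[\h_\alpha,\g_\beta]\neq0\Rightarrow[\h_\beta,\g_\alpha]=0$ for free (and for $Tak(\o\s\p(1|2))$ one either cites part~(1) or reduces to $Tak(\s\l(2))$ via the $(\p)\s\q(2)$ sitting in $\g\langle\beta\rangle$ with the same coroot space). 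So the remaining task is the contrapositive of your ``crux'': derive a contradiction from $[\h_\beta,\g_\alpha]=0$ together with $[H,\g_\beta]\neq0$. From $[\h_\beta,\g_\alpha]=0$ one has $\alpha(h_\beta)=0$ and $[H,H_\beta]=0$. On $\g_\beta$ the vanishing anticommutator $HH_\beta+H_\beta H=0$, together with $[H_\beta,e_\beta]\in\C^\times E_\beta$ and $[H_\beta,E_\beta]=0$, forces $[H,E_\beta]=0$; hence $[H,e_\beta]\neq0$. Expanding $H\in[\g_\alpha,\g_{-\alpha}]$ via the Jacobi identity and using $[\g_{-\alpha},\g_\beta]=0$ gives $[\g_\alpha,e_\beta]\neq0$. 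But your own observation that $\g_\alpha$ is $\s\l(2)_\beta$-lowest of weight $\alpha(h_\beta)=0$ now finishes it: integrable $\s\l(2)$-theory forces $[e_\beta,\g_\alpha]=0$, a contradiction. The $\s\l(1|1)$-string via $E,F,c$ that you build is not needed.
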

	\begin{proof}		
		For the proof, let us first realize the subalgebra of \(\mathfrak{g}(\mathcal{A})\) generated by \(\mathfrak{g}_{\alpha}\) and \(\mathfrak{g}_{-\alpha}\). Because \(\rk\alpha = 1\), we have \(\mathfrak{g}_{-\alpha} \cong\mathfrak{g}_{\alpha}^\vee\), and thus we identify $\g_{-\alpha}$ and $\g_{\alpha}^\vee$. Let \(e\) and \(E\) be even and odd basis vectors for \(\mathfrak{g}_{\alpha}\), respectively. Then \(f \coloneq e^\vee\) and \(F \coloneq \sqrt{-1}E^\vee\) form a homogeneous basis for \(\mathfrak{g}_{-\alpha}\). Let \(h \coloneq [e,f]\), \(c \coloneq [E,F]\) and \(H \coloneq [e,F]\). We notice that \(e,h,f\) form a Heisenberg subalgebra, so the integrability of \(\mathfrak{g}(\mathcal{A})\) implies \(\beta(h) = 0\). \par 
		Suppose first that \([H,\mathfrak{g}_{\beta}]=0\). The irreducibility of \(\mathfrak{g}_{\alpha}\) implies the existence of \(K\in\mathfrak{h}_{\ol{1}}\) such that \([K,\mathfrak{g}_{\alpha}]=\mathfrak{g}_{\alpha}\). In particular, \([K,H] = ah+bc\) for some non-zero \(a,b \in \mathbb{C}\). Therefore
		\begin{align*}
			0 = [[K,H],\mathfrak{g}_{\beta}] = [ah+bc,\mathfrak{g}_{\beta}].
		\end{align*}
		As \(\beta(h) = 0\), we must have \(\beta(c) = 0\). Thus we obtain \([\h_{\alpha},\mathfrak{g}_{\beta}] = 0\), as desired. We conclude:
		\begin{align}\label{equation-root-implication}
			[H,\mathfrak{g}_{\beta}]=0 \implies [\h_{\alpha},\g_{\beta}]= 0.
		\end{align}
		We now prove the lemma case by case, depending on the properties of \(\beta\). If \(\rk\beta = 0\), we must have \([H,\mathfrak{g}_{\beta}] = 0\), so we are done.  Similarly if $\rk\beta=1$, then since $H^2=0$ we must have $[H,\g_{\beta}]=0$, so we are done.
		
		Suppose now \(\beta\) is of $Tak(\s\l(2))$-type or $Tak(\o\s\p(1|2))$-type.  In the latter case, $\g\langle\alpha\rangle$ contains $(\p)\mathfrak{sq}(2)$ as a subalgebra with the same coroots, so it suffices to assume we are in this case.
		
		However by Lemma \ref{lemma_conn_tak}, if $[\h_{\alpha},\g_{\beta}]\neq0$ then necessarily $[\h_{\beta},\g_{\alpha}]=0$.  If we write $H_{\beta}$ for a nonzero odd coroot of $\h_{\beta}$, then this implies that $[H_1,H_\beta]=0$.  Thus $H,H_\beta$ have commuting actions on $\g_{\beta}$, so since $H_1$ acts nontrivially it must act nontrivially on the $\s\l(2)$ triple inside $\g\langle\beta\rangle$.  However if $h_{\beta}$ denotes the even coroot of said $\s\l(2)$ triple, then this implies $\alpha(h_{\beta})\neq0$, a contradiction. 
	\end{proof}
	
	\section{Type Q Kac--Moody algebras}\label{section-QKM}
	
	\subsection{Regularity and indecomposability} \begin{definition}\label{definition-regular}
		We say that a Cartan datum $\AA$ (and by extension, the Lie superalgebra \(\mathfrak{g}(\AA)\)) is regular if \([\h_{\alpha},\mathfrak{g}_{\beta}] = 0\) implies \([\h_{\beta},\mathfrak{g}_{\alpha}] = 0\) for any \(\alpha,\beta \in \Pi\). 
	\end{definition}
	
	From Proposition \ref{proposition-roots-connectivity} we deduce:
	
	\begin{corollary}\label{corollary-regular}
		If \(\mathfrak{g}\) is regular and \(\alpha,\beta \in \Pi\) are such that \([\h_{\alpha},\mathfrak{g}_{\beta}] = 0\), then \(\alpha+\beta\) is not a root of \(\mathfrak{g}\).
	\end{corollary}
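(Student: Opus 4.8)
The plan is to deduce the statement directly from Proposition~\ref{proposition-roots-connectivity} applied to singleton subsets of $\Pi$, once regularity has been used to symmetrize the vanishing hypothesis. Suppose $\alpha,\beta\in\Pi$ satisfy $[\h_{\alpha},\g_{\beta}]=0$. If $\alpha=\beta$, then $[\h_{\alpha},\g_{\alpha}]=0$ says that $\alpha$ is of Heisenberg type, so the lemma of Section~\ref{section_heisenberg} gives $\Delta=\{\pm\alpha\}$; in particular $2\alpha\notin\Delta$, which is the claim in this degenerate case. Hence for the rest of the argument I would assume $\alpha\neq\beta$.

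First I would invoke regularity: since $[\h_{\alpha},\g_{\beta}]=0$, Definition~\ref{definition-regular} yields $[\h_{\beta},\g_{\alpha}]=0$ as well. Thus the disjoint subsets $\Pi_1:=\{\alpha\}$ and $\Pi_2:=\{\beta\}$ satisfy exactly the hypothesis of Proposition~\ref{proposition-roots-connectivity}, namely $[\h_{\gamma},\g_{\delta}]=[\h_{\delta},\g_{\gamma}]=0$ for all $\gamma\in\Pi_1$, $\delta\in\Pi_2$. Here $Q_1^+=\Z_{\geq0}\alpha$ and $Q_2^+=\Z_{\geq0}\beta$, and visibly $\alpha+\beta\in Q_1^++Q_2^+$.

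Next, I would argue by contradiction. If $\alpha+\beta$ were a root of $\g$, then Proposition~\ref{proposition-roots-connectivity} would force $\alpha+\beta\in Q_1^+$ or $\alpha+\beta\in Q_2^+$, i.e.\ $\alpha+\beta$ would be a nonnegative integer multiple of $\alpha$ alone or of $\beta$ alone. But $\Pi$ is linearly independent in $\mathfrak{t}^*$ by Definition~\ref{definition-datum}, so $\alpha+\beta$ is proportional to neither $\alpha$ nor $\beta$. This contradiction shows $\alpha+\beta\notin\Delta$.

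There is essentially no obstacle here: the corollary is a packaging of Proposition~\ref{proposition-roots-connectivity}. The only two points needing a moment's care are the degenerate case $\alpha=\beta$, handled via the Heisenberg lemma, and the appeal to linear independence of $\Pi$ to exclude $\alpha+\beta$ being a multiple of a single simple root — without that, the conclusion of Proposition~\ref{proposition-roots-connectivity} would not be a contradiction.
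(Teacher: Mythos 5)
Your proof is correct and is exactly the deduction the paper intends: the paper derives this corollary directly from Proposition \ref{proposition-roots-connectivity} applied to the singletons $\Pi_1=\{\alpha\}$, $\Pi_2=\{\beta\}$ after symmetrizing via regularity, with linear independence of $\Pi$ ruling out $\alpha+\beta\in\Z_{\geq0}\alpha\cup\Z_{\geq0}\beta$. Your separate treatment of the degenerate case $\alpha=\beta$ via the Heisenberg lemma is a reasonable extra precaution but does not change the substance of the argument.
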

	
	\begin{definition}
		Let $\AA$ be a Cartan datum.  We say that $\AA$ is indecomposable if there does not exist a nontrivial partition $\Pi=\Pi_1\sqcup\Pi_2$ such that $[\h_{\alpha},\g_{\beta}]=[\h_{\beta},\g_{\alpha}]=0$ for all $\alpha\in\Pi_1$, $\beta\in\Pi_2$.
	\end{definition}
	
	Our work in the previous section immediately implies:
	\begin{theorem}\label{theorem-connectivity-aKM}
		Let \(\mathfrak{g}(\mathcal{A})\) be a regular, indecomposable Clifford Kac--Moody algebra with $|\Pi|>1$.  Then one of two possibilities can occur: 
		\begin{enumerate}[label=(\roman*)]
			\item \(\rk\alpha =0\) and \(\mathfrak{g}_{-\alpha} \simeq \mathfrak{g}_{\alpha}^\vee\) for all $\alpha\in\Pi$;
			\item \(\rk\alpha =2\) and \(\mathfrak{g}_{-\alpha} \simeq \mathfrak{g}_{\alpha}^\vee\) for all $\alpha\in\Pi$.
		\end{enumerate}
	\end{theorem}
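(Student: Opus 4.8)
\emph{Plan.} This theorem is meant to be read off from the one--root classification of Section~\ref{section-one-root} together with the connectivity analysis of Section~\ref{section-connectivity}, so the plan is to assemble those inputs. The first step is a graph--theoretic reformulation: regularity makes the relation ``$\alpha\sim\beta$'' defined by $[\h_{\alpha},\g_{\beta}]\neq0$ symmetric on $\Pi$ (apply Definition~\ref{definition-regular} with the two roots interchanged), while indecomposability together with $|\Pi|>1$ forces the graph $(\Pi,\sim)$ to be connected; in particular every simple root has at least one neighbour. Since every simple root of a Clifford Kac--Moody algebra has a well--defined type (the table at the start of Section~\ref{section-connectivity}), it suffices to show that no simple root can be of a ``sink'' type or of type $H_1$, and then that the two surviving rank values do not mix.

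First I would discard the sink types. By Proposition~\ref{prop_sinks}, if $\alpha$ is of type $\h\e(0)$, $\h\e(0)^{\Pi}$, $\h\e(2)^{\Pi}$, or $H_n$ with $n\geq3$, then $[\h_{\alpha},\g_{\beta}]=0$ for all $\beta\in\Pi$, hence by regularity $[\h_{\beta},\g_{\alpha}]=0$ as well, so $\alpha$ has no neighbour, contradicting connectedness. This removes every type of rank $\geq3$, along with the types $\h\e(0)$, $\h\e(0)^{\Pi}$ (rank $0$) and $\h\e(2)^{\Pi}$ (rank $2$). It remains to rule out the rank--$1$ type $H_1$. Suppose $\alpha$ has type $H_1$ and pick a neighbour $\beta$, so $[\h_{\alpha},\g_{\beta}]\neq0$. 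By Lemma~\ref{lemma-rank-1}, $\beta$ can have neither rank $<2$ nor type $Tak(\s\l(2))$ or $Tak(\o\s\p(1|2))$; the previous paragraph also excludes type $\h\e(2)^{\Pi}$ and types $H_n$ with $n\geq3$. Hence $\beta$ must be of type $Tak(\s\l(1|1))$ --- but then Lemma~\ref{lemma_conn_tak}(1), applied with the roles of $\alpha$ and $\beta$ exchanged, gives $[\h_{\beta},\g_{\alpha}]=0$, and symmetry of $\sim$ forces $[\h_{\alpha},\g_{\beta}]=0$, a contradiction. So no simple root has type $H_1$, and consequently $\rk\alpha\in\{0,2\}$ for every $\alpha\in\Pi$, and the only surviving types are $\s\l(2),\o\s\p(1|2),\s\l(1|1)$ (rank $0$) and $Tak(\s\l(2)),Tak(\o\s\p(1|2)),Tak(\s\l(1|1))$ (rank $2$).

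To upgrade this to the stated dichotomy I would invoke the (unlabelled) lemma preceding Lemma~\ref{lemma_conn_tak}: if $\beta$ is of type $Tak(\s\l(2))$, $Tak(\o\s\p(1|2))$, or $Tak(\s\l(1|1))$ and $\rk\alpha=0$, then $[\h_{\beta},\g_{\alpha}]=0$, hence by regularity $[\h_{\alpha},\g_{\beta}]=0$ too. Thus no edge of $(\Pi,\sim)$ joins a rank--$0$ root to a rank--$2$ root, and connectedness of the graph then forces all roots to share a single rank, giving case (i) or case (ii). Finally, $\g_{-\alpha}\cong\g_{\alpha}^{\vee}$ holds in each surviving case: for the rank--$2$ types it is precisely the defining property recalled in Section~\ref{section-connectivity} (and also part of Theorem~\ref{theorem-one-simple-root}(iii)--(iv) via Corollary~\ref{corollary-twist}), while for the rank--$0$ types $\s\l(2),\o\s\p(1|2),\s\l(1|1)$ the modules $\g_{\alpha}$ and $\g_{-\alpha}$ are one--dimensional --- so $\h_{\ol1}$ acts trivially on them and the twist $\omega_{\h}^{-1}$ only negates the weight --- and they have equal parity (even for $\s\l(2)$, odd for $\o\s\p(1|2)$ and $\s\l(1|1)$), which is exactly the condition $\g_{-\alpha}\cong\g_{\alpha}^{\vee}$.

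The only step requiring genuine care is the elimination of type $H_1$: one must track both directions of the connectivity relation, using regularity to move between them, and combine Lemma~\ref{lemma-rank-1} with Lemma~\ref{lemma_conn_tak} to close the last open possibility ($\beta$ of type $Tak(\s\l(1|1))$). Everything else is a direct application of Proposition~\ref{prop_sinks}, the one--root classification of Section~\ref{section-one-root}, and the connectivity lemmas.
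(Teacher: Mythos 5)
Your proposal is correct and follows exactly the route the paper intends: the paper gives no written proof beyond ``our work in the previous section immediately implies,'' and your argument is precisely that assembly --- regularity symmetrizes the connection relation, indecomposability forces the connection graph on $\Pi$ to be connected, Proposition \ref{prop_sinks} isolates the sink types, Lemma \ref{lemma-rank-1} together with Lemma \ref{lemma_conn_tak}(1) eliminates $H_1$, and the unlabelled lemma on rank-$0$ neighbours of Takiff-type roots plus regularity prevents the rank-$0$ and rank-$2$ classes from mixing. The only minor caveat is your parenthetical appeal to Corollary \ref{corollary-twist} for the rank-$2$ duality, which does not cover the (Heisenberg-type) $Tak(\s\l(1|1))$ roots; but your primary justification --- that $\g_{-\alpha}\simeq\g_{\alpha}^{\vee}$ is built into the definition of the three $Tak(\s)$ types --- is the right one, so nothing is missing.
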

	
	\subsection{Type Q Kac--Moody superalgebras} Following Theorem \ref{theorem-connectivity-aKM}, it makes sense to consider the class of Clifford Kac--Moody algebras in which any simple root \(\alpha\in \Pi\) satisfies \(\rk\alpha = 2\) and \(\mathfrak{g}_{-\alpha} \simeq \mathfrak{g}_{\alpha}^\vee\). 
	\begin{definition}\label{definition-QKM}
		Let $\AA$ be a Cartan datum.  We say the Lie superalgebra \(\mathfrak{g}(\mathcal{A})\) is type Q Kac--Moody (QKM) if the following conditions are satisfied:
		\begin{enumerate}[label=(KM\arabic*)]
			\item\label{QKM-integrable} \(\AA\) is integrable, see Definition \ref{definition-integrable};
			\item\label{QKM-regular} \(\mathcal{A}\) is regular, see Definition \ref{definition-regular};
			\item\label{QKM-rank2} every \(\alpha \in \Pi\) is of rank \(2\);
			\item\label{QKM-irrtwist} for any $\alpha\in\Pi$, the \(\mathfrak{h}\)-module \(\mathfrak{g}_{\alpha}\) is irreducible, and \(\mathfrak{g}_{-\alpha}\simeq \mathfrak{g}_{\alpha}^\vee\);
			\item\label{QKM-linear-independence} for any \(\alpha \in \Pi\) we have \(\mathfrak{h}_{\alpha} \nsubseteq \sum_{\beta\in\Pi\setminus\{\alpha\}}\mathfrak{h}_{\beta}\).
		\end{enumerate}
		In this situation we will also say that $\AA$ is a QKM Cartan datum.
	\end{definition}
	\begin{remark}
		We notice that every QKM algebra is a Clifford Kac--Moody algebra, and the possible simple root types are $Tak(\s\l(2))$, $Tak(\o\s\p(1|2))$, and $Tak(\s\l(1|1))$. 
	\end{remark}
	
	\begin{remark}\label{remark-Chevalley}
		From Corollary \ref{corollary-Chevalley} we deduce that any QKM algebra admits a Chevalley automorphism $\omega$ extending \(\omega_{\mathfrak{h}}\) and satisfying $\omega^2=\delta$.
	\end{remark}
	
	\begin{remark}
		Condition \ref{QKM-regular} is a convenient assumption for purposes; if removed, we would obtain algebras with a nontrivial ideal inside generated by `non-regular' simple roots.  
	\end{remark}
	
	\begin{remark}
		Property \ref{QKM-linear-independence} is a condition on the linear independence of the simple coroots of \(\mathfrak{g}(\mathcal{A})\). In a Clifford Kac--Moody algebra with all simple roots of rank \(0\), this condition coincides with the usual linear independence of coroots condition as in \cite{K}.
	\end{remark}
	
	\begin{lemma}
		Let $\AA$ be a Cartan datum with simple roots $\Pi=\{\alpha_1,\dots,\alpha_n\}$ satisfying (KM1)-(KM4) of Definition \ref{definition-QKM}.  Then $\AA$ satisfies (KM5) if and only if the odd coroots in $\h_{\alpha_1},\dots,\h_{\alpha_n}$ are linearly independent.
	\end{lemma}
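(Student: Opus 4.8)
The plan is to translate both sides of the claimed equivalence into a single, tractable linear-algebra statement. By \ref{qKM-rank2} and \ref{qKM-irrtwist}, each simple root $\alpha_i$ has $\rk\alpha_i = 2$ and $\g_{-\alpha_i}\cong\g_{\alpha_i}^\vee$, so Corollary \ref{corollary-tensor-of-irreducibles}(2) (the $\rk\in 4\Z+2$ case) applies: the coroot space $\h_{\alpha_i}=[\g_{\alpha_i},\g_{-\alpha_i}]$ has an odd part which is at most one-dimensional and which generates $\h_{\alpha_i}$ as an $\h$-module. Since $\h$ is quasi-toral and $\h_{\alpha_i}$ is an ideal of $\h$ (Definition \ref{definition_coroot}), the $\h$-module generated by a single odd element $H_{\alpha_i}$ is spanned by $H_{\alpha_i}$ together with $[\h_{\ol1},H_{\alpha_i}]\subseteq\mathfrak{t}$; moreover $[H_{\alpha_i},H_{\alpha_i}]\in\mathfrak{t}$ lies in this span as well. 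Thus there is a nonzero odd coroot $H_{\alpha_i}\in(\h_{\alpha_i})_{\ol1}$, unique up to scalar, and
\[
\h_{\alpha_i}=\C H_{\alpha_i}+[\mathfrak{h}_{\ol1},H_{\alpha_i}]+\C[H_{\alpha_i},H_{\alpha_i}],
\]
with the last two summands lying in $\mathfrak{t}$. In particular, $(\h_{\alpha_i})_{\ol1}=\C H_{\alpha_i}$.

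**The forward direction.** Suppose $\AA$ satisfies (KM5), i.e. $\h_{\alpha_i}\nsubseteq\sum_{j\neq i}\h_{\alpha_j}$ for every $i$; I want the $H_{\alpha_i}$ linearly independent. Suppose instead $\sum_i c_i H_{\alpha_i}=0$ with not all $c_i=0$; reindex so $c_1\neq 0$. Then $H_{\alpha_1}=-c_1^{-1}\sum_{j\geq 2}c_j H_{\alpha_j}\in\sum_{j\geq 2}(\h_{\alpha_j})_{\ol1}\subseteq\sum_{j\geq 2}\h_{\alpha_j}$. It then suffices to show this forces $\h_{\alpha_1}\subseteq\sum_{j\geq 2}\h_{\alpha_j}$, contradicting (KM5). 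Since $H_{\alpha_1}$ generates $\h_{\alpha_1}$ as an $\h$-module and $\sum_{j\geq 2}\h_{\alpha_j}$ is an $\h$-submodule of $\h$ (a sum of ideals is an ideal, hence an $\h$-submodule), once $H_{\alpha_1}$ lies in it so does the whole submodule it generates, namely $\h_{\alpha_1}$. This is the contradiction, so the $H_{\alpha_i}$ are independent.

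**The converse.** Conversely, suppose the $H_{\alpha_i}$ are linearly independent; I want (KM5). Fix $i$ and suppose for contradiction that $\h_{\alpha_i}\subseteq\sum_{j\neq i}\h_{\alpha_j}$. In particular $H_{\alpha_i}\in\left(\sum_{j\neq i}\h_{\alpha_j}\right)_{\ol1}=\sum_{j\neq i}(\h_{\alpha_j})_{\ol1}=\sum_{j\neq i}\C H_{\alpha_j}$, so $H_{\alpha_i}$ is a linear combination of the $H_{\alpha_j}$ with $j\neq i$, contradicting linear independence. Hence $\h_{\alpha_i}\nsubseteq\sum_{j\neq i}\h_{\alpha_j}$, which is (KM5).

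**Main obstacle.** The only nontrivial input is the structural fact that $\h_{\alpha_i}$ is generated as an $\h$-module by a single odd element whose odd part spans $(\h_{\alpha_i})_{\ol1}$; this is precisely Corollary \ref{corollary-tensor-of-irreducibles}(2) combined with $\rk\alpha_i\in 4\Z+2$, and it is what makes ``$\h_{\alpha_i}\subseteq\sum_{j\neq i}\h_{\alpha_j}$'' equivalent to the purely odd statement ``$H_{\alpha_i}\in\sum_{j\neq i}\C H_{\alpha_j}$.'' The one point requiring a line of care is that $\sum_{j\neq i}\h_{\alpha_j}$ is genuinely an $\h$-submodule of $\h$ (so that containment of a generator upgrades to containment of the full submodule in the forward direction), which follows since each $\h_{\alpha_j}$ is an ideal of $\h$ and $\h$ acts on itself by the adjoint action; everything else is bookkeeping.
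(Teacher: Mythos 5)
Your proof is correct and follows essentially the same route as the paper's: both directions reduce to the facts that $(\h_{\alpha_i})_{\ol 1}$ is one-dimensional and that its generator $H_{\alpha_i}$ generates all of $\h_{\alpha_i}$ as an $\h$-module, so that $H_{\alpha_i}\in\sum_{j\neq i}\h_{\alpha_j}$ upgrades to $\h_{\alpha_i}\subseteq\sum_{j\neq i}\h_{\alpha_j}$. The paper dismisses the converse as clear and runs the identical contradiction argument for the forward direction, so there is nothing substantive to distinguish the two.
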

	
	\begin{proof}
		The backward direction is clear.  For the forward direction, suppose that $\AA$ satisfies (KM5) but the odd coroots are not linearly independent.  Since $\dim(\h_{\alpha_i})_{\ol{1}}=1$ for all $i$, let $H_i\in(\h_{\alpha_i})_{\ol{1}}$ be any nonzero element. Then by our assumption, for some $i$ we have $H_i\in\sum_{j\neq i}\h_{\alpha_j}$.  However $H_i$ generates $\h_{\alpha_i}$ as an $\h$-module; thus 
		\[
		\h_{\alpha_i}=\C\langle H_i\rangle\oplus[\h,H_i]\subseteq\sum\limits_{j\neq i}[\h,\h_{\alpha_j}]\subseteq\sum_{j\neq i}\h_{\alpha_j},
		\]
		contradiction (KM5).
	\end{proof}

	\begin{remark}
		It follows from our work that if $\AA$ is QKM, then for a simple root $\alpha$, the subalgebra $\g\langle\alpha\rangle$ is isomorphic to a central quotient of $\mathfrak{t}\s$ for a Kac--Moody Lie superalgebra $\mathfrak{s}$ with one simple root (see Example \ref{example_ts_pts}).  Thus QKM algebras are built by `glueing together' extensions of Takiffs of Kac--Moody superalgebras with one simple root.
		
		A trivial way to glue together such Takiffs is via the Takiff construction applied to an arbitrary Kac--Moody Lie superalgebra; however, in a sense, this is a less interesting type of glueing.  We will find that if we avoid such constructions, we are led to more `interesting' Lie superalgebras such as $\q(n)$, and also that the theory becomes very rigid.
	\end{remark}
	
	\subsection{Cartan datum for QKM algebras}\label{section-notation-QKM}
	
	Because QKM algebras only have simple roots of type $Tak(\s)$ for $\s=\s\l(2),\o\s\p(1|2)$, or $\s\l(1|1)$, we can more explicitly present their Cartan datum.  We do that now, while also introducing notation that will be used henceforth throughout the paper.
	
	Let $\AA$ be a QKM Cartan datum, and let $\Pi=\{\alpha_1,\dots,\alpha_n\}$ be the simple roots.  For each $i$, we have the subalgebra $\g\langle\alpha_i\rangle$ generated by $\g_{\alpha_i}$ and $\g_{-\alpha_i}$.  We choose homogeneous bases of $\g_{\pm\alpha_i}$, using the identification $\g_{-\alpha_i}=\g_{\alpha_i}^\vee$, as follows:
	\begin{enumerate}
		\item if $\alpha_i$ is of type $Tak(\s\l(2))$, set $e_i$ and $E_i$ to be nonzero even and odd elements of $\g_{\alpha_i}$, and let $f_i:=e_i^\vee$ and $F_i:=\sqrt{-1}E_i^\vee$.  Rescale the choice of $e_i$ so that $\alpha_i([e_i,f_i])=2$;
		\item if $\alpha_i$ is of type $Tak(\o\s\p(1|2))$ or $Tak(\s\l(1|1))$, set $E_i$ and $e_i$ to be nonzero even and odd elements of $\g_{\alpha_i}$ (notice the change in order), and let $f_i:=\sqrt{-1}e_i^\vee$ and $F_i:=E_i^\vee$.  If $\alpha_i$ is of type $Tak(\o\s\p(1|2))$, then rescale the choice of $e_i$ so that $\alpha_i([e_i,f_i])=1$.
	\end{enumerate}
	It follows from the above choices that:
	\begin{enumerate}
		\item if $\alpha_i$ is of type $Tak(\s\l(2))$, then $e_i,[e_i,f_i],f_i$ form an $\s\l(2)$ triple;
		\item if $\alpha_i$ is of type $Tak(\o\s\p(1|2))$, then $[e_i,e_i],e_i,[e_i,f_i],f_i,[f_i,f_i]$ form an $\o\s\p(1|2)$-quintuple;
		\item if $\alpha_i$ is of type $Tak(\s\l(1|1))$, then $e_i,[e_i,f_i],f_i$ form an $\s\l(1|1)$-triple.
	\end{enumerate}
	On should view the $e_i,E_i,f_i,$ and $F_i$ as the `Chevalley generators' for a QKM algebra.
	
	\subsubsection{Uniqueness of generators}\label{section_uniqueness_gens} Before going further, we note that the above choices are not unique: in particular we may simultaneously rescale $e_i$ and $f_i$ by $(-1)$, or simultaneously rescale $E_i$ and $F_i$ by $\lambda\in\C^\times$. For $Tak(\s\l(1|1))$ we may also rescale $e_i,f_i$ by any $\lambda\in\C^\times$.
	
	\subsubsection{Pure coroots} Set 
	\[
	H_i:=[E_i,f_i];
	\]
	one may check that $H_i\neq0$, and we also always have 
	\[
	H_i=[e_i,F_i].
	\]
	Further set:
	\[
	h_i:=[e_i,f_i] \ \ \ \ c_i:=[E_i,F_i].
	\]
	
	The following table summarizes some of the important properties, which are easy to prove, of the pure coroots we have introduced.
	\begin{center}
		\begin{tabular}{|c|c|c|c|}
			\hline
			& $Tak(\s\l(2))$ & $Tak(\o\s\p(1|2))$ & $Tak(\s\l(1|1))$\\
			\hline
			$H_i^2=$ & $c_i$ & $c_i$ & 0\\
			\hline 
			$c_i$ lies in... & $\s\l(1|1)$-triple & even Heisenberg triple & even Heisenberg triple\\
			& $\alpha_i(c_i)=0$ & $\beta(c_i)=0$ for all roots $\beta$ & $\beta(c_i)=0$ for all roots $\beta$\\
			\hline
			$h_i$ lies in... & $\s\l(2)$-triple & $\o\s\p(1|2)$ quintuple & $\s\l(1|1)$-triple\\
			& $\alpha_i(h_i)=2$ & $\alpha_i(h_i)=1$ & $\alpha_i(h_i)=0$\\
			\hline
		\end{tabular}	
	\end{center}
	
	\begin{remark}\label{remark_root_subalg_really_takiff}
		A more concrete viewpoint on the elements described above is given using the fact that each $\g\langle\alpha_i\rangle$ is a central quotient of $\mathfrak{t}\s=\s\otimes\C[\xi]\oplus\C\langle c\rangle$ for $\s=\s\l(2),\o\s\p(1|2)$, or $\s\l(1|1)$.  In these terms, $e_i,f_i$ are the Chevalley generators of $\s$, $h_i=[e_i,f_i]$, $c_i=c$, and $H_i=h_i\otimes \xi$.  
	\end{remark}
	
	\subsubsection{$X$ and $Y$ matrices}\label{section_X_Y_matrices}
	
	Let $\AA$ be a QKM Cartan datum.  We now introduce two $n\times n$ matrices $X(\AA)=X=(x_{ij})$ and $Y(\AA)=Y=(y_{ij})$ of complex numbers, which encode much of the Cartan datum. The entries are defined as follows:
		\[
		[H_j,e_i]=x_{ji}E_i, \ \text{ and } \ [H_j,E_i]=y_{ji}e_i.
		\]
		Then because $\g_{-\alpha_i}\cong\g_{\alpha_i}^\vee$, we have
		\[
		[H_i,f_j]=-(-1)^{\ol{e_i}}x_{ij}F_j, \ \text{ and } \ [H_i,F_j]=(-1)^{\ol{e_i}}y_{ij}f_j.
		\]
	
	\begin{remark}\label{remark_rescaling}
		The entries of the matrices $X(\AA)$ and $Y(\AA)$ are not unique; indeed, in addition to permuting indices, if we rescaled our choices of generators $e_i$ and $E_i$ (see \ref{section_uniqueness_gens}) we would scale certain entries in the matrices. Indeed, if we rescale $E_i$ by $\lambda$ the entries change as follows:
		\[
		y_{ij},y_{ji}\mapsto \lambda y_{ij},\lambda y_{ji}, \ \ \ \ x_{ij},x_{ji}\mapsto\lambda x_{ij},x_{ji}/\lambda.
		\]
	\end{remark}

	\subsubsection{Formulas}	
	\begin{lemma}\label{lemma_QKM_formulas}
		We have the following formulas:
		\begin{enumerate}
			\item for any $i,j$:
			\[
			[H_i,H_j]=x_{ij}c_j+y_{ij}h_j = x_{ji}c_i+y_{ji}h_i;
			\]
			\item for any $i,j$:
			\[
			\alpha_k([H_i,H_j])=x_{ik}y_{jk} + x_{jk}y_{ik},
			\]
			in particular,
			\[
			\alpha_j(H_i^2)=x_{ij}y_{ij};
			\]
			\item 
			\[
			y_{ii}=0 \ \text{ for all }i;
			\]
			\item 
			\[
			x_{ii}=\alpha_i(h_i)=\begin{cases}
				2 & \text{ if }\alpha_i\text{ of type }Tak(\s\l(2)),\\
				1 & \text{ if }\alpha_i\text{ of type }Tak(\o\s\p(1|2)),\\
				0 & \text{ if }\alpha_i\text{ of type }Tak(\s\l(1|1)).
			\end{cases}
			\]
			\item 
			\[
			\alpha_i([H_i,H_j])=y_{ij}\alpha_i(h_i).
			\]
		\end{enumerate}
	\end{lemma}
	\begin{proof}
		Formula (1) follows from applying the Jacobi identity $[H_i,H_j]$ with either $H_i=[E_i,f_i]$ or $H_j=[E_j,f_j]$.  Formula (2) holds because $\alpha_k([H_i,H_j])$ is given by the scalar action of $[H_i,H_j]$ on $\g_{\alpha_k}$; however this is the action of the operator $H_iH_j+H_jH_i$, and one sees that it acts by the given scalar.  Formulas (3) and (4) are straightforward, and formula (5) follows from (2), (3), and (4).
	\end{proof}

	\begin{lemma}\label{lemma_nontrivial_relation_on_ys}
		For two simple roots $\alpha_i,\alpha_j$ of type $Tak(\s\l(2))$ or $Tak(\o\s\p(1|2))$ we have
		\[
		x_{ij}x_{ji}y_{ji}+y_{ij}\alpha_i(h_j)=y_{ji}\alpha_i(h_i),
		\]
		and
		\[
		y_{ij}y_{ji}(\alpha_i(h_i)-\alpha_j(h_j))=y_{ij}^2\alpha_i(h_j)-y_{ji}^2\alpha_j(h_i).
		\]
		In particular if $\alpha_i(h_i)=\alpha_j(h_j)$, then
		\[
		y_{ij}^2\alpha_i(h_j)=y_{ji}^2\alpha_j(h_i).
		\]
	\end{lemma}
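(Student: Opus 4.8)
The plan is to read off both identities from the two expressions for $[H_i,H_j]$ in Lemma \ref{lemma_qKM_formulas}(1), namely
\[
[H_i,H_j]=x_{ij}c_j+y_{ij}h_j=x_{ji}c_i+y_{ji}h_i,
\]
by pairing this equality of elements of $\h$ against the weights $\alpha_i$ and $\alpha_j$, and then eliminating the product $x_{ij}x_{ji}$ between the two resulting scalar equations.

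First I would assemble the inputs. Because $\alpha_i$ and $\alpha_j$ are both of type $Tak(\s\l(2))$ or $Tak(\o\s\p(1|2))$, the pure coroots satisfy $c_i=H_i^2$, $c_j=H_j^2$, and $\alpha_i(c_i)=\alpha_j(c_j)=0$ (see the table of pure coroots in Section \ref{section-notation-qKM}); this is precisely where the type hypothesis enters, since a $Tak(\s\l(1|1))$ root would instead have $H^2=0$. Feeding $c_j=H_j^2$ into Lemma \ref{lemma_qKM_formulas}(2) gives $\alpha_i(c_j)=\alpha_i(H_j^2)=x_{ji}y_{ji}$, and symmetrically $\alpha_j(c_i)=x_{ij}y_{ij}$.

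Next I would pair the displayed equality with $\alpha_i$. The middle expression yields $x_{ij}\alpha_i(c_j)+y_{ij}\alpha_i(h_j)=x_{ij}x_{ji}y_{ji}+y_{ij}\alpha_i(h_j)$, while the right-hand expression yields $x_{ji}\alpha_i(c_i)+y_{ji}\alpha_i(h_i)=y_{ji}\alpha_i(h_i)$ since $\alpha_i(c_i)=0$; equating the two is the first identity (here the common scalar is of course $\alpha_i([H_i,H_j])$, also computed directly via Lemma \ref{lemma_qKM_formulas}(2)–(4)). Pairing instead with $\alpha_j$ — equivalently, interchanging $i$ and $j$, which is legitimate because $[H_i,H_j]=[H_j,H_i]$ — produces the mirror relation $x_{ij}x_{ji}y_{ij}+y_{ji}\alpha_j(h_i)=y_{ij}\alpha_j(h_j)$.

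Finally I would combine the two: multiply the first identity by $y_{ij}$ and the mirror by $y_{ji}$, so that both contain the term $x_{ij}x_{ji}y_{ij}y_{ji}$; subtracting one from the other cancels it and leaves
\[
y_{ij}^2\alpha_i(h_j)-y_{ji}^2\alpha_j(h_i)=y_{ij}y_{ji}\bigl(\alpha_i(h_i)-\alpha_j(h_j)\bigr),
\]
which is the second identity, and when $\alpha_i(h_i)=\alpha_j(h_j)$ the right-hand side vanishes, giving $y_{ij}^2\alpha_i(h_j)=y_{ji}^2\alpha_j(h_i)$. I do not expect a genuine obstacle: once Lemma \ref{lemma_qKM_formulas} and the table of pure coroots are in hand this is a short computation. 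The only points needing care are keeping the index order straight, since $X(\AA)$ and $Y(\AA)$ are not symmetric, and confirming that the restriction on the types of $\alpha_i,\alpha_j$ is used exactly to guarantee $c_\ell=H_\ell^2$ and $\alpha_\ell(c_\ell)=0$, which is what allows $\alpha_k(c_\ell)$ to be rewritten through Lemma \ref{lemma_qKM_formulas}(2).
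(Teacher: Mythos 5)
Your proof is correct and follows essentially the same route as the paper: apply $\alpha_i$ to the two expressions for $[H_i,H_j]$ in Lemma \ref{lemma_qKM_formulas}(1), rewrite $\alpha_i(c_j)=\alpha_i(H_j^2)=x_{ji}y_{ji}$ using $c_j=H_j^2$ together with Lemma \ref{lemma_qKM_formulas}(2), swap indices, and eliminate $x_{ij}x_{ji}$ by weighting the two mirror relations with $y_{ij}$ and $y_{ji}$ before subtracting (a step the paper's ``swap the indices and subtract'' leaves implicit). One remark: your computation produces $y_{ji}\alpha_i(h_i)$ on the right-hand side of the first identity, whereas the printed statement has $y_{ij}\alpha_i(h_i)$; your version is the correct one --- the printed index order is a typo inherited from part (5) of Lemma \ref{lemma_qKM_formulas}, as one checks against part (2) with $k=i$ and against how the identity is invoked in the proof of Proposition \ref{propistion-two-sq2} --- and the second and third identities are unaffected since the weighting by $y_{ij}$ and $y_{ji}$ symmetrizes the discrepancy away.
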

	\begin{proof}
		Indeed, if we apply $\alpha_i$ to formula (1) of Lemma \ref{lemma_QKM_formulas} we obtain
		\[
		x_{ij}\alpha_i(c_j)+y_{ij}\alpha_i(h_j)=y_{ji}\alpha_i(h_i).
		\]
		Now we use that for $Tak(\s\l(2))$ and $Tak(\o\s\p(1|2))$ type we have $H_i^2=c_i$, and apply (2) of Lemma \ref{lemma_QKM_formulas}, we obtain the first formula.  To obtain the second formula, swap the indices on $i$ and $j$ in the first formula, and subtract one from the other.  The third formula is immediate.
	\end{proof}

	\subsection{Dynkin diagrams}\label{section_dynkin}
	
	\begin{definition}
		Let \(\mathfrak{g}(\mathcal{A})\) be a QKM algebra, with notation as explained in Section \ref{section-notation-QKM}. We call \(A = (a_{ij})=(\alpha_j(h_i))\) the Cartan matrix of \(\mathfrak{g}(\mathcal{A})\). Define $D=D(\AA)$ to be the Dynkin diagram associated with the Cartan datum \(\AA\) as follows: draw a vertex for each $i=1,\dots,n$ according to the following rule:
		\begin{enumerate}
			\item draw \begin{tikzpicture}	\node[diamond,aspect=1,scale=0.6,draw] (A) {};	\end{tikzpicture} if $a_{ii}=2$, or equivalently $\alpha_i$ is of type $Tak(\s\l(2))$;
			\item draw \begin{tikzpicture}	\node[diamond,aspect=1,scale=0.6,draw,fill=black] (A) {};	\end{tikzpicture} if $a_{ii}=1$, or equivalently $\alpha_i$ is of type $Tak(\o\s\p(1|2))$;
			\item draw $\diamondtimes$ if $a_{ii}=0$, or equivalently $\alpha_i$ is of type $Tak(\s\l(1|1))$.
		\end{enumerate} 
		Next we draw an edge from vertex $i$ to vertex $j$ if $a_{ij}a_{ji}\neq0$, and we label the edge by the pair \(a_{ij}, a_{ji}\). We will sometimes draw an unlabeled edge between two vertices to indicate that we assume $a_{ij}a_{ji}\neq0$.  If we disregard the type of a simple root, we draw \(\square\) for its corresponding vertex.
	\end{definition}
	
	\begin{example}
		The Dynkin diagram of $\q(n)$ looks as follows:
		\begin{center}
			\begin{tikzpicture}
				\node[diamond,aspect=1,scale=0.6,draw] (A) at (-3,0) {};
				\node[diamond,aspect=1,scale=0.6,draw] (B) at (-1.5,0) {};
				\node (C) at (0,0) {...};
				\node[diamond,aspect=1,scale=0.6,draw] (D) at (1.5,0) {};
				\node[diamond,aspect=1,scale=0.6,draw] (E) at (3,0) {};
				\draw (A) edge node[above,font=\tiny] {\(-1,-1\)} (B);
				\draw (B) edge node[above,font=\tiny] {\(-1,-1\)} (C) ;
				\draw (C) edge node[above,font=\tiny] {\(-1,-1\)} (D) ;
				\draw (D) edge node[above,font=\tiny] {\(-1,-1\)} (E) ;
			\end{tikzpicture}
		\end{center}
		This is also the Dynkin diagram of $T\s\l(n)$.
	\end{example}
	
	\begin{remark}
		From the results of Section \ref{section-two-simple-roots} we see that two simple roots \(\alpha\) and \(\beta\) of a QKM algebra \(\mathfrak{g}(\mathcal{A})\) are connected in the associated Dynkin diagram \(S(A)\) if and only if \(\alpha+\beta\) is a root of \(\mathfrak{g}(\mathcal{A})\), that is, \(\mathfrak{g}_{\alpha+\beta}\neq 0\).
	\end{remark}
	
	\begin{definition}
		We say two distinct simple roots $\alpha,\beta$ are connected if the same is true in the Dynkin diagram.  Note that by the following lemma, this is equivalent to $\alpha_i(h_j)\alpha_j(h_i)=a_{ij}a_{ji}\neq0$.
	\end{definition}
	
		\begin{lemma}\label{lemma-QKM-notation}
		Suppose that $\AA$ is a QKM Cartan datum, and $\alpha_i,\alpha_j$ are distinct simple roots.  Then the following are equivalent:
		\begin{enumerate}
			\item $\alpha_i$ and $\alpha_j$ are connected;
			\item $\g_{\alpha_i+\alpha_j}\neq0$;
			\item $[\h_{\alpha_i},\g_{\alpha_j}]\neq0$;
			\item $[H_i,\g_{\alpha_j}]\neq0$;
			\item $x_{ij}$ and $y_{ij}$ are not both zero;
			\item $\alpha_j((\h_{\alpha_i})_{\ol{0}})\neq0$.
			\item $\alpha_{j}(h_i)\neq0$.
		\end{enumerate}
	\end{lemma}
	
	\begin{proof}
		The implications between all but the last condition a straightforward application of regularity, the equivalence of the conditions in the question at the start of Section \ref{section-connectivity}, and the fact that $H_i$ generates $\h_{\alpha_i}$ as an ideal of $\h$.  The final condition is equivalent by $\s\l(2)$-representation theory if $\alpha_i$ is of type $Tak(\s\l(2))$ or $Tak(\o\s\p(1|2))$, and for $Tak(\s\l(1|1))$ it is because $\alpha_j(c_i)=0$ for all roots $\alpha_j$ (see the table in Section \ref{section-notation-QKM}).
	\end{proof}
	
	\begin{remark}\label{remark_dynkin_diagram}
		Consider the map $\Theta$ from Dynkin diagrams of QKM algebras to Dynkin diagrams of Kac--Moody superalgebras, obtained by turning diamonds into circles (see \cite{K2} for more on Dynkin diagrams of Kac--Moody superalgebras).  We notice that if the Dynkin diagram $D$ of a QKM algebra \(\mathfrak{g}(\mathcal{A})\) has that $\Theta(D)$ is of infinite growth, then $\g(\AA)$ is also of infinite growth, by Section \ref{section_subdatum}.  
	\end{remark}
	
	\section{Two simple roots in a QKM algebra}\label{section-two-simple-roots}
	
	\begin{lemma}\label{lemma_coupled_defn_justif}
		Let $\alpha_i,\alpha_j$ be connected simple roots in a QKM algebra $\g(\AA)$.  Then either $x_{ij}x_{ji}\neq0$ or $y_{ij}y_{ji}\neq0$.
	\end{lemma}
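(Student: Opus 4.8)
The plan is to argue by contradiction: I would assume that $x_{ij}x_{ji}=0$ \emph{and} $y_{ij}y_{ji}=0$ and derive a contradiction with the hypothesis that $\alpha_i$ and $\alpha_j$ are connected. First I would record what connectedness gives: since ``connected'' is a symmetric relation, Lemma~\ref{lemma-qKM-notation} applied to the ordered pairs $(\alpha_i,\alpha_j)$ and $(\alpha_j,\alpha_i)$ shows that neither $(x_{ij},y_{ij})$ nor $(x_{ji},y_{ji})$ is the zero pair, and (by the definition of ``connected'') that $a_{ij}=\alpha_j(h_i)\neq 0$ and $a_{ji}=\alpha_i(h_j)\neq 0$. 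Then I would run the short case analysis: if $x_{ij}=0$ then $y_{ij}\neq 0$, so $y_{ji}=0$, and then $x_{ji}\neq 0$; if instead $x_{ij}\neq 0$ then $x_{ji}=0$, so $y_{ji}\neq 0$, and then $y_{ij}=0$. These two outcomes are interchanged by swapping $i$ and $j$, so without loss of generality I may assume the configuration $x_{ij}=0,\ y_{ij}\neq 0,\ y_{ji}=0,\ x_{ji}\neq 0$.

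The second step is to substitute this configuration into formula (1) of Lemma~\ref{lemma_qKM_formulas}, $[H_i,H_j]=x_{ij}c_j+y_{ij}h_j=x_{ji}c_i+y_{ji}h_i$, which collapses to $y_{ij}h_j=x_{ji}c_i$. Applying $\alpha_i$ and using that $\alpha_i(c_i)=0$ for every root type (see the table in Section~\ref{section-notation-qKM}), I obtain $y_{ij}\,\alpha_i(h_j)=x_{ji}\,\alpha_i(c_i)=0$; since $y_{ij}\neq 0$ this forces $\alpha_i(h_j)=a_{ji}=0$, contradicting connectedness. Hence the assumption fails, i.e.\ $x_{ij}x_{ji}\neq 0$ or $y_{ij}y_{ji}\neq 0$.

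I do not expect a genuine obstacle here: the only ``content'' is the observation that in the forbidden configuration formula (1) forces the coroot $h_j$ of $\alpha_j$ to be proportional to $c_i$, the coroot of $\alpha_i$ that pairs trivially with every root (it sits in an $\s\l(1|1)$- or even-Heisenberg triple), which is incompatible with $\alpha_i(h_j)\neq 0$. The points needing care are (i) invoking the symmetry of ``connected'' so that Lemma~\ref{lemma-qKM-notation} may be applied in both orderings of $i,j$, and (ii) using $\alpha_i(c_i)=0$ uniformly across the three types $Tak(\s\l(2))$, $Tak(\o\s\p(1|2))$, $Tak(\s\l(1|1))$.
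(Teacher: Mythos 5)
Your proof is correct and follows essentially the same route as the paper's: reduce by symmetry to the configuration $x_{ij}=y_{ji}=0$ with $x_{ji}y_{ij}\neq 0$, use formula (1) of Lemma~\ref{lemma_qKM_formulas} to get $y_{ij}h_j=x_{ji}c_i$, and apply $\alpha_i$ together with $\alpha_i(c_i)=0$ to contradict $\alpha_i(h_j)\neq 0$. The only difference is that you spell out the case analysis behind the paper's ``WLOG'' more explicitly, which is fine.
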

	\begin{proof}
		Suppose that neither of the above two equations holds; because $\alpha_i$ and $\alpha_j$ are connected, we may assume WLOG that $x_{ij}=y_{ji}=0$ and $x_{ji}y_{ij}\neq0$. Then by (1) of Lemma \ref{lemma_QKM_formulas}, we learn that
		\[
		x_{ji}c_i=y_{ij}h_j.
		\]
		However since $\alpha_i(c_i)=0$, this implies $\alpha_i(h_j)=0$, contradicting Lemma \ref{lemma-QKM-notation}.
	\end{proof}
	
	\begin{definition}\label{definition_coupling}
		Let \(\mathfrak{g}(\mathcal{A})\) be a QKM algebra. We say two distinct simple roots \(\alpha_i,\alpha_j \in \Pi\) are coupled if they are connected and we have $\alpha_i(c_j)=\alpha_j(c_i)=0$.  Equivalently, by Lemma \ref{lemma_coupled_defn_justif}, either $x_{ij}=x_{ji}=0$, in which case we say $\alpha_i$ and $\alpha_j$ are $X$-coupled, or $y_{ij}=y_{ji}=0$, in which case we say $\alpha_i$ and $\alpha_j$ are $Y$-coupled.  
	\end{definition}
	
	\begin{remark}\label{remark_coroot_coupling}
		By (1) of Lemma \ref{lemma_QKM_formulas}, we have for two simple roots $\alpha_i$ and $\alpha_j$ the relation:
		\[
		x_{ij}c_j+y_{ij}h_j = x_{ji}c_i+y_{ji}h_i.
		\]
		Thus if $\alpha_i$ and $\alpha_j$ are coupled, the above formula gives a nontrivial relation between one pure even coroot on each side.  In particular, if they are $X$-coupled we obtain 
		\[
		y_{ij}h_j=y_{ji}h_i, \ \ \ \ y_{ij}y_{ji}\neq0,
		\]
		and if they are $Y$-coupled we obtain
		\[
		x_{ij}c_j=x_{ji}c_i, \ \ \ \ x_{ij}x_{ji}\neq0.	 
		\]
	\end{remark}
	
	\begin{example}
		The prototypical example of a QKM algebra with coupled simple roots is the extension of the Takiff superalgebra $T\s$ for any Kac--Moody superalgebra $\s$.  Indeed, in this case we have $Y=0$, i.e.~all connected simple roots are $Y$-coupled.
	\end{example}
	
	\begin{example}
		For $\g=\q(n)$, no distinct simple roots are coupled with one another.
	\end{example}
	
	The aim for this section is to prove the following theorem:
	
	\begin{theorem}\label{theorem_coupling}
		Suppose that $\AA$ is QKM and $\alpha_i,\alpha_j\in\Pi$ are distinct simple roots.  Then one of the following cases must occur:
		\begin{enumerate}
			\item $\alpha_i$ and $\alpha_j$ are not connected;
			\item $\alpha_i$ and $\alpha_j$ are $Y$-coupled;
			\item $\alpha_i$ and $\alpha_j$ are $X$-coupled and one of the following holds (up to swapping indices):
			\begin{enumerate}
				\item $\alpha_i$ and $\alpha_j$ are of type $Tak(\s\l(2))$ and $\alpha_i(h_j)\alpha_j(h_i)=4$; or,
				\item $\alpha_i$ is of type $Tak(\s\l(2))$, $\alpha_j$ is of type $Tak(\o\s\p(1|2))$, and we have \linebreak $\alpha_j(h_i)=-2$ and $\alpha_i(h_j)=-1$;
			\end{enumerate}
			\item $\alpha_i$ and $\alpha_j$ are not coupled, both of type $Tak(\s\l(2))$, and $y_{ij}y_{ji}\neq0$.
		\end{enumerate}
	\end{theorem}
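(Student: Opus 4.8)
\emph{Plan of proof.} I would argue by a case analysis organized first by how many of $\alpha_i,\alpha_j$ are of type $Tak(\s\l(2))$, and then by which of the two products $x_{ij}x_{ji}$, $y_{ij}y_{ji}$ vanishes. The standing inputs are: Lemma~\ref{lemma_coupled_defn_justif}, which once $\alpha_i,\alpha_j$ are connected gives $x_{ij}x_{ji}\neq 0$ or $y_{ij}y_{ji}\neq 0$; the identities of Lemma~\ref{lemma_qKM_formulas}, chiefly $[H_i,H_j]=x_{ij}c_j+y_{ij}h_j=x_{ji}c_i+y_{ji}h_i$ and $\alpha_j(H_i^2)=x_{ij}y_{ij}$, read against the table of Section~\ref{section-notation-qKM} (so that $\beta(c_i)=0$ for every root $\beta$ when $\alpha_i$ is of type $Tak(\o\s\p(1|2))$ or $Tak(\s\l(1|1))$, that $H_i^2=0$ when $\alpha_i$ is of type $Tak(\s\l(1|1))$, and $\alpha_i(h_i)\in\{2,1,0\}$ according to type); Lemma~\ref{lemma_nontrivial_relation_on_ys}; Remark~\ref{remark_coroot_coupling}; and integrability, which through the finite-dimensional representation theory of the $\s\l(2)$ (resp.\ $\o\s\p(1|2)$) inside $\g\langle\alpha_i\rangle$ forces $\alpha_j(h_i)$ to be a non-positive integer whenever $\alpha_i$ is of type $Tak(\s\l(2))$ or $Tak(\o\s\p(1|2))$, non-zero precisely when $\alpha_i,\alpha_j$ are connected (Lemma~\ref{lemma-qKM-notation}). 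We may assume $\alpha_i,\alpha_j$ connected, else we are in case~(1).

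First I would dispatch the $Tak(\s\l(1|1))$ nodes, which are inert: if $\alpha_i$ is of type $Tak(\s\l(1|1))$ then $c_i=0$, $\alpha_i(h_i)=0$ and $x_{ij}y_{ij}=\alpha_j(H_i^2)=0$, and substituting into $[H_i,H_j]=x_{ij}c_j+y_{ij}h_j=y_{ji}h_i$ and pairing with $\alpha_i$ and with $\alpha_j$ forces $y_{ij}=0$ (otherwise $h_j$ is a nonzero multiple of $h_i$ and $\alpha_i(h_j)=0$, against connectedness) and then $y_{ji}=0$ (from $\alpha_j(h_i)\neq 0$); since the relevant $\alpha_\bullet(c_\bullet)$ vanish automatically, $\alpha_i,\alpha_j$ are $Y$-coupled. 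So from here on both roots are of type $Tak(\s\l(2))$ or $Tak(\o\s\p(1|2))$ and Lemma~\ref{lemma_nontrivial_relation_on_ys} is available. If \emph{both} are of type $Tak(\s\l(2))$ then $\alpha_i(h_i)=\alpha_j(h_j)=2$ and the second identity of Lemma~\ref{lemma_nontrivial_relation_on_ys} reduces to $y_{ij}^2\alpha_i(h_j)=y_{ji}^2\alpha_j(h_i)$, so $y_{ij}$ and $y_{ji}$ vanish together. If they both vanish then $x_{ij}x_{ji}\neq 0$ and we are in case~(2); if both are non-zero and moreover $x_{ij}=x_{ji}=0$, then Remark~\ref{remark_coroot_coupling} gives $y_{ij}h_j=y_{ji}h_i$, so $h_i,h_j$ are proportional and pairing with $\alpha_i$ and $\alpha_j$ yields $\alpha_i(h_j)\alpha_j(h_i)=4$, which is case~(3)(a); and if both $y$-entries are non-zero but some $x$-entry is non-zero, then $\alpha_i,\alpha_j$ are not coupled and we are in case~(4).

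It remains to treat the situations involving a $Tak(\o\s\p(1|2))$ node, which is the heart of the argument. If both $\alpha_i,\alpha_j$ are of type $Tak(\o\s\p(1|2))$, then $x_{ij}y_{ij}=x_{ji}y_{ji}=0$ (the $c$'s pair trivially with all roots), so connectedness forces $X$-coupling or $Y$-coupling; $X$-coupling is incompatible with integrability, since then the first identity of Lemma~\ref{lemma_nontrivial_relation_on_ys} (with the $c$-terms absent) gives $\alpha_i(h_j)=\alpha_i(h_i)=1>0$, contradicting $\alpha_i(h_j)\le 0$; hence $Y$-coupled, case~(2). Finally suppose, after relabelling, $\alpha_i$ is of type $Tak(\s\l(2))$ and $\alpha_j$ of type $Tak(\o\s\p(1|2))$. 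Then $\alpha_i(H_j^2)=x_{ji}y_{ji}=0$ while $\alpha_j(H_i^2)=x_{ij}y_{ij}=\alpha_j(c_i)$ need not vanish, and running the pairings of $[H_i,H_j]=x_{ij}c_j+y_{ij}h_j=x_{ji}c_i+y_{ji}h_i$ against $\alpha_i,\alpha_j$ together with Lemma~\ref{lemma_nontrivial_relation_on_ys} shows: either $Y$-coupled (case~(2)); or $X$-coupled with $\alpha_j(h_i)\alpha_i(h_j)=2$; while every remaining \emph{uncoupled} configuration is excluded, because it would force $\alpha_i(h_j)$ or $\alpha_j(h_i)$ to be positive, or a pure coroot lying in a Heisenberg triple to pair non-trivially with a root. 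In the surviving $X$-coupled case $(\alpha_j(h_i),\alpha_i(h_j))$ is $(-1,-2)$ or $(-2,-1)$, and one pins it down to $(-2,-1)$: the $\o\s\p(1|2)$ in $\g\langle\alpha_j\rangle$ acts integrably with $\g_{\alpha_i}$ a $(1|1)$-dimensional lowest-weight space (as $\alpha_i-\alpha_j$ and $\alpha_i-2\alpha_j$ are not roots), and comparing its weight string with the $\s\l(2)$-string of $\alpha_j$ through $\alpha_i$ forces the asymmetric values; this is case~(3)(b). This last step is the one I expect to be the main obstacle: unlike everything else, it cannot be settled by the coroot identities alone and needs the representation theory of $\o\s\p(1|2)$. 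Collecting the cases proves the theorem.
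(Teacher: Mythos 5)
Your overall strategy --- a case analysis by root type driven by the coroot identities of Lemmas \ref{lemma_qKM_formulas} and \ref{lemma_nontrivial_relation_on_ys} --- is the same as the paper's, and your treatment of the $Tak(\s\l(1|1))$ case and of two $Tak(\s\l(2))$ roots matches Proposition \ref{propistion-two-sq2} (the slip ``$c_i=0$'' for a $Tak(\s\l(1|1))$ root is harmless, since your pairings only use $\alpha_\bullet(c_i)=0$). But there are two genuine gaps, both in the cases involving $Tak(\o\s\p(1|2))$. First, your exclusion of $X$-coupling between two $Tak(\o\s\p(1|2))$ roots reads the first identity of Lemma \ref{lemma_nontrivial_relation_on_ys} literally; that identity is mis-stated: applying $\alpha_i$ to $x_{ij}c_j+y_{ij}h_j=x_{ji}c_i+y_{ji}h_i$ gives $x_{ij}x_{ji}y_{ji}+y_{ij}\alpha_i(h_j)=y_{ji}\alpha_i(h_i)$, with $y_{ji}$ on the right (this is also what formula (2) of Lemma \ref{lemma_qKM_formulas} with $k=i$ gives, and what the proof of Proposition \ref{propistion-two-sq2} actually uses, e.g.\ ``$2y_{ji}=y_{ij}\alpha_i(h_j)$''). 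With the corrected identity, the $X$-coupled two-$\o\s\p$ configuration yields $y_{ij}\alpha_i(h_j)=y_{ji}$ and $y_{ji}\alpha_j(h_i)=y_{ij}$, hence $\alpha_i(h_j)\alpha_j(h_i)=1$, i.e.\ $\alpha_i(h_j)=\alpha_j(h_i)=-1$: there is no contradiction with integrability, and the configuration survives all the coroot identities. The paper kills it differently, by regarding $2\alpha_i$ (the $\s\l(2)$-root inside $\g\langle\alpha_i\rangle$) and $\alpha_j$ as an $X$-coupled $Tak(\s\l(2))$--$Tak(\o\s\p(1|2))$ pair and invoking Proposition \ref{prop_coupling_sl_osp}, which forces $\alpha_i(h_j)=-1/2$, absurd.

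Second, in the mixed $Tak(\s\l(2))$--$Tak(\o\s\p(1|2))$ case you correctly reduce to $(\alpha_j(h_i),\alpha_i(h_j))\in\{(-1,-2),(-2,-1)\}$ but leave the exclusion of $(-1,-2)$ to ``comparing weight strings,'' which you yourself flag as the main obstacle. A weight-string count does not by itself break the asymmetry between the two options; the paper's argument is an explicit bracket computation: when $\alpha_j(h_i)=-1$, Lemma \ref{lemma-sq2-rep}(2) gives $[E_i,e_j]+[e_i,[H_i,e_j]]=0$, and bracketing with $F_j$ (using $[F_j,E_i]=0$ and $[F_j,[H_i,e_j]]\in\C\langle c_j\rangle$, which annihilates $e_i$ since $\alpha_i(c_j)=0$) yields $0=[E_i,H_j]=\pm y_{ji}e_i\neq0$, a contradiction. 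Both missing steps are precisely where the coroot identities stop sufficing and concrete representation-theoretic input is required, so the proposal as written does not close the proof.
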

	
	Note that the following is false for the entries of the $X$-matrix (in particular it fails for $\q^{-}_{(2,2)}$, see Section \ref{section_q_22}). 
	\begin{corollary}\label{corollary_ys}
		If $\AA$ is QKM, then $y_{ij}=0\Rightarrow y_{ji}=0$.
	\end{corollary}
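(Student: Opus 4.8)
The plan is to deduce the statement directly from the trichotomy of Theorem \ref{theorem_coupling} by a short case analysis on the relationship between $\alpha_i$ and $\alpha_j$. Assume $y_{ij}=0$; the goal is to exclude every case of Theorem \ref{theorem_coupling} in which $y_{ji}\neq 0$ could occur, so that only cases forcing $y_{ji}=0$ remain.

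First I would handle the non-connected case. By Lemma \ref{lemma-qKM-notation}, connectedness of $\alpha_i$ and $\alpha_j$ is a symmetric relation (governed by regularity together with the nonvanishing of $[\h_{\alpha_i},\g_{\alpha_j}]$), and when the two roots are not connected, part (5) of that lemma gives $x_{ij}=y_{ij}=0$, and symmetrically $x_{ji}=y_{ji}=0$; in particular $y_{ji}=0$. Next, among the connected cases of Theorem \ref{theorem_coupling}: if $\alpha_i$ and $\alpha_j$ are $Y$-coupled, then $y_{ij}=y_{ji}=0$ by definition and there is nothing to prove.

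It then remains to observe that the two other connected cases are incompatible with $y_{ij}=0$. In case (4) of Theorem \ref{theorem_coupling} the condition $y_{ij}y_{ji}\neq 0$ is explicit. In case (3) (the $X$-coupled case) I would invoke Remark \ref{remark_coroot_coupling}, which records that $X$-coupled roots satisfy the relation $y_{ij}h_j=y_{ji}h_i$ together with $y_{ij}y_{ji}\neq 0$; hence again $y_{ij}\neq 0$. Therefore, under the hypothesis $y_{ij}=0$ we must be in the non-connected case or the $Y$-coupled case, both of which yield $y_{ji}=0$.

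There is no genuine obstacle here; the only points requiring care are using ``connected'' symmetrically (so that one may pass from $x_{ij}=y_{ij}=0$ to $x_{ji}=y_{ji}=0$) and reading off $y_{ij}y_{ji}\neq 0$ in the $X$-coupled case from Remark \ref{remark_coroot_coupling} rather than from the bare statement of the theorem. Following the remark preceding the corollary, I would also note explicitly that the analogous implication fails for the $X$-matrix, as witnessed by $\q^{-}_{(2,2)}$, which is precisely why the corollary is asserted only for the $Y$-entries.
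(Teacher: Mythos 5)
Your proof is correct and takes essentially the same route as the paper, whose entire argument is that the corollary follows immediately from Theorem \ref{theorem_coupling}; you have simply made the case analysis (non-connected, $Y$-coupled, $X$-coupled via Remark \ref{remark_coroot_coupling}, and the uncoupled case (4)) explicit. No gaps.
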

	\begin{proof}
		This follows immediately from Theorem \ref{theorem_coupling}.
	\end{proof}
	
	The proof of Theorem \ref{theorem_coupling} will occupy the rest of the section, and we will also prove auxiliary results which will be important in future sections.  We begin by dealing with the case of $Tak(\s\l(1|1))$:
	
	\begin{lemma}
		Suppose that $\alpha_i$ is of type $Tak(\s\l(1|1))$ and is connected to $\alpha_j$.  Then $\alpha_i$ and $\alpha_j$ are $Y$-coupled.		
	\end{lemma}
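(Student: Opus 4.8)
The plan is to wring everything out of the two identities for $[H_i,H_j]$ in Lemma~\ref{lemma_qKM_formulas}, combined with the special vanishing statements recorded in the table of Section~\ref{section-notation-qKM} for a root of type $Tak(\s\l(1|1))$: namely $H_i^2=0$, $\alpha_i(h_i)=0$, and $\beta(c_i)=0$ for \emph{every} root $\beta$. First I would note that, since $\alpha_i$ and $\alpha_j$ are connected, Lemma~\ref{lemma-qKM-notation} gives $\alpha_i(h_j)\neq 0$ and $\alpha_j(h_i)\neq 0$; and since $H_i^2=0$, part~(2) of Lemma~\ref{lemma_qKM_formulas} forces $x_{ij}y_{ij}=\alpha_j(H_i^2)=0$, so at most one of $x_{ij},y_{ij}$ is nonzero.

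Next I would rule out $y_{ij}\neq 0$. If $y_{ij}\neq 0$ then $x_{ij}=0$, so part~(1) of Lemma~\ref{lemma_qKM_formulas} reads $[H_i,H_j]=y_{ij}h_j=x_{ji}c_i+y_{ji}h_i$; applying $\alpha_i$ and using $\alpha_i(c_i)=\alpha_i(h_i)=0$ yields $y_{ij}\alpha_i(h_j)=0$, contradicting connectedness. Hence $y_{ij}=0$. Now $[H_i,H_j]=x_{ij}c_j=x_{ji}c_i+y_{ji}h_i$, and applying $\alpha_j$ together with $\alpha_j(c_j)=0$ and $\alpha_j(c_i)=0$ gives $y_{ji}\alpha_j(h_i)=0$, so $y_{ji}=0$ as well.

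Finally, to see that they are in fact \emph{coupled} (and therefore $Y$-coupled in the sense of Definition~\ref{definition_coupling}), I would observe that connectedness together with Lemma~\ref{lemma-qKM-notation}(5) forces $x_{ij}\neq 0$ and $x_{ji}\neq 0$, so part~(1) of Lemma~\ref{lemma_qKM_formulas} now gives $x_{ij}c_j=x_{ji}c_i$; applying $\alpha_i$ and using $\alpha_i(c_i)=0$ yields $\alpha_i(c_j)=0$, while $\alpha_j(c_i)=0$ is automatic because $\alpha_i$ is of type $Tak(\s\l(1|1))$. There is no genuine obstacle here—the whole argument is a short chain of evaluations of $\mathfrak{t}^*$ on the relation $[H_i,H_j]=x_{ij}c_j+y_{ij}h_j=x_{ji}c_i+y_{ji}h_i$; the only point requiring care is keeping straight which vanishing statement from the table applies to $c_i$, $c_j$, $h_i$, $h_j$, since $\alpha_j$ need not itself be of $Tak(\s\l(1|1))$ type.
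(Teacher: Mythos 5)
Your proof is correct and follows essentially the same route as the paper: use $H_i^2=0$ to get $x_{ij}y_{ij}=0$, rule out $y_{ij}\neq0$ by applying $\alpha_i$ to $[H_i,H_j]=x_{ji}c_i+y_{ji}h_i$ (using $\alpha_i(c_i)=\alpha_i(h_i)=0$), then apply $\alpha_j$ to $x_{ij}c_j=x_{ji}c_i+y_{ji}h_i$ (using $\alpha_j(c_j)=\alpha_j(c_i)=0$) to force $y_{ji}=0$. Your closing verification that $\alpha_i(c_j)=\alpha_j(c_i)=0$ is redundant given Lemma \ref{lemma_coupled_defn_justif} and Definition \ref{definition_coupling}, but harmless, and your explicit contradiction setup ($y_{ij}\neq0\Rightarrow x_{ij}=0$) states cleanly what the paper's proof leaves implicit.
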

	\begin{proof}
		Suppose that $\alpha_i$ and $\alpha_j$ are connected, so that $\alpha_i(h_j)\alpha_j(h_i)\neq0$.  If $\alpha_i$ is of type $Tak(\s\l(1|1))$, then $H_i^2=0$, implying that $x_{ij}y_{ij}=0$.  Suppose that $x_{ij}=0$, so that (1) of \ref{lemma_QKM_formulas} gives
		\[
		y_{ij}h_j=x_{ji}c_i+y_{ji}h_i.
		\]
		Applying $\alpha_i$ to the RHS we obtain 0, implying that $\alpha_i(h_j)=0$, a contradiction.  Thus we must have $y_{ij}=0$, implying that 
		\[
		x_{ij}c_j=x_{ji}c_i+y_{ji}h_i.
		\]
		If we apply $\alpha_j$, we know from the table in Section \ref{section-notation-QKM} that $\alpha_j(c_j)=\alpha_j(c_i)=0$, so we obtain $y_{ji}\alpha_j(h_i)=0$; since $\alpha_j(h_i)\neq0$, this forces $y_{ji}=0$.  Thus we have proven that $\alpha_i$ is $Y$-coupled to $\alpha_j$.
	\end{proof}

	\begin{lemma}\label{lemma-sq2-rep}
		Let \(\mathfrak{g}(\mathcal{A})\) be a QKM algebra and let \(\alpha \in \Pi\) be of type $Tak(\s\l(2))$. Let \(e,E,f,F,h,c,H\) be the spanning set for $\g\langle\alpha\rangle\cong(\p)\mathfrak{sq}(2)$ as defined in Section \ref{section-notation-QKM}. Let \(\beta \in \Pi\) be a simple root different from \(\alpha\). Then:
		\begin{enumerate}
			\item $\g_{\beta-\beta(h)\alpha}\neq0$, and $\g_{\beta+(1-\beta(h))\alpha}=0$;
			\item  if \(\beta(h)=-1\) and \(v \in \mathfrak{g}_{\beta}\), then 
			\[
			[E,v] + [e,[H,v]] =0.
			\]
		\end{enumerate}
	\end{lemma}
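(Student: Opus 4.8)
The plan is to prove (1) by an $\s\l(2)$-string argument and (2) by a short chain of Jacobi identities, using throughout the $\s\l(2)$-triple $\{e,h,f\}\subseteq\g\langle\alpha\rangle$ (which satisfies $\alpha(h)=2$, $[e,f]=h$), the relations $H=[e,F]=[E,f]$ and $[H,e]=\alpha(h)E$ from Sections~\ref{section-realization} and~\ref{section-notation-qKM}, and the integrability of $\g(\AA)$.

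For part (1): since $\alpha\ne\beta$ are both simple roots, $\beta-k\alpha=\alpha_j-k\alpha_i$ lies in neither $Q_+$ nor $-Q_+$ for any $k\ge1$, so $\g_{\beta-k\alpha}=0$. By integrability $\operatorname{ad}(e)$ and $\operatorname{ad}(f)$ act locally nilpotently on $\g(\AA)$ (they are nilpotent on each of the generating subspaces $\h$ and $\g_{\pm\alpha_k}$, using $\g_{2\alpha}=0$), so $V:=\bigoplus_{n\in\Z}\g_{\beta+n\alpha}$ is an integrable module over the $\s\l(2)$ spanned by $e,h,f$, hence a direct sum of finite-dimensional irreducibles, in which $\g_{\beta+n\alpha}$ is precisely the $h$-weight space of weight $\beta(h)+2n$. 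Since $\g_\beta\ne0$ but $\g_{\beta-\alpha}=0$, the weight $\beta(h)$ is the lowest weight of $V$; because the weight set of such a module is symmetric about $0$ and gap-free (in steps of $2$), this forces $\beta(h)\le0$ and $\g_{\beta+n\alpha}\ne0\iff 0\le n\le-\beta(h)$. Taking $n=-\beta(h)$ and $n=1-\beta(h)$ gives the two assertions of (1).

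For part (2): with $\beta(h)=-1$, part (1) gives $\g_{\beta-\alpha}=0$ and $\g_{\beta+2\alpha}=0$, so $[f,v]=[F,v]=0$ for $v\in\g_\beta$, and $[e,w]=[E,w]=0$ for $w\in\g_{\alpha+\beta}$. I would then compute in three steps. Expanding $[H,v]=[[e,F],v]$ by the super Jacobi identity and using $[F,v]=0$ gives $[H,v]=-[F,[e,v]]$. Applying $\operatorname{ad}(e)$, expanding $[e,[F,[e,v]]]$ by Jacobi, and using $[e,[e,v]]=0$, one gets $[e,[H,v]]=-[H,[e,v]]$. Finally, expanding $[H,[e,v]]=[[E,f],[e,v]]$ by Jacobi and using $[E,[e,v]]=0$ gives $[H,[e,v]]=[E,[f,[e,v]]]$, and since $[f,[e,v]]=[[f,e],v]=-[h,v]=-\beta(h)\,v=v$ we obtain $[H,[e,v]]=[E,v]$. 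Combining yields $[e,[H,v]]=-[E,v]$, i.e.~$[E,v]+[e,[H,v]]=0$.

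The argument is largely routine; the two points that need care are (i) keeping track of the signs in the super Jacobi identity throughout part (2), since $E,F,H$ are odd, and (ii) in part (1), recognizing that it is exactly the \emph{simplicity} of $\beta$ that forces $\g_\beta$ to sit at the bottom of the $\alpha$-string — this is what simultaneously pins the string down and rules out $\g_{\beta+(1-\beta(h))\alpha}$; for a general root $\beta$ one could only conclude that the $\alpha$-string through $\beta$ has length $1-\beta(h)$ above its lower endpoint.
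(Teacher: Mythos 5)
Your proof is correct and follows essentially the same route as the paper: part (1) is the standard integrable $\s\l(2)$-string argument (which the paper merely asserts and you spell out), and part (2) is the same kind of Jacobi-identity manipulation, differing only in that you work upward from $[F,v]=0$ in three steps while the paper applies $\operatorname{ad}(f)$ once to $[e,[E,v]]=0\in\g_{\beta+2\alpha}$.
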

	\begin{proof}
		The first statement follows from the integrability of the action of $\s\l(2)$.  Now suppose that \(\beta(h) = -1\). Then $\g_{\beta+2\alpha}=0$ so that \([e,[E,v]] = 0\) . Therefore
		\begin{align*}
			0 & = -[f,[e,[E,v]]] = -[[f,e],[E,v]] - [e,[[f,E],v]] \\ &
			= [h,[E,v]] + [e,[H,v]] = [E,v] + [e,[H,v]].
		\end{align*}
	\end{proof}

	\begin{proposition}\label{propistion-two-sq2}
		Let \(\mathfrak{g}(\mathcal{A})\) be a QKM algebra and let \(\alpha_i, \alpha_j \in \Pi\) be two simple roots type $Tak(\s\l(2))$. If \(\alpha_i\) and \(\alpha_j\) are connected, then exactly one of the following happens:
		\begin{enumerate}[label=(\roman*)]
			\item $\alpha_i$ and $\alpha_j$ are coupled, in which case either:
			\begin{enumerate}
				\item $\alpha_i$ and $\alpha_j$ are $Y$-coupled; or
				\item $\alpha_i$ and $\alpha_j$ are $X$-coupled and $\alpha_i(h_j)\alpha_j(h_i)=4$.
			\end{enumerate}
			\item \(\alpha_i(h_j) = \alpha_j(h_i) = -1\) and \(\alpha_i(c_j)\alpha_j(c_i) \neq 0\);
			\item \(\alpha_i(h_j)=\alpha_j(h_i) = -2\),  and exactly one of \(x_{ij}\), \(x_{ji}\) is zero;
			\item \(\alpha_i(h_j),\alpha_j(h_i) \in \mathbb{Z}_{\leq -2}\) and \(\alpha_i(c_i)\alpha_j(c_i) \neq 0\).
		\end{enumerate}
	\end{proposition}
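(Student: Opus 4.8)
The plan is to run an $\s\l(2)$-representation-theoretic analysis on the $\alpha_i$-string through $\alpha_j$ (and symmetrically), combined with the coroot identities of Lemma \ref{lemma_qKM_formulas} and Lemma \ref{lemma_nontrivial_relation_on_ys}, to pin down the finitely many possibilities. Write $a := \alpha_i(h_j)$, $b := \alpha_j(h_i)$; since $\alpha_i,\alpha_j$ are connected, Lemma \ref{lemma-qKM-notation} gives $ab \neq 0$. Because $h_i$ lies in an $\s\l(2)$-triple and $\g$ is integrable, the $\alpha_i$-string through $\alpha_j$ forces $b = \alpha_j(h_i) \in \Z_{\leq 0}$, and by symmetry $a \in \Z_{\leq 0}$; so in fact $a,b \in \Z_{<0}$. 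The first task is to separate the coupled case (i) from the uncoupled cases. If $\alpha_i,\alpha_j$ are coupled, then by Lemma \ref{lemma_coupled_defn_justif} and Definition \ref{definition_coupling} they are either $X$-coupled or $Y$-coupled; in the $X$-coupled case I will use Remark \ref{remark_coroot_coupling} ($x_{ij}c_j = x_{ji}c_i$ with $x_{ij}x_{ji}\neq 0$) together with the fact that for $Tak(\s\l(2))$ one has $c_k = H_k^2$ and $\alpha_k(c_k)=0$, plus formula (2) of Lemma \ref{lemma_qKM_formulas}, $\alpha_j(H_i^2) = x_{ij}y_{ij}$, to extract the relation $ab = 4$. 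This will be the content of case (i)(b). So the remaining work is the \emph{uncoupled} connected case, where by Lemma \ref{lemma_coupled_defn_justif} we may assume (after possibly swapping $i,j$) that $x_{ij}x_{ji}\neq 0$ or $y_{ij}y_{ji}\neq 0$, and at least one of $\alpha_i(c_j),\alpha_j(c_i)$ is nonzero.

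Next I would invoke Lemma \ref{lemma-sq2-rep}(1): $\g_{\beta - \beta(h)\alpha}\neq 0$ but $\g_{\beta + (1-\beta(h))\alpha}=0$, applied with $\alpha = \alpha_i$, $\beta = \alpha_j$ (so $\beta(h) = b$) and symmetrically. In the borderline case $a = b = -1$ the key tool is Lemma \ref{lemma-sq2-rep}(2): for $v \in \g_{\alpha_j}$ we get $[E_i, v] + [e_i, [H_i, v]] = 0$, and plugging in $v = e_j$ and $v = E_j$ and using the $X,Y$ entries $[H_i, e_j] = x_{ij}E_j$, $[H_i, E_j] = y_{ij}e_j$ produces linear relations among the root vectors in $\g_{\alpha_i+\alpha_j}$; combined with the symmetric statement (roles of $i,j$ reversed) and the coroot relation of Lemma \ref{lemma_qKM_formulas}(1), I expect this to force $\alpha_i(c_j)\alpha_j(c_i)\neq 0$, yielding case (ii). For $a = b = -2$: here the $\s\l(2)$-strings are slightly longer; I would compute $\g_{\alpha_i + \alpha_j}$ and $\g_{2\alpha_i+\alpha_j}$, $\g_{\alpha_i+2\alpha_j}$ using surjectivity of $\operatorname{ad} e_i$, $\operatorname{ad} e_j$ from Lemma \ref{lemma-sl2-osp2-subalgebras}, and use Lemma \ref{lemma_nontrivial_relation_on_ys} (with $\alpha_i(h_i)=\alpha_j(h_j)=2$, giving $y_{ij}^2\alpha_i(h_j) = y_{ji}^2\alpha_j(h_i)$, i.e. $-2y_{ij}^2 = -2 y_{ji}^2$, so $y_{ij}^2 = y_{ji}^2$) together with the first identity of that lemma to show that exactly one of $x_{ij},x_{ji}$ vanishes (if both vanished they would be $X$-coupled, contradiction; if neither vanished one derives a contradiction with the string length via $\g_{2\alpha_i+\alpha_j}=0$ or $\g_{\alpha_i+2\alpha_j}=0$), which is case (iii). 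Finally, when $\{a,b\}\not\subseteq\{-1,-2\}$, i.e. $a \leq -2$ and $b \leq -2$ with at least one $\leq -3$, I would argue that the $\s\l(2)$-string constraints on both sides force $\g_{2\alpha_i+\alpha_j}$ and $\g_{\alpha_i+2\alpha_j}$ to be nonzero of the same dimension as $\g_{\alpha_j}$ resp.\ $\g_{\alpha_i}$, and then tracking the action of $H_i$ on these spaces via the coroot bracket $[H_i,H_j] = x_{ij}c_j + y_{ij}h_j$ (Lemma \ref{lemma_qKM_formulas}(1)) together with $\alpha_i(c_i)\neq 0$ or $\alpha_j(c_i)\neq 0$ gives case (iv). Throughout, one checks the four cases are mutually exclusive by inspecting the values of $ab$ and the vanishing pattern of the $x$'s.

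The main obstacle I anticipate is the bookkeeping in cases (iii) and (iv): unlike the classical Kac-Moody setting, each root space $\g_{\alpha_i}$ is $(1|1)$-dimensional and the odd coroots $H_i$ act nontrivially, so the $\s\l(2)$-strings carry an extra layer of odd structure, and one must simultaneously control the even part (governed by $h_i$, classical $\s\l(2)$ theory) and the odd part (governed by $H_i$, via the $x_{ij},y_{ij}$ entries and the relations $H_i^2 = c_i$, $[H_i,H_j] = x_{ij}c_j+y_{ij}h_j$). Concretely, the delicate point is showing that in case (iv) one \emph{cannot} have $\alpha_i(c_i) = \alpha_j(c_i) = 0$ simultaneously while remaining connected and uncoupled — this should follow from applying $\alpha_i$ and $\alpha_j$ to the identity $x_{ij}c_j + y_{ij}h_j = x_{ji}c_i + y_{ji}h_i$ and using $ab \geq 4$ together with Lemma \ref{lemma_nontrivial_relation_on_ys}, but the sign/nondegeneracy analysis needs care. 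A secondary subtlety is ensuring the $\s\l(2)$-module generated by $\g_{\pm\alpha_j}$ under $\g\langle\alpha_i\rangle$ is genuinely finite-dimensional so that string arguments apply — this is guaranteed by the integrability hypothesis \ref{qKM-integrable}, as used in Lemma \ref{lemma-sl2-osp2-subalgebras}, so I would simply cite that.
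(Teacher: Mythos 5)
Your overall strategy (integrality of $a:=\alpha_i(h_j)$, $b:=\alpha_j(h_i)$ from $\s\l(2)$-theory, the coroot identities of Lemma \ref{lemma_qKM_formulas} and Lemma \ref{lemma_nontrivial_relation_on_ys}, and Lemma \ref{lemma-sq2-rep}(2) for the borderline string) is the right one and matches the paper's tools, but the case analysis in the uncoupled situation has two genuine gaps. First, splitting into ``$a=b=-1$'', ``$a=b=-2$'', and ``$\{a,b\}\not\subseteq\{-1,-2\}$'' (which, contrary to your paraphrase, does \emph{not} mean both $a,b\leq-2$) omits all the mixed cases such as $a=-1$, $b\leq -2$. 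Ruling these out is exactly where the real work lies: assuming $\alpha_i(h_j)=-1$ and uncoupledness, one applies $\operatorname{ad}f_i$ and $\operatorname{ad}F_i$ to the relation $[E_j,e_i]+x_{ji}[e_j,E_i]=0$ furnished by Lemma \ref{lemma-sq2-rep}(2), obtaining $\alpha_j(h_i)+x_{ij}x_{ji}=0$ and $y_{ij}(1-x_{ij}x_{ji})=0$; since $y_{ij}\neq0$ (not $Y$-coupled), this forces $x_{ij}x_{ji}=1$ and $\alpha_j(h_i)=-1$. This single computation both eliminates the mixed cases and yields $\alpha_i(c_j)\alpha_j(c_i)=x_{ij}x_{ji}y_{ij}y_{ji}\neq0$ in case (ii), which you only ``expect'' to hold.

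Second, your claim that $a=b=-2$ uncoupled forces exactly one of $x_{ij},x_{ji}$ to vanish is false: case (iv) of the proposition explicitly allows $a=b=-2$ with both $x$'s nonzero, and $\q^{+}_{(2,2)}$ of Section \ref{section_q_22} realizes this ($x_{12}x_{21}=4$). The correct logic runs the other way: if $x_{ij}x_{ji}=0$, then $(y_{ij},y_{ji})$ is a nontrivial solution of a $2\times 2$ linear system with determinant $4-ab$, so $ab=4$; combined with the exclusion of $a=-1$ or $b=-1$ above, the configurations with exactly one vanishing $x$ land in case (iii), while $a=b=-2$ with $x_{ij}x_{ji}\neq0$ falls under (iv). A smaller slip: in the $X$-coupled case Remark \ref{remark_coroot_coupling} gives $y_{ij}h_j=y_{ji}h_i$ with $y_{ij}y_{ji}\neq0$ (you quoted the $Y$-coupled relation $x_{ij}c_j=x_{ji}c_i$ instead); evaluating the correct relation on $\alpha_i$ and $\alpha_j$ still gives $ab=4$, so that part is recoverable.
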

	
	\begin{proof}
		Because we assume $\alpha_i,\alpha_j$ are connected, we have $\alpha_i(h_j)\alpha_j(h_i)\neq0$.  
		
		Now because $\alpha_i(h_i)=\alpha_j(h_j)$, Lemma \ref{lemma_nontrivial_relation_on_ys} implies that:
		\begin{align}
			y_{ij}^2\alpha_i(h_j) = y_{ji}^2\alpha_j(h_i).
		\end{align}
		So if \(y_{ij}y_{ji} = 0\), it must be that both \(y_{ij}=y_{ji} = 0\), implying \(\alpha_i\) and \(\alpha_j\) are $Y$-coupled.
		
		If, on the other hand, \(x_{ij}x_{ji} = 0\), then Lemma \ref{lemma_nontrivial_relation_on_ys} gives:
		\begin{align*}
			2y_{ji} = y_{ij}\alpha_i(h_j), \\
			2y_{ij} = y_{ji}\alpha_j(h_i).
		\end{align*}
		Thus \(y_{ij},y_{ji}\) is a non-trivial solution to the linear system defined by the matrix 
		\[
		\begin{bmatrix} 2 & -\alpha_i(h_j)\\ -\alpha_j(h_i) & 2\end{bmatrix},
		\]
		implying that the determinant is 0, which gives
		\[
		\alpha_i(h_j)\alpha_j(h_i) = 4.
		\]
		In particular, this shows that if $\alpha_i$ and $\alpha_j$ are $X$-coupled, then $\alpha_i(h_j)\alpha_j(h_i)=4$, as desired. On the other hand, if \(\alpha_i\) and \(\alpha_j\) are not coupled but we still have $x_{ij}x_{ji}=0$, then exactly one of \(x_{ij}\), \(x_{ji}\) is zero, and we have $\alpha_i(h_j)\alpha_j(h_i)=4$.  It remains to show then that $\alpha_i(h_j)=\alpha_j(h_i)=-2$, or equivalently that $\alpha_i(h_j)\neq-1$ and $\alpha_j(h_i)\neq-1$, which will follow from our final argument.
		
		Thus suppose that \(\alpha_i(h_j) = -1\) and \(\alpha_i\), \(\alpha_j\) are not coupled. Then Lemma \ref{lemma-sq2-rep} implies \([E_j,e_i] + x_{ji}[e_j,E_i] = 0\). As a result
		\begin{align*}
			0 = [f_i,[E_j,e_i] + x_{ji}[e_j,E_i]] = [E_j,-h_i]+ x_{ji}[e_j,-H_i] = (\alpha_j(h_i)+x_{ji}x_{ij})E_j,
		\end{align*}
		and
		\begin{align*}
			0 = [F_i,[E_j,e_i] + x_{ji}[e_j,E_i]] = [E_j,H_i] + x_{ji}[e_j,c_i] = (y_{ij}-x_{ij}x_{ji}y_{ij})e_j.
		\end{align*}
		Because \(\alpha_i\) and \(\alpha_j\) are not coupled, we have \(y_{ij} \neq 0\), so \(x_{ij}x_{ji}=1\) and \(\alpha_j(h_i) = -1\).  This completes our proof.
	\end{proof}
	
	\begin{proposition}\label{prop_coupling_sl_osp}
		Let \(\mathfrak{g}(\mathcal{A})\) be a QKM algebra. Let \(\alpha_i \in \Pi\) be of $Tak(\s\l(2))$-type and \(\alpha_j \in \Pi\) be of $Tak(\mathfrak{osp}(1|2))$-type. If $\alpha_i$ is connected to $\alpha_j$, then either they are Y-coupled, or they are $X$-coupled and we have $\alpha_j(h_i)=-2$ and $\alpha_i(h_j)=-1$.
	\end{proposition}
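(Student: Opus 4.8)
The plan is to follow the strategy of Proposition~\ref{propistion-two-sq2}, but to exploit the extra rigidity available here: since $\alpha_j$ is of type $Tak(\o\s\p(1|2))$, the pure coroot $c_j$ annihilates every root space and commutes with $\h$ (see the table in Section~\ref{section-notation-qKM}), hence is central in $\g(\AA)$. I will write $a=\alpha_i(h_j)$ and $b=\alpha_j(h_i)$, and use the generators $e_i,E_i,f_i,F_i$, $e_j,E_j,f_j,F_j$ and the matrices $(x_{\bullet\bullet}),(y_{\bullet\bullet})$ of Section~\ref{section_X_Y_matrices}. Since $\alpha_i$ is connected to $\alpha_j$ we have $ab\neq 0$ by Lemma~\ref{lemma-qKM-notation}. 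First I would pin down $b$: by Lemma~\ref{lemma-sq2-rep}(1), $\g_{\alpha_j-b\alpha_i}$ is a root space, so $\alpha_j-b\alpha_i\in\pm Q_+$; as it has $\alpha_j$-coefficient $1$ and $\alpha_i$-coefficient $-b$, this forces $b\le 0$, and integrability of the $\s\l(2)$ spanned by $e_i,h_i,f_i$ forces $b\in\Z$, so $b\le -1$. Finally, centrality of $c_j$ gives $\alpha_i(c_j)=0$; since $c_j=H_j^2$ and $\alpha_i(H_j^2)=x_{ji}y_{ji}$ by Lemma~\ref{lemma_qKM_formulas}(2), we conclude $x_{ji}y_{ji}=0$.

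Next I would evaluate the identity $[H_i,H_j]=x_{ij}c_j+y_{ij}h_j=x_{ji}c_i+y_{ji}h_i$ of Lemma~\ref{lemma_qKM_formulas}(1) against $\alpha_i$ and against $\alpha_j$, using $\alpha_i(c_i)=\alpha_i(c_j)=0$, $\alpha_i(h_i)=2$, $\alpha_j(c_j)=0$, $\alpha_j(h_j)=1$ and $\alpha_j(c_i)=x_{ij}y_{ij}$ (Lemma~\ref{lemma_qKM_formulas}(2)). This gives the two scalar identities
\[
a\,y_{ij}=2\,y_{ji},\qquad (1-x_{ij}x_{ji})\,y_{ij}=b\,y_{ji}.
\]
If $y_{ji}=0$, the first gives $y_{ij}=0$ (as $a\neq0$), so $\alpha_i$ and $\alpha_j$ are $Y$-coupled and we are done. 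Otherwise $y_{ji}\neq 0$, whence $x_{ji}=0$ by the previous paragraph, the second identity reads $y_{ij}=b\,y_{ji}\neq 0$, and substituting into the first gives $ab=2$. Together with $b\in\Z_{\le-1}$ this leaves exactly $(a,b)\in\{(-1,-2),(-2,-1)\}$.

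To eliminate $(a,b)=(-2,-1)$: here $b=\alpha_j(h_i)=-1$, so Lemma~\ref{lemma-sq2-rep}(2) applied to $v=e_j$ gives $[E_i,e_j]+x_{ij}[e_i,E_j]=0$; applying $\operatorname{ad}(f_j)$ and simplifying with the Jacobi identity and the relations $[f_j,e_i]=[f_j,E_i]=0$, $[f_j,e_j]=h_j$, $[f_j,E_j]=-H_j$, $[H_j,e_i]=x_{ji}E_i=0$ collapses this to $a\,E_i=0$, contradicting $a\neq0$. Hence $(a,b)=(-1,-2)$, i.e.\ $\alpha_j(h_i)=-2$ and $\alpha_i(h_j)=-1$. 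It remains to show that $\alpha_i$ and $\alpha_j$ are in fact $X$-coupled, i.e.\ that $x_{ij}=0$. For this I would use that $\g_{\alpha_i+\alpha_j}$ is $(1|1)$-dimensional and has $h_i$-weight $b+2=0$, sitting at the middle of a length-three $\s\l(2)$-string; its even part is one-dimensional, spanned by $[e_i,E_j]\neq 0$, so $[E_i,e_j]=\kappa[e_i,E_j]$ for a scalar $\kappa$. Applying $\operatorname{ad}(f_i)$ and comparing (using $[f_i,[E_i,e_j]]=-x_{ij}E_j$ and $[f_i,[e_i,E_j]]=-b\,E_j$) gives $\kappa b=x_{ij}$, so $2\kappa+x_{ij}=x_{ij}(b+2)/b=0$. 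Then $[H_i,[e_i,e_j]]=\alpha_i(h_i)[E_i,e_j]+x_{ij}[e_i,E_j]=(2\kappa+x_{ij})[e_i,E_j]=0$, so $c_i=H_i^2$ kills $[e_i,e_j]\in\g_{\alpha_i+\alpha_j}$, whence $(\alpha_i+\alpha_j)(c_i)=0$. But $(\alpha_i+\alpha_j)(c_i)=\alpha_i(c_i)+\alpha_j(c_i)=x_{ij}y_{ij}$ (Lemma~\ref{lemma_qKM_formulas}(2)) and $y_{ij}\neq 0$, so $x_{ij}=0$; with $x_{ji}=0$ this gives that $\alpha_i,\alpha_j$ are $X$-coupled.

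The main obstacle is the last step. Everything up to $(a,b)\in\{(-1,-2),(-2,-1)\}$ is bookkeeping with Lemmas~\ref{lemma_qKM_formulas} and~\ref{lemma_nontrivial_relation_on_ys}, but both eliminating the reversed assignment and deducing $x_{ij}=0$ require careful Jacobi-identity computations inside $\g_{\alpha_i+\alpha_j}$ using the $\s\l(2)$-string structure, and one must use the integrability input (giving $b\in\Z_{\le-1}$) before the case analysis. Conceptually the entire argument hinges on the centrality of $c_j$: this is what forces $x_{ji}y_{ji}=0$, and it is precisely what distinguishes this case from the $Tak(\s\l(2))$ case of Proposition~\ref{propistion-two-sq2}.
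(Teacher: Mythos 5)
Your argument is correct and runs parallel to the paper's up to the determination of $(\alpha_i(h_j),\alpha_j(h_i))=(-1,-2)$: using the centrality of $c_j$ to force $x_{ji}y_{ji}=0$, evaluating $[H_i,H_j]=x_{ij}c_j+y_{ij}h_j=x_{ji}c_i+y_{ji}h_i$ against $\alpha_i$ and $\alpha_j$, and the Jacobi computation excluding $\alpha_j(h_i)=-1$ (the paper applies $\operatorname{ad}F_j$ where you apply $\operatorname{ad}f_j$; both work) are all sound. One small loose end: from $ab=2$ and $b\in\Z_{\leq-1}$ alone you cannot conclude $(a,b)\in\{(-1,-2),(-2,-1)\}$ (nothing yet excludes, say, $b=-4$, $a=-1/2$); you also need $a=\alpha_i(h_j)\in\Z_{\leq -1}$, which follows by the same integrability argument applied to the $\s\l(2)$-triple $[e_j,e_j],h_j,[f_j,f_j]$ inside $\g\langle\alpha_j\rangle$. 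That is easily repaired.

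The final step, however, has a genuine gap, and it is in fact circular. You justify $[E_i,e_j]\in\C\langle[e_i,E_j]\rangle$ by asserting that $\g_{\alpha_i+\alpha_j}$ is $(1|1)$-dimensional because it sits in the middle of a length-three $\s\l(2)_i$-string; but the middle weight space of an integrable $\s\l(2)$-module with weights $\{-2,0,2\}$ can exceed the end weight spaces by trivial summands, so this does not follow. Worse, the assertion is essentially equivalent to the conclusion: set $w:=[E_i,e_j]+\tfrac{x_{ij}}{2}[e_i,E_j]$. A direct computation gives $[f_i,w]=0$ but $[F_i,w]=\tfrac{x_{ij}y_{ij}}{2}\,e_j$ (using $[F_i,E_i]=c_i$, $\alpha_j(c_i)=x_{ij}y_{ij}$ from Lemma \ref{lemma_qKM_formulas}(2), and $[F_i,[e_i,E_j]]=-y_{ij}e_j$). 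Since $y_{ij}\neq0$ in this branch, if $x_{ij}\neq0$ then $w$ is a nonzero even vector of $\g_{\alpha_i+\alpha_j}$ that is not proportional to $[e_i,E_j]$ (it is killed by $f_i$ while $[f_i,[e_i,E_j]]\neq0$), so $(\g_{\alpha_i+\alpha_j})_{\ol{0}}$ is two-dimensional and your key claim fails. In other words, the claim holds precisely when $x_{ij}=0$, which is what you are trying to prove. The paper closes the argument differently: it passes to the $\s\q(2)$-subquotient of $\g\langle\alpha_j\rangle$ generated by $\g_{\pm2\alpha_j}$ and applies Proposition \ref{propistion-two-sq2} to the pair $(\alpha_i,2\alpha_j)$; the only uncoupled alternative compatible with $x_{ji}=0$ is case (iii) there, which would force $(2\alpha_j)(h_i)=-2$, i.e.\ $\alpha_j(h_i)=-1$, already excluded, so the roots must be coupled and hence $X$-coupled. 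You should replace your last paragraph with this reduction, or else supply an independent proof that $\dim\g_{\alpha_i+\alpha_j}=(1|1)$.
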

	
	\begin{proof}
		First of all, we have \(\alpha_i(c_j) = 0\) by the table from Section \ref{section-notation-QKM}.  In particular $x_{ji}y_{ji}=0$.
		
		We may consider the \(\s\q(2)\)-subquotient of $\g\langle\alpha_j\rangle$ generated by $\g_{2\alpha_j}$ and $\g_{-2\alpha_j}$.  Then Proposition \ref{propistion-two-sq2} tells us that $y_{ji}=0$ implies $y_{ij}=0$ as well, so that $\alpha_i$ and $\alpha_j$ are $Y$-coupled.  
		
		Let us assume they are not $Y$-coupled, so that $x_{ji}=0$, $y_{ij}y_{ji}\neq0$.  Then Proposition \ref{propistion-two-sq2} tells us that $\alpha_i(h_j)(2\alpha_j)(h_i)=4$.  
		
		If $\alpha_j(h_i)=-1$, then Lemma \ref{lemma-sq2-rep} then gives \([E_i,e_j] + [e_i,[H_i,e_j]] = 0\), thus
		\begin{align}\label{equation-contradicting-tosp}
			0 = [F_j,[E_i,e_j] + [e_i,[H_i,e_j]]] = -[E_i,H_j] + [e_i,[F_j,[H_i,e_j]]].
		\end{align} 
		As \([H_i,e_j] \in \mathbb{C}\langle E_j\rangle\), we have \([F_j,[H_i,e_j]] \in \mathbb{C}\langle c_j\rangle \), so \([e_i,[F_j,[H_i,e_j]]] = 0\). Together with \eqref{equation-contradicting-tosp}, we obtain
		\begin{align*}
			0 = [E_i,H_j] = y_{ji}e_i \neq 0,
		\end{align*}
		a contradiction. Thus we must have $\alpha_j(h_i)=-2$ and $\alpha_i(h_j)=-1$.  Now if $\alpha_i$ and $\alpha_j$ were not $X$-coupled, by then Proposition \ref{propistion-two-sq2} we would have $2\alpha_j(h_i)=\alpha_i(h_j)=-2$, which is not the case, meaning instead they must be $X$-coupled, and we are done.
	\end{proof}
	
	We now finish the proof of Theorem \ref{theorem_coupling} with the following proposition.
	\begin{proposition}
		If $\alpha_i$ and $\alpha_j$ are both of type $Tak(\o\s\p(1|2))$ and are connected, then they cannot be $X$-coupled (in particular they must be $Y$-coupled).
	\end{proposition}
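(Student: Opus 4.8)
The plan is to assume for contradiction that $\alpha_i$ and $\alpha_j$ are $X$-coupled and extract first the numerical constraints this imposes. By Lemma \ref{lemma_coupled_defn_justif} the $X$-coupling gives $x_{ij}=x_{ji}=0$ and $y_{ij}y_{ji}\neq 0$, and then Remark \ref{remark_coroot_coupling} yields $y_{ij}h_j=y_{ji}h_i$, so $h_i$ and $h_j$ are proportional. Since $\alpha_i(h_i)=\alpha_j(h_j)=1$ by the table in Section \ref{section-notation-qKM} for $Tak(\o\s\p(1|2))$, proportionality forces $\alpha_i(h_j)\alpha_j(h_i)=1$. On the other hand $\g_{\alpha_i}$ is annihilated by $\g_{-\alpha_j}$ by the defining relations, hence by $[f_j,f_j]\in[\g_{-\alpha_j},\g_{-\alpha_j}]$, so it is a lowest weight space for the even $\s\l(2)\subseteq\g\langle\alpha_j\rangle$, whose coroot is $h_j$; by integrability this $\s\l(2)$-module is finite-dimensional, so $\alpha_i(h_j)\in\Z_{\leq 0}$, and since $\alpha_i,\alpha_j$ are connected it is $\leq -1$, and symmetrically $\alpha_j(h_i)\leq -1$. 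Together with $\alpha_i(h_j)\alpha_j(h_i)=1$ this forces $\alpha_i(h_j)=\alpha_j(h_i)=-1$ and $y_{ji}=-y_{ij}$.

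Next I would pin down $\g_{\alpha_i+\alpha_j}$. Viewing $V:=\bigoplus_{k\geq 0}\g_{\alpha_i+k\alpha_j}$ as a module over $\g\langle\alpha_j\rangle$, the space $\g_{\alpha_i-\alpha_j}$ vanishes (it lies in neither $Q_+$ nor $-Q_+$) and $[\g_{-\alpha_j},\g_{\alpha_i}]=0$, so $\g_{\alpha_i}$ is a lowest weight subspace of weight $\alpha_i(h_j)=-1$, and $V$ is finite-dimensional by integrability. A finite-dimensional $\o\s\p(1|2)$-module generated by a $(1|1)$-dimensional lowest weight space of weight $-1$ is a sum of two copies (one of each parity) of the $3$-dimensional irreducible with weights $-1,0,1$; in particular $\g_{\alpha_i+\alpha_j}$ is $(1|1)$-dimensional, so its odd part is one-dimensional. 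Now I would play off the two odd elements $[e_i,E_j]$ and $[e_j,E_i]$ of $\g_{\alpha_i+\alpha_j}$: using $[f_i,e_i]=h_i$, $[E_i,f_i]=H_i$, $[e_j,F_j]=H_j$ and the relations $[\g_{-\alpha_i},\g_{\alpha_j}]=0$, one computes
\[
[f_i,[e_i,E_j]]=\alpha_j(h_i)E_j=-E_j\neq 0,\qquad [F_j,[e_j,E_i]]=-[H_j,E_i]=y_{ij}e_i\neq 0,
\]
so both $[e_i,E_j]$ and $[e_j,E_i]$ are nonzero; but the $X$-coupling gives $[H_i,e_j]=x_{ij}E_j=0$, hence
\[
[f_i,[e_j,E_i]]=-[e_j,[f_i,E_i]]=[e_j,H_i]=[H_i,e_j]=0 .
\]
Since the odd part of $\g_{\alpha_i+\alpha_j}$ is one-dimensional, $[e_j,E_i]\in\C^\times\cdot[e_i,E_j]$, and applying $\mathrm{ad}(f_i)$ then gives $0=[f_i,[e_j,E_i]]\in\C^\times\cdot[f_i,[e_i,E_j]]=\C^\times\cdot E_j$, which is absurd.

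The step I expect to be the real obstacle is the middle one: establishing that $\g_{\alpha_i+\alpha_j}$ is exactly $(1|1)$-dimensional rather than larger. This requires the $\o\s\p(1|2)$-representation-theoretic bookkeeping to be done carefully — one must check that $V$, being finite-dimensional and generated by its lowest weight space, decomposes as a direct sum of copies of the $3$-dimensional irreducible (using that lowest weight Vermas for $\o\s\p(1|2)$ with weight $-1$ have a unique finite-dimensional quotient, together with the parity structure of that irreducible) — and it is exactly this fact that makes the proportionality used to close the argument legitimate. The preceding reduction (Step 1) and the Jacobi computations (Step 3) are routine in the style of Propositions \ref{propistion-two-sq2} and \ref{prop_coupling_sl_osp}.
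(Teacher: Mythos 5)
Your Step 1 (forcing $\alpha_i(h_j)=\alpha_j(h_i)=-1$ and $y_{ji}=-y_{ij}$ from $y_{ij}h_j=y_{ji}h_i$) and your Jacobi computations in Step 3 are correct, but the step you yourself flagged as the obstacle — that $(\mathfrak{g}_{\alpha_i+\alpha_j})_{\ol{1}}$ is one-dimensional — is a genuine gap, and the justification you sketch cannot close it. The module $V=\bigoplus_{k\geq0}\mathfrak{g}_{\alpha_i+k\alpha_j}$ is generated by $\mathfrak{g}_{\alpha_i}$ over the whole of $\mathfrak{g}\langle\alpha_j\rangle$, not over its $\o\s\p(1|2)$ subalgebra: the extra odd generator $E_j$ of $\mathfrak{g}_{\alpha_j}$ contributes $[E_j,\mathfrak{g}_{\alpha_i}]$ to the weight-zero space, and this lies outside $U(\o\s\p(1|2)_j)\mathfrak{g}_{\alpha_i}$, so the lowest-weight-Verma/complete-reducibility bookkeeping only bounds $[e_j,\mathfrak{g}_{\alpha_i}]$ and says nothing about $[E_j,e_i]$. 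Worse, a direct check from the same hypotheses shows your desired proportionality is underivable by any soft argument of this kind: using $[f_j,E_j]=-H_j$, $[f_j,e_i]=0$ and $x_{ji}=0$ one gets $[f_j,[E_j,e_i]]=-[H_j,e_i]=-x_{ji}E_i=0$, while $[f_j,[e_j,E_i]]=[h_j,E_i]=\alpha_i(h_j)E_i\neq0$; since (as you correctly compute) both $[E_j,e_i]$ and $[e_j,E_i]$ are nonzero, they are linearly independent, so the odd part of $\mathfrak{g}_{\alpha_i+\alpha_j}$ is two-dimensional and the final step "$[e_j,E_i]\in\C^\times[e_i,E_j]$" fails. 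The contradiction therefore never materializes along your route as written.

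For comparison, the paper's proof avoids all of this by a reduction: under $X$-coupling it treats $2\alpha_i$ as a $Tak(\s\l(2))$-type root (via the $\s\q(2)$-subquotient of $\mathfrak{g}\langle\alpha_i\rangle$ generated by $\mathfrak{g}_{\pm2\alpha_i}$) paired with the $Tak(\o\s\p(1|2))$-root $\alpha_j$, and invokes Proposition \ref{prop_coupling_sl_osp} to get $(2\alpha_i)(h_j)=-1$, i.e.\ $\alpha_i(h_j)=-1/2$, which is impossible since this value is an integer. If you want to salvage a direct argument, note that your Step 1 conclusion $\alpha_i(h_j)=-1$ already contradicts the value $-1/2$ forced by that reduction; alternatively, one could try to turn the linear independence of $[E_j,e_i]$ and $[e_j,E_i]$ exhibited above into a contradiction with integrability, but that requires additional work you have not supplied.
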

	\begin{proof}
		Suppose that $\alpha_i$ and $\alpha_j$ are $X$-coupled.   By considering $2\alpha_i$ and $\alpha_j$ as $X$-coupled simple roots, Proposition \ref{prop_coupling_sl_osp} tells us that we have $\alpha_j(h_i)=-2$ and $(2\alpha_i)(h_j)=-1$. However this implies that $\alpha_i(h_j)=-1/2$, which is impossible.
	\end{proof}
	
	\section{Completely coupled vs.~completely uncoupled}\label{section_coupled_v_uncoupled}
	
	It is interesting to understand what are the possible configurations of simple roots of a QKM algebra. If we consider QKM algebras of finite growth, the coupling property is very well behaved. First, a definition.
	
	\begin{definition}
		Let $\AA$ a be QKM Cartan datum with matrices $X=X(\AA)$ and $Y=Y(\AA)$ as in Section \ref{section-notation-QKM}.  We say that $\AA$, and also $\g(\AA)$, is 
		\begin{enumerate}
			\item completely coupled if any two simple roots that are connected are coupled;
			\item completely $X$-coupled if $x_{ij}=0$ for all $i\neq j$;
			\item completely $Y$-coupled if $Y=0$, i.e.~$y_{ij}=0$ for all $i,j$;
			\item completely uncoupled if for all $i\neq j$, either $\alpha_i$ and $\alpha_j$ are not connected, or they are connected and uncoupled.
		\end{enumerate}
	\end{definition}
	
	The main theorem to be proven in this section is:
	\begin{theorem}
		Suppose that $\g(\AA)$ is an indecomposable, finite growth QKM algebra.  Then $\AA$ is either completely $X$-coupled, completely $Y$-coupled, or completely uncoupled. Further, their Cartan data are completely classified up to isomorphism.
	\end{theorem}  
	
	\subsection{$X$-couplings}
	
	\begin{lemma}\label{lemma_X_coupling_isolated}
		Suppose that $\AA$ is an indecomposable QKM Cartan datum. Then if $\alpha_i,\alpha_j$ are distinct, connected, $X$-coupled simple roots, then $\Pi=\{\alpha_i,\alpha_j\}$, i.e.~there are no other simple roots.
	\end{lemma}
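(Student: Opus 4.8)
The plan is to exploit that $X$-coupling makes the even simple coroots $h_i$ and $h_j$ negatively proportional, and then to show that any third simple root attached to this pair would be forced to attach to \emph{both} $\alpha_i$ and $\alpha_j$ and, through that proportionality, to acquire an off-diagonal Cartan entry of the wrong sign.

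First I would record, from Remark~\ref{remark_coroot_coupling}, that $X$-coupling gives $y_{ij}h_j=y_{ji}h_i$ with $y_{ij}y_{ji}\neq 0$, so $h_i=\lambda h_j$ with $\lambda:=y_{ij}/y_{ji}\in\C^{\times}$. Writing $a_{ab}=\alpha_b(h_a)$ and applying $\alpha_j$ yields $a_{ij}=\lambda a_{jj}$. Here $a_{jj}\neq 0$, for otherwise $a_{ij}=0$ would contradict the connectedness of $\alpha_i,\alpha_j$ (Lemma~\ref{lemma-qKM-notation}); hence $a_{jj}\in\{1,2\}$, and likewise $a_{ii}\in\{1,2\}$, so neither $\alpha_i$ nor $\alpha_j$ is of type $Tak(\s\l(1|1))$. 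I would also isolate the following sign fact: for connected distinct simple roots $\beta,\gamma$ of a qKM algebra with $\beta$ of type $Tak(\s\l(2))$ or $Tak(\o\s\p(1|2))$ one has $\gamma(h_\beta)<0$. Indeed, the classical $\s\l(2)$-triple inside $\g\langle\beta\rangle$ (with coroot $h_\beta$) acts locally finitely on $\g$ by integrability, its lowering operator annihilates $\g_{\gamma}$ because $\g_{\gamma-\beta}=0$ (as $\gamma-\beta\notin\pm Q_+$), so $\g_{\gamma}$ is a nonzero lowest-weight space, giving $\gamma(h_\beta)\leq 0$, hence $<0$ by connectedness. In particular $a_{ij}<0$, so $\lambda=a_{ij}/a_{jj}<0$; the negativity of $\lambda$ is the crux.

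Now suppose for contradiction that $|\Pi|>2$. Since $\AA$ is indecomposable and, by regularity together with the equivalences in Lemma~\ref{lemma-qKM-notation}, ``connected'' is exactly adjacency in the Dynkin diagram, the diagram is connected; hence some $\alpha_k\in\Pi\setminus\{\alpha_i,\alpha_j\}$ is adjacent to $\alpha_i$ or to $\alpha_j$. The situation being symmetric in $i$ and $j$ (we only used that both are not of type $Tak(\s\l(1|1))$ and that $\lambda<0$), assume $\alpha_k$ is adjacent to $\alpha_i$. Then $a_{ik}=\alpha_k(h_i)\neq 0$ (Lemma~\ref{lemma-qKM-notation}), so $a_{jk}=\alpha_k(h_j)=\lambda^{-1}a_{ik}\neq 0$ and $\alpha_k$ is adjacent to $\alpha_j$ as well. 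Applying the sign fact to the connected pairs $(\alpha_i,\alpha_k)$ and $(\alpha_j,\alpha_k)$ gives $a_{ik}<0$ and $a_{jk}<0$; but $a_{ik}=\lambda a_{jk}$ with $\lambda<0$ and $a_{jk}<0$ forces $a_{ik}>0$, a contradiction. Therefore $\Pi=\{\alpha_i,\alpha_j\}$.

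The only steps needing genuine care are the sign statement $\gamma(h_\beta)<0$ for connected distinct simple roots — which I would prove precisely as indicated, from the local $\s\l(2)$-module structure and $\g_{\gamma-\beta}=0$, handling $Tak(\o\s\p(1|2))$ by using its even $\s\l(2)$ — and the bookkeeping ``indecomposable $\Rightarrow$ Dynkin diagram connected'', which is immediate from regularity and Lemma~\ref{lemma-qKM-notation}. Everything else is formal.
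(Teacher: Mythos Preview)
Your proof is correct and follows essentially the same approach as the paper's: both hinge on the proportionality $h_i=\lambda h_j$ with $\lambda<0$ (from the $X$-coupling relation $y_{ij}h_j=y_{ji}h_i$ and the sign of $a_{ij}/a_{jj}$), combined with the $\s\l(2)$-constraint that off-diagonal Cartan entries are nonpositive. The only cosmetic difference is that the paper shows every third root $\gamma$ satisfies $\gamma(h_i)=\gamma(h_j)=0$ directly (squeezing $\gamma(h_i)=\lambda\gamma(h_j)$ between $\leq 0$ and $\geq 0$), whereas you pick a neighbor $\alpha_k$ and derive a strict sign contradiction; and you re-derive $a_{ii},a_{jj}\in\{1,2\}$ from the proportionality rather than citing Theorem~\ref{theorem_coupling}.
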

	\begin{proof}
		By Theorem \ref{theorem_coupling}, our assumption implies that $\alpha_k(h_k)=1$ or $2$ for $k=i,j$.  On the other hand, since $x_{ij}=x_{ji}=0$ by assumption, (1) of Lemma \ref{lemma_QKM_formulas} implies that $h_i=\lambda h_j$ for $\lambda=y_{ij}/y_{ji}\neq0$.  We see that
		\[
		0<\alpha_i(h_i)=\lambda\alpha_i(h_j).
		\]
		Since $\alpha_i(h_j)\in\Z_{<0}$, this forces $\lambda\in\R_{<0}$.  If $\gamma$ were any other simple root in $\AA$, then we must have $\gamma(h_i),\gamma(h_j)\leq0$; on the other hand this means that $0\leq \lambda\gamma(h_j)=\gamma(h_i)$.  Thus $\gamma(h_i)=0$, and similarly $\gamma(h_j)=0$.  Thus $\gamma$ is not connected to either $\alpha_i$ or $\alpha_j$, which by indecomposability implies that $\Pi=\{\alpha_i,\alpha_j\}$, and we are done.
	\end{proof}

\begin{theorem}\label{theorem_X_coupled}
		If $\g(\AA)$ is an indecomposable, completely $X$-coupled QKM algebra, then its Cartan datum satisfies one of the following up to isomorphism, and $\g(\AA)$ is finite growth:
		\begin{enumerate}
			\item $\Pi=\{\alpha_1,\alpha_2\}$ with $\alpha_1,\alpha_2$ both of type $Tak(\s\l(2))$, and $h_1=-h_2$.  We have 
			\[
			X=\begin{bmatrix}
				2 & 0\\
				0 & 2
			\end{bmatrix}, \ \ \ \ 
			Y=\begin{bmatrix}
				0 & -1 \\ 1 & 0
			\end{bmatrix}.
			\]
			The Dynkin diagram is given by:
			\begin{center}
				\begin{tikzpicture}
					\node[diamond,aspect=1,scale=0.6,draw] (A) at (-0.5,0) {};
					\node[diamond,aspect=1,scale=0.6,draw] (B) at (0.5,0) {};
					\draw (A) edge node[above,font=\tiny] {\(-2,-2\)} (B);
				\end{tikzpicture}
			\end{center}
			\item $\Pi=\{\alpha_1,\alpha_2\}$ with $\alpha_1,\alpha_2$ both of type $Tak(\s\l(2))$, and $h_1=-2h_2$.  We have 
			\[
			X=\begin{bmatrix}
				2 & 0\\
				0 & 2
			\end{bmatrix}, \ \ \ \ 
			Y=\begin{bmatrix}
				0 & -2\\ 1 & 0
			\end{bmatrix}.
			\]
			The Dynkin diagram is given by:
			\begin{center}
				\begin{tikzpicture}
					\node[diamond,aspect=1,scale=0.6,draw] (A) at (-0.5,0) {};
					\node[diamond,aspect=1,scale=0.6,draw] (B) at (0.5,0) {};
					\draw (A) edge node[above,font=\tiny] {\(-4,-1\)} (B);
				\end{tikzpicture}
			\end{center}
			\item $\Pi=\{\alpha_1,\alpha_2\}$ with $\alpha_1$ of type $Tak(\s\l(2))$, $\alpha_2$ of type $Tak(\o\s\p(1|2))$, and $h_1=-2h_2$.  We have 
			\[
			X=\begin{bmatrix}
				2 & 0\\
				0 & 1
			\end{bmatrix}, \ \ \ \ 
			Y=\begin{bmatrix}
				0 & -2\\ 1 & 0
			\end{bmatrix}.
			\]
			The Dynkin diagram is given by:
			\begin{center}
				\begin{tikzpicture}
					\node[diamond,aspect=1,scale=0.6,draw] (A) at (-0.5,0) {};
					\node[diamond,aspect=1,scale=0.6,draw,fill=black] (B) at (0.5,0) {};
					\draw (A) edge node[above,font=\tiny] {\(-2,-1\)} (B);
				\end{tikzpicture}
			\end{center}
		\end{enumerate}
	\end{theorem}
    
	\begin{proof}[Proof of Theorem \ref{theorem_X_coupled}]
		It remains to check that the only possible Cartan data are the ones listed.  However this follows from (3) of Theorem \ref{theorem_coupling}, which tells us the possible simple root types and the values of $\alpha_i(h_j)$, and we can rescale $y_{ij}$ and $y_{ji}$ to make it as above, by rescaling our choice of generators (see Remark \ref{remark_rescaling}).  The relations on $h_1$ and $h_2$ come from (1) of Lemma \ref{lemma_QKM_formulas}.
	\end{proof}

	\subsection{Complete coupling for finite growth QKM algebras}	
	\begin{lemma}
		Let \(\mathfrak{g}(\mathcal{A})\) be a QKM algebra. Let \(\alpha_1\), \(\alpha_2\) and \(\alpha_3\) be simple roots, such that their corresponding full subdiagram of the Dynkin diagram of \(\mathfrak{g}(\mathcal{A})\) is:
		\begin{center}
			\begin{tikzpicture}[square/.style={regular polygon,regular polygon sides=4}]
				\node[square,aspect=1,scale=0.6,draw] (A) at (0,0) {};
				\node[square,aspect=1,scale=0.6,draw] (B) at (180:1cm) {};
				\node[square,aspect=1,scale=0.6,draw] (C) at (0:1cm) {};
				\draw (A) -- (B);
				\draw (A) -- (C);
			\end{tikzpicture}
		\end{center}
        (Here, each symbol $\square$ could be either \begin{tikzpicture}	\node[diamond,aspect=1,scale=0.6,draw] (A) {};	\end{tikzpicture}, \begin{tikzpicture}	\node[diamond,aspect=1,scale=0.6,draw,fill=black] (A) {};	\end{tikzpicture}, or $\diamondtimes$.)
		If \(\alpha_1\) and \(\alpha_2\) are coupled, then \(\alpha_2\) and \(\alpha_3\) are coupled. 
	\end{lemma}
	\begin{proof}		
		Since we may assume by Lemma \ref{lemma_X_coupling_isolated} that $\alpha_1$ and $\alpha_2$ are $Y$-coupled, (1) of Lemma \ref{lemma_QKM_formulas} implies that 
		\[
		x_{12}c_2=x_{21}c_1,
		\]
		where $x_{12},x_{21}\neq0$.  Thus we obtain that $\alpha_1(c_2)=0$, since the same is true of $c_1$. Applying $\alpha_1$ to the formula
		\[
		x_{23}c_3+y_{23}h_3=x_{32}c_2+y_{32}h_2
		\]
		gives 
		\[
		0=x_{32}\alpha_1(c_2)+y_{32}\alpha_1(h_2)=y_{32}\alpha_1(h_2).
		\]
		Since $\alpha_1(h_2)\neq0$, this forces $y_{32}=0$, so by Lemma \ref{lemma_coupled_defn_justif} we obtain $y_{23}=0$ also, and we are done.
	\end{proof}
	
	\begin{lemma}
		Let \(\mathfrak{g}(\mathcal{A})\) be a QKM algebra of finite growth. Let \(\alpha_1\), \(\alpha_2\) and \(\alpha_3\) be simple roots, such that their corresponding full subdiagram of the Dynkin diagram of \(\mathfrak{g}(\mathcal{A})\) is:		
		\begin{center}
			\begin{tikzpicture}[square/.style={regular polygon,regular polygon sides=4}]
				\node[square,scale=0.6,draw] (A) at (90:0.5cm) {};
				\node[square,scale=0.6,draw] (B) at (210:1cm) {};
				\node[square,scale=0.6,draw] (C) at (-30:1cm) {};
				\draw (A) edge node[left,font=\tiny] {} (B);
				\draw (B) edge node[below,font=\tiny] {} (C);
				\draw (C) edge node[right,font=\tiny] {} (A);
			\end{tikzpicture}
		\end{center}
		If \(\alpha_1\) and \(\alpha_2\) are coupled, then \(\alpha_3\) is coupled with \(\alpha_1\) and \(\alpha_2\).
	\end{lemma}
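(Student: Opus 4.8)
The plan is to first pass to the full Cartan subdatum on $\{\alpha_1,\alpha_2,\alpha_3\}$ (Remark \ref{remark-almostKM-subdatum}), which is again a qKM algebra of finite growth with a connected (hence indecomposable) Dynkin diagram. Since it has three simple roots, Lemma \ref{lemma_X_coupling_isolated} forbids $\alpha_1,\alpha_2$ from being $X$-coupled, so the coupling of $\alpha_1$ and $\alpha_2$ is $Y$-coupling. By Remark \ref{remark_coroot_coupling} this gives $x_{12}c_2=x_{21}c_1$ with $x_{12}x_{21}\neq0$, so $c_1$ and $c_2$ are nonzero scalar multiples of one another, and since $\alpha_i(c_i)=0$ always (table in Section \ref{section-notation-qKM}) we get $\alpha_1(c_2)=\alpha_2(c_1)=0$.

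I would then argue by contradiction: suppose $\alpha_3$ is not coupled to $\alpha_1$. As $\alpha_1,\alpha_3$ are connected, Theorem \ref{theorem_coupling} leaves only case (4): both $\alpha_1$ and $\alpha_3$ are of type $Tak(\s\l(2))$ and $y_{13}y_{31}\neq0$. In particular $\alpha_1,\alpha_3$ are even simple roots. Now finite growth enters decisively: the classical Cartan subdatum on $\{\alpha_1,\alpha_2,\alpha_3\}$ has Cartan matrix $A'=(\alpha_j(h_i))$, a $3\times 3$ triangle (all off-diagonal entries nonzero) with $a_{11}=a_{33}=2$, and by the Corollary at the end of Section \ref{section_subdatum} the Kac--Moody superalgebra $\g(A')$ has finite growth. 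A finite-growth Kac--Moody superalgebra with a triangular Dynkin diagram containing two even vertices must be $A_2^{(1)}$ (this follows from the classification \cite{CS}; alternatively, the even subalgebra would otherwise contain a rank-$3$ Kac--Moody algebra with a non-affine triangular generalized Cartan matrix). Hence $A'=A_2^{(1)}$: all $a_{ii}=2$ and $a_{ij}=-1$ for $i\neq j$, so all three roots are of type $Tak(\s\l(2))$.

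From here the contradiction is a direct computation with the matrices $X$ and $Y$. Since $\alpha_1,\alpha_3$ are connected, uncoupled, and $a_{13}=a_{31}=-1$, Proposition \ref{propistion-two-sq2} puts them in case (ii), so $\alpha_1(c_3)\alpha_3(c_1)\neq0$; as $c_2\propto c_1$ this gives $\alpha_3(c_2)\neq0$, so $\alpha_2,\alpha_3$ are not $Y$-coupled and hence (again with $a_{23}=a_{32}=-1$) also lie in case (ii) of Proposition \ref{propistion-two-sq2}. Evaluating the coroot identity $x_{23}c_3+y_{23}h_3=[H_2,H_3]=x_{32}c_2+y_{32}h_2$ (Lemma \ref{lemma_qKM_formulas}(1)) against $\alpha_3$, using $\alpha_3(c_2)=\alpha_3(H_2^2)=x_{23}y_{23}$ and Lemma \ref{lemma_nontrivial_relation_on_ys}, forces $y_{23}=y_{32}$; evaluating the same identity against $\alpha_1$, using $\alpha_1(c_2)=0$, $\alpha_1(c_3)=\alpha_1(H_3^2)=x_{31}y_{31}$, $\alpha_1(h_2)=\alpha_1(h_3)=-1$, and $y_{32}=y_{23}$, gives $x_{23}x_{31}y_{31}=0$. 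But $x_{23},x_{31}\neq0$ because $\alpha_3(c_2)=x_{23}y_{23}\neq0$ and $\alpha_1(c_3)=x_{31}y_{31}\neq0$, and $y_{31}\neq0$ by the case-(4) hypothesis — a contradiction. Therefore $\alpha_3$ is coupled to $\alpha_1$; this coupling must be $Y$-type (an $X$-coupling would isolate $\{\alpha_1,\alpha_3\}$ by Lemma \ref{lemma_X_coupling_isolated}, contradicting that $\alpha_2$ is connected to both), so $c_3\propto c_1\propto c_2$, and then $\alpha_2(c_3)=\alpha_3(c_2)=0$, i.e.\ $\alpha_2$ and $\alpha_3$ are coupled as well. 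The main obstacle is the middle step — extracting from finite growth that the triangle is exactly $A_2^{(1)}$, which is where one must invoke (or re-derive) the classification of finite-growth Kac--Moody superalgebras; the remainder is routine bookkeeping with the coroot formulas.
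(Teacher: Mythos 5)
Your proposal follows the paper's skeleton quite closely at the start and the end: the reduction to $Y$-coupling via Lemma \ref{lemma_X_coupling_isolated}, the deduction $c_1\propto c_2$ hence $\alpha_1(c_2)=\alpha_2(c_1)=0$ from Remark \ref{remark_coroot_coupling}, and a contradiction extracted from the coroot identity $[H_2,H_3]=x_{23}c_3+y_{23}h_3=x_{32}c_2+y_{32}h_2$ evaluated against the simple roots. Your closing computation (getting $y_{23}=y_{32}$ and then $x_{23}x_{31}y_{31}=0$) is correct and is in fact shorter than the paper's, precisely because you have already fixed $a_{ij}=-1$ for all $i\neq j$. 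The paper instead keeps the $a_{ij}$ general: it first disposes of the non-$Tak(\mathfrak{sl}(2))$ types by coupling propagation (if any of the three roots is of type $Tak(\mathfrak{sl}(1|1))$ or $Tak(\mathfrak{osp}(1|2))$, Theorem \ref{theorem_coupling} forces it to be $Y$-coupled to its neighbours, and proportionality of the $c_i$'s then couples everything), then runs a longer computation culminating in $a_{12}a_{21}>2$, and only at the very end invokes finite growth through the elementary fact that no finite-type or affine generalized Cartan matrix on a $3$-cycle has $a_{12}a_{21}>2$.

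The one genuine weakness is your middle step: the assertion that a finite-growth Kac--Moody \emph{super}algebra whose diagram is a triangle with two even vertices must be $A_2^{(1)}$. At the point where you invoke it, the type of $\alpha_2$ is not yet determined, so you really do need the full Hoyt--Serganova classification (including $S(1,2;b)$, $q(3)^{(2)}$, and the twisted affines), and you assert the needed consequence without checking it; moreover your fallback justification via ``the even subalgebra'' does not apply when $\alpha_2$ is odd, since then $A'$ is not the Cartan matrix of a Lie algebra. The statement is in fact true, so this is a fillable gap rather than a fatal error, but the cleaner fix --- and the one the paper uses --- is to first show that all three roots must be of type $Tak(\mathfrak{sl}(2))$ without any growth hypothesis: if $\alpha_2$ were of type $Tak(\mathfrak{sl}(1|1))$ or $Tak(\mathfrak{osp}(1|2))$, Theorem \ref{theorem_coupling} together with Lemma \ref{lemma_X_coupling_isolated} forces $\alpha_2$ and $\alpha_3$ to be $Y$-coupled, whence $c_2\propto c_3\propto c_1$ and $\alpha_1,\alpha_3$ would already be coupled, contradicting your standing assumption. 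Once all three roots are even of type $Tak(\mathfrak{sl}(2))$, the matrix $A'$ is an honest generalized Cartan matrix on a $3$-cycle, and only the classical (purely even) finite/affine classification is needed to force $A'=A_2^{(1)}$; with that repair your argument goes through.
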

	
	\begin{proof}
		We again may assume by Lemma \ref{lemma_X_coupling_isolated} that any coupling that occurs here is $Y$-coupling.
		
		If $\alpha_3$ is of type $Tak(\s\l(1|1))$ or $Tak(\o\s\p(1|2))$, then we are done by Theorem \ref{theorem_coupling}.  If $\alpha_1$ is of type $Tak(\s\l(1|1))$ or $Tak(\o\s\p(1|2))$, then it must be $Y$-coupled to $\alpha_2$ and $\alpha_3$, so by Remark \ref{remark_coroot_coupling} we learn that $c_2$ and $c_3$ are proportional.  This implies that $\alpha_2(c_3)=\alpha_3(c_2)=0$, so $\alpha_2$ and $\alpha_3$ are coupled.  Similar considerations apply if $\alpha_2$ is of type $Tak(\s\l(1|1))$ or $Tak(\o\s\p(1|2))$.
		
		Thus we may assume that \(\alpha_1\), \(\alpha_2\) and \(\alpha_3\) are all of type $Tak(\s\l(2))$, and we know that $y_{12}=y_{21}=0$, since they are $Y$-coupled.  If either $\alpha_1$ or $\alpha_2$ were coupled to $\alpha_3$, then we would have that $c_1$ is a nonzero multiple of $c_2$ is a nonzero multiple of $c_3$, meaning that $\alpha_i(c_j)=0$ for all $i,j$, so all roots are coupled.  
		
		Thus we assume $\alpha_3$ is not coupled to either $\alpha_1$ or $\alpha_2$; in particular $y_{13}y_{31}y_{23}y_{32}\neq0$.  Now suppose that \(x_{13} = 0\); then (1) of Lemma \ref{lemma_QKM_formulas} gives 
		\begin{align*}
			y_{13}h_3 = x_{31}c_1 + y_{31}h_1.
		\end{align*}
		Since \(\alpha_2(c_1)=\alpha_1(c_1) = 0\), so evaluating \(\alpha_1\) and \(\alpha_2\) on the above equation, we obtain
		\begin{align*}
			&y_{13}a_{31} = 2y_{31}, \\
			&y_{13}a_{32} = y_{31}a_{12}.
		\end{align*}
		However this implies $a_{31}/a_{32}=2/a_{12}$, which is impossible, as \(a_{31},a_{32},a_{12} < 0\). Thus we must have $x_{13}\neq0$, and for similar reasons $x_{23}\neq0$.
		
		By (1) or Lemma \ref{lemma_QKM_formulas} we have \([H_1,H_2] = x_{21}c_1 = x_{12}c_2\), and by (2) of the same lemma we obtain:
		\begin{align*}
			x_{13}y_{23}+x_{23}y_{13} = x_{21}x_{13}y_{13} = x_{12}x_{23}y_{23}.
		\end{align*}
		We may rescale \(E_1\) and \(E_2\) such that \(y_{13}=y_{23}= 1\), so from the above equation we obtain
		\begin{align*}
			x_{13}(1-x_{21})+x_{23} = x_{13} + x_{23}(1-x_{12}) = 0.
		\end{align*}
		As \(x_{13}x_{23} \neq 0\),  the above equations imply that \((1-x_{21})(1-x_{12}) = 1\), so 
		\begin{align}\label{equation-x-relations}
			x_{12}+x_{21} = x_{12}x_{21}.
		\end{align}
		We again use equations (1) and (2) of Lemma \ref{lemma_QKM_formulas} to deduce that
		\begin{align*}
			& x_{12}y_{32} = \alpha_2([H_1,H_3]) = y_{31}a_{12}, \\
			& x_{21}y_{31} = \alpha_1([H_2,H_3]) = y_{32}a_{21}.
		\end{align*}
		From these equations we obtain \(x_{12}x_{21} = a_{12}a_{21}\) and \(\frac{x_{12}}{x_{21}} = \frac{y_{31}^2a_{12}}{y_{32}^2a_{21}}\). As in the proof of Proposition \ref{propistion-two-sq2}, we must have \(y_{31}^2a_{13} = y_{13}^2a_{31} = a_{31}\) and \(y_{32}^2a_{23} = y_{23}^2a_{32} = a_{32}\), so
		\begin{align*}
			\frac{x_{12}}{x_{21}} = \frac{a_{12}a_{23}a_{31}}{a_{21}a_{13}a_{32}} > 0. 
		\end{align*}
		Squaring \eqref{equation-x-relations} and dividing by \(x_{12}x_{21}\), we get
		\begin{align*}
			\frac{x_{12}}{x_{21}} + \frac{x_{21}}{x_{12}} + 2 = x_{12}x_{21}
		\end{align*}
		But \(\frac{x_{12}}{x_{21}} > 0\) and \(x_{12}x_{21} = a_{12}a_{21}\), so \(a_{12}a_{21} > 2\). It is now easy to see that $A$ is not a finite type Cartan matrix, meaning \(\mathfrak{g}(\mathcal{A})\) is not of finite growth by Remark \ref{remark_dynkin_diagram}.	
	\end{proof}
	
	We immediately obtain the following corollary:
	
	\begin{corollary}
		If \(\mathfrak{g}(\mathcal{A})\) is an indecomposable QKM algebra of finite growth, then it is either completely coupled or completely uncoupled.
	\end{corollary}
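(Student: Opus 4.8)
The plan is to deduce the corollary from the two lemmas just proven by an elementary graph-theoretic argument on the Dynkin diagram $D$ of $\g(\AA)$. First I would dispose of the degenerate case $|\Pi|\le 1$, where there are no pairs of distinct simple roots and the statement holds vacuously. So assume $|\Pi|>1$; then indecomposability of $\AA$ says exactly that $D$ is connected as a graph, and in particular $D$ has at least one edge.

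Next I would view the coupled/uncoupled dichotomy as a $2$-coloring of the edge set of $D$: an edge $\{\alpha,\beta\}$ of $D$ (which by convention records that $\alpha$ and $\beta$ are connected) is colored according to whether $\alpha$ and $\beta$ are coupled, in the sense of Definition~\ref{definition_coupling}. The corollary is then precisely the assertion that this coloring is monochromatic. Suppose toward a contradiction that it is not. Since $D$ is connected and has an edge, its line graph is connected, so a non-monochromatic edge-coloring of $D$ must contain two edges of $D$ that share a vertex and carry different colors. After relabeling I may write these as $\{\alpha_1,\alpha_2\}$, which is coupled, and $\{\alpha_2,\alpha_3\}$, which is uncoupled, with $\alpha_2$ the common vertex; note that $\alpha_1$, $\alpha_2$, $\alpha_3$ are then pairwise distinct.

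Now I would split according to whether $\alpha_1$ and $\alpha_3$ are connected. If they are not, then the full subdiagram on $\{\alpha_1,\alpha_2,\alpha_3\}$ is a path with central vertex $\alpha_2$, and the first of the two preceding lemmas gives that, since $\alpha_1$ and $\alpha_2$ are coupled, $\alpha_2$ and $\alpha_3$ are coupled, contradicting that $\{\alpha_2,\alpha_3\}$ was uncoupled. If instead $\alpha_1$ and $\alpha_3$ are connected, the full subdiagram on $\{\alpha_1,\alpha_2,\alpha_3\}$ is a triangle, and here one invokes the finite-growth hypothesis: the second of the two preceding lemmas gives that, since $\alpha_1$ and $\alpha_2$ are coupled, $\alpha_3$ is coupled with both $\alpha_1$ and $\alpha_2$; in particular $\alpha_2$ and $\alpha_3$ are coupled, again a contradiction. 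Hence the coloring is monochromatic, meaning either every connected pair of simple roots is coupled, i.e.\ $\g(\AA)$ is completely coupled, or no connected pair is coupled, i.e.\ $\g(\AA)$ is completely uncoupled.

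I do not expect a genuine obstacle in this argument: the two substantive inputs, namely the path case and the triangle case (the latter the only place finite growth is needed), are precisely the preceding lemmas, and what remains is the bookkeeping of reducing an arbitrary mismatch of couplings to a pair of adjacent edges of $D$ and then invoking the appropriate lemma. The only point requiring a little care is that being coupled is a property of connected pairs, so the $2$-coloring lives on the edges of $D$ rather than on all pairs of simple roots; once that is arranged, connectedness of the line graph of $D$ does the rest.
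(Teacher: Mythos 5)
Your argument is correct and is exactly the deduction the paper intends when it states that the corollary follows "immediately" from the two preceding lemmas: a non-monochromatic edge $2$-coloring of a connected Dynkin diagram forces two adjacent edges of different colors, and the path lemma (resp.\ the triangle lemma, where finite growth enters) rules out the two possible configurations. The only elaboration you add is the line-graph connectivity bookkeeping, which the paper leaves implicit.
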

	
	\subsection{$Y$-couplings: realization via the Takiff construction}
	
	\begin{theorem}\label{theorem-Takiffs}
		Let \(\mathfrak{g}(\mathcal{A})\) be an indecomposable, completely $Y$-coupled QKM algebra (i.e.~$Y=0$).
		Let \(A=(a_{ij})=(\alpha_j(h_i))\) be the Cartan matrix of \(\mathfrak{g}(\mathcal{A})\), and set \(\s=\mathfrak{s}(A)\) to be the corresponding contragredient, Kac--Moody Lie superalgebra, with root lattice $Q_{\s}$.  Finally, let  \(T\mathfrak{s}\) be as in Example \ref{example-takiff}. Set
		\[
		J_{\AA}:=\bigcap\limits_{i=1}^{n}\operatorname{Ann}_{\h}\g_{\alpha_i},  \ \ \ \ J_{\s}:=\bigcap\limits_{\alpha\in Q_{\s}}\operatorname{Ann}_{T\s}(\s_{\alpha}\otimes\C[\xi]).
		\] 
		Then:
		\begin{enumerate}
			\item Up to rescaling of generators (see Remark \ref{remark_rescaling}), we have an equality of matrices $X=A$;
			\item we have an embedding
			\[
			\mathfrak{g}(\mathcal{A})/J_\AA\hookrightarrow T\mathfrak{s}(A)/J_{\mathfrak{s}},
			\]
			which is an isomorphism on all root spaces.
		\end{enumerate}
	\end{theorem}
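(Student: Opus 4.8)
The plan is to first squeeze out of the hypothesis enough structural data to identify the matrix $X$ with $A$, and then to exhibit the embedding of (2) explicitly on Chevalley‑type generators, checking the defining relations against the explicit bracket of $T\s$ in Example \ref{example-takiff}.

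\textbf{Step 1 (the central element $c$, and $\partial$).} First I would show that in the indecomposable, completely $Y$‑coupled case all the $c_i$ are scalar multiples of one element $c$, which is central in $\g(\AA)$, and that $\alpha_j(c_i)=0$ for all $i,j$. Indeed, if $\alpha_i,\alpha_j$ are connected then Lemma \ref{lemma_qKM_formulas}(1) with $Y=0$ gives $x_{ij}c_j=x_{ji}c_i$ with $x_{ij}x_{ji}\neq 0$ (Lemma \ref{lemma_coupled_defn_justif}), so $c_i\propto c_j$; by indecomposability the Dynkin diagram is connected, so all $c_i\propto c$. Since $\alpha_i(c_i)=0$ for every $i$ (table in Section \ref{section-notation-qKM}), this forces $\alpha_j(c_i)=0$ for all $i,j$, hence $\beta(c)=0$ for all $\beta\in\Delta$, and as $\h$ is quasi‑toral the even element $c$ is central. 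In particular $c_i\in J_{\AA}$, and each $D_i:=\operatorname{ad}(H_i)$ descends to an odd square‑zero derivation of $\g(\AA)/J_{\AA}$ with $[D_i,D_j]=\operatorname{ad}(x_{ij}c_j)=0$. Moreover, as $\rk\alpha_i=2$ the form $B_{\alpha_i}$ is nonzero, so a generic $\partial\in\h_{\ol 1}$ satisfies $\alpha_i(\partial^2)\neq0$ for all $i$ simultaneously, i.e.\ $\partial$ acts invertibly on every $\g_{\alpha_i}$; after rescaling the $E_i$ (Remark \ref{remark_rescaling}) we may take the $e_i$‑component of $[\partial,E_i]$ to be $e_i$. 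This $\partial$ is the intrinsic analogue of $\partial_\xi$.

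\textbf{Step 2 (proof of (1): $X=A$).} Fix $i\neq j$. Expanding $[\partial,H_i]=[\partial,[e_i,F_i]]$ by the Jacobi identity and using $[\partial,e_i]\in\C\langle e_i,E_i\rangle$ and $\alpha_k(c_i)=0$, one finds $[\partial,H_i]\in\mathfrak t$ with $\alpha_k([\partial,H_i])=\varepsilon_i\alpha_k(h_i)$ for all $k$, for some $\varepsilon_i\in\C^\times$ depending only on normalisations. Applying $\operatorname{ad}(\partial)$ to $[H_i,e_j]=x_{ij}E_j$,
\begin{align*}
x_{ij}[\partial,E_j]=[\partial,[H_i,e_j]]=[[\partial,H_i],e_j]+[H_i,[\partial,e_j]],
\end{align*}
and $[H_i,[\partial,e_j]]$ has no $e_j$‑component because $[H_i,e_j]\in\C\langle E_j\rangle$ and $[H_i,E_j]=y_{ij}e_j=0$ — here $Y=0$ is essential. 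Comparing $e_j$‑components gives $x_{ij}=\varepsilon_i a_{ij}$; since $x_{ii}=a_{ii}$ already (Lemma \ref{lemma_qKM_formulas}(4)) and both sides vanish when $\alpha_i,\alpha_j$ are disconnected (Lemma \ref{lemma-qKM-notation}), a final rescaling of the $E_i$ absorbs $\varepsilon_i$ and yields $X=A$. (For $Tak(\o\s\p(1|2))$ and $Tak(\s\l(1|1))$ vertices the same computation applies with the obvious parity signs.)

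\textbf{Step 3 (proof of (2): the embedding).} Identify $\mathfrak t^*$ with the weight lattice of $\s=\s(A)$ so that $\alpha_1,\dots,\alpha_n$ become its simple roots (consistent, as $\alpha_j(h_i)=a_{ij}$). I would define $\phi\colon\tilde\g(\AA)\to T\s/J_{\s}$ on the generating space $\mathcal C$ of \eqref{generating-set} by $e_i\mapsto e_i\otimes 1$, $f_i\mapsto f_i\otimes 1$, $E_i\mapsto e_i\otimes\xi$, $F_i\mapsto f_i\otimes\xi$, and on $\h$ by sending each $h$ to the unique Cartan element of $T\s/J_{\s}$ acting on every $\s_{\pm\alpha_i}\otimes\C[\xi]$ as $h$ acts on $\g_{\pm\alpha_i}$ — concretely $h_i\mapsto h_i^\s\otimes 1$, $H_i\mapsto h_i^\s\otimes\xi$, $\partial\mapsto\partial_\xi$, $\c\mapsto 0$. (That such a Cartan element exists uses Step 2 together with a short computation with $\partial^2$ showing the $\h$‑action on $\bigoplus_i(\g_{\alpha_i}\oplus\g_{-\alpha_i})$ factors through the Cartan of $T\s/J_{\s}$; uniqueness is faithfulness of the latter on root spaces.) A direct check against the bracket of Example \ref{example-takiff} shows $\phi$ respects the three families of relations defining $\tilde\g(\AA)$, the only non‑formal inputs being $X=A$ (for $[H_i,e_j]=x_{ij}E_j$), $Y=0$ (for $[H_i,E_j]=0$), $\alpha_j(c_i)=0$ (for $[H_i,H_j]$), and $c\in J_{\s}$ (matching $c_i\in J_{\AA}$). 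Since $\r\subseteq\tilde\n^-\oplus\tilde\n^+$ and $\phi$ is surjective onto each root space $\s_\beta\otimes\C[\xi]$ (these are generated by the images of the $e_i\otimes 1$, $e_i\otimes\xi$ and brackets), $\phi(\r)$ is an ideal of $\phi(\tilde\g(\AA))$ lying in $\bigoplus_{\beta\neq0}\s_\beta\otimes\C[\xi]$; a minimal‑height element of it would be killed by all $\operatorname{ad}(f_i\otimes1)$, hence zero by the analogue of Corollary \ref{corollary-maximal-ideal} for the Kac‑Moody superalgebra $\s$. Thus $\r\subseteq\ker\phi$, so $\phi$ factors through $\g(\AA)$ and then — as $J_{\AA}=\c\subseteq\h$ maps to $0$ — through $\bar\phi\colon\g(\AA)/J_{\AA}\to T\s/J_{\s}$. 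By construction $\bar\phi$ is injective on $\h/J_{\AA}$ (kernel there $=J_{\AA}$, using Corollary \ref{corollary-center}), so $\ker\bar\phi$ is an ideal meeting the Cartan trivially, hence $0$ by Theorem \ref{theorem-half-baked-structure} together with Corollary \ref{corollary-maximal-ideal}. Finally $\bar\phi$ is onto each root space (generation) and injective there (it is visibly injective on $\g_{\alpha_i}$, and surjectivity plus injectivity on lower heights via the triangular decomposition force $\dim\g_\beta=\dim(\s_\beta\otimes\C[\xi])$ for all $\beta$), so $\bar\phi$ is an isomorphism on all root spaces.

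\textbf{Where the difficulty lies.} The crux is Step 2: producing $\partial$ with the correct normalisation on all $\g_{\alpha_i}$ at once and pinning the scalars $\varepsilon_i$ in $[\partial,H_i]\equiv\varepsilon_i h_i\pmod{\c}$, so that after the permitted rescalings $X$ is exactly $A$ rather than merely diagonally conjugate to it; the interplay of the three vertex types and the parity signs requires care, as does the verification in Step 3 that $\h$ acts through the Cartan of $T\s/J_\s$. Once these are in place, the remaining work is the now‑familiar mixture of explicit Takiff bracket computations and the maximal‑ideal machinery of Section \ref{section_cartan_datum}.
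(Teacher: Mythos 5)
Your proposal is correct and follows essentially the same route as the paper's proof: part (1) is obtained by bracketing $[H_i,e_j]=x_{ij}E_j$ against an odd Cartan element that swaps the $e$'s and $E$'s (the paper uses one element $I$ per simple root plus a connectivity propagation, you use a single generic $\partial$ normalized by rescaling the $E_i$ — an immaterial difference), and part (2) is the same explicit assignment $e_i\mapsto \tilde e_i$, $E_i\mapsto \tilde e_i\otimes\xi$, $H_i\mapsto h_i^{\s}\otimes\xi$, $X\mapsto \langle X,E_1\rangle\partial_\xi+\sum_i\langle X,e_i\rangle t_i\otimes\xi$, followed by the same maximal-ideal argument to factor through $\r$ and $J_{\AA}$ and to get bijectivity on root spaces. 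The only point to make fully explicit is the one you defer parenthetically in Step 3 — that $\langle X,E_i\rangle$ is independent of $i$ (which follows from $X=A$ and connectedness of the diagram, exactly as in your Step 2) — and the aside $J_{\AA}=\c$ is unnecessary and not quite what is used; what matters, and what your formulas do give, is $\ker\phi|_{\h}=J_{\AA}$.
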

	
	Thus Theorem \ref{theorem-Takiffs} says if $Y=0$, $\g(\AA)$ is, up to extensions and derivations in $\h$, a Takiff superalgebra.
	
	\begin{proof}[Proof of Theorem \ref{theorem-Takiffs}]	
	
		Fix \(\alpha_i\) a simple root of \(\mathfrak{g}(\mathcal{A})\).  We first show that after rescaling $E_1,\dots,E_n$, we obtain that $X=A$.
		
		The irreducibility of \(\mathfrak{g}_{\alpha_i}\) as an \(\mathfrak{h}\)-module implies the existence of \(I \in \mathfrak{h}_{\ol{1}}\) such that \([I,E_i] = e_i\). For each \(j\), let \(y_j \in \mathbb{C}\) be the scalar satisfying \([I,E_j] = y_je_j\). We compute:
		\begin{align*}
			x_{ij}y_j = \alpha_j([I,H_i]) = \alpha_j(h_i) = a_{ij}
		\end{align*}
		Thus it suffices to show that we can rescale $E_1,\dots,E_n$ so that $y_j=1$ for all $j$; or equivalently that $y_j\neq0$ for all $j$.  However the above formula clearly shows that if \(\alpha_j\) is connected to \(\alpha_i\), then $y_i\neq0\Rightarrow y_j\neq0$.  Since the Dynkin diagram is connected, we are done, and we obtain, after rescaling, $x_{ij}=a_{ij}$ for all $i,j$.
		
		For any \(H \in \mathfrak{h}_{\ol{1}}\) define the complex numbers \(\langle H , e_i \rangle \) and \(\langle H , E_i \rangle\) by the formula:
		\[
		[H,e_i+E_i] = \langle H , e_i \rangle E_i + \langle H , E_i \rangle e_i.
		\]
		Repeating the argument given for  $I$ and the fact that $X=A$, we have \(\langle H , E_i \rangle = \langle H , E_j \rangle\) for any \(i,j\). 
		
		Write $\mathfrak{t}_{\s}$ for the Cartan subalgebra of $\s$, and let $t_1,\dots,t_n\in\mathfrak{t}_{\s}$ be such that $\alpha_i(t_j)=\delta_{ij}$.  Define a map \(\tilde{\phi} : \tilde{\mathfrak{g}}(\mathcal{A})\rightarrow T\mathfrak{s}(A)/J_\mathfrak{s}\) as follows:
		\begin{align*}
			e_i & \mapsto \tilde{e}_i \\
			f_i & \mapsto \tilde{f}_i \\
			E_i & \mapsto \tilde{e}_i\otimes \xi\\
			F_i & \mapsto \tilde{f}_i\otimes \xi \\
			\mathfrak{t}\ni h& \mapsto \sum\limits_{i}\alpha_i(h)t_i \\
			\h_{\ol{1}}\ni H & \mapsto \langle H,E_1 \rangle\partial_\xi + \sum_{i=1}^n \langle H,e_i \rangle t_i\otimes\xi
		\end{align*}
		A straightforward verification shows that the above assignment determines a well defined homomorphism of Lie superalgebras. We further notice that \((\tilde{\phi}(\tilde{\mathfrak{g}}(\mathcal{A})))_{\tilde{\alpha}} = (T\mathfrak{s}(A)/Z_\mathfrak{s})_{\tilde{\alpha}}\) for any nonzero  \(\tilde{\alpha} \in \mathbb{Z}\langle \tilde{\alpha}_1,...,\tilde{\alpha}_n \rangle\). As \(T\mathfrak{s}(A)/J_\mathfrak{s}\) does not admit an ideal intersecting its Cartan subalgebra trivially, we see that the map \(\tilde{\phi}\) factors through a map \(\phi : \mathfrak{g}(\mathcal{A}) \rightarrow T\mathfrak{s}(A)/J_\mathfrak{s}\), which is an isomorphism on all root spaces. \par
		Finally, it is easy to check that \(\phi(J_\mathcal{A}) = 0\), so that \(\phi\) factors through a map \newline \(\mathfrak{g}(\mathcal{A})/J_{\mathcal{A}}\rightarrow T\mathfrak{s}(A)/J_\mathfrak{s}\), which is easily checked to be injective.
	\end{proof}

	\section{Completely uncoupled QKM algebras}
	
	In this section, we consider the structure of completely uncoupled QKM algebras \(\mathfrak{g}(\mathcal{A})\), by which we mean that \(\alpha\) and \(\beta\) are uncoupled for any distinct \(\alpha,\beta\in \Pi\).   In particular, we will be interested in the possible Dynkin diagrams, and a classification of those with finite growth.  By Theorem \ref{theorem_coupling}, we may assume all simple roots are of $Tak(\s\l(2))$-type. 	
	
	One should view this as the more nontrivial setting, that is far away from Takiff superalgebras. We will see in fact that being completely uncoupled makes a QKM algebra very rigid.
	
	We begin with a lemma stating some of the properties we will use of completely uncoupled algebras.
	\begin{lemma}\label{lemma_uncoupled_facts}
		Let $\AA$ be a completely uncoupled QKM Cartan datum.  Then:
		\begin{enumerate}
			\item any full Cartan subdatum (see Section \ref{section_subdatum}) $\AA'$ of a completely uncoupled QKM Cartan datum $\AA$ is also completely uncoupled;
			\item two distinct simple roots $\alpha_i$ and $\alpha_j$  are connected if and only if $[H_i,H_j]\neq0$;
			\item we have, for all $i,j$, $y_{ij}y_{ji}\neq0$, and:
			\[
			y_{ij}^2\alpha_i(h_j)=y_{ji}^2\alpha_j(h_i).
			\]
		\end{enumerate} 
	\end{lemma}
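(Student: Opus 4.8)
The plan is to derive all three parts from the pairwise classification Theorem~\ref{theorem_coupling}, the coroot identities of Lemma~\ref{lemma_qKM_formulas}, and Lemma~\ref{lemma_nontrivial_relation_on_ys}; no genuinely new computation is needed, only careful tracking of which case of Theorem~\ref{theorem_coupling} survives under the completely uncoupled hypothesis.

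For \textbf{(1)}, I would recall that a full Cartan subdatum \(\AA'\) corresponds to a subset \(\Pi'\subseteq\Pi\) and keeps \(\h\) and the root spaces \(\g_{\pm\alpha}\) (\(\alpha\in\Pi'\)) unchanged; hence the coroot spaces \(\h_{\alpha_i}\), the distinguished generators \(e_i,E_i,f_i,F_i\), and all the scalars \(x_{ij},y_{ij},\alpha_j(h_i)\) for \(i,j\in\Pi'\) agree with those of \(\AA\). By Remark~\ref{remark-almostKM-subdatum}, \(\AA'\) is Clifford Kac--Moody, and the remaining conditions \ref{qKM-regular}, \ref{qKM-rank2}, \ref{qKM-irrtwist}, \ref{qKM-linear-independence} are visibly inherited by passing to a subset of \(\Pi\) (for \ref{qKM-linear-independence}, one uses \(\sum_{\beta\in\Pi'\setminus\{\alpha_i\}}\h_\beta\subseteq\sum_{\beta\in\Pi\setminus\{\alpha_i\}}\h_\beta\)), so \(\AA'\) is again qKM. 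Since ``completely uncoupled'' is a condition only on the triples \((x_{ij},y_{ij},\alpha_j(h_i))\) for distinct \(i,j\in\Pi'\), which are unchanged, it is inherited.

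For \textbf{(2)}, I would use that, by Theorem~\ref{theorem_coupling}, a connected pair of simple roots in a completely uncoupled qKM datum falls into case~(4): both \(\alpha_i,\alpha_j\) are of type \(Tak(\s\l(2))\) and \(y_{ij}y_{ji}\neq0\). Then \(\alpha_i(h_i)=2\) by the table of Section~\ref{section-notation-qKM}, and formula~(5) of Lemma~\ref{lemma_qKM_formulas} gives \(\alpha_i([H_i,H_j])=y_{ij}\alpha_i(h_i)=2y_{ij}\neq0\), whence \([H_i,H_j]\neq0\). Conversely, if \(\alpha_i\) and \(\alpha_j\) are not connected, then \(x_{ij}=y_{ij}=0\) by Lemma~\ref{lemma-qKM-notation}, and formula~(1) of Lemma~\ref{lemma_qKM_formulas} gives \([H_i,H_j]=x_{ij}c_j+y_{ij}h_j=0\).

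\textbf{(3)} is then immediate. The non-vanishing \(y_{ij}y_{ji}\neq0\) for distinct connected \(i,j\) is precisely the surviving case~(4) of Theorem~\ref{theorem_coupling} invoked above. For the identity \(y_{ij}^2\alpha_i(h_j)=y_{ji}^2\alpha_j(h_i)\): when \(\alpha_i,\alpha_j\) are connected both are of type \(Tak(\s\l(2))\), so \(\alpha_i(h_i)=\alpha_j(h_j)=2\) and the ``in particular'' clause of Lemma~\ref{lemma_nontrivial_relation_on_ys} applies; when they are not connected (or \(i=j\)) both sides vanish since \(\alpha_i(h_j)=\alpha_j(h_i)=0\) (resp.\ \(y_{ii}=0\)). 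The one point that wants care — and the closest thing to an obstacle — is verifying that under the completely uncoupled hypothesis cases (1)--(3) of Theorem~\ref{theorem_coupling} are all excluded for a \emph{connected} pair, so that case~(4) is forced; but this is just reading off that theorem's case list.
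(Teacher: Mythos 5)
Your proposal is correct and follows essentially the same route as the paper: part (1) by inspection of the subdatum, part (3) from Lemma~\ref{lemma_nontrivial_relation_on_ys} together with the fact that uncoupledness forces $y_{ij}y_{ji}\neq0$ for connected pairs (the paper cites Corollary~\ref{corollary_ys}, you cite case (4) of Theorem~\ref{theorem_coupling} directly — the same content), and part (2) from formula (1) of Lemma~\ref{lemma_qKM_formulas}. The only cosmetic difference is in the forward direction of (2), where you evaluate $\alpha_i$ on $[H_i,H_j]$ via formula (5), while the paper notes that $y_{ij}\neq 0$ and $h_j$ is not proportional to $c_j$; both are valid.
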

	\begin{proof}
		Part (1) is clear, and (3) is an immediate application of Lemma \ref{lemma_nontrivial_relation_on_ys} and Corollary \ref{corollary_ys}.
		
		For part (2), the backward direction is obvious.  For the forward direction, (1) of Lemma \ref{lemma_QKM_formulas} tells us that 
		\[
		[H_i,H_j]=x_{ij}c_j+y_{ij}h_j.
		\]
		However we must have $y_{ij}\neq0$, and since $h_j$ cannot be a multiple of $c_j$, we must have $[H_i,H_j]\neq0$.
	\end{proof}

	\begin{theorem}\label{theorem-root-system}
		Let \(\mathfrak{g}(\mathcal{A})\) be a completely uncoupled QKM algebra and let \(A\) be its Cartan matrix. Let \(\mathfrak{s} = \mathfrak{s}(A)\) be the (central quotient of the) Kac--Moody Lie algebra corresponding to \(A\), with Cartan subalgebra \(\mathfrak{t}\) \footnote{this is a Kac--Moody Lie algebra in the sense of \cite{GHS}, with the condition on the dimension of the Cartan subalgebra \(\mathfrak{t}\) more relaxed.}, set of simple roots \(\Pi\) and simple coroots \(h_1,...,h_n\). Then the root system \(\Delta\) of \(\mathfrak{g}(\mathcal{A})\) and the root system \(\Delta_{\mathfrak{s}}\) of \(\mathfrak{s}\) coincide (as subsets of \(\mathfrak{t}^*\)). Moreover, \(\mathfrak{g}(\mathcal{A})\) is finite-dimensional if and only if \(\Delta\) is finite.
	\end{theorem}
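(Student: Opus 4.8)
The plan is to prove the two inclusions $\Delta_{\mathfrak{s}}\subseteq\Delta$ and $\Delta\subseteq\Delta_{\mathfrak{s}}$ and then deduce the finite-dimensionality criterion. By Theorem \ref{theorem_coupling}, every simple root of a completely uncoupled qKM algebra is of type $Tak(\mathfrak{sl}(2))$, so each $\alpha_i$ carries an honest $\mathfrak{sl}(2)$-triple $(e_i,h_i,f_i)$ with $\alpha_i(h_i)=2$. For $i\neq j$ the element $e_j$ is a lowest-weight vector for $\mathfrak{sl}(2)_i:=\langle e_i,h_i,f_i\rangle$, since $[f_i,e_j]\in\mathfrak{g}_{\alpha_j-\alpha_i}=0$ (as $\alpha_j-\alpha_i\notin Q_+\cup(-Q_+)$), and $h_i$ acts on it by $a_{ij}=\alpha_j(h_i)$; as integrability makes $\mathfrak{g}(\mathcal{A})$ an integrable $\mathfrak{sl}(2)_i$-module, this forces $a_{ij}\in\mathbb{Z}_{\leq0}$, and $a_{ij}=0\iff a_{ji}=0$ by Lemma \ref{lemma-qKM-notation}. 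Hence $A=(\alpha_j(h_i))$ is a generalized Cartan matrix and $\mathfrak{s}=\mathfrak{s}(A)$ is a genuine Kac-Moody algebra (in the relaxed sense of \cite{GHS}), with Weyl group $W$. Since integrability forces $\operatorname{ad}e_i$ and $\operatorname{ad}f_i$ to be locally nilpotent on $\mathfrak{g}(\mathcal{A})$, the reflections $r_i=(\exp\operatorname{ad}e_i)(\exp\operatorname{ad}(-f_i))(\exp\operatorname{ad}e_i)$ are well-defined automorphisms of $\mathfrak{g}(\mathcal{A})$ satisfying $r_i(\mathfrak{g}_\beta)=\mathfrak{g}_{\beta-\beta(h_i)\alpha_i}$; so $W$ acts on $\mathfrak{g}(\mathcal{A})$ permuting root spaces, and $\Delta$ is $W$-stable. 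Also recall $\Delta=-\Delta$ via the Chevalley automorphism of Remark \ref{remark-Chevalley}, and that every root of $\mathfrak{g}(\mathcal{A})$ lies in $Q_+\cup(-Q_+)$.

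For $\Delta_{\mathfrak{s}}\subseteq\Delta$, I would produce an embedding $\psi:\mathfrak{s}\hookrightarrow\mathfrak{g}(\mathcal{A})$. The Chevalley relations of $\mathfrak{s}$ hold among $\mathfrak{t}$ and the $e_i,f_i$ inside $\mathfrak{g}(\mathcal{A})$ (notably $[e_i,f_j]=0$ for $i\neq j$ is a defining relation of $\mathfrak{g}(\mathcal{A})$, and $[e_i,f_i]=h_i$), while the Serre relations $(\operatorname{ad}e_i)^{1-a_{ij}}(e_j)=0=(\operatorname{ad}f_i)^{1-a_{ij}}(f_j)$ follow from finite-dimensional $\mathfrak{sl}(2)_i$-representation theory, $e_j$ (resp.\ $f_j$) being an extremal weight vector of $h_i$-weight $a_{ij}$ in an integrable $\mathfrak{sl}(2)_i$-module. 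Thus the Chevalley--Serre presentation maps to $\mathfrak{g}(\mathcal{A})$ by the identity on $\mathfrak{t}$ and $e_i\mapsto e_i$, $f_i\mapsto f_i$; being injective on $\mathfrak{t}$ this map factors through $\mathfrak{s}=\mathfrak{s}(A)$, and the induced $\psi$ is injective because $\mathfrak{s}$ has no nonzero ideal meeting $\mathfrak{t}$ trivially. Then $0\neq\mathfrak{s}_\beta\hookrightarrow\mathfrak{g}_\beta$ for every $\beta\in\Delta_{\mathfrak{s}}$.

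For $\Delta\subseteq\Delta_{\mathfrak{s}}$, I would induct on height to show $\mathfrak{g}_\beta\neq0\Rightarrow\beta\in\Delta_{\mathfrak{s}}$ for $\beta\in Q_+\setminus\{0\}$; the case $\operatorname{ht}\beta=1$ is clear. Suppose $\operatorname{ht}\beta\geq2$. If $\operatorname{supp}\beta$ is disconnected in the Dynkin diagram, then splitting $\operatorname{supp}\beta$ into a connected component and its complement within $\operatorname{supp}\beta$ and applying Proposition \ref{proposition-roots-connectivity} (its hypothesis holds by regularity and Lemma \ref{lemma-qKM-notation}) would give $\mathfrak{g}_\beta=0$, a contradiction; so $\operatorname{supp}\beta$ is connected. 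If $\beta(h_i)\leq0$ for all $i$, then $\beta$ is a fundamental imaginary root of $\mathfrak{s}$, hence $\beta\in\Delta^{\mathrm{im}}_{\mathfrak{s},+}\subseteq\Delta_{\mathfrak{s}}$ by the classification of imaginary roots (see \cite{K}, and \cite{GHS} for the present generality). Otherwise $\beta(h_i)>0$ for some $i$; writing $\beta=\sum_jk_j\alpha_j$ one has $0<\beta(h_i)\leq2k_i\leq2\operatorname{ht}\beta$, with equality in the last inequality only if $\beta=k_i\alpha_i$ — impossible here, since $\operatorname{ht}\beta\geq2$ and $\mathfrak{g}\langle\alpha_i\rangle\cong(\mathfrak{p})\mathfrak{sq}(2)$ has root system $\{\pm\alpha_i\}$, forcing $\mathfrak{g}_\beta=0$. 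Hence $r_i\beta\neq\beta$ is a root of $\mathfrak{g}(\mathcal{A})$ (as $\mathfrak{g}_{r_i\beta}=r_i(\mathfrak{g}_\beta)\neq0$) with $|\operatorname{ht}(r_i\beta)|<\operatorname{ht}\beta$; the unique $\gamma\in Q_+\setminus\{0\}$ with $\gamma=\pm r_i\beta$ is then a root of strictly smaller height, so $\gamma\in\Delta_{\mathfrak{s}}$ by induction, and $W$-stability of $\Delta_{\mathfrak{s}}$ together with $\Delta_{\mathfrak{s}}=-\Delta_{\mathfrak{s}}$ gives $\beta\in\Delta_{\mathfrak{s}}$. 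This proves $\Delta=\Delta_{\mathfrak{s}}$. For the last claim, $\mathfrak{n}^+$ is a quotient of the free Lie superalgebra on the finite-dimensional space $\bigoplus_i\mathfrak{g}_{\alpha_i}$, so each $\mathfrak{g}_\beta$ is finite-dimensional; combined with $\dim\mathfrak{h}<\infty$, the decomposition $\mathfrak{g}(\mathcal{A})=\mathfrak{h}\oplus\bigoplus_{\beta\in\Delta}\mathfrak{g}_\beta$ shows $\mathfrak{g}(\mathcal{A})$ is finite-dimensional if and only if $\Delta$ is finite.

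The main obstacle is the inclusion $\Delta\subseteq\Delta_{\mathfrak{s}}$. Carrying it out cleanly requires two standard but nontrivial inputs in the relaxed Kac-Moody setting: that integrability yields Weyl-group automorphisms of $\mathfrak{g}(\mathcal{A})$ acting on root spaces by $\beta\mapsto\beta-\beta(h_i)\alpha_i$ (needing care because $\mathfrak{g}(\mathcal{A})$ is a Lie superalgebra and may be infinite-dimensional), and the classification of imaginary roots of $\mathfrak{s}(A)$ — that every nonzero $\beta\in Q_+$ with connected support on which all simple coroots act nonpositively is a root. The height induction also rests on the point that $\mathfrak{g}_{m\alpha_i}=0$ for $m\geq2$, which is one of the key places where the restriction to $Tak(\mathfrak{sl}(2))$-type simple roots afforded by Theorem \ref{theorem_coupling} is used.
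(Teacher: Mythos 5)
Your proof is correct and follows the same overall strategy as the paper (both inclusions, with the hard one $\Delta\subseteq\Delta_{\mathfrak{s}}$ done by descending in height via simple reflections), but it is substantially more complete at the key step. The paper's proof takes a minimal-height $\beta\in\Delta^+\setminus\Delta_{\mathfrak{s}}$, picks $\alpha\in\Pi$ with $\beta-\alpha\in\Delta^+$, and asserts that $r_\alpha(\beta)$ is a positive root of smaller height; this tacitly assumes $\beta(h_\alpha)>0$, which can fail (e.g.\ for $\beta$ in the cone where all $\beta(h_i)\leq0$, as happens for imaginary roots in the affine cases that actually occur here). Your case split --- connected support via Proposition \ref{proposition-roots-connectivity}, the fundamental-imaginary-root chamber handled by Kac's classification $K\subseteq\Delta^{\mathrm{im}}_{\mathfrak{s},+}$, and otherwise a reflection strictly decreasing $|\operatorname{ht}|$ using $\beta(h_i)<2\operatorname{ht}\beta$ and $\mathfrak{g}_{m\alpha_i}=0$ for $m\geq2$ --- closes exactly this gap, so your argument is the more rigorous one. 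Two small remarks. First, for $\Delta_{\mathfrak{s}}\subseteq\Delta$ your phrase ``being injective on $\mathfrak{t}$ this map factors through $\mathfrak{s}(A)$'' has the ideal containments backwards: injectivity on $\mathfrak{t}$ gives $\ker\phi\subseteq\mathfrak{r}_A$, not $\supseteq$, so a priori you only get a surjection $\operatorname{Im}\phi\twoheadrightarrow\mathfrak{s}(A)$ rather than an embedding $\mathfrak{s}(A)\hookrightarrow\mathfrak{g}(\mathcal{A})$; but that surjection already yields $\dim\mathfrak{g}(\mathcal{A})_\beta\geq\dim\mathfrak{s}_\beta$, which is all you need (and is the paper's ``immediate'' argument via the classical Cartan subdatum of Section \ref{section_subdatum} --- the detour through Serre relations is unnecessary). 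Second, you should say a word on why all simple roots are of type $Tak(\mathfrak{sl}(2))$: Theorem \ref{theorem_coupling} gives this for \emph{connected} uncoupled pairs, and one either adds indecomposability or treats isolated $Tak(\mathfrak{osp}(1|2))$ roots separately (the paper imposes this as a standing assumption at the start of the section), since an $\mathfrak{osp}(1|2)$-type root would contribute $2\alpha\in\Delta\setminus\Delta_{\mathfrak{s}}$ and falsify the statement. Your finite-dimensionality argument matches the paper's.
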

	\begin{proof}
		It is immediate that \(\Delta_\mathfrak{s} \subseteq \Delta\). Suppose \(\Delta_\mathfrak{s} \neq \Delta\); then the Chevalley automorphism on \(\mathfrak{g}\) (see Remark \ref{remark-Chevalley}) implies that \(\Delta^+ \setminus \Delta_{\mathfrak{s}} \neq \emptyset\). Let \(\beta \in \Delta^+ \setminus \Delta_{\mathfrak{s}}\) be of minimal height. Then \(\beta\) is not proportional to any simple root. Since \(\beta\) is a root of \(\mathfrak{g}(\mathcal{A})\), there exists \(\alpha \in \Pi\) such that \(\beta - \alpha \in \Delta^+\), so the minimality of the height of \(\beta\) implies \(\beta - \alpha \in \Delta_\mathfrak{s}^+\). Let \(\mathfrak{s_{\alpha}} \subseteq \mathfrak{s}\) be the \(\s\l(2)\) subalgebra corresponding to \(\alpha\). From the integrability of \(\mathfrak{g}(\mathcal{A})\) we obtain that \(\bigoplus_{k\in\mathbb{Z}} \mathfrak{g}_{\beta + k \alpha}\) is a finite-dimensional \(\mathfrak{s}_{\alpha}\)-module. But then \(r_{\alpha}(\beta)\) is a positive root of \(\mathfrak{g(\mathcal{A})}\), of smaller height than \(\beta\), so \(r_{\alpha}(\beta) \in \Delta_{\mathfrak{s}}^+\). This implies that \(\beta \in \Delta_{s}\), a contradiction. Hence \(\Delta = \Delta_{\mathfrak{s}}\). \par

		If \(\Delta\) is finite then because \(\dim \mathfrak{g}_{\beta}<\infty\) for all \(\beta \in \Delta_\mathfrak{g}\), we have \(\mathfrak{g}\) is finite-dimensional.
	\end{proof}

	\begin{lemma}\label{lemma-Dynkin-D}
		There does not exist a completely uncoupled QKM algebra \(\mathfrak{g}(\mathcal{A})\) with the following Dynkin diagram:
		\begin{center}
			\begin{tikzpicture}
				\node[diamond,aspect=1,scale=0.6,draw] (A) at (0,0) {};
				\node[diamond,aspect=1,scale=0.6,draw] (B) at (180:1cm) {};
				\node[diamond,aspect=1,scale=0.6,draw] (C) at (30:1cm) {};
				\node[diamond,aspect=1,scale=0.6,draw] (D) at (-30:1cm) {};
				\draw (A) -- (B);
				\draw (A) -- (C);
				\draw (A) -- (D);
			\end{tikzpicture}.
		\end{center}
	\end{lemma}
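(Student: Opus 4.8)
The plan is to argue by contradiction, exploiting the two coroot identities in Lemma~\ref{lemma_qKM_formulas}: the decomposition $[H_i,H_j]=x_{ij}c_j+y_{ij}h_j=x_{ji}c_i+y_{ji}h_i$ and the pairing formula $\alpha_k([H_i,H_j])=x_{ik}y_{jk}+x_{jk}y_{ik}$. Since the displayed diagram is connected and all its pairs of distinct simple roots are assumed uncoupled, Theorem~\ref{theorem_coupling} forces every simple root to be of $Tak(\s\l(2))$-type; write $\alpha_0$ for the central node and $\alpha_1,\alpha_2,\alpha_3$ for the three leaves. For distinct $i,j\in\{1,2,3\}$ the roots $\alpha_i,\alpha_j$ are not connected, so $x_{ij}=y_{ij}=0$ by Lemma~\ref{lemma-qKM-notation} and $[H_i,H_j]=0$ by Lemma~\ref{lemma_uncoupled_facts}; meanwhile $\alpha_0$ is connected to each $\alpha_i$, so $a_{0i}a_{i0}\neq 0$ and $y_{i0}y_{0i}\neq 0$ (again Lemma~\ref{lemma_uncoupled_facts}).

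The first step is to evaluate $\alpha_0$ on each of the three relations $[H_i,H_j]=0$. By part (2) of Lemma~\ref{lemma_qKM_formulas} this reads $x_{i0}y_{j0}+x_{j0}y_{i0}=0$ for all distinct $i,j\in\{1,2,3\}$. Dividing by the nonzero scalar $y_{i0}y_{j0}$ and setting $t_i:=x_{i0}/y_{i0}$, these three equations become the linear system $t_1+t_2=t_1+t_3=t_2+t_3=0$, whose only solution is $t_1=t_2=t_3=0$. Hence $x_{i0}=0$ for $i=1,2,3$.

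The second step feeds this back into part (1) of Lemma~\ref{lemma_qKM_formulas}: since $x_{i0}=0$ we get $[H_0,H_i]=x_{i0}c_0+y_{i0}h_0=y_{i0}h_0$. Now pair this with $\alpha_j$ for $j\in\{1,2,3\}$, $j\neq i$. The left-hand side gives $y_{i0}\,\alpha_j(h_0)=y_{i0}\,a_{0j}$, while part (2) of Lemma~\ref{lemma_qKM_formulas} gives $\alpha_j([H_0,H_i])=x_{0j}y_{ij}+x_{ij}y_{0j}=0$ because $x_{ij}=y_{ij}=0$. Since $y_{i0}\neq 0$, this forces $a_{0j}=0$, contradicting the connectedness of $\alpha_0$ and $\alpha_j$. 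This completes the argument.

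I do not expect a genuine obstacle here: the whole proof is a short manipulation of identities already available for qKM algebras, and uses no growth hypothesis. The only points needing care are bookkeeping the index conventions in Lemma~\ref{lemma_qKM_formulas}(2) (which slot is the evaluated root $\alpha_k$), and checking that each use of ``not connected $\Rightarrow x_{ij}=y_{ij}=0$ and $[H_i,H_j]=0$'' and ``connected $\Rightarrow a_{0i}\neq 0,\ y_{i0}\neq 0$'' is legitimate in the completely uncoupled setting, which it is by Lemmas~\ref{lemma-qKM-notation} and \ref{lemma_uncoupled_facts}. One could instead obtain the contradiction symmetrically: after $x_{i0}=0$, rewriting $\alpha_j(c_0)=x_{0j}y_{0j}$ in the $\alpha_j$-pairing of $[H_0,H_i]$ yields $x_{i0}x_{0j}y_{0j}+y_{i0}a_{0j}=0$, which on its own would force $x_{i0}\neq 0$ --- but the version above is cleaner.
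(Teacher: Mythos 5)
Your proof is correct; every step checks out against the lemmas you cite. In step one, for distinct leaves $i,j\in\{1,2,3\}$ the roots are not connected, so $[H_i,H_j]=0$ (Lemma \ref{lemma_uncoupled_facts}(2)) and hence $x_{i0}y_{j0}+x_{j0}y_{i0}=0$ by Lemma \ref{lemma_qKM_formulas}(2); dividing by $y_{i0}y_{j0}\neq 0$ (legitimate since each leaf is connected to, and uncoupled from, the center, so Theorem \ref{theorem_coupling}(4) applies) gives the system $t_i+t_j=0$ whose only solution over $\mathbb{C}$ is $t_1=t_2=t_3=0$, i.e.\ $x_{i0}=0$. In step two, $[H_0,H_i]=y_{i0}h_0$ then pairs with a second leaf $\alpha_j$ to give $y_{i0}\alpha_j(h_0)=x_{0j}y_{ij}+x_{ij}y_{0j}=0$, contradicting $\alpha_j(h_0)\neq 0$ from Lemma \ref{lemma-qKM-notation}.

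The route is genuinely different from the paper's in its mechanics, though it draws on the same toolkit. The paper (with $\alpha_1$ as the center and $\alpha_2,\alpha_3,\alpha_4$ the leaves) observes that $[H_1,H_2]$ and $[H_1,H_3]$ both lie in the one-dimensional space $\mathbb{C}\langle c_1,h_1\rangle\cap\ker\alpha_4$, hence are proportional; since $[H_1,H_3]\in\h_{\alpha_3}$ and $\alpha_3$ is not connected to $\alpha_2$, it follows that $\alpha_2([H_1,H_2])=0$, which by Lemma \ref{lemma_qKM_formulas}(5) reads $2y_{12}=0$ and contradicts uncoupledness. So the paper's contradiction is with the uncoupling hypothesis, obtained by a dimension count on coroot spaces, whereas yours is with connectedness, obtained by explicitly solving for the $X$-matrix entries. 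Your intermediate conclusion $x_{10}=x_{20}=x_{30}=0$ is a quantitative fact the paper never establishes and is mildly informative in its own right (it says the center would have to be $X$-coupled-like to all three leaves simultaneously); the cost is that you must divide by the $y$'s, which the paper's projective argument avoids. Both proofs genuinely need all three leaves, and neither uses any growth hypothesis.
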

	\begin{proof}
		We write \(\alpha_1,\alpha_2,\alpha_3,\alpha_4\) for the simple roots and assume \(\alpha_1\) corresponds to the middle vertex of the Dynkin diagram.
		Lemma \ref{lemma_uncoupled_facts} implies that \([H_1,H_2] \neq 0\).
		As \([H_1,H_2] \in \h_{\alpha_1}\cap\h_{\alpha_2}\), we have \([H_1,H_2] \in \ker \alpha_4\). So
		\begin{align*}
			[H_1,H_2] \in \C\langle c_1,h_1\rangle\cap\ker\alpha_4.
		\end{align*}
		A symmetric argument implies
		\begin{align*}
			[H_1,H_3] \in\mathbb{C}\langle c_1,h_1 \rangle  \cap \ker \alpha_4.
		\end{align*}
		However, since $\alpha_4(h_1)\neq0$, the above subspaces are all one-dimensional and thus equal.  Hence:  
		\begin{align*}
			\mathbb{C}\langle[H_1,H_2]\rangle = \mathbb{C}\langle c_1,h_1 \rangle \cap \ker \alpha_4 = \mathbb{C}\langle[H_1,H_3]\rangle.
		\end{align*}
		
		Therefore $\mathbb{C}\langle [H_1,H_2] \rangle\in(\h_{\alpha_3})_{\ol{0}}\subseteq\ker\alpha_2$.  Thus by Lemma \ref{lemma_QKM_formulas} we have \newline $0=\alpha_2([H_1,H_2])=y_{12}\alpha_2(h_2)$, i.e.~$y_{12}=0$, a contradiction of being uncoupled.
	\end{proof}

	\begin{lemma}\label{lemma-A3-diagrams}
		Suppose \(\mathfrak{g}(\mathcal{A})\) is a completely uncoupled QKM algebra with the following Dynkin diagram:
		\begin{center}
			\begin{tikzpicture}
				\node[diamond,aspect=1,scale=0.6,draw] (A) at (0,0) {};
				\node[diamond,aspect=1,scale=0.6,draw] (B) at (180:1cm) {};
				\node[diamond,aspect=1,scale=0.6,draw] (C) at (0:1cm) {};
				\draw (A) -- (B);
				\draw (A) -- (C);
			\end{tikzpicture}
		\end{center}
		Then \(\mathfrak{g}(\mathcal{A})\) is necessarily of type \(A_3\), that is, its Cartan matrix is \(A = \begin{pmatrix}
			2 & -1 & 0 \\
			-1 & 2 & -1 \\
			0 & -1 & 2
		\end{pmatrix}\).
	\end{lemma}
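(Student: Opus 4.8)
The plan is to pin down the Cartan matrix by a linear‑algebra computation carried out inside the even coroot space \((\mathfrak h_{\alpha_1})_{\ol 0}\) of the central vertex, combined with the rank‑\(2\) classification of Proposition \ref{propistion-two-sq2}. By the reduction opening this section (via Theorem \ref{theorem_coupling}) all three simple roots are of \(Tak(\s\l(2))\)-type, so \(a_{ii}=2\); relabel so that \(\alpha_1\) is the central vertex, whence \(\alpha_1\sim\alpha_2\), \(\alpha_1\sim\alpha_3\), \(\alpha_2\not\sim\alpha_3\), so \(a_{23}=a_{32}=0\) and \(a_{12}a_{21},\,a_{13}a_{31}\) are positive integers. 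By Lemma \ref{lemma_uncoupled_facts}(3) moreover \(y_{12}y_{21}\neq 0\neq y_{13}y_{31}\), with \(y_{12}^2 a_{21}=y_{21}^2 a_{12}\) and \(y_{13}^2 a_{31}=y_{31}^2 a_{13}\); in particular \((a_{12}y_{21}/y_{12})^2=a_{12}a_{21}\) and \((a_{13}y_{31}/y_{13})^2=a_{13}a_{31}\).

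The central step: the space \(V:=(\mathfrak h_{\alpha_1})_{\ol 0}=\C\langle h_1,c_1\rangle\) is \(2\)-dimensional (since \(\alpha_1(c_1)=0\neq\alpha_1(h_1)\)), and \(w_2:=[H_1,H_2]\), \(w_3:=[H_1,H_3]\) lie in it, being even elements of the ideal \(\mathfrak h_{\alpha_1}\). From Lemma \ref{lemma_qKM_formulas} together with \(a_{22}=a_{33}=2\), \(\alpha_1(c_1)=\alpha_2(c_2)=0\), \(a_{23}=a_{32}=0\), and \(\alpha_3(c_2)=0\) (Lemma \ref{lemma-qKM-notation}, since \(\alpha_2\not\sim\alpha_3\)), I would compute \(\alpha_1(w_2)=2y_{21}\), \(\alpha_2(w_2)=2y_{12}\), \(\alpha_3(w_2)=0\), and symmetrically \(\alpha_1(w_3)=2y_{31}\), \(\alpha_3(w_3)=2y_{13}\), \(\alpha_2(w_3)=0\). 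As \(\dim V^*=2\), the restrictions \(\alpha_1|_V,\alpha_2|_V,\alpha_3|_V\) satisfy a non-trivial relation \(d_1\alpha_1|_V+d_2\alpha_2|_V+d_3\alpha_3|_V=0\); evaluating on \(w_2\) and \(w_3\) forces \(d_1\neq 0\) and \(d_2=-d_1 y_{21}/y_{12}\), \(d_3=-d_1 y_{31}/y_{13}\), and evaluating on \(h_1\) (with \(\alpha_1(h_1)=2\), \(\alpha_2(h_1)=a_{12}\), \(\alpha_3(h_1)=a_{13}\)) and dividing by \(d_1\) yields the cherry‑specific identity
\[
\frac{a_{12}y_{21}}{y_{12}}+\frac{a_{13}y_{31}}{y_{13}}=2. \qquad(\ast)
\]

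To finish: put \(p:=a_{12}y_{21}/y_{12}\), \(q:=a_{13}y_{31}/y_{13}\), so \((\ast)\) reads \(p+q=2\) while \(p^2=a_{12}a_{21}\) and \(q^2=a_{13}a_{31}\) are positive integers (hence \(p,q\in\R\)). On the other hand Proposition \ref{propistion-two-sq2}, applied to the connected, uncoupled pairs \((\alpha_1,\alpha_2)\) and \((\alpha_1,\alpha_3)\) — for which only cases (ii) and (iii) survive, case (i) being the coupled case and case (iv) being vacuous since \(\alpha_i(c_i)=0\) for a \(Tak(\s\l(2))\)-root — gives \(a_{12}a_{21},\,a_{13}a_{31}\in\{1,4\}\). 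A one‑line check of the possibilities for \(p,q\) with \(p+q=2\) and \(p^2,q^2\in\{1,4\}\) leaves only \(p=q=1\), so \(a_{12}a_{21}=a_{13}a_{31}=1\); being negative integers, \(a_{12}=a_{21}=a_{13}=a_{31}=-1\), which with \(a_{ii}=2\), \(a_{23}=a_{32}=0\) is exactly the \(A_3\) Cartan matrix (up to the relabeling). I expect \((\ast)\) to be the crux: the two bonds of the cherry are \emph{a priori} independent and only become coupled by forcing \([H_1,H_2]\) and \([H_1,H_3]\) into the small space \((\mathfrak h_{\alpha_1})_{\ol 0}\); everything after \((\ast)\) is routine arithmetic against the rank‑\(2\) list.
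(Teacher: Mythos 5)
Your derivation of the identity $(\ast)$, namely $\frac{a_{12}y_{21}}{y_{12}}+\frac{a_{13}y_{31}}{y_{13}}=2$, is correct: the observation that $[H_1,H_2]$ and $[H_1,H_3]$ land in the two-dimensional space $(\h_{\alpha_1})_{\ol{0}}=\C\langle h_1,c_1\rangle$, forcing a linear dependence among $\alpha_1,\alpha_2,\alpha_3$ there, is a clean repackaging of part of the paper's computation. The gap is in the final step. You discard case (iv) of Proposition \ref{propistion-two-sq2} by reading its condition ``$\alpha_i(c_i)\alpha_j(c_i)\neq 0$'' literally; this is visibly a typo for $\alpha_i(c_j)\alpha_j(c_i)\neq 0$ (the proof of that proposition produces precisely the residual case ``uncoupled, $x_{ij}x_{ji}\neq 0$, both $a_{ij},a_{ji}\leq -2$'', where $\alpha_j(c_i)=x_{ij}y_{ij}\neq0$ and $\alpha_i(c_j)=x_{ji}y_{ji}\neq0$; moreover the Question at the end of the uncoupled section explicitly contemplates two-vertex uncoupled diagrams with labels $-m,-n$ for arbitrary $m,n\geq 2$). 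With the corrected reading, a connected uncoupled pair may a priori have $a_{ij}a_{ji}$ equal to any product of two integers $\leq -2$, not just $1$ or $4$. Then $(\ast)$ alone does not pin down $p=q=1$: the signs of $y_{21}/y_{12}$ and $y_{31}/y_{13}$ are not controlled, so for instance $p=-1$, $q=3$ with $a_{12}a_{21}=1$ and $a_{13}=a_{31}=-3$ is consistent with everything you have established.

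The missing input is the equation coming from the non-adjacent pair, which your argument never uses: since $\alpha_2\not\sim\alpha_3$ we have $[H_2,H_3]=0$ by Lemma \ref{lemma_uncoupled_facts}(2), whence $0=\alpha_1([H_2,H_3])=x_{21}y_{31}+x_{31}y_{21}$ by (2) of Lemma \ref{lemma_qKM_formulas}. Pairing this with your relation $\alpha_2([H_1,H_3])=x_{31}\alpha_2(c_1)+y_{31}\alpha_2(h_1)=0$ gives a homogeneous $2\times 2$ system in $(x_{31},y_{31})\neq(0,0)$, so its determinant vanishes: $y_{21}\alpha_2(h_1)=x_{21}\alpha_2(c_1)$. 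Combined with $x_{21}\alpha_2(c_1)+y_{21}\alpha_2(h_1)=\alpha_2([H_1,H_2])=2y_{12}$, this yields $y_{21}a_{12}=y_{12}$, i.e.\ $p=1$, and symmetrically $q=1$; your $(\ast)$ is the sum of these two identities and is strictly weaker than their conjunction. This determinant argument is exactly the paper's proof (written there with the central vertex labelled $\alpha_2$), and once $p=q=1$ one gets $a_{12}a_{21}=a_{13}a_{31}=1$ from Lemma \ref{lemma_uncoupled_facts}(3) with no appeal to the case list of Proposition \ref{propistion-two-sq2}.
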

	\begin{proof}
		Using the notation of Section \ref{section-notation-QKM}, we have, by the proof of Proposition \ref{propistion-two-sq2},
		\begin{align}
			&[H_i,H_j] = x_{ij}c_j + y_{ij}h_j = x_{ji}c_i + y_{ji}h_i, \label{equation-commutator-odd-coroots-general}\\
			&y_{ij}^2\alpha_i(h_j) = y_{ji}^2 \alpha_j(h_i) \label{equation-y-dependence-general}
		\end{align}
		As \(\mathfrak{g}(\mathcal{A})\) is completely uncoupled, we have \(y_{12}y_{21}y_{23}y_{32} \neq 0\). We also have
		\begin{align*}
			\alpha_2([H_1,H_3]) e_2 = [[H_1,H_3],e_2] = (x_{32}y_{12} + x_{12}y_{32})e_2.
		\end{align*}
		By Lemma \ref{lemma-QKM-notation} we have \([H_1,H_3] = 0\), so 
		\begin{align}\label{equation-non-touching-eval}
			0=\alpha_2([H_1,H_3]) = x_{32}y_{12} + x_{12}y_{32}.
		\end{align}
		For \(i=2\), \(j=3\), equation \eqref{equation-commutator-odd-coroots-general} evaluated on \(\alpha_1\) gives
		\begin{align}\label{equation-touching-eval}
			0 = x_{23}\alpha_1(c_3) + y_{23}\alpha_1(h_3) =x_{32}\alpha_1(c_2) + y_{32}\alpha_1(h_2).
		\end{align}
		As \(y_{32} \neq 0\), the linear system \eqref{equation-non-touching-eval} and \eqref{equation-touching-eval} in the variables \(x_{32}\) and \(y_{32}\) admits a non-trivial solution. Therefore
		\begin{align}\label{equation-determinant-relation}
			0 = \begin{vmatrix}
				\alpha_1(c_2) & \alpha_1(h_2) \\ y_{12} & x_{12} 
			\end{vmatrix} = x_{12}\alpha_1(c_2) - y_{12}\alpha_1(h_2).
		\end{align}
		Using \eqref{equation-commutator-odd-coroots-general} again for \(i=1\), \(j=2\) and evaluating on \(\alpha_1\) we obtain
		\begin{align*}
			x_{12}\alpha_1(c_2) + y_{12}\alpha_1(h_2) = x_{21}\alpha_1(c_1) + y_{21}\alpha_1(h_1) = 2y_{21}.
		\end{align*}
		Together with \eqref{equation-determinant-relation}, we deduce \(y_{12}\alpha_1(h_2) = y_{21}\). Finally, \eqref{equation-y-dependence-general} implies
		\begin{align*}
			\alpha_1(h_2) (y_{21}^2\alpha_2(h_1)) = \alpha_1(h_2)(y_{12}^2\alpha_1(h_2)) = (y_{12}\alpha_1(h_2))^2 = y_{21}^2,
		\end{align*}
		hence \(\alpha_1(h_2)\alpha_2(h_1) = 1\). A symmetric argument gives \(\alpha_3(h_2)\alpha_2(h_3) = 1\), and our claim follows.
	\end{proof}
	
	Using Remark \ref{remark_dynkin_diagram}, we obtain the following as an immediate corollary to Lemmas \ref{lemma-Dynkin-D} and \ref{lemma-A3-diagrams}, and Proposition \ref{propistion-two-sq2}:
	
	\begin{corollary}\label{corollary-fg-cc-QKM}
		Let \(\mathfrak{g}(\mathcal{A})\) be a completely uncoupled indecomposable QKM algebra (with at least two simple roots). If \(\mathfrak{g}(\mathcal{A})\) is of finite growth, then its Dynkin diagram is one of the following types:
		\begin{enumerate}
			\item Type \(A(n)\) for \(n \in \mathbb{Z}_{\geq 2}\), which corresponds to
			\begin{center}
				\begin{tikzpicture}
					\node[diamond,aspect=1,scale=0.6,draw] (A) at (-3,0) {};
					\node[diamond,aspect=1,scale=0.6,draw] (B) at (-1.5,0) {};
					\node (C) at (0,0) {...};
					\node[diamond,aspect=1,scale=0.6,draw] (D) at (1.5,0) {};
					\node[diamond,aspect=1,scale=0.6,draw] (E) at (3,0) {};
					\draw (A) edge node[above,font=\tiny] {\(-1,-1\)} (B);
					\draw (B) edge node[above,font=\tiny] {\(-1,-1\)} (C) ;
					\draw (C) edge node[above,font=\tiny] {\(-1,-1\)} (D) ;
					\draw (D) edge node[above,font=\tiny] {\(-1,-1\)} (E) ;
				\end{tikzpicture}
			\end{center}
			with \(n\) vertices.
			\item Type \(A(n)^{(1)}\) for \(n \in \mathbb{Z}_{\geq 2}\), which corresponds to
			\begin{center}
				\begin{tikzpicture}
					\node[diamond,aspect=1,scale=0.6,draw] (A) at (-3,0) {};
					\node[diamond,aspect=1,scale=0.6,draw] (B) at (-1.5,0) {};
					\node (C) at (0,0) {...};
					\node[diamond,aspect=1,scale=0.6,draw] (D) at (1.5,0) {};
					\node[diamond,aspect=1,scale=0.6,draw] (E) at (3,0) {};
					\draw (A) edge node[below,font=\tiny] {\(-1,-1\)} (B);
					\draw (B) edge node[below,font=\tiny] {\(-1,-1\)} (C) ;
					\draw (C) edge node[below,font=\tiny] {\(-1,-1\)} (D) ;
					\draw (D) edge node[below,font=\tiny] {\(-1,-1\)} (E) ;
					\draw (A) edge [bend left] node[above,font=\tiny] {\(-1,-1\)} (E);
				\end{tikzpicture}
			\end{center}
			with \(n+1\) vertices.
			\item Type \(A(1)^{(1)}\), which corresponds to
			\begin{center}
				\begin{tikzpicture}
					\node[diamond,aspect=1,scale=0.6,draw] (A) at (-0.75,0) {};
					\node[diamond,aspect=1,scale=0.6,draw] (B) at (0.75,0) {};
					\draw (A) edge node[above,font=\tiny] {\(-2,-2\)} (B) ;
				\end{tikzpicture}
			\end{center}
		\end{enumerate}
	\end{corollary}
	\begin{remark}
		In the next section we will construct finite growth QKM algebras associated to each of the above diagrams, and see they are of finite growth.   
	\end{remark}
	
	\subsection{Beyond finite growth}
	        So far we have seen that completely uncoupled QKM algebras are very rigid structure.  We do not yet understand which Dynkin diagrams have a completely uncoupled QKM algebra associated to them.

            \textbf{Question} Do the Dynkin diagrams listed in Corollary \ref{corollary-fg-cc-QKM}, along with $	\begin{tikzpicture}
		\node[diamond,aspect=1,scale=0.6,draw] (A) at (-0.75,0) {};
		\node[diamond,aspect=1,scale=0.6,draw] (B) at (0.75,0) {};
		\draw (A) edge node[above,font=\tiny] {\(-m,-n\)} (B) ;
	\end{tikzpicture}$ with $m,n\in\Z_{\geq2}$, exhaust all connected Dynkin diagrams of completely uncoupled QKM algebras?
	
        In order to show that the above question has a positive answer, it suffices to show a having a triangle as a Dynkin diagram is not possible.  Some evidence for this is provided by the following lemma, for which we omit the proof in the interest of space.
        
        \begin{lemma}\label{lemma_triangle}
    
		Suppose that $\g(\AA)$ is a completely uncoupled QKM algebra with Dynkin diagram:
		\begin{center}
			\begin{tikzpicture}
				\node[diamond,scale=0.6,draw] (A) at (90:0.5cm) {};
				\node[diamond,scale=0.6,draw] (B) at (210:1cm) {};
				\node[diamond,scale=0.6,draw] (C) at (-30:1cm) {};
				\draw (A) edge node[left,font=\tiny] {} (B);
				\draw (B) edge node[below,font=\tiny] {} (C);
				\draw (C) edge node[right,font=\tiny] {} (A);
			\end{tikzpicture}
		\end{center}
		If $a_{12}a_{21}\neq1$ and $a_{23}a_{32}a_{31}a_{13}\neq1$, then $a_{12}a_{23}a_{31}=a_{21}a_{32}a_{13}$.
	\end{lemma}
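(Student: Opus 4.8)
The plan is to convert the statement into a computation with the structure constants $x_{ij},y_{ij}$ of Section \ref{section-notation-qKM} and the Cartan entries $a_{ij}=\alpha_j(h_i)$, and then eliminate the $x$'s. Throughout one uses the reduction at the start of this section: a triangle diagram forces all three simple roots to be of type $Tak(\s\l(2))$, so $a_{ii}=2$ and $\alpha_i(c_i)=0$; moreover all three pairs are connected, so by Lemma \ref{lemma_uncoupled_facts} we have $[H_i,H_j]\neq 0$ and $y_{ij}y_{ji}\neq 0$ for every pair.

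First I would write down, for each edge $\{i,j\}$ (with $k$ the remaining index), the identity $[H_i,H_j]=x_{ij}c_j+y_{ij}h_j=x_{ji}c_i+y_{ji}h_i$ of Lemma \ref{lemma_qKM_formulas}(1), and evaluate it on $\alpha_1,\alpha_2,\alpha_3$. Using Lemma \ref{lemma_qKM_formulas}(2) in the form $\alpha_\ell(c_m)=\alpha_\ell(H_m^2)=x_{m\ell}y_{m\ell}$, the diagonal values $\alpha_m(c_m)=0$, $\alpha_m(h_m)=2$, and the direct formula $\alpha_k([H_i,H_j])=x_{ik}y_{jk}+x_{jk}y_{ik}$, this yields two families of relations. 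The \emph{within-edge} relations, from evaluating on $\alpha_i$ and $\alpha_j$, read $y_{ji}(2-x_{ij}x_{ji})=y_{ij}a_{ji}$ and $y_{ij}(2-x_{ij}x_{ji})=y_{ji}a_{ij}$; multiplying them and cancelling $y_{ij}y_{ji}\neq 0$ gives the key identity $(2-x_{ij}x_{ji})^2=a_{ij}a_{ji}$ (in particular recovering Lemma \ref{lemma_uncoupled_facts}(3)). The \emph{cross} relations, from evaluating on $\alpha_k$, come from equating $x_{ik}y_{jk}+x_{jk}y_{ik}$ with $x_{ij}x_{jk}y_{jk}+y_{ij}a_{jk}$ and with $x_{ji}x_{ik}y_{ik}+y_{ji}a_{ik}$; solving for $y_{ij}a_{jk}$ and $y_{ji}a_{ik}$ expresses each product of a $y$ with an adjacent Cartan entry as a bilinear expression in the $x$'s and the two $y$'s on the other two edges.

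Next I would reduce the goal. Applying Lemma \ref{lemma_uncoupled_facts}(3) on each edge, the three identities $y_{ij}^2 a_{ji}=y_{ji}^2 a_{ij}$ multiplied around the triangle show that the desired equality $a_{12}a_{23}a_{31}=a_{21}a_{32}a_{13}$ is equivalent to the single scalar identity $(y_{12}y_{23}y_{31})^2=(y_{21}y_{32}y_{13})^2$, so it suffices to establish this. I would do so by forming the product of the three cross relations carrying $a_{12},a_{23},a_{31}$ and, separately, of the three carrying $a_{21},a_{32},a_{13}$, then using the within-edge identities $(2-x_{ij}x_{ji})^2=a_{ij}a_{ji}$ to eliminate the surviving $x$'s. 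This is where the hypotheses enter: rewriting them via the within-edge identities, $a_{12}a_{21}\neq 1$ becomes $(2-x_{12}x_{21})^2\neq 1$ and $a_{23}a_{32}a_{31}a_{13}\neq 1$ becomes $\big((2-x_{23}x_{32})(2-x_{31}x_{13})\big)^2\neq 1$, and these are exactly what guarantees that the coefficients appearing as denominators during the elimination along, respectively, the edge $\{1,2\}$ and the edges $\{2,3\},\{3,1\}$ are nonzero. With these in hand the system becomes invertible and collapses to $(y_{12}y_{23}y_{31})^2=(y_{21}y_{32}y_{13})^2$, hence to the claim.

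I expect the main obstacle to be the elimination itself: there are six cross relations in the six unknowns $x_{ij}$ (together with the $y$'s), and one must isolate the precise combination that produces the cyclic product while verifying that the two stated inequalities are exactly what keeps every denominator nonzero. A secondary subtlety is that the relations only ever control squares, so the argument naturally produces the squared identity $(y_{12}y_{23}y_{31})^2=(y_{21}y_{32}y_{13})^2$ rather than an equality of the products themselves — but, as the reduction above shows, this already suffices.
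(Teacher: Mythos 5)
First, a point of reference: the paper itself does not prove this lemma --- it states explicitly that ``we do not prove this lemma here since we do not use it'' --- so there is no argument of the authors to compare yours against, and your proposal must stand on its own. Its preparatory steps are correct. Evaluating $[H_i,H_j]=x_{ij}c_j+y_{ij}h_j=x_{ji}c_i+y_{ji}h_i$ at $\alpha_i$ and using $\alpha_i(c_i)=0$, $\alpha_i(c_j)=x_{ji}y_{ji}$ does give $y_{ji}(2-x_{ij}x_{ji})=y_{ij}a_{ji}$ (note this corrects an index slip in the statement of Lemma \ref{lemma_nontrivial_relation_on_ys}), hence $(2-x_{ij}x_{ji})^2=a_{ij}a_{ji}$ and $y_{ij}^2a_{ji}=y_{ji}^2a_{ij}$; multiplying the latter around the triangle correctly reduces the claim to $(y_{12}y_{23}y_{31})^2=(y_{21}y_{32}y_{13})^2$; and the six cross relations $y_{ij}a_{jk}=(x_{ik}-x_{ij}x_{jk})y_{jk}+x_{jk}y_{ik}$ are correctly derived.

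The gap is that everything after this setup --- which is the entire content of the lemma --- is not carried out, and the plan you describe for it is doubtful as stated. Multiplying the three relations carrying $a_{12},a_{23},a_{31}$ and dividing by the product of the three carrying $a_{21},a_{32},a_{13}$ reduces the goal to showing that the ratio of the two products of right-hand sides equals $\pm1$; but those right-hand sides are bilinear in the $x$'s and $y$'s with no visible cancellation, so this grouping does not obviously close. A grouping that does produce cancellation pairs the two cross relations attached to a single edge (one from each of your two cyclic products): for instance, multiplying the relation carrying $a_{23}$ with the one carrying $a_{32}$ and substituting $x_{23}x_{32}=2-t_{23}$ yields, after expansion, an identity with an overall factor $(t_{23}-1)$, where $t_{23}=2-x_{23}x_{32}$ and $t_{23}^2=a_{23}a_{32}$. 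This forces a case analysis that your hypotheses do not obviously control: $a_{23}a_{32}a_{31}a_{13}\neq1$ does not exclude $a_{23}a_{32}=1$, so the factor $(t_{23}-1)$ may vanish and render that identity vacuous. Relatedly, your claim that the two hypotheses are ``exactly what keeps every denominator nonzero'' is asserted rather than verified, and their asymmetric shape --- one condition on the single edge $\{1,2\}$, one on the product over the edges $\{2,3\}$ and $\{3,1\}$ --- cannot come out of the symmetric procedure you propose; it signals that the edge $\{1,2\}$ must be treated differently from the other two in the elimination. Until the elimination is performed explicitly and the points where each hypothesis is invoked are exhibited, this is a plan, not a proof.
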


	The upshot of Lemma \ref{lemma_triangle} is that a Dynkin diagram with underlying graph of a triangle generically cannot correspond to a completely uncoupled QKM algebra. 	.

	\section{Finite growth, completely uncoupled QKM algebras}
	
	In this section we give explicit presentations for the superalgebras with the Dynkin diagrams of Corollary \ref{corollary-fg-cc-QKM}, i.e.~the finite growth Dynkin diagrams.
	
	\subsection{Type \(A(n)\)}\label{subsection-An}
	The proof for the following theorem can be found in \cite{Si}.   There, an explicit list of possibilities for $\g(\AA)$ is given, under the assumption that 
	\[
	\bigcap_{\alpha \in \Pi} \mathrm{Ann}_{\mathfrak{h}}\mathfrak{g}_{\alpha} \subseteq \sum_{\alpha\in \Pi} \mathfrak{h}_{\alpha}.
	\]
	\begin{theorem}\label{theorem-finite-dimensional-QKM}
		Let \(\mathfrak{g}(\mathcal{A})\) be an uncoupled QKM algebra Dynkin diagram of type \(A(n)\) for \(n \in \mathbb{Z}_{\geq 2}\).  Then the derived subalgebra of $\g(\AA)$ is isomorphic to a trivial central extension of either $\s\q(n)$ or $\p\s\q(n)$.  
	\end{theorem}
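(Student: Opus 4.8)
The plan is to reduce the combinatorics to a Cartan matrix of type $A_n$, identify the root system via Theorem~\ref{theorem-root-system}, pin down every root multiplicity, and then match $\g(\AA)$ against the explicit queer realizations of Examples~\ref{example_sqn} and~\ref{example_psq(n)}. First I would set up the combinatorics: by the reduction at the start of this section (a consequence of Theorem~\ref{theorem_coupling}) every simple root of $\AA$ has type $Tak(\s\l(2))$, and since every edge of a diagram of type $A(n)$ carries the label $(-1,-1)$ while $a_{ii}=2$ for all $i$, the Cartan matrix $A=(a_{ij})$ is the classical Cartan matrix of type $A_n$. Let $\s=\s(A)$ be the corresponding simple Lie algebra. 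By Theorem~\ref{theorem-root-system} the root system $\Delta$ of $\g(\AA)$ equals $\Delta_{\s}$, so $\Delta$ is finite and $\g(\AA)$ is finite-dimensional. Note that one direction of the classification is already in hand, since Examples~\ref{example_sqn} and~\ref{example_psq(n)} exhibit $\s\q$ and $\p\s\q$ (and hence their trivial central extensions) as completely uncoupled qKM algebras with a type $A$ Dynkin diagram; the content is that nothing else can occur.

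Next I would compute the root multiplicities. For $\beta\in\Delta$ and a simple root $\alpha_i$, integrability makes $\bigoplus_{k\in\Z}\g_{\beta+k\alpha_i}$ a finite-dimensional module over $\g\langle\alpha_i\rangle$, a central quotient of $\mathfrak{t}\s\l(2)\cong\s\q(2)$; since the Weyl group of $\s(A)$ permutes $\Delta$ transitively while each $\g_{\alpha_i}$ is an irreducible $\h$-module of rank $2$, hence $(1|1)$-dimensional, the argument of Lemma~\ref{lemma-sl2-osp2-subalgebras} yields $\dim\g_\beta=(1|1)$ for every $\beta\in\Delta$. In particular the even and odd root spaces are each one-dimensional, and $\dim\g$ is determined by $\dim\h$. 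The subalgebra of $\g_{\ol{0}}$ generated by the even root vectors $e_i,f_i$ (equivalently, by the $\s\l(2)$-triples $e_i,h_i,f_i$) is a finite-dimensional quotient of the Kac--Moody algebra $\s(A)$, hence, the latter being simple, a copy of $\s(A)$; combined with the one-dimensionality of the even root spaces this forces $\g_{\ol{0}}=\s(A)+\mathfrak{t}$, with $\langle h_1,\dots,h_n\rangle\subseteq\mathfrak{t}$ the Cartan subalgebra of this $\s(A)$. Using (1) of Lemma~\ref{lemma_qKM_formulas} to pin down the relations among the $h_i$ and $c_i$, and $[\h_{\ol{0}},\h]=0$ to see that each element of $\mathfrak{t}$ acts by a scalar on every $\g_\beta$, one identifies $\g_{\ol{0}}$ with $\s(A)$ enlarged by a trivial central summand, subject to a single residual degree of freedom: whether or not $\mathfrak{t}$ contains the ``scalar matrix'' central element. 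This is precisely the bifurcation that will produce $\s\q$ versus $\p\s\q$.

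The heart of the argument is the identification of the odd part together with the structure maps $[-,-]_{\alpha_i}\colon\g_{\alpha_i}\otimes\g_{-\alpha_i}\to\h$ and the mixed brackets $[\g_{\alpha_i},\g_{\alpha_j}]$. Each $\g_{\alpha_i}=C_{\alpha_i}$ is an irreducible module over the Clifford superalgebra $\CC\ell(\alpha_i)$ with $\g_{-\alpha_i}\cong\g_{\alpha_i}^\vee$, and the $\Pi$-twist recorded in Section~\ref{section_dualities} is exactly the ambiguity in this identification. Concretely I would produce explicit elements $\tilde{e}_i,\tilde{E}_i,\tilde{f}_i,\tilde{F}_i$ inside a trivial central extension of $\s\q(n)$, respectively of $\p\s\q(n)$, verify that they satisfy the Chevalley-type relations defining $\tilde{\g}(\AA)$ listed in Section~\ref{section_cartan_datum}, obtain a homomorphism from $\tilde{\g}(\AA)$ onto it which is the identity on $\h$ up to center, invoke the maximality of $\r$ together with Corollary~\ref{corollary-maximal-ideal} to see that it descends to $\g(\AA)$, and then use the multiplicity count above to conclude that the descended map is bijective on every root space. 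A final bookkeeping on $\h$ identifies the derived subalgebra $[\g(\AA),\g(\AA)]$ with a trivial central extension of $\s\q(n)$ or of $\p\s\q(n)$, as asserted.

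The step I expect to be the main obstacle is this last identification. One has to check the entire list of defining relations of $\tilde{\g}(\AA)$ against the proposed generators on the queer side and, more delicately, rule out any collapsing beyond the asserted trivial central extension --- that is, show that no unexpected central elements or Serre-type relations are forced. This is exactly where the Clifford-module data (parities, the $(-)^\#$ and $(-)^\vee$ twists, and the sign normalizations fixed in Section~\ref{section-notation-qKM}) must be tracked carefully through the whole construction, and where the dichotomy between $\s\q$ and $\p\s\q$ genuinely has to be resolved rather than assumed. By contrast, the preliminary steps --- the reduction to type $A_n$, the finiteness via Theorem~\ref{theorem-root-system}, and the root-multiplicity computation --- are comparatively routine.
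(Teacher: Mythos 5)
The paper does not actually prove this theorem in the text: it defers the argument entirely to the thesis \cite{Si}, noting only that the classification there is carried out under the normalization $\bigcap_{\alpha\in\Pi}\mathrm{Ann}_{\mathfrak{h}}\mathfrak{g}_{\alpha}\subseteq\sum_{\alpha\in\Pi}\mathfrak{h}_{\alpha}$. So there is no in-paper proof to compare against line by line. That said, your outline is the natural strategy and is visibly the same one the authors use for the analogous affine case $A(n)^{(1)}$: reduce to $Tak(\s\l(2))$ simple roots, identify the root system via Theorem~\ref{theorem-root-system}, pin down multiplicities, normalize the structure constants, and build an explicit homomorphism onto the target algebra which descends through $\mathfrak{r}$ by Corollary~\ref{corollary-maximal-ideal}. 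The preliminary steps you describe (type $A_n$ Cartan matrix, finiteness, $\dim\mathfrak{g}_\beta=(1|1)$ via lifted Weyl reflections) are all sound.

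The gap is that the step carrying the entire content of the theorem is announced rather than executed. To show that \emph{only} $\s\q(n)$ and $\p\s\q(n)$ (up to trivial central extension) occur, you must actually determine the Cartan datum up to isomorphism: (i) pin down the $X$ and $Y$ matrices --- here Lemma~\ref{lemma_uncoupled_facts}(3) gives $y_{ij}^2a_{ij}=y_{ji}^2a_{ji}$, and the relation $y_{12}\alpha_1(h_2)=y_{21}$ extracted in the proof of Lemma~\ref{lemma-A3-diagrams} forces $y_{i,i+1}=-y_{i+1,i}$, after which Remark~\ref{remark_rescaling} lets you normalize all of them and conclude $x_{ij}=a_{ij}$; (ii) use Lemma~\ref{lemma_qKM_formulas}(1) to derive the complete set of linear relations among $h_i$, $c_i$, $H_i$, and show these admit exactly the two solutions realized by $\s\q(n)$ and $\p\s\q(n)$ modulo a trivially-acting central complement; and (iii) verify the proposed generators satisfy all Chevalley-type relations and that the resulting map is injective on $\mathfrak{h}\cap[\mathfrak{g},\mathfrak{g}]$. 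You correctly locate the $\s\q$ versus $\p\s\q$ bifurcation in whether the span of the $h_i,c_i$ contains the scalar central element, but without (i)--(iii) the claim that this is the \emph{only} residual freedom is unsupported. As written, the proposal is a correct plan with the decisive classification left as future work, which is precisely the part the paper outsources to \cite{Si}.
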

	
	\subsection{Type \({A(n)}^{(1)}\)}
	
	Let \(\mathfrak{g}(\mathcal{A})\) be a completely coupled QKM algebra with Dynkin diagram \({A(n)}^{(1)}\). We aim to show that \(\mathfrak{g}(\mathcal{A})\) is an affinization of a QKM algebra with a Dynkin diagram of type \(A(n)\). By an odd affinization we shall mean the following:
	
	\begin{definition}[Odd Affinization]
		Let \(\mathfrak{g}\) be a Lie superalgebra that admits an odd, invariant, super-symmetric bilinear form \((-,-)\). Then the Lie superalgebra
		\begin{align*}
			\hat{\mathfrak{g}} \coloneq \mathfrak{g}\otimes \mathbb{C}[t,t^{-1}] \oplus \mathbb{C}\langle t\partial_t, K  \rangle
		\end{align*}
		with \(t\) even, \(K\) odd and central, and bracket given by
		\begin{align*}
			[x\otimes t^m, y\otimes t^n] = [x,y]\otimes t^{m+n} + m \delta_{m+n,0} (x,y) K,
		\end{align*}
		is called the odd affinization of \(\mathfrak{g}\) with respect to the bilinear form \((-,-)\).
	\end{definition}
	
	Now let $\AA$ be a completely uncoupled Cartan datum with Dynkin diagram of type $A(n)^{(1)}$.  Denote the simple roots of \(\mathfrak{g}(\mathcal{A})\) by \(\alpha_0,...,\alpha_n\); we will consider all indices mod \(n+1\).  A direct computation (see \cite{Si}) shows that we can rescale \(E_i\) such that \(y_{i+1,i} = - y_{i,i+1} = 1\) and \(x_{i+1,i} = x_{i,i+1} = -1\). In particular \(x_{ij} = a_{ij}\) for any \(i,j\). Moreover, it is shown there that \(\sum_{i=0}^n h_i = 0\). \par
	We notice that \(\sum_{i=0}^n H_i \in \bigcap_{\alpha\in\Pi}\mathrm{Ann}_{\mathfrak{h}}\mathfrak{g}_{\alpha}\), and that any other linear combination of \(H_0,\dots,H_n\) that lies in $\bigcap_{\alpha\in\Pi}\mathrm{Ann}_{\mathfrak{h}}\mathfrak{g}_{\alpha}$ must be proportional to \(\sum_{i=0}^n H_i\). 	We denote \(K \coloneq \sum_{i=0}^n H_i\) and notice that \([K,K] = 0\). 
	
	Part (1) of Lemma \ref{lemma_QKM_formulas} and the above values of \(x_{ij}\) and \(y_{ij}\) imply:
	\begin{align*}
		&-c_0 - h_0 = -c_n + h_n, \\
		&-c_0 + h_0 = -c_1 - h_1,
	\end{align*}
	so that
	\begin{align}\label{equation-psqn-realtions}
		2c_0 = c_1 + h_1 + c_n - h_n. 
	\end{align}
	Finally, let \(\alpha_0,...,\alpha_n,\beta_1,...,\beta_m\) be a basis for \(\mathfrak{h}_{\ol{0}}^*\), we let \(\alpha_0^*,...\alpha_n^*,\beta_1^*,...\beta_m^*\) be the dual basis for \(\mathfrak{h}_{\ol{0}}\).
	
	\begin{theorem}
		Let \(\mathfrak{g}(\mathcal{A})\) be a QKM algebra with Dynkin diagram of type \({A(n)}^{(1)}\) for \(n \in \mathbb{Z}_{\geq 2}\). Then \(\hat{\mathfrak{psq}_n}\) can be identified with a subquotient of \(\mathfrak{g}(\mathcal{A})\), where \(\hat{\mathfrak{psq}}_n\) is the odd affinization of \(\mathfrak{psq}_n\) with respect to its non-degenerate form. \par 
		Concretely, there exists a QKM algebra \(\mathfrak{g}(\mathcal{A}')\) that can be identified with an ideal of \(\mathfrak{g}(\mathcal{A})\) such that \(\mathfrak{g}(\mathcal{A}) =  \mathfrak{g}(\mathcal{A}') + \mathfrak{h}_{\ol{1}}\), and \(\hat{\mathfrak{psq}_n} \simeq \mathfrak{g}(\mathcal{A}')/\mathfrak{c}'_{\ol{0}}\), where \(\mathfrak{c}'_{\ol{0}}\) is the even part of the center of \(\mathfrak{g}(\mathcal{A}')\). 
	\end{theorem}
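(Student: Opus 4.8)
The plan is to follow the template of the proof of Theorem~\ref{theorem-Takiffs}: isolate the ideal of $\mathfrak{g}(\mathcal{A})$ generated by the root spaces, realize it (up to an even central quotient) as a central extension of the loop algebra $\mathfrak{psq}_n\otimes\mathbb{C}[t,t^{-1}]$ by comparing Chevalley generators, and then use the rigidity of the root system to upgrade this to the identification with $\hat{\mathfrak{psq}_n}$. Concretely, I would first take $\mathcal{A}'$ to be the Cartan subdatum with $\mathfrak{g}'_{\pm\alpha_i}=\mathfrak{g}_{\pm\alpha_i}$ and $\mathfrak{h}'=\mathfrak{h}_{\bar 0}\oplus\mathbb{C}\langle H_0,\dots,H_n\rangle$, so that $\mathfrak{g}(\mathcal{A}')$ is the subalgebra generated by $\mathfrak{h}'$ and the root spaces. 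Using the triangular decomposition of Theorem~\ref{theorem-half-baked-structure} together with $[\mathfrak{h}_{\bar 1},H_i]\subseteq\mathfrak{h}_{\bar 0}$ and $[\mathfrak{h}_{\bar 1},\mathfrak{g}_{\pm\alpha_i}]\subseteq\mathfrak{g}_{\pm\alpha_i}$, one checks that $\mathfrak{g}(\mathcal{A}')$ is an ideal and that $\mathfrak{g}(\mathcal{A})=\mathfrak{g}(\mathcal{A}')+\mathfrak{h}_{\bar 1}$, the only missing generators being odd elements of $\mathfrak{h}$. Its even center is $\mathfrak{c}'_{\bar 0}=\bigcap_i\ker\alpha_i\subseteq\mathfrak{h}_{\bar 0}$; combining $\sum_i h_i=0$, the recursion $c_{i+1}=c_i-h_i-h_{i+1}$ obtained by evaluating $[H_i,H_{i+1}]$ two ways via Lemma~\ref{lemma_qKM_formulas}(1) with the normalized entries $x_{i\pm1,i}=-1,\ y_{i+1,i}=-y_{i,i+1}=1$, and the relation~\eqref{equation-psqn-realtions}, one shows that $c_0$ spans the part of $\mathfrak{c}'_{\bar 0}$ transverse to the surviving Cartan directions, so that $\mathfrak{g}(\mathcal{A}')/\mathfrak{c}'_{\bar 0}$ has exactly the Cartan subalgebra expected of $\hat{\mathfrak{psq}_n}$.

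Next I would build the comparison map. Since $\mathfrak{psq}_n$ is a completely uncoupled qKM algebra of type $A(n)$ (Theorem~\ref{theorem-finite-dimensional-QKM}) it has a Chevalley--Serre presentation (by \cite{Si}), and hence so does its odd affinization $\hat{\mathfrak{psq}_n}$; adjoining the degree operator $t\partial_t$, the latter is the qKM algebra $\mathfrak{g}(\mathcal{B})$ of a Cartan datum $\mathcal{B}$ with Dynkin diagram $A_n^{(1)}$, whose affine Chevalley generators $\check e_i,\check E_i,\check f_i,\check F_i$ ($i=0,\dots,n$) are the finite $\mathfrak{sq}$-generators for $i\geq 1$ and, for $i=0$, the root vectors attached to $\delta-\theta$ formed from a highest-root vector tensored with $t^{-1}$, and with $\mathcal{K}=\sum_i\check H_i$ the odd central element. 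I would then define $\tilde\psi:\tilde{\mathfrak{g}}(\mathcal{B})\to\mathfrak{g}(\mathcal{A}')$ by $\check e_i\mapsto e_i,\ \check E_i\mapsto E_i,\ \check f_i\mapsto f_i,\ \check F_i\mapsto F_i,\ \mathcal{K}\mapsto K=\sum_i H_i$, sending the remaining Cartan generators of $\mathcal{B}$ (including $t\partial_t$) to the matching elements of $\mathfrak{h}'$, with $t\partial_t$ landing on a dual-basis vector $\alpha_0^*$. Well-definedness is a direct verification of the defining relations of $\tilde{\mathfrak{g}}(\mathcal{B})$ in $\mathfrak{g}(\mathcal{A}')$, exactly in the style of Theorem~\ref{theorem-Takiffs}, using that $X(\mathcal{A})$ equals the affine Cartan matrix, that $K$ is odd and central, and the coroot identities of Section~\ref{section-notation-qKM}. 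The image of $\tilde\psi$ contains every root space, and $\mathfrak{g}(\mathcal{B})$ admits no ideal meeting its Cartan trivially, so $\tilde\psi$ descends to $\psi:\mathfrak{g}(\mathcal{B})\to\mathfrak{g}(\mathcal{A}')$ that is bijective on each root space: by Theorem~\ref{theorem-root-system} both $\mathfrak{g}(\mathcal{A})$ and $\mathfrak{g}(\mathcal{B})$ have root system $\Delta_{A_n^{(1)}}$, the real root spaces are $(1|1)$-dimensional on both sides by Lemma~\ref{lemma-sl2-osp2-subalgebras}, and the imaginary root spaces have equal finite dimension, identifiable through the finite $A(n)$-subdiagram and Theorem~\ref{theorem-finite-dimensional-QKM}. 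Then $\ker\psi$ is an ideal meeting every root space trivially, so it lies in the purely odd center of $\mathfrak{g}(\mathcal{B})$ and vanishes, while $\operatorname{coker}\psi=\mathfrak{c}'_{\bar 0}$; this yields $\hat{\mathfrak{psq}_n}=\mathfrak{g}(\mathcal{B})\cong\mathfrak{g}(\mathcal{A}')/\mathfrak{c}'_{\bar 0}$, and the ideal statement together with $\mathfrak{g}(\mathcal{A})=\mathfrak{g}(\mathcal{A}')+\mathfrak{h}_{\bar 1}$ from the first step completes the proof.

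The hard part will be the well-definedness of $\tilde\psi$ in the affine setting: one must check the full list of Chevalley--Serre relations of $\hat{\mathfrak{psq}_n}$ hold among the images, and the $A^{(1)}$ Serre relations are heavier than the $A(n)$ ones used in Theorem~\ref{theorem-finite-dimensional-QKM}. Running in parallel with this, one must pin down $\mathfrak{c}'_{\bar 0}$ precisely and verify that the central extension of $\mathfrak{psq}_n\otimes\mathbb{C}[t,t^{-1}]$ produced by $\psi$ is the one given by the residue of the odd-trace form — so that it is genuinely the odd affinization and not some other central extension. A secondary but unavoidable point is the bookkeeping of $\dim\mathfrak{h}_{\bar 0}$ and $\dim\mathfrak{h}_{\bar 1}$, i.e.\ of which derivations and central elements persist in $\mathfrak{g}(\mathcal{A}')/\mathfrak{c}'_{\bar 0}$; this is where the normalization imported from \cite{Si} (the equalities $x_{i\pm1,i}=-1$, $y_{i+1,i}=-y_{i,i+1}=1$, $\sum_i h_i=0$, and \eqref{equation-psqn-realtions}) carries most of the weight.
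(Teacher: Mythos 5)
Your first step coincides with the paper's: the subdatum $\mathcal{A}'$ is obtained by shrinking $\mathfrak{h}_{\ol{1}}$ to the span of the odd simple coroots (equivalently of $K=\sum_i H_i$ and $H_1,\dots,H_n$), keeping $\mathfrak{t}$, and the normalizations $x_{i\pm1,i}=-1$, $y_{i+1,i}=-y_{i,i+1}=1$, $\sum_i h_i=0$ imported from \cite{Si} play exactly the role you assign them. The divergence, and the gap, is in the comparison map. You build it \emph{out of} an abstract qKM presentation $\mathfrak{g}(\mathcal{B})$ of $\hat{\mathfrak{psq}}_n$, justified by ``$\mathfrak{psq}_n$ has a Chevalley--Serre presentation, and hence so does its odd affinization.'' That implication is not formal: knowing generators and relations for $\mathfrak{psq}_n$ tells you nothing a priori about a presentation of $\mathfrak{psq}_n\otimes\C[t,t^{-1}]\oplus\C\langle t\partial_t,K\rangle$ by affine Chevalley generators --- in the classical theory the analogous identification of the abstract affine algebra with the loop realization is itself a theorem, proved by constructing a graded surjection and killing its kernel. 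Worse, the Serre-relations theorem you cite from \cite{Si} applies to \emph{finite growth} completely uncoupled qKM algebras, and finite growth of the $A(n)^{(1)}$ diagram is precisely what this realization theorem is meant to establish (see the remark following Corollary \ref{corollary-fg-cc-qKM}); and applying it to $\hat{\mathfrak{psq}}_n$ presupposes that $\hat{\mathfrak{psq}}_n$ is a qKM algebra of that type, which is the content of the statement being proved. So as written the argument is circular at its central step.

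The paper avoids all of this by running the map in the opposite direction: it defines an explicit homomorphism $\tilde{\mathfrak{g}}(\mathcal{A}')\to\hat{\mathfrak{psq}}_n$ on generators, sending $e_i,E_i,f_i,F_i$ ($1\le i\le n$) to matrix units $X_{E_{i,i+1},0}\otimes 1$, etc., the affine generators $e_0,E_0,f_0,F_0$ to highest-root vectors tensored with $t^{\pm1}$, $\alpha_0^*\mapsto t\partial_t$, and $K\mapsto K$. Well-definedness then only requires the Chevalley-type relations of $\tilde{\mathfrak{g}}(\mathcal{A}')$ (no Serre relations at all), surjectivity is immediate since the images generate, and the map factors through $\mathfrak{g}(\mathcal{A}')$ because its image of $\mathfrak{r}$ sits in nonzero root spaces while $\hat{\mathfrak{psq}}_n$ has no nontrivial ideal meeting its Cartan trivially --- a concrete fact about the explicit algebra, checked directly. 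The kernel is then pinned inside the even center, giving $\hat{\mathfrak{psq}}_n\simeq\mathfrak{g}(\mathcal{A}')/\mathfrak{c}'_{\ol{0}}$. If you want to salvage your direction, you would still have to verify this same ``no ideals meeting the Cartan'' property of the explicit affinization and compute its root-space dimensions independently, at which point you have reproduced the paper's argument with extra overhead; the cleaner fix is simply to reverse the arrow.
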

	
	\begin{proof}
		We use the notation of \cite{G} for a basis of \(\mathfrak{psq}_n\). \par	
		Let \(\mathfrak{h}'\) be a subalgebra of \(\mathfrak{h}\) given by \(\mathfrak{t}' = \mathfrak{t}\) and \(\mathfrak{h}' = \mathbb{C}\langle K,H_1,...,H_n \rangle\) (as in the notation above).
		Let \(\mathcal{A}'\) be the Cartan datum obtain from \(\mathcal{A}\) by replacing \(\mathfrak{h}\) by \(\mathfrak{h}'\) (and restricting the \(\mathfrak{h}\)-modules \(\mathfrak{g}_{\alpha}\) to \(\mathfrak{h}'\)). Clearly \(\mathfrak{g}(\mathcal{A}')\) is a QKM algebra and can be embedded naturally as an ideal of \(\mathfrak{g}(\mathcal{A})\) satisfying \(\mathfrak{g}(\mathcal{A}) = \mathfrak{g}(\mathcal{A}')+\mathfrak{h}_{\ol{1}}\). \par
		Now define a map from \(\tilde{\mathfrak{g}}(\mathcal{A}')\) onto \(\hat{\mathfrak{psq}}_n\) as follows:
		\begin{align*}
			&e_i \mapsto X_{E_{i,i+1}, 0}\otimes 1 \text{ for } 1\leq i \leq n, \\
			&E_i \mapsto X_{0, E_{i,i+1}}\otimes 1 \text{ for } 1\leq i \leq n, \\
			&f_i \mapsto X_{E_{i+1,i}, 0}\otimes 1 \text{ for } 1\leq i \leq n, \\
			&F_i \mapsto X_{0, E_{i+1,i}}\otimes 1 \text{ for } 1\leq i \leq n, \\
			&e_0 \mapsto X_{E_{n,0},0}\otimes t, \\
			&E_0 \mapsto X_{0,E_{n,0}}\otimes t, \\
			&f_0 \mapsto X_{E_{0,n},0}\otimes t^{-1}, \\ 
			&F_0 \mapsto X_{0,E_{0,n}}\otimes t^{-1}, \\
			&\alpha_0^* \mapsto t\delta_t, \\
			&h_i \mapsto X_{E_{i,i}-E_{i+1,i+1},0}\otimes 1 \text{ for } 1 \leq i \leq n, \\
			&\beta_i^* \mapsto 0, \\
			&K \mapsto K, \\
			&H_i \mapsto X_{0,E_{i,i}-E_{i+1,i+1}}\otimes 1 \text{ for } 1 \leq i \leq n.
		\end{align*}
		It is straightforward yet tedious to verify that the above assignment determines a well defined homomorphism of Lie superalgebras, which surjects onto \(\hat{\mathfrak{psq}}_n\). As \(\hat{\mathfrak{psq}}_n\) has no non-trivial ideals intersecting \(\mathfrak{h}\) trivially, this homomorphism factors through a map \(\mathfrak{g}(\mathcal{A})\rightarrow \hat{\mathfrak{psq}}_n\). It is clear that the kernel of this lies entirely in the even part of the center.
	\end{proof}
	
	It should be noted that the odd part of the Cartan subalgebra of \(\mathfrak{g}(\mathcal{A})\) cannot be much more complicated than that of \(\hat{\mathfrak{psq}}_n\). Indeed let \(Y \in \mathfrak{h}_{\ol{1}}\). Because \(x_{ij} = a_{ij}\) and the submatrix of \(A\) obtained by removing the first row and the first column is non-degenerate, there is a unique linear combination \(\sum_{i=1}^n t_i H_i\) such that \([Y-\sum_{i=1}^n t_i H_i,e_j] = 0\) for \(1\leq j \leq n\). So we assume \([Y,e_j] = 0\) for all \(1\leq j \leq n\). Let \(r \in \mathbb{C}\) be the scalar satisfying \([Y,E_1] = re_1\). Then \([Y,H_1] = rh_1\), so
	\begin{align*}
		ra_{12}e_2 = [rh_1, e_2] = [[Y,H_1],e_2] = x_{12}[Y,E_2] = a_{12}[Y,E_2],
	\end{align*}
	implying \([Y,E_2]=re_2\). Repeating this argument, we obtain \([Y,E_j] = re_j\) for all \(1\leq j \leq n\). For \(j=0\), we obtain the following:
	\begin{align*}
		-re_0 &= ra_{10}e_0 = [rh_1, e_0] = [[H_1,Y], e_0] = [H_1,[Y,e_0]] + a_{10}[Y,E_0] = [H_1,[Y,e_0]] - [Y,E_0],\\
		-re_0 & = ra_{n0}e_0 = [rh_n, e_0] = [[H_n,Y], e_0] = [H_n,[Y,e_0]] + a_{n0}[Y,E_0] = [H_n,[Y,e_0]] - [Y,E_0].
	\end{align*}
	These equations imply that $[H_1,[Y,e_0]]=[H_n,[Y,e_0]]$; however since $y_{n,0}=-y_{1,0}$, this implies that $[Y,e_0]=0$.  Thus we obtain further that \([Y,E_0] = re_0\). It is immediate that \([Y,Y]\) is central.

	\subsection{Type \({A(1)}^{(1)}\)}\label{section_q_22}
	Let \(s \in \{\pm 1\}\) and \(x_{12},x_{21},y_{12},y_{21} \in \mathbb{C}\) satisfy \(\frac{y_{12}}{y_{21}} = s\sqrt{\frac{a_{12}}{a_{21}}}\) and \(x_{12}x_{21} = 2 + s\sqrt{a_{12}a_{21}}=2(1+s)\). We assume without loss of generality that \(x_{12}\neq 0\) and define a datum \(\mathcal{A}^{s}_{(2,2)}\). \par
	Let \(\mathfrak{h}\) be a \((3|2)\)-dimensional quasi-toral superalgebra, defined as follows. Let \(\{h_1,h_2,h_3\}\) be a basis for \(\mathfrak{t}\) and \(\{H_1,H_2\}\) a basis for \(\mathfrak{h}_{\ol{1}}\). Let \(\alpha_1,\alpha_2,\alpha_3\) be a basis of \(\mathfrak{t}^*\), defined by 
	\begin{align*}
		(\alpha_j(h_i))_{i,j} = \begin{pmatrix} 2 & -2 & 0 \\ -2 & 2 & 1 \\ 0 & 1 & 0 \end{pmatrix}.
	\end{align*}
	We also write \(\delta = \alpha_1+\alpha_2\). We let \(c_1 = x_{12}y_{12} h_3\) and \(c_2 = \frac{1}{x_{12}}(x_{21}c_1+y_{21}h_1 - y_{12}h_2)\). We define the relations in \(\mathfrak{h}\) by
	\begin{align*}
		&[H_i,H_i] = 2c_i, \\
		&[H_1,H_2] = x_{12}c_2+ y_{12}h_2 = x_{21}c_1 + y_{21}h_1.
	\end{align*}
	We set \(\Pi = \{\alpha_1,\alpha_2\}\). It is clear that \(\alpha_1\) and \(\alpha_2\) are linearly independent in \(\mathfrak{t}^*\). 
	
	We let \(\mathfrak{g}_{\alpha_i}\) be an irreducible \(\mathfrak{h}\)-module with \(e_i\) and \(E_i\) even and odd basis vectors, the action of \(\mathfrak{h}\) given by
	\begin{align*}
		& H_i\cdot(e_i+E_i) = 2E_i, \\
		& H_j\cdot (e_i+E_i) = x_{ji}E_i + y_{ji}e_i.
	\end{align*}
	Our definition of \(\mathfrak{h}\) ensures this is a well defined action. Now we let \(\mathfrak{g}_{-\alpha_i} \coloneq \mathfrak{g}_{\alpha_i}^{\vee}\) and let \(f_i \coloneq e_i^\vee\) and \(F_i \coloneq \sqrt{-1} \cdot E_i^\vee\) be a basis for it. Finally, we let \([-,-]_{\alpha_i}:\mathfrak{g}_{\alpha_i}\otimes \mathfrak{g}_{-\alpha_i} \rightarrow \mathfrak{h}\) be the \(\mathfrak{h}\)-module homomorphism defined by \([e_i,F_i]_{\alpha_i} = H_i\). All this information defines a Cartan datum \(\mathcal{A}_{(2,2)}^s\). It then follows that \(\mathfrak{g}(\mathcal{A}_{(2,2)}^s)\) is a QKM algebra. We write \(\mathfrak{q}^+_{(2,2)} \coloneq \mathfrak{g}(\mathcal{A}_{(2,2)}^{1})\) and \(\mathfrak{q}^-_{(2,2)} \coloneq \mathfrak{g}(\mathcal{A}_{(2,2)}^{-1})\). 
	
	\begin{theorem}
		\(\mathfrak{q}^\pm_{(2,2)}\) are of finite growth, their real root spaces are of dimension \((1|1)\) and their imaginary root space is of dimension \((2|2)\).
	\end{theorem}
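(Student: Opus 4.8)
The plan is to determine the root system first, deduce finite growth from bounded multiplicities, and then compute the multiplicities. Since \(\mathcal{A}^{s}_{(2,2)}\) is completely uncoupled with Cartan matrix \(A=\left(\begin{smallmatrix}2&-2\\-2&2\end{smallmatrix}\right)\), Theorem \ref{theorem-root-system} identifies \(\Delta\) with the root system \(\Delta_{\s}\) of \(\s=\widehat{\s\l}_2\); writing \(\delta=\alpha_1+\alpha_2\) for the minimal imaginary root, the roots are the real roots \(\pm\alpha_1+n\delta\) (\(n\in\Z\)) and the imaginary roots \(n\delta\) (\(n\in\Z\setminus\{0\}\)), and \(\pm2\alpha_1+n\delta\) is never a root. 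Granting that every real root space is \((1|1)\)-dimensional and every imaginary root space is \((2|2)\)-dimensional, the number of roots of height at most \(N\) is \(O(N)\), so \(\dim\bigoplus_{\operatorname{ht}\beta\le N}\g_\beta=O(N)\) and \(\g\) has finite growth (indeed GK dimension \(1\)); thus finite growth is a corollary of the multiplicity computation.

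For the real roots: both simple roots are of type \(Tak(\s\l(2))\), so \(\g_{\pm\alpha_i}\) is \((1|1)\)-dimensional by Theorem \ref{theorem-one-simple-root}, and \(e_i,h_i=[e_i,f_i],f_i\) span an \(\s\l(2)\)-triple. For any real root \(\beta\), integrability (Definition \ref{definition-integrable}) makes the \(\alpha_i\)-string \(\bigoplus_k\g_{\beta+k\alpha_i}\) a finite-dimensional \(\s\l(2)\)-module, and since this action preserves parity, the symmetry of weight multiplicities gives \(\dim(\g_\beta)_{\ol{\varepsilon}}=\dim(\g_{r_{\alpha_i}\beta})_{\ol{\varepsilon}}\) for each parity \(\ol{\varepsilon}\). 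As the affine Weyl group acts transitively on real roots and every non-simple positive real root is carried to one of smaller height by some \(r_{\alpha_i}\) (the standard reflection argument for Kac--Moody root systems), induction on height gives \(\g_\beta\cong\g_{\alpha_i}\), of dimension \((1|1)\), for every real \(\beta\).

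The imaginary roots require the real computation. For the \textbf{upper bound}, I would exploit that \(c_1=[E_1,F_1]\) and, by Lemma \ref{lemma_qKM_formulas} together with \(H_1^2=c_1\) and \(y_{12}\neq0\) (Lemma \ref{lemma_uncoupled_facts}), \(\alpha_2(c_1)=x_{12}y_{12}\neq0\) while \(\alpha_1(c_1)=0\); hence \(c_1\in\mathfrak{t}\) acts on the weight space \(\g_{n\delta}\) by the nonzero scalar \(n\delta(c_1)=n\,\alpha_2(c_1)\). Applying the super Jacobi identity to \(c_1=[E_1,F_1]\), every \(v\in\g_{n\delta}\) satisfies \(n\delta(c_1)\,v=[E_1,[F_1,v]]+[F_1,[E_1,v]]\), so the odd linear map \(v\mapsto([F_1,v],[E_1,v])\) embeds \(\g_{n\delta}\) into \(\g_{n\delta-\alpha_1}\oplus\g_{n\delta+\alpha_1}\); as the two target spaces are real root spaces of dimension \((1|1)\), this forces \(\dim\g_{n\delta}\le(2|2)\). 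For the \textbf{lower bound}, I would show the bracket \(\g_{\alpha_1}\otimes\g_{n\delta-\alpha_1}\to\g_{n\delta}\) is injective: fixing a homogeneous basis \(u,U\) of the \((1|1)\)-space \(\g_{n\delta-\alpha_1}\) and a hypothetical relation \(a[e_1,u]+b[E_1,U]+c[e_1,U]+d[E_1,u]=0\), one applies \(\operatorname{ad}f_1\) and \(\operatorname{ad}F_1\); using \(n\delta-2\alpha_1\notin\Delta\), the explicit \(\s\q(2)\)-relations of Section \ref{section-notation-qKM} (in particular \([E_1,e_1]=0\), \([E_1,f_1]=[e_1,F_1]=H_1\), \([E_1,F_1]=c_1\)), and the nonvanishing of \((n\delta-\alpha_1)(h_1)=-2\) and \((n\delta-\alpha_1)(c_1)=n\,\alpha_2(c_1)\), all resulting terms land in the \((1|1)\)-space \(\g_{n\delta-\alpha_1}\) and one reads off \(a=b=c=d=0\). (For \(n=1\) this is precisely \(\g_{\alpha_1}\otimes\g_{\alpha_2}\to\g_\delta\) with images \([e_1,e_2],[E_1,E_2],[e_1,E_2],[E_1,e_2]\).) Combining the bounds gives \(\dim\g_{n\delta}=(2|2)\) for all \(n>0\), and the case \(n<0\) follows from the Chevalley automorphism of Remark \ref{remark-Chevalley}.

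The main obstacle is the lower-bound computation and, above all, its uniformity in \(n\): one has to verify that a single bracket computation works for every imaginary root, which rests on the combinatorial facts that \(n\delta-2\alpha_1\) is never a root and that \(c_1\) (equivalently \(\alpha_2(c_1)=x_{12}y_{12}\)) is nonzero, and on careful tracking of the super-signs in the \(\s\q(2)\)-relations. The upper bound is conceptually the cleanest point once one notices that \(c_1\) acts invertibly on every imaginary weight space; the real-root step and the passage to finite growth are then routine.
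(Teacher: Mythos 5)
Your proposal is correct and follows essentially the same route as the paper: identify $\Delta$ with the affine $\s\l_2$ root system via Theorem \ref{theorem-root-system}, propagate the $(1|1)$ multiplicity to all real roots by reflection/Weyl symmetry, and pin the imaginary multiplicity at $(2|2)$ using that $\delta(c_1)=\alpha_2(c_1)=x_{12}y_{12}\neq 0$, so that $c_1=[E_1,F_1]$ acts invertibly on $\g_{n\delta}$. The paper packages the two bounds slightly differently (lower bound quoted from $\s\q(2)$-representation theory, upper bound from a hypothetical vector annihilated by $\g_{-\alpha_1}$), but the underlying mechanism is the same as yours.
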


	\begin{proof}
		Let \(\mathfrak{g}\) denote either \(\mathfrak{q}^+_{(2,2)}\) or \(\mathfrak{q}^-_{(2,2)}\). Let \(\mathfrak{s}\) be the Kac--Moody Lie algebra corresponding to \(\mathfrak{g}\) as in Theorem \ref{theorem-root-system}. Then \(\mathfrak{s} \simeq \s\l(2)^{(1)}\) and is of finite growth. 
		
		We can associate to \(\mathfrak{g}\) the same Weyl group \(W\) as of \(\mathfrak{s}\), and every reflection of \(W\) can be lifted to an automorphism of \(\mathfrak{g}\). As \(\dim \mathfrak{g}_{\alpha} = (1|1)\) for any \(\alpha \in \Pi\), we have \(\dim \mathfrak{g}_{\beta} = (1|1)\) for any real root \(\beta\in\Delta\). If we write \(\delta = \alpha_1+\alpha_2\) then the last result can be stated as \(\dim \mathfrak{g}_{\alpha+k\delta} = (1|1)\) for any \(\alpha \in \Pi\), \(k \in \mathbb{Z}\).\par
		Finally, Proposition \ref{propistion-two-sq2} implies, without loss of generality, that \(\alpha_2(c_1) \neq 0\). We consider the \(\s\q(2)\) subalgebra of \(\mathfrak{g}\) corresponding to \(\alpha_1\). The representation theory of \(\s\q(2)\) implies that \(\dim \mathfrak{g}_{k\delta} \geq (2|2)\), for any \(k \in \mathbb{Z}\setminus\{0\}\). We notice that \(\delta(c_1) \neq 0\). If \(\mathfrak{g}_{k\delta} > (2|2)\), then there exists \(\theta \in \mathfrak{g}_{k\delta}\) such that \([\mathfrak{g}_{-\alpha_1},\theta] = 0\). This implies \(0 = [c_1,\theta] = k\delta(c_1)\theta\), so \(\delta(c_1) = 0\), a contradiction. We conclude that \(\dim \mathfrak{g}_{k\delta}= (2|2)\) for any \(k \in \mathbb{Z}\setminus\{0\}\). This concludes the proof.
	\end{proof}
	
	As stated in Theorem \ref{thm superconformal}, $\q_{(2,2)}^{\pm}$ are the $d=2$, $\NN=3,4$ twisted superconformal algebras.  This will be shown explicitly in a future work.

	\textsc{\footnotesize Alexander Sherman, Dept. of Mathematics, University of Sydney, Camperdown, Australia} 
	
	\textit{\footnotesize Email address:} \texttt{\footnotesize xandersherm@gmail.com}
	
	\textsc{\footnotesize Lior Silberberg, Dept. of Mathematics, Weizmann Institute of Science, Rehovot, Israel} 
	
	\textit{\footnotesize Email address:} \texttt{\footnotesize lior.silberberg@imj-prg.fr }
\end{document}